\newtheorem{Thm}{Theorem}[subsection]
\newtheorem{Lem}[Thm]{Lemma}
\newtheorem{Prop}[Thm]{Proposition}
\newtheorem{Cor}[Thm]{Corollary}
\newtheorem{Conj}[Thm]{Conjecture}
\newtheorem{Quest}[Thm]{Question}
\newtheorem{Eg}[Thm]{Example}
\newtheorem{Rem}[Thm]{Remark}
\newtheorem{Def}[Thm]{Definition}
\newtheorem*{Def*}{Definition}
\newtheorem*{Thm*}{Theorem}
\newtheorem*{Assumption*}{Assumption}
\newtheorem*{Conj*}{Conjecture}
\newtheorem*{Notation*}{Notation}
\newcommand{\Z}{\mathbb{Z}}
\newcommand{\N}{\mathbb{N}}
\newcommand{\Q}{\mathbb{Q}}
\newcommand{\C}{\mathbb{C}}
\newcommand{\ie}{{\em i.e.}\ }
\newcommand{\cf}{{\em cf.}\ }
\newcommand{\wrt}{{w.r.t.}\ }
\renewcommand{\hat}[1]{\widehat{#1}}
\renewcommand{\tilde}[1]{\widetilde{#1}}
\newcommand{\opname}[1]{\operatorname{\mathsf{#1}}}
\newcommand{\Rep}{\opname{Rep}}
\newcommand{\per}{\opname{per}}
\newcommand{\pr}{\opname{pr}}
\newcommand{\inj}{\opname{inj}}
\newcommand{\add}{\opname{add}}
\newcommand{\op}{^{op}}
\newcommand{\ra}{\rightarrow}
\newcommand{\iso}{\stackrel{_\sim}{\rightarrow}}
\newcommand{\Gr}{\opname{Gr}}
\newcommand{\id}{\mathbf{1}}
\newcommand{\End}{\opname{End}}
\newcommand{\Hom}{\opname{Hom}}
\newcommand{\Ext}{\opname{Ext}}
\renewcommand{\deg}{\opname{deg}}
\newcommand{\Hf}{{\frac{1}{2}}}
\newcommand{\Rm}[1]{{\longmapsto}}
\newcommand{\Lm}[1]{{\longmapsfrom}}
\newcommand{\cA}{{\mathcal A}}
\newcommand{\cC}{{\mathcal C}}
\newcommand{\cD}{{\mathcal D}}
\newcommand{\cE}{{\mathcal E}}
\newcommand{\cF}{{\mathcal F}}
\newcommand{\cI}{{\mathcal I}}
\newcommand{\cL}{{\mathcal L}}
\newcommand{\cM}{{\mathcal M}}
\newcommand{\cN}{{\mathcal N}}
\newcommand{\cR}{{\mathcal R}}
\newcommand{\cT}{{\mathcal T}}
\newcommand{\cU}{{\mathcal U}}
\newcommand{\cW}{{\mathcal W}}
\newcommand{\bI}{{\mathbf I}}
\newcommand{\bL}{{\mathbf L}}
\newcommand{\bi}{{\mathbf i}}
\newcommand{\bm}{{\mathbf m}}
\newcommand{\sV}{{\mathbb V}}
\newcommand{\sW}{{\mathbb W}}
\newcommand{\uX}{{\underline{X}}}
\newcommand{\ua}{{\underline{a}}}
\newcommand{\uc}{{\underline{c}}}
\newcommand{\uk}{{\underline{k}}}
\newcommand{\tB}{{\widetilde{B}}}
\newcommand{\tQ}{{\widetilde{Q}}}
\newcommand{\tW}{{\widetilde{W}}}
\newcommand{\tb}{{\widetilde{b}}}
\newcommand{\hk}{{\widehat{k}}}
\newcommand{\tg}{{\tilde{g}}}
\newcommand{\simp}{\mbf{S}}
\newcommand{\stdMod}{\mbf{M}}
\newcommand{\can}{L}
\newcommand{\gen}{\mathbb{L}}
\newcommand{\qtChar}{{\chi_{q,t}}}
\newcommand{\redWSet}{{\mathcal{J}}}
\newcommand{\quotKGp}{{\cR_t}}
\newcommand{\extQuotKGp}{{\cR}}
\newcommand{\commQuotKGp}{{\cR_{t=1}}}
\newcommand{\YTorus}{\mathscr {Y}}
\newcommand{\monCat}{{\mathcal{U}}}
\newcommand{\qBase}{{\Z[q^{\pm\Hf}]}}
\newcommand{\projQuot}{{\cM}}
\newcommand{\affQuot}{{\projQuot_0}}
\newcommand{\lag}{{\cL}}
\newcommand{\grProjQuot}{{\projQuot^\bullet}}
\newcommand{\grAffQuot}{{\affQuot^\bullet}}
\newcommand{\grLag}{{\lag^\bullet}}
\newcommand{\clAlg}{{\cA}}
\newcommand{\qClAlg}{{\cA^q}}
\newcommand{\oh}{{\overline{h}}}
\newcommand{\diag}{{\delta}}
\newcommand{\RHS}{{\mathrm{RHS}}} 
\newcommand{\LHS}{{\mathrm{LHS}}}
\newcommand{\trunc}{{^{\leq 0}}}
\newcommand{\mfr}[1]{{\mathfrak{#1}}}
\newcommand{\mbf}[1]{{\mathbf{#1}}}
\newcommand{\set}[1]{\left\{#1\right\}}
\tikzstyle{none}=[inner sep=0pt]
\tikzstyle{black box}=[draw=black, fill=black!25]
\tikzstyle{white box}=[draw=black, fill=white]
\tikzstyle{black circle}=[circle,draw=black!50, fill=black!25]
\tikzstyle{red circle}=[circle,draw=red!50, fill=red!25]
\tikzstyle{blue circle}=[circle,draw=blue!50, fill=blue!25]
\tikzstyle{green circle}=[circle,draw=green!50, fill=green!25]
\tikzstyle{yellow circle}=[circle,draw=yellow!50, fill=yellow!25]
\newcommand{\thistheoremname}{}
\newtheorem*{genericthm*}{\thistheoremname}
\newenvironment{namedthm*}[1]
  {\renewcommand{\thistheoremname}{#1}%
   \begin{genericthm*}}
  {\end{genericthm*}}
\begin{document}
\title[]{triangular bases in quantum cluster algebras and monoidal
  categorification conjectures}

\author{Fan QIN}
\dedicatory{To the memory of Professor Andrei Zelevinsky}
\email{qin.fan.math@gmail.com}

\begin{abstract}
We consider the quantum cluster algebras which are
injective-reachable and introduce a triangular basis in every seed. We prove that, under some initial conditions, there exists a
unique common triangular
basis with respect to all seeds. This basis is parametrized by tropical
points as expected in the Fock-Goncharov conjecture.

As an application, we prove the existence of the common
triangular bases for the quantum cluster algebras arising from
representations of quantum affine algebras and partially for those
arising from quantum unipotent subgroups. This result implies monoidal
categorification conjectures
of Hernandez-Leclerc and Fomin-Zelevinsky in the corresponding
cases: all cluster monomials correspond to simple modules.

\end{abstract}
\maketitle
\setcounter{tocdepth}{1}
\tableofcontents

\renewcommand{\qClAlg}{{\cA}}
\renewcommand{\diag}{{d}}

\newcommand{\redISet}{\cI}
\newcommand{\qNilAlg}{\cN}
\newcommand{\norm}{\alpha}
\newcommand{\sym}{\mathcal{S}}
\newcommand{\mHf}{{-\Hf}}
\newcommand{\height}{{\opname{ht}}}

\renewcommand{\inj}{{\bI}}
\renewcommand{\can}{{\bL}}
\newcommand{\Ind}{{\opname{Ind}}}
\newcommand{\Coind}{{\opname{Coind}}}

\newcommand{\wtMap}{{\opname{wt}}}
\newcommand{\gMap}{{\opname{wt}^{-1}}}

\renewcommand{\redWSet}{{\cW}}
\newcommand{\degL}{\mathrm{D}}
\newcommand{\domDegL}{\mathrm{D}^\dagger}

\newcommand{\condL}{\mathrm{L}}
\newcommand{\ord}{\opname{ord}}
\newcommand{\var}{\opname{var}}
\newcommand{\Var}{\opname{Var}}
\newcommand{\ex}{\opname{ex}}
\newcommand{\Diag}{\opname{Diag}}

\newcommand{\ptSet}{\mathcal{PT}}

\newcommand{\roots}{{\mbox{\boldmath $r$}}}
\newcommand{\bpi}{{\mbox{\boldmath $\pi$}}}

\newcommand{\hfg}{\hat{\mathfrak{g}}}
\renewcommand{\trunc}{^{\leq 2l}}


\section{Introduction}
\label{sec:intro}

\subsection{Background}
\label{sec:background}

Cluster algebras were invented by Sergey Fomin and Andrei Zelevinsky around the
year 2000 in their seminal work \cite{FominZelevinsky02}. These are commutative
algebras with generators defined recursively called cluster
variables. The quantum cluster algebras were later introduced in
\cite{BerensteinZelevinsky05}. Fomin and Zelevinsky aimed to develop a combinatorial
approach to the canonical bases in quantum groups
(Lusztig \cite{Lusztig90}, Kashiwara \cite{Kashiwara90}) and theory of total positivity in
algebraic groups (by Lusztig \cite{Lusztig93}\cite{Lusztig94}). They conjectured that the cluster structure should
serve as an algebraic framework for the study of the ``dual canonical bases'' in various coordinate rings and
their $q$-deformations. In particular, they proposed the following
conjecture.
\begin{Conj*}[Fomin-Zelevinsky]
  All monomials in the variables of any given cluster (the
  cluster monomials) belong to the dual canonical basis.
\end{Conj*}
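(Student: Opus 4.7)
The plan is to deduce the Fomin–Zelevinsky conjecture from a general theorem about the existence of a common triangular basis in injective-reachable quantum cluster algebras, and then verify the required initial conditions in the concrete cases coming from quantum unipotent subgroups. First I would set up, in every seed $t$, a \emph{PBW-type basis} $\{\pbw^t(w)\}$ indexed by the weight lattice (essentially the normalized products of cluster variables and inverse frozen variables in that seed), together with a bar involution. A \emph{triangular basis} at $t$ would then be the unique bar-invariant basis whose transition matrix to $\{\pbw^t(w)\}$ is unitriangular with off-diagonal entries in $q^{-\Hf}\Z[q^{-\Hf}]$ with respect to a dominance order $\wtLeq$ on the weight lattice determined by the exchange matrix. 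The first step is to record that such a triangular basis in a fixed seed exists and is unique whenever the PBW elements have the expected dominance and bar-invariance behaviour; this is a Kazhdan–Lusztig style argument carried out seed by seed.

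Next I would address the compatibility across seeds, which is the heart of the matter. The key notion is a \emph{common triangular basis}: a $\qBase$-basis of $\qClAlg$ that is simultaneously the triangular basis of every seed. Starting from one seed with a triangular basis $\basis$, I would analyse how $\basis$ behaves under a single mutation $\mu_k$: using the quantum exchange relation and bar invariance, show that each element of $\basis$ can still be expanded unitriangularly in the PBW basis of the mutated seed, provided that certain initial cluster variables and the \emph{injective-reachable} elements (the shifted version of the initial cluster variables under the injective reachability assumption) themselves lie in $\basis$. The plan is therefore to formulate explicit initial conditions on a single seed (triangularity of initial cluster variables and of the injective-reachable variables, together with some positivity/bar invariance) that propagate through all mutations. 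This gives an existence-and-uniqueness theorem for the common triangular basis, with the elements automatically parametrized by tropical $g$-vectors, i.e.\ by tropical points as in the Fock–Goncharov picture.

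To derive the Fomin–Zelevinsky conjecture, I would specialize to the quantum cluster algebra structure on (the quantum coordinate ring of) a quantum unipotent subgroup $\Uq(\frkn(w))$, whose dual canonical basis in the sense of Lusztig–Kashiwara is already known to be bar-invariant and, by work of Lusztig, Leclerc, Geiss–Leclerc–Schr\"oer and Kimura, has unitriangular expansions in PBW bases attached to reduced expressions. After matching an adapted reduced expression with an initial seed, these results give precisely the initial conditions required above, so the common triangular basis exists and, by uniqueness, coincides with the dual canonical basis. Since the common triangular basis contains every cluster variable of every seed and is closed under the quantum multiplication of commuting cluster variables (their products are PBW-leading, hence equal to their own triangular expansion), every cluster monomial is an element of this basis, and therefore of the dual canonical basis.

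The main obstacle I expect is the mutation step in the second paragraph: proving that the triangularity property at a seed $t$ really survives a mutation $\mu_k t$, which requires careful control of the Laurent expansions of the injective-reachable elements and of the quantum exchange relation modulo the dominance order. Once this is in place, the identification with the dual canonical basis in the concrete case is essentially a bookkeeping exercise that matches Lusztig's PBW parametrization with the cluster PBW basis $\{\pbw^t(w)\}$; the monoidal categorification consequences (simple modules $\leftrightarrow$ cluster monomials) follow formally from this identification.
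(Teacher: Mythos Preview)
Your overall outline matches the paper's architecture, but there are two genuine gaps that would block the argument as written.

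First, the step ``such a triangular basis in a fixed seed exists and is unique \ldots\ this is a Kazhdan--Lusztig style argument'' does not go through. The paper says explicitly (Section~\ref{sec:triangular_basis_brief}, Strategy) that Lusztig's Lemma does \emph{not} apply here, and this is not an incidental remark: the injective pointed set $\inj^t$ (which is the correct PBW-type set---it must include the injective cluster variables $I_k(t)$ from the shifted seed $t[1]$, not merely cluster variables and frozen inverses as you wrote) is not known a priori to span the algebra, nor is the order bounded below in general. The paper therefore never constructs the triangular basis abstractly; it \emph{starts} from the dual canonical basis, proves directly that $\inj^{t_0}$ is $(\prec_{t_0},\bm)$-unitriangular to it (Proposition~\ref{prop:relate_PBW}, an induction on the word length using the standard basis and embeddings $\extQuotKGp(\bi-i_l)\hookrightarrow\extQuotKGp(\bi)$), and thereby identifies it as the triangular basis at $t_0$. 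Uniqueness then follows, but existence is an input, not an output.

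Second, you underestimate what ``initial conditions'' means and omit the positivity hypothesis. The Reduced Existence Theorem~\ref{thm:reduction} requires (a) that $\can^{t_0}$ be \emph{positive} (structure constants in $\N[q^{\pm\Hf}]$), and (b) that the specific quantum cluster variables produced along the mutation sequences from $t_0$ to $t_0[1]$ and to $t_0[-1]$ already lie in $\can^{t_0}$. Positivity is essential in the one-step mutation argument (Proposition~\ref{prop:prove_X}), where one controls the Laurent expansion of the desired basis element by sandwiching it between positive expansions and invoking the Cluster Expansion Assumption. Condition (b) is not ``bookkeeping'': for the sequence to $t_0[-1]$ this is the content of Conjecture~\ref{conj:desired_module}, and the paper verifies it only for $\bi=c^{N+1}$ by computing truncated $q,t$-characters of certain minuscule modules over graded quiver varieties (Section~\ref{sec:consequence_KR_module}, Proposition~\ref{prop:calculate_variable}). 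This is why the paper's final result (Theorem~\ref{thm:canonical_basis}) covers only type $ADE$ and words dominated by adaptable ones, not the Fomin--Zelevinsky conjecture in full; your proposal, as written, would not close that gap either.
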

This claim inspired the positivity conjecture of the Laurent
phenomenon of skew-symmetric cluster
algebras, which was recently proved by Lee and Schiffler
\cite{LeeSchiffler13} by elementary algebraic computation.

It soon turns out that (quantum) cluster algebras are related to many other
areas. For
example, by using cluster algebras, Bernhard Keller has proved the
periodicity conjecture of $Y$-systems in physics, \cf
\cite{keller2013periodicity}. We refer the reader to B. Keller's introductory
survey \cite{Keller12} for a review of applications of
cluster algebras. 

However, despite the success in other areas, the original
motivation to study cluster algebras remains largely open. In literature, the following bases have been constructed
and shown to contain the (quantum) cluster monomials.

\begin{enumerate}[(i)]
\item By using preprojective algebras, \cf
  \cite{GeissLeclercSchroeer10}, Geiss, Leclerc, and Schr\"oer have
  shown that, if $G$ is a semi-simple complex algebraic group and
  $N\subset G$ a maximal nilpotent subgroup, then the coordinate
  algebra $\C[N]$ admits a canonical cluster algebra structure. They
  further established the generic basis of this algebra, which
  contains the cluster monomials. As an important consequence, they
  identified the generic basis with Lusztig's dual semi-canonical
  basis, which is a variant of the dual canonical basis, \cf
  \cite{Lusztig00}.

\item For commutative cluster algebras arising from triangulated surfaces,
  Musiker, Schiffler and Williams constructed various bases,
  \cf\cite{musiker2013bases}. Their positivity were discussed in \cite{thurston2013positive}. Fock and Goncharov also
  constructed ``canonical bases'' in \cite{FockGoncharov03conj}.

\item For (quantum) cluster algebras of rank $2$, Kyungyong Lee, Li
  Li, A. Zelevinsky introduced greedy bases, \cf
  \cite{lee2014greedy} \cite{lee2014greedyQuantum}\cite{lee2014existence}.
\end{enumerate}

As a new approach to cluster algebras, David Hernandez
and Bernard Leclerc found a cluster algebra structure on the Grothendieck ring of finite dimensional representations
of quantum affine algebras in \cite{HernandezLeclerc09}. They proposed the following monoidal categorification
conjecture.
\begin{Conj*}[Hernandez-Leclerc]
All cluster monomials belong to the basis of the simple modules.
\end{Conj*}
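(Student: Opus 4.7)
The plan is to deduce the Hernandez--Leclerc conjecture from the general existence/uniqueness theorem for the common triangular basis stated in the abstract, applied to the quantum cluster algebra structure on the Grothendieck ring of finite-dimensional representations of the quantum affine algebra $U_q(\hfg)$ discovered by Hernandez and Leclerc. The guiding idea is that the classes $[L]$ of simple modules form a distinguished basis of this Grothendieck ring with strong positivity and unitriangularity properties (coming from Frenkel--Reshetikhin $q$-characters and Nakajima's geometric construction), and that these properties exactly match the defining axioms of the common triangular basis. Once this matching is established, both halves of the conjecture follow: every cluster monomial is automatically an element of the common triangular basis, and the common triangular basis coincides with the basis of simples.

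Concretely, I would proceed in four steps. First, pass to a suitable quantum ($t$-deformed) version of the Grothendieck ring, using Nakajima's $t$-deformed $q$-characters, so that the Hernandez--Leclerc cluster algebra is promoted to a genuine quantum cluster algebra $\qClAlg$ on which the triangular basis machinery can act. Second, verify the structural hypotheses required by the main theorem: namely, that $\qClAlg$ is injective-reachable (this is where the explicit combinatorics of Hernandez--Leclerc's monoidal subcategories $\cC_\ell$, together with the shift automorphism coming from the Kirillov--Reshetikhin modules, enters) and that it satisfies the initial conditions. Third, pick a reference seed — naturally the one built from fundamental/Kirillov--Reshetikhin modules — and show that the simple modules form a triangular basis with respect to this seed, in the sense that each $[L]$ is a sum of a distinguished cluster monomial plus lower terms in the dominance order on Laurent monomials. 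This is the content encoded in the $q,t$-character formulas: the highest $\ell$-weight term is unitriangular to lower monomials, and the coefficients satisfy the required bar-invariance and positivity.

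Fourth, invoke the main theorem: since the simple modules form a triangular basis in one seed and the cluster algebra is injective-reachable, the uniqueness clause forces this basis to be the common triangular basis with respect to all seeds. The general theory then ensures that all quantum cluster monomials lie in this common triangular basis, hence correspond to classes of simple modules, which is exactly the Hernandez--Leclerc conjecture. Tropical-point parametrization of the basis (the Fock--Goncharov prediction also mentioned in the abstract) is an automatic byproduct.

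I expect the hard step to be the third one: checking the triangular property of $\{[L]\}$ at the initial seed. On the cluster-algebra side the dominance order is defined on $g$-vectors (or equivalently on Laurent monomials in the initial cluster), while on the representation side the analogous partial order is the one on dominant $\ell$-weights used in the Frenkel--Mukhin algorithm and in Nakajima's theory. Reconciling these two orders, and showing that the leading term of $[L]$ in the $q,t$-character is precisely the cluster monomial predicted by the appropriate $g$-vector, is the main combinatorial obstruction; verifying injective-reachability and the bar-invariance/positivity integrality required by the triangular basis axioms will be the second main technical burden but should follow from Nakajima's geometric positivity and the known duality between standard and simple modules.
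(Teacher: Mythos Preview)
Your overall architecture matches the paper's: promote to the quantum setting, check that simples form the triangular basis at the canonical initial seed $t_0$, then invoke the existence machinery. However, your step~4 contains a genuine gap. The Reduced Existence Theorem does \emph{not} say that a positive triangular basis at one seed plus injective-reachability suffices. Its hypothesis additionally demands that all quantum cluster variables obtained along the mutation sequence $\Sigma$ from $t_0$ to $t_0[1]$ \emph{and} along $\sigma^{-1}\Sigma^{-1}$ from $t_0$ to $t_0[-1]$ already lie in $\can^{t_0}$. You have silently assumed this away.

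The first half (variables along the path to $t_0[1]$) is classical: these are Kirillov--Reshetikhin modules. The second half is the real obstruction, and it is not where you placed the difficulty. In the paper, your step~3 is handled relatively cleanly: the dominance order on $\degL(t_0)$ is compared to the lexicographic order on $\N^l$ via an explicit isomorphism $\theta^{-1}$, and the required $(\prec_{t_0},\bm)$-unitriangularity of $[X_k*\can^{t_0}]$ follows by induction on word length from the known unitriangularity between the standard (dual PBW) basis $\{M(w)\}$ and the simple basis $\{S(w)\}$. No deep geometric input is needed there.

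The substantive work is instead the verification that the cluster variables appearing along the sequence toward $t_0[-1]$ are simples. For $\bi=c^{N+1}$ the paper does this by introducing minuscule modules $W^{(i)}_{k-h,h,a}$ generalizing Kirillov--Reshetikhin modules, computing their truncated $q,t$-characters via graded quiver varieties (following Nakajima's arguments for KR modules), and matching these characters to the Laurent expansions of the relevant cluster variables. An inductive argument involving equivalent mutation sequences and the correction technique then bootstraps from $c^{N+1}$ to arbitrary adaptable words. None of this is visible in your outline, and without it the existence theorem cannot be applied.
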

\begin{Rem}
If we consider Cartan type $ADE$, then the Fomin and Zelevinsky's original conjecture about dual canonical basis can be deduced from Hernandez-Leclerc conjecture, because one can view dual canonical basis elements as simple modules, cf. \cite{HernandezLeclerc11}.

In fact, Hernandez and Leclerc has a stronger version of this conjecture in \cite{HernandezLeclerc09}: there exists a bijection between the cluster monomials and the real simple objects (a simple object $\simp$ is called \emph{real} if $\simp\otimes\simp$ remains simple). Its analogue in cluster categories would be that all rigid objects correspond to cluster monomials.
\end{Rem}

Partial results in type $A$
and $D$ were due to  \cite{HernandezLeclerc09}
\cite{hernandez2013monoidal}. Inspired by the work of Hernandez and Leclerc, Hiraku Nakajima used graded
quiver varieties to study cluster algebras \cite{Nakajima09} and
verified the monoidal categorification conjecture for cluster algebras
arising from bipartite quivers. His work was later generalized to all
acyclic
quivers by Yoshiyuki Kimura and the author
\cite{KimuraQin11} (cf. \cite{Lee13} for a different and quick proof).

By the work of Khovanov,
Lauda \cite{KhovanovLauda08}, and Rouquier \cite{Rouquier08}, a quantum group $U_q(\mfr{n})$
admits a categorification by the modules of quiver Hecke
algebras, in which the dual canonical basis vectors correspond to the finite
dimensional simple modules. Therefore, Fomin and Zelevinsky's conjecture can also be viewed as a monoidal
categorification conjecture of the corresponding (quantum) cluster algebras.

\subsection{Construction, results and comments}
\label{sec:main_result}

We shall consider quantum cluster algebras $\clAlg$ that are
injective-reachable, namely, for any seed $t$, we have a similar new seed $t[1]$ whose cluster variables are the ``injectives" of $t$ (Section \ref{sec:injective_reachable}). Notice that this assumption is satisfied by the cluster algebras appearing both in Fomin-Zelevinsky's conjecture and Hernandez-Leclerc's conjecture. Starting from $t[1]$, we again have a similar new seed $t[2]$. Repeating
this process, we obtain a chain of
seeds $\set{t[d]}_{d\in \Z}$.

Inspired by Kazhdan-Lusztig theory \cite{kazhdan1979representations}, with respect to any given seed $t$, we define the weakly triangular basis $\can^t$ to be the unique bar-invariant basis, such that normalized products of quantum cluster variables in $t$ and $t[1]$ has unitriangular decompositions into $\can^t$ with non leading coefficients in $q^{-\Hf}\Z[q^{-\Hf}]$. $\can^t$ is parametrized by the set of tropical points $\degL(t)$. It is further called the triangular basis \wrt $t$ if we can strengthen its triangularity property by an analog of Leclerc's conjecture (Section \ref{sec:injective_pointed} \ref{sec:triangular_basis_brief} \ref{sec:weakly_triangular_basis} Remark \ref{rem:Leclerc_conjecture}).

By a common triangular basis \wrt different given seeds we mean a basis which is triangular \wrt each given seed and whose parametrization is compatible with tropical transformations. Our Existence Theorem (Theorem \ref{thm:induction})
states that if there exists a common triangular basis \wrt the seeds
$\set{t[d]}$, $d\in\Z$, such that it has positive
  structure constants, then this
  basis lifts to the unique common triangular basis $\can$ \wrt all
  seeds, called the common triangular basis of $\qClAlg$.

Furthermore, because the seeds $t[d]$ are similar (Section \ref{sec:correction}), the existence of $\can$ is guaranteed by Reduced Existence Theorem (Theorem \ref{thm:reduction}) which only demands the finite criterion that the cluster variables
along the mutation sequence from $t$ to $t[1]$ and $t[-1]$ are
contained in the triangular basis $\can^t$.

By construction, the common triangular basis $\can$ contains all the quantum
cluster monomials. Therefore, if we can show that the basis consisting of
the simple modules (or the dual canonical basis) produces the common triangular
basis, the monoidal
categorification conjecture is verified.

As applications, we prove the main results of this paper. 
\begin{Thm}[Main results, Theorem
	\ref{thm:consequence} \ref{thm:simple_module} \ref{thm:canonical_basis}]
The common triangular basis exist
in the following cases.
\begin{enumerate}[(I)]
	\item Assume that $\qClAlg$ arises from the quantum unipotent subgroup $A_q(\mfr{n}(w))$
	associated with a reduced word $w$ (called type (i)). If the Cartan
	matrix is of Dynkin type $ADE$, or if the word $w$ is inferior to an
	adaptable word under the left or right weak order, then the dual canonical basis
	gives the common triangular basis after normalization and localization at the frozen
	variables. 
	
	\item Assume that $\clAlg$ arises from finite dimensional modules of quantum affine algebras (called type
	(ii)). Then, after localization at the frozen variables, the basis of
	the simple modules gives the common triangular basis. 
	
	In particular, all cluster monomials correspond to simples modules. This type includes those $\clAlg$ arising from level $N$ categories $\cC_N$ in the sense of \cite{HernandezLeclerc09}, thus verifying the Hernandez-Leclerc conjecture at all levels $N$.
\end{enumerate}
\end{Thm}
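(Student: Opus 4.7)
The plan is to invoke the Reduced Existence Theorem (Theorem~\ref{thm:reduction}), which reduces the construction of a common triangular basis in a given quantum cluster algebra to a finite check: every quantum cluster variable along the mutation sequence joining a chosen seed $t$ to its shifted seeds $t[1]$ and $t[-1]$ must lie in the triangular basis $\can^t$. In each of the two cases, I first identify a natural bar-invariant basis with a Kazhdan--Lusztig style triangularity property, then pick a convenient initial seed, and finally verify the finite criterion.

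For case (II), the candidate $\can$ is the basis of simple modules of the appropriate monoidal subcategory of finite-dimensional $U_q(\hfg)$-modules, after localization at the frozen variables. Nakajima's theory of graded quiver varieties supplies the essential ingredients: a bar-involution on the Grothendieck ring, a PBW-type family of standard modules, a unitriangular decomposition of standard modules into simples with off-diagonal entries in $q^{-\Hf}\Z[q^{-\Hf}]$ (thereby verifying the Leclerc-type strengthening demanded in the definition of $\can^t$), and a tropical-point parametrization via truncated Frenkel--Reshetikhin $q$-characters. The initial seed $t$ is chosen so that its cluster variables are fundamental or Kirillov--Reshetikhin modules, and $t[1]$ is identified with the shift proposed by Hernandez--Leclerc. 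The finite criterion is then reduced, via the $T$-system (or Kirillov--Reshetikhin exchange relation), to the claim that at each mutation step the new cluster variable is again a simple module, presented as the head of a tensor product of two simples already known to be real.

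For case (I), the candidate $\can$ is the dual canonical basis of Kashiwara--Lusztig on $A_q(\mfr{n}(w))$, normalized and localized at the frozen variables. Bar-invariance is built into the definition, and the triangularity with respect to Lusztig's PBW basis of the quantum Schubert cell gives the required unitriangular decomposition with coefficients in $q^{-\Hf}\Z[q^{-\Hf}]$. In Dynkin type $ADE$, I transport the result of case (II) across Hernandez--Leclerc's monoidal isomorphism identifying $A_q(\mfr{n}(w))$ with a subring of the quantum affine Grothendieck ring, so that dual canonical basis elements correspond to simple modules and the common triangular basis is inherited. When $w$ is bounded above by an adaptable word $w'$ in the weak order, I first establish the statement for $w'$ using the acyclic / bipartite categorification results of Nakajima and Kimura--Qin, and then descend to $w$ via the subword-induced inclusion $A_q(\mfr{n}(w)) \hookrightarrow A_q(\mfr{n}(w'))$, invoking the Reduced Existence Theorem a second time to reduce the descent again to the finite criterion.

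The principal obstacle is the finite criterion itself. Although the triangularity of a single normalized product is a local statement, the mutation sequence from $t$ to $t[\pm 1]$ may be long, and at every step one must certify that the newly produced cluster variable remains in the candidate basis. Each exchange relation must therefore be interpreted as a short exact sequence of simples (respectively of dual canonical basis elements) whose prime simple head can be identified, and this is where the representation-theoretic input---$q$-character computations, $T$-system identities, prime-simple factorization, and compatibility of the dual canonical basis with the filtration by quantum Schubert cells---carries the weight of the argument.
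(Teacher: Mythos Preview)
Your overall architecture is correct and matches the paper: establish that the candidate basis (simples, resp.\ dual canonical basis) is the triangular basis at a canonical initial seed $t_0$, then invoke the Reduced Existence Theorem. The transport from type (ii) to type (i) in Dynkin type $ADE$ via the Hernandez--Leclerc identification, and the descent along weak order via the subword embedding of Grothendieck rings, are also the paper's route.

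The genuine gap is in your verification of the finite criterion. You write that it ``is then reduced, via the $T$-system \ldots, to the claim that at each mutation step the new cluster variable is again a simple module, presented as the head of a tensor product of two simples already known to be real.'' This is only half true. The mutation sequence from $t_0$ to $t_0[1]$ does consist of $T$-system exchanges, and the resulting variables are Kirillov--Reshetikhin modules; this is the easy direction. But the sequence from $t_0$ to $t_0[-1]$ does \emph{not} consist of $T$-system exchanges in any evident sense, and the cluster variables produced there are not KR modules. Identifying them as simples is the crux of the argument, and your proposal does not supply a mechanism for it; ``head of a tensor product of reals'' is not enough, since one must still compute which simple that head is and match it with the Laurent expansion of the cluster variable.

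The paper resolves this by (a) reducing any adaptable word to an initial subword of $c^{N+1}$ for an acyclic Coxeter word $c$; (b) for $\bi=c^{N+1}$, computing via graded quiver varieties the truncated $q,t$-characters of a new family of minuscule simple modules $W^{(i)}_{k-h,h,a}$ (Proposition~\ref{prop:KR_module_variant}), variants of KR modules that are not themselves KR; (c) matching these characters with the Laurent expansions of the cluster variables along the first segment $\overleftarrow{\mu}^l$ of the sequence toward $t_0[-1]$ (Proposition~\ref{prop:calculate_variable}); and (d) iterating via similarity of seeds and a careful combinatorial analysis of equivalent mutation sequences (Lemmas~\ref{lem:equivalent_sequence}, \ref{lem:enough_variable}, Proposition~\ref{prop:compatible}). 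Step (b) is the representation-theoretic input you are missing, and step (d) is the combinatorial bookkeeping that lets the single character computation propagate along the whole sequence.
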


We also find an easy proof of Conjecture
\ref{conj:integral_quantum_group} about cluster structures on all integral quantum
unipotent subgroups, \cf Theorem \ref{thm:isom_integral}.

\begin{Rem}[Key points in the proofs]
The existence theorem \ref{thm:induction} is established by elementary
algebraic computation and the combinatorics of cluster algebras. We control the triangular bases by the positivity of
their structure
constants as well as their expected compatibility with the tropical
transformations. On the one hand, the expected compatibility implies that the new quantum
cluster variables obtained from one-step mutations are contained in
the triangular basis of the original seed, \cf Proposition \ref{prop:prove_X} Example
\ref{eg:control_expansion}. On the
other hand, if these new quantum cluster variables are contained
in the original triangular basis, we can construct the triangular bases \wrt the new seeds which are compatible with the original one, \cf Proposition \ref{prop:condition_change_seed}.

When we consider applications of Reduced Existence Theorem (Theorem \ref{thm:reduction}) to a quantum cluster algebra $\clAlg$ of type (i) or (ii), most initial conditions are obtained by induction based on the properties of the standard basis (or dual PBW basis) and the $T$-systems. It remains to verify that finite many
quantum cluster variables are contained in the initial triangular basis
$\can^{t_0}$. Some of them are known to be Kirillov-Reshetikhin
modules, \cf\cite{Nakajima03}\cite{GeissLeclercSchroeer11}\cite{hernandez2013monoidal}. We
compare the rest with the characters of some variants of Kirillov-Reshetikhin modules in
 the language of graded quiver varieties, \cf Section
 \ref{sec:consequence_KR_module}.
 
In the proofs, we often simplify our treatment by
 changing the coefficient part of the cluster algebra (this technique was introduced in \cite{Qin12} and called the correction
 technique , \cf Theorem \ref{thm:correction}).
\end{Rem}

  \begin{Rem}[Berenstein-Zelevinsky triangular basis]
    The idea of constructing the common triangular basis of a quantum
    cluster algebra first appeared
    in the work of Berenstein and Zelevinsky
    \cite{BerensteinZelevinsky12}, whose construction was over acyclic
    seeds. Their definition is very different from ours. For general acyclic seeds, it is not clear whether their
    construction agrees with the
    triangular basis in this paper or not.
  \end{Rem}

  \begin{Rem}[Assumption status]
     Existence Theorem relies on Cluster Expansion Assumption. This assumption is well known for cluster algebras arising from quivers (from skew-symmetric matrices)
thanks to the categorification by cluster categories. It might be possible to verify this assumption based on the recent paper \cite{gross2014canonical}.
  \end{Rem}

Recently, Gross, Hacking, Keel and Kontsevich \cite{gross2014canonical} found
another positive basis of commutative cluster algebras parametrized by
tropical points and a more precise form of the
Fock-Goncharov dual basis conjecture was verified. Their work also implied the
positivity of the Laurent phenomenon of commutative cluster algebras. 

When the author was preparing the present article, Seok-Jin Kang, Masaki Kashiwara, Myungho Kim and Se-Jin Oh established
an approach to cluster algebras on quantum unipotent subgroups by studying simple
modules of quiver Hecke algebras, which reduced the verification of
the corresponding monoidal
categorification conjecture to one-step mutations in one seed \cite{KKKO14}. This result allows them to
verify the conjecture
for quantum cluster algebras arising from quantum unipotent subgroups in a forthcoming
paper, namely, all type (i) quantum cluster algebras. The approaches in their paper and the present paper are very different and the results are different too: they verify all type (i) cluster algebras, while the present paper verify some type (i) and all type (ii). Also, we will see that our triangular basis is generated by the basis of simples and, consequently, their result implies our Conjecture \ref{conj:desired_module} for all type (i).

\subsection{Contents}
\label{sec:content}

In the first part of Section \ref{sec:preliminaries}, we recall basic definitions and properties of
quantum cluster algebras $\qClAlg$ and its categorification (including the Calabi-Yau reduction). This
part serves as a background for the construction of the common
triangular basis.

In Section
\ref{sec:dominant}, we consider the
lattice $\degL(t)$ of leading degrees for every seed $t$ of a
cluster algebra together with its dominance order $\prec_t$. We then define and
study pointed elements in the quantum torus $\cT(t)$, whose name is borrowed from \cite{lee2014greedy}. We recall the tropical transformation expected by \cite{FockGoncharov03}. We end the
section by a natural discussion of dominant degrees.

In Section \ref{sec:correction}, we recall and reformulate the correction technique
introduced in
\cite{Qin12}, which is easy but useful to keep track of equations involving
pointed elements when the coefficients and quantization change.

In Section \ref{sec:pointed_sets}, we construct and study the injective pointed set
$\inj^t$ consisting of normalized products of the cluster variables and
the injectives in any given seed $t$. We are interested in the cluster expansions of cluster variables related to injective and projective modules and impose Cluster Expansion Assumption.

In section \ref{sec:triangular_basis}, we introduce the (common) triangular bases
$\can^t$ of a quantum
cluster algebra and give theorems guaranteeing their existence. We present basic properties of $\bm$-unitriangular decompositions and introduce notions such as weakly triangular bases and admissibility. As a crucial step, we
study one step mutations (Section \ref{sec:one_step_mutation}). We
  conclude the section by proving existence theorems for the common triangular basis.

We devote the rest of the paper to the existence and applications of the common
triangular bases.

In Section \ref{sec:quiver_variety}, we review the theory of graded
quiver varieties for acyclic quivers and the associated
Kirillov-Reshetikhin modules. As the main result, we compute $q,t$-characters of a special class of simple
modules. These slightly generalize Nakajima's notations and results
\cite{Nakajima04}\cite{Nakajima03}. A reader unfamiliar with this topic might skip this section
and refer to Nakajima's works.

In Section \ref{sec:preliminaries_monoidal}, we review the notion of monoidal categorification, the quantum cluster
algebras $\qClAlg(\bi)$ associated with a word $\bi$, and the monoidal
categorification conjecture in type (i). We then introduce the quantum cluster
structure $\qClAlg(\bi)$ on a subring $\extQuotKGp(\bi,\ua)$ of the graded Grothendieck ring over graded quiver
varieties, where the word $\bi$ and grading $\ua$ are adaptable. We present the corresponding
monoidal categorification conjecture.

In section \ref{sec:application}, we show that the basis of the simple
modules produces the initial triangular bases of the corresponding
quantum cluster algebra $\qClAlg(\bi)$ after localization. In order to show that it lifts to the common triangular
basis, it remains to check that certain cluster variables are
contained in this basis. We mainly work in the case when $\bi$ is the power
$c^{N+1}$ of an acyclic Coxeter word $c$. Consequences are summarized in the
end.


\section*{Acknowledgments}
\label{sec:ack}
Special thanks are due to Bernhard Keller and Christof Geiss for their
encouragements. The author is grateful to Hiraku Nakajima and Yoshiyuki Kimura
for important discussion and comments. He thanks David Hernandez, Pierre-Guy
Plamondon, and Pavel Tumarkin for interesting discussion on quantum affine algebras,
cluster algebras and Coxeter groups. He also thanks Andrei Zelevinsky and
Dylan Rupel for suggesting the name ``pointed element''. He thanks
Yoshiyuki Kimura, David Hernandez, Bernhard Keller and Changjian Fu for remarks. He thanks Qiaoling Wei for suggestions on revising the manuscript. He is grateful to the anonymous reviewers for the careful reading and many suggestions.

The author thanks the Mathematical
Sciences Research Institute at Berkeley for the financial support,
where this project was initiated during his stay in 2012. 

\section{Preliminaries on additive categorification}
\label{sec:preliminaries}

\subsection{Quantum cluster algebras}
\label{sec:quantum_cluster_algebra}

We review the definition of quantum cluster algebras introduced by
\cite{BerensteinZelevinsky05}. Our notations will be similar to those in
\cite{Qin10}\cite{KimuraQin11}.

Let $m\geq n$ be two non-negative integers. Let $\tB$ be an $m\times n$
integer matrix. Its entry in position $(i,j)$ is denoted by $b_{ij}$. We define the \emph{principal part} and the
\emph{coefficient pattern} of the matrix $\tB$ to be its upper
$n\times n$ submatrix $B$ and lower $(m-n)\times n$ submatrix
respectively. We shall always assume the matrix $\tB$ to be of full
rank $n$ and its principal part matrix skew-symmetrizable, namely, there exists a diagonal matrix $D$ whose diagonal entries are strictly positive integers, such that $BD$ is skew-symmetric.

The indices $1,2,\ldots,m$ are called vertices. Let the intervals $[1,n]$, $[1,m]$ denote their sets $\{1,2,\ldots,n\}$, $\{1,2,\ldots,m\}$ for simplicity.

A \emph{quantization matrix} $\Lambda$ is an $m\times m$
skew-symmetric integer matrix.

\begin{Def}[Compatible pair]
  The pair $(\Lambda,\tB)$ is called \emph{compatible}, if their
  product satisfies
  \begin{align}
    \Lambda\cdot (-\tB)=\left(
      \begin{array}{c}
        D\\
0
      \end{array}
\right)  
  \end{align}
for some $n\times n$ diagonal matrix $D=\Diag(\mathbf{d}_1,\mathbf{d}_2,\ldots, \mathbf{d}_n)$
whose diagonal entries are strictly positive integers.
\end{Def}
By \cite{BerensteinZelevinsky05}, the product $DB$ is skew symmetric.

The quantization matrix $\Lambda$ gives the bilinear form $\Lambda(\ ,\ )$ on the
lattice $\Z^m$ such that $\Lambda(g,h)=g^T \Lambda h$ for $g,h\in
\Z^m$. Here we use $(\ )^T$ to denote the matrix transposition.

Let $q$ be an indeterminate. Define $q^\Hf$ to be its square root such
that $(q^\Hf)^2=q$. The quantization matrix $\Lambda$ makes the usual Laurent polynomial ring into a quantum torus.
\begin{Def}\label{def:quantum_torus}
  The quantum torus $\cT$ is the Laurent polynomial ring
  $\Z[q^{\pm\Hf}][X_1^\pm,\ldots,X_m^\pm]$ endowed with the twisted
  product $*$ such that
  \begin{align*}
    X^g*X^h=q^{\Hf\Lambda(g,h)}X^{g+h},
  \end{align*}
for any $g,h\in \Z^m$.
\end{Def}
We denote the usual product of $\cT$ by $\cdot$. The notation $X^g$ denotes the monomial $\prod_{1\leq i\leq
  m}X_i^{g_i}$ of the \emph{$X$-variables} $X_i$ with respect to the usual product. Recall that the \emph{$Y$-variables}
$Y_k$, $1\leq k\leq n$, are defined as $X^{\tB e_k}$, where $e_k$ is
the $k$-th unit vector in the lattice $\Z^m$. The monomials $Y^v$,
$v\in \N^n$, are similarly defined as $\prod_{k}Y_k^{v_k}$.

By \cite{BerensteinZelevinsky05}, the quantum torus $\cT$ is contained
in its \emph{skew-field of fractions} $\cF$.

The bar involution $\overline{(\ )}$ on $\cT$ is the anti-automorphism of
$\cT$ sending $q^\Hf$ to $q^{-\Hf}$ and $X^g$ to $X^g$.

The quantum torus $\cT$ contains the subalgebra $\mathbb{A}_B=\Z[q^{\pm\Hf}][Y_1,\ldots,Y_n]$. The completion of $\mathbb{A}_B$ is constructed by using the maximal ideal generated by $\{Y_k\}_{1\leq k\leq n}$ and denoted by $\hat{\mathbb{A}}_B$, which has a natural bar-involution such that $\overline{q^sY^v}=q^{-s}Y^v$, $v\in\N^n$. Following \cite[Section 1.2]{gross2014canonical} \cite[Section 2.1]{Davison16}, the completion of the quantum torus $\cT$ is defined to be
\begin{align}\label{eq:completion_torus}
\hat{\cT}=\cT\otimes_{\mathbb{A}_B}\hat{\mathbb{A}}_B.
\end{align}
Its bar involution is induced from that of $\cT$ and $\hat{\mathbb{A}}_B$.

 View $\hat{\cT}$ as a natural right $\hat{\mathbb{A}}_B$-module. Let $\{Z_i\}$ be any (possibly infinite) collection of elements in some finitely generated submodule $\sum_{j=1,\ldots,s} X^{\eta^{(j)} }\hat{\mathbb{A}}_B$ , where $\eta^{(j)}\in\degL(t)$. Assume that for any Laurent degree $\eta$, there are finitely many $Z_i$ with non-zero coefficient $(Z_i)_\eta$ at the Laurent monomial $X^\eta$. Then we can define the possibly infinite sum in the submodule
 \begin{align}\label{eq:infinite_sum}
 \sum_i Z_i=\sum_{\eta\in\Z^m} (\sum_i (Z_i)_\eta)X^\eta.
 \end{align}

A sign $\epsilon$ is an element in $\set{-,+}$. For any $1\leq k\leq
n$ and $\epsilon$, we associate to $\tB$ the $m\times m$ matrix
$E_\epsilon$ whose entry at position $(i,j)$ is
\begin{align*}
  e_{ij}=\left\{
      \begin{array}{ll}
        -1&\mathrm{if}\ i=j= k\\
        \max(0,-\epsilon b_{ik})&\mathrm{if}\ i\neq k,j=k\\
        \delta_{ij}&\mathrm{if}\ j\neq k
      \end{array}\right.,
\end{align*}
and the $n\times n$ matrix $F_\epsilon$ whose entry at position $(i,j)$ is
\begin{align*}
  f_{ij}=\left\{
      \begin{array}{ll}
        -1&\mathrm{if}\ i=j=k\\
        \max(0,\epsilon b_{kj})&\mathrm{if}\ i= k,j\neq k\\
        \delta_{ij}&\mathrm{if}\ i\neq k
      \end{array}\right. .
\end{align*}

\begin{Def}[Quantum seed]
A quantum seed is a triple $(\Lambda,\tB,\uX)$ such that $(\Lambda,\tB)$
is a compatible pair and $\uX=\set{X_1,\ldots,X_m}$ the $X$-variables
in the corresponding quantum torus $\cT$.
\end{Def}

Let $[\ ]_+$ denote the function $\max\set{\ ,0}$.
\begin{Def}[Mutation \cite{FominZelevinsky02} \cite{BerensteinZelevinsky05}]\label{def:mutation}
  Fix a sign $\epsilon$ and an integer $1\leq k\le n$. The mutation $\mu_k$ on a seed
  $(\Lambda,\tB,\uX)$ is the operation that generates the new seed
  $(\Lambda',\tB',\uX')$:
  \begin{align*}
    \tB'=&E_\epsilon \tB F_\epsilon,\\
\Lambda'=&E_\epsilon^T \Lambda E_\epsilon,\\
X'_i=&X_i,\ \forall\ 1\leq i\leq m,i\neq k,\\
X_k'=&X_k^{-1}\cdot (\prod_{1\leq i\leq m}X_i^{[b_{ik}]_+}+\prod_{1\leq j\leq m}X_j^{[-b_{jk}]_+}).
  \end{align*}
\end{Def}
The last equation above is called the \emph{exchange relation} at the vertex
$k$. By this definition, the quantum torus $\cT$ and the new quantum torus
$\cT'$ associated with the new triple share the same skew-field of fractions. By \cite[Proposition 6.2]{BerensteinZelevinsky05}, for any element $Z\in \cT\cap \cT'$, $Z$ is bar-invariant in $\cT$ if and only if it is bar-invariant in $\cT'$.

Choose an initial quantum seed $t_0=(\Lambda,\tB,\uX)$. We consider all seeds $t=(\Lambda(t),\tB(t),\uX(t))$ obtained from $t_0$ by iterated mutations at directions $1\leq k\leq n$. be a given quantum seed. The
$X$-variables $X_i(t)$, $1\leq i\leq m$, are called \emph{quantum cluster variables}. The \emph{quantum cluster monomials} are the monomials
of quantum cluster variables from the same seed. 

Notice that, when
$j>n$, the cluster variables $X_j(t)$ remain the same for all seeds
$t$. Consequently, we call them the \emph{frozen variables} or \emph{coefficients} and simply
denote them by $X_j$. Correspondingly, the vertices $\{n+1,\ldots,m\}$ are said to be \emph{frozen}. The vertices $1,2,\ldots,n$ are said to be
\emph{exchangeable} (or \emph{unfrozen}) and form the set $\ex$. Let $\Z[q^{\pm\Hf}]P$ denote the
coefficient ring $\Z[q^{\pm\Hf}][X_j^\pm]_{n<j\leq m}$. The frozen
variables commute with any quantum cluster variable up to a power
of $q^\Hf$, which situation is called \emph{q-commute} or \emph{quasi-commute}.

\begin{Def}[Quantum cluster algebras]
	The quantum cluster algebra $\qClAlg^\dagger$ is the
	$\Z[q^{\pm\Hf}]$-subalgebra of $\cF$ generated by the quantum cluster
	variables $X_i(t)$, $\forall t$, $\forall 1\leq i\leq m$, with respect to the
	twisted product.
	
 The quantum cluster algebra $\qClAlg$ is the localization of $\qClAlg^\dagger$ at the frozen variables $X_{n+1},\ldots,X_m$.
\end{Def}
The specialization $\qClAlg|_{q^\Hf\mapsto 1}$ gives us the
\emph{commutative cluster algebra}, which is denoted by $\clAlg_\Z$. 

For any given seed $t$, we define the corresponding quantum torus $\cT(t)$ using the data provided by $t$ and Definition \ref{def:quantum_torus}. Then the quantum cluster algebra is contained
in the quantum torus $\cT(t)$, \cf the \emph{quantum Laurent
phenomenon} \cite{FominZelevinsky02}\cite{BerensteinZelevinsky05}

Specialize $q^\Hf$ to $1$ and fix the initial seed $t_0$. For any given cluster variable $X_i(t)$, $1\leq i\leq m$, by
\cite{Tran09} \cite{FominZelevinsky07}, we define its \emph{extended $g$-vector} $\tg_i(t)$ in $\Z^m$ such that the commutative cluster variable takes the following form:
$$X_i(t)|_{q^\Hf\mapsto 1}=X^{\tg_i(t)} F(Y_1,\ldots,Y_n),$$
where $F(Y_1,\ldots,Y_n)$ is the $F$-polynomial in \cite{FominZelevinsky07}. The restriction of $\tg_i(t)$ to the first
$n$ components is called the corresponding \emph{$g$-vector}, which is the
grading of the cluster variable $X_i(t)$ when the $B$-matrix is of
\emph{principal coefficients type}, namely, when $$\tB(t_0)=B^{\pr}(t_0)=
\left(\begin{array}{c}
  B(t_0)\\
\id_n
\end{array}\right).$$

We refer the reader to the survey \cite{Keller12} for more details on
the combinatorics of cluster algebras.

\begin{Thm}[{\cite{DerksenWeymanZelevinsky09}\cite{gross2014canonical}
	\cite[Theorem 6.1]{Tran09}}]\label{thm:quantum_cluster_expansion}
Any quantum cluster variables $X_i(t)$
	has the following Laurent expansion in the quantum torus $\cT(t_0)$:
	\begin{align}\label{eq:quantum_cluster_expansion}
	X_i(t)=X(t_0)^{\tg_i(t)}\cdot(1+\sum_{0\neq v\in \N^n}c_v(q^\Hf)Y(t_0)^v),
	\end{align}
	where the coefficients $c_v(q^\Hf)\in\Z[q^{\pm\Hf}]$ are invariant under the bar involution.
\end{Thm}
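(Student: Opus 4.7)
The plan is to reduce this quantum statement to three ingredients: the commutative Laurent/$F$-polynomial formula, the quantum Laurent phenomenon, and the bar-invariance of quantum cluster variables.

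First, I would fix the initial seed $t_0$ and invoke the quantum Laurent phenomenon of Berenstein--Zelevinsky to write $X_i(t)=\sum_{\eta\in\Z^m}a_\eta(q^\Hf)\,X(t_0)^\eta$ as an (a priori unrestricted) Laurent expansion in $\cT(t_0)$, with $a_\eta(q^\Hf)\in\Z[q^{\pm\Hf}]$. Specializing $q^\Hf\mapsto 1$ yields the commutative cluster variable, and by the Fomin--Zelevinsky/Derksen--Weyman--Zelevinsky separation formula (extended to the skew-symmetrizable setting by Tran, and in full generality by Gross--Hacking--Keel--Kontsevich), this specialization equals $X(t_0)^{\tg_i(t)}F(Y_1,\ldots,Y_n)$ with $F$ a polynomial in the $Y_k$ having constant term $1$. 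In particular, for the specialized expansion only degrees of the form $\tg_i(t)+\tB v$ with $v\in\N^n$ occur, and the coefficient of $X(t_0)^{\tg_i(t)}$ equals $1$.

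Next, I would upgrade this to the quantum expansion. The key observation is that the support of the quantum and the commutative Laurent expansions coincide: indeed any Laurent monomial $X(t_0)^\eta$ appearing with nonzero coefficient $a_\eta(q^\Hf)$ must, upon setting $q^\Hf=1$, give a monomial appearing in the commutative expansion (since no cancellation can occur among distinct Laurent monomials in the commutative torus). Hence $a_\eta(q^\Hf)\neq 0$ forces $\eta=\tg_i(t)+\tB v$ for some $v\in\N^n$, and moreover $a_{\tg_i(t)}(q^\Hf)\equiv 1\pmod{(q^\Hf-1)}$. Using that $X(t_0)^{\tg_i(t)+\tB v}=X(t_0)^{\tg_i(t)}\cdot Y(t_0)^v$ with respect to the commutative product $\cdot$, we obtain the desired expression
\[
X_i(t)=X(t_0)^{\tg_i(t)}\cdot\Bigl(c_0(q^\Hf)+\sum_{0\neq v\in\N^n}c_v(q^\Hf)\,Y(t_0)^v\Bigr),
\]
with coefficients in $\Z[q^{\pm\Hf}]$ and $c_0(1)=1$.

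It remains to show $c_0(q^\Hf)=1$ and that all $c_v(q^\Hf)$ are bar-invariant. For this I would first establish that $X_i(t)$ itself is bar-invariant: proceed by induction on the length of a mutation sequence $t_0\leadsto t$, using that the initial variables $X_j(t_0)$ are bar-invariant and that the quantum exchange relation (Definition \ref{def:mutation}) is a sum of two bar-invariant monomials in $\cT(t')$, hence preserves bar-invariance at each step (by the fact from Section \ref{sec:quantum_cluster_algebra} that bar-invariance in $\cT$ and $\cT'$ agree on $\cT\cap\cT'$). Since $\overline{X(t_0)^g}=X(t_0)^g$ with respect to the commutative product for every $g\in\Z^m$, applying the bar involution to the displayed expansion and comparing coefficients of the (linearly independent) Laurent monomials $X(t_0)^{\tg_i(t)+\tB v}$ forces $\overline{c_v(q^\Hf)}=c_v(q^\Hf)$ for all $v$. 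In particular the bar-invariant element $c_0(q^\Hf)\in\Z[q^{\pm\Hf}]$ specializes to $1$; combined with integrality this gives $c_0(q^\Hf)=1$, completing the proof.

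The only genuinely nontrivial step is the support comparison in the second paragraph, i.e.\ controlling the $N^n$-pointedness of the quantum expansion from the commutative one; this is precisely the content of Tran's theorem (and can alternatively be extracted from the scattering-diagram construction of \cite{gross2014canonical}), and I would simply invoke it rather than reprove it.
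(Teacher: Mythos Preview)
The paper does not give its own proof of this theorem; it is stated with citations to \cite{DerksenWeymanZelevinsky09}, \cite{gross2014canonical}, and \cite[Theorem 6.1]{Tran09} as a known result. So there is nothing to compare against, and your plan of invoking Tran's theorem for the form of the expansion and then supplying the bar-invariance argument is exactly what one would do.

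That said, two of your auxiliary arguments are genuinely wrong, and you should drop them rather than present them as heuristics. First, the support comparison in your second paragraph fails: a nonzero coefficient $a_\eta(q^\Hf)\in\Z[q^{\pm\Hf}]$ can perfectly well vanish at $q^\Hf=1$ (e.g.\ $q^\Hf-q^{-\Hf}$), so the implication ``$a_\eta(q^\Hf)\neq 0\Rightarrow a_\eta(1)\neq 0$'' is false. Your parenthetical about distinct monomials not cancelling is correct but irrelevant to this point. You rightly flag this step as the content of Tran's theorem, so just invoke it and delete the faulty heuristic. Second, your derivation of $c_0(q^\Hf)=1$ from ``bar-invariant and specializes to $1$'' is also wrong: $q+q^{-1}-1$ is bar-invariant, lies in $\Z[q^{\pm\Hf}]$, and specializes to $1$. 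The constant term being $1$ is part of what Tran proves (by induction on the mutation sequence, since each exchange relation has leading coefficient $1$), so again you should simply cite it.

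Your argument for bar-invariance of the $c_v(q^\Hf)$ is correct: bar-invariance of quantum cluster variables follows by induction on mutations (using \cite[Proposition 6.2]{BerensteinZelevinsky05} as the paper notes), and since each $X(t_0)^g$ is bar-invariant under the commutative product, comparing coefficients gives $\overline{c_v}=c_v$.
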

\subsection{Quivers and cluster categories}
\label{sec:ice_quiver}

We review the necessary notions on the additive categorification of a
cluster algebra by
cluster categories, more details could be found in \cite{Keller08c} \cite{Plamondon10a}.

Let $\tB$ be the $m\times n$ matrix introduced before. Assume that its
principal part $B$ is skew-symmetric.

Then we can construct the
quiver $\tQ=\tQ(\tB)$. This is an oriented graph with the vertices
$1,2,\ldots,m$ and $[b_{ij}]_+$ arrows from  $i$ to $j$, where $1\leq
i,j\leq m$. We call $\tQ$ an \emph{ice quiver}, its vertices $1,\ldots, n$
the exchangeable vertices, and its vertices $n+1,\ldots,
m$ the frozen vertices. Its principal part is defined to be the full
subquiver $Q=Q(B)$ on the vertices $1,2,\ldots,n$. 

Notice that we can deduce the matrix $\tB$ from its ice quiver $\tQ$. Therefore, we say that the corresponding cluster algebra is of quiver type.

For any given arrow $h$
in a quiver, we let $s(h)$ and $t(h)$ to denote its start and terminal respectively. The quiver $Q$ is called\emph{ acyclic} if it contains no oriented
cycles, and \emph{bipartite }if any of its vertices is either a source point or a sink point.

\begin{Eg}
Figure \ref{fig:A_3_injective} provides an example
  of an ice quiver, where we use the diamond nodes to denote the frozen vertices.
\end{Eg}

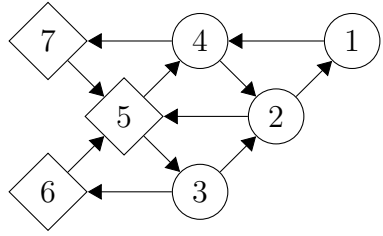
\begin{figure}[htb!]
 \centering
\beginpgfgraphicnamed{fig:A_3_injective}
  \begin{tikzpicture}
\node [shape=circle, draw] (v1) at (7,4) {1}; 
    \node  [shape=circle, draw] (v2) at (6,3) {2}; 
    \node [shape=circle,  draw] (v3) at (5,2) {3};

\node  [shape=circle, draw] (v4) at (5,4) {4}; 
    \node [shape=diamond,  draw] (v5) at (4,3) {5};

\node  [shape=diamond, draw] (v6) at (3,2) {6};
    \node  [shape=diamond, draw] (v7) at (3,4) {7};

    \draw[-triangle 60] (v3) edge (v2); 
    \draw[-triangle 60] (v2) edge (v1);

    \draw[-triangle 60] (v5) edge (v4);

    \draw[-triangle 60] (v7) edge (v5); 
    \draw[-triangle 60] (v5) edge (v3);
    
    \draw[-triangle 60] (v4) edge (v2);

    \draw[-triangle 60] (v1) edge (v4); 
    \draw[-triangle 60] (v4) edge (v7); 

    \draw[-triangle 60] (v2) edge (v5); 
\draw[-triangle 60](v3)edge (v6);

\draw[-triangle 60](v6)edge (v5);
  \end{tikzpicture}
\endpgfgraphicnamed
\caption{The ice quiver associated the adaptable word $(1
  3 2 1 3 2 1)$ and Cartan type $A_3$}
\label{fig:A_3_injective}
\end{figure}

We fix the base field to be the complex field $\C$. Following
\cite{DerksenWeymanZelevinsky07}, choose a generic
potential $\tW$ for the ice quiver $\tQ$, namely, a generic linear combination of oriented cycles of $\tQ$ which is not included in some hypersurfaces (\cite[Corollary 7.4]{DerksenWeymanZelevinsky07}). Associate the Ginzburg algebra
$\Gamma=\Gamma(\tQ,\tW)$ to the quiver with potential $(\tQ,\tW)$. The restriction of
  $(\tQ,\tW)$ to the vertices $[1,n]$ is called the principal
  part and denoted by $(Q,W)$.

Claire Amiot introduced the generalized cluster category $\cC$ in
\cite{Amiot09} as
the quotient category
\begin{align*}
  \cC=\per \Gamma/\cD_{fd}\Gamma,
\end{align*}
where $\per \Gamma$ is the perfect derived category of dg-modules of $\Gamma$ and
$\cD_{fd}\Gamma$ its full subcategory whose objects have finite
dimensional homology.

The natural functor $\pi:\per \Gamma\ra \cC$ gives us
\begin{align*}
T_i=&\pi (e_i\Gamma),\ \forall\ 1\leq i\leq m,\\
  T=&\oplus T_i.
\end{align*}

By Pierre-Guy Plamondon's work \cite{Plamondon10a}, the presentable
cluster category $\cD$ is defined as the full subcategory of
$\cC$ which consists of objects $M$ such that there exist triangles
\begin{align*}
  M_1\ra M_0\ra M\ra S M_1,\\
  M^0\ra M^1\ra S^{-1}M\ra S M^0,
\end{align*}
with $M_1,M_0,M^0,M^1$ in $\add T$, and $S$ is the shift functor in $\cC$. Following \cite{Palu08a}, we define the
index and coindex of $M$ as the following elements in $K_0(\add T)$:
\begin{align*}
  \Ind^TM=[M_0]-[M_1],\\
\Coind^TM=[M^0]-[M^1].
\end{align*}
It is natural to identify $K_0(\add T)$ with $\Z^m$ by using the indecomposable summands $T_i$, $1\leq i\leq m$, of $T$ as the natural basis elements. Then we have
\begin{align}\label{eq:define_index_coindex}
\begin{split}
  \Ind^TM=[M_0:T]-[M_1:T],\\
\Coind^TM=[M^0:T]-[M^1:T],
\end{split}
\end{align}
where $[\ :T]$ denote the multiplicities of the indecomposable summands of $T$ appearing in an element of $K_0(\add T)$. Notice that
$\Coind^TM=-\Ind^{ T}S^{-1}M$.

Let $F$ denote\footnote{Our functor $F$ is the composition of $S$ with the functor $F$ in \cite{Plamondon10a}.} the functor $\Ext^1_\cC(T,\ )$ from $\cD$ to the category
of right modules of $\End_\cC T$. Notice that the module $FM$ is supported on $m$-vertices the quiver $\End_\cC (\oplus_{1\leq k\leq m}T_i)$ whose dimension is given by $\Ext^1_\cC(\oplus_{1\leq i\leq m}T_i,S^{-1}M)$.

If $\Ext(T_{n+i},M)=0$, $\forall 1\leq i\leq m-n$, by \cite[Lemma
2.1 2.3]{Palu08a}, \cite[Lemma 3.6,
Notation 3.7]{Plamondon10a}, we have
\begin{align}\label{eq:ind_coind}
\Ind^{S^{-1} T} M+\Ind^T M=-\tB\cdot \dim \Ext^1_\cC(\oplus_{1\leq i\leq n}T_i,M).
\end{align}

An object $M$ in $\cC$ is said to be \emph{coefficient-free} if
\begin{enumerate}[(i)]
\item it contains no direct summand $T_j$ for $j>n$, and
\item the module $FM$ is supported at the principal quiver $Q$.
\end{enumerate}

By \cite{DerksenWeymanZelevinsky09} \cite{Plamondon10a}, we define the
cluster characters for coefficient-free objects $M$ in $\cD$ as
\begin{align*}
  x_M=x^{\Ind^T M}(\sum_{e\in\N^n} \chi(\Gr_eFM)Y^e),
\end{align*}
where $\Gr_eFM$ denotes the variety of the $e$-dimensional submodules of $FM$ and $\chi$ the Euler-Poincar\'e characteristic.

By \cite{Plamondon10a}, there exists a unique way of associating an
object $T(t)=\oplus_{1\leq i\leq m}T_i(t)$ in $\cD$ with each seed
$t$ such that
\begin{enumerate}
  \item $T(t_0)=T$,
\item if two vertices $t$ and $t'$ are related by an edge labeled $k$,
  then the object $T(t')$ is obtained from $T(t)$ by replacing one
  summand in a unique way.
\end{enumerate}

\begin{Thm}[\cite{DerksenWeymanZelevinsky09}\cite{Plamondon10a}\cite{Nagao10}]
For any seed $t$ and vertex $1\leq i\leq m$, the commutative cluster variable $X_i(t)|_{q^\Hf\mapsto 1}$ equals the cluster character $x_{T_i(t)}$ of the object $T_i(t)$.
\end{Thm}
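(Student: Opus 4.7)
The plan is to prove this by induction on the mutation distance from the initial seed $t_0$, using the well-known multiplication formula for cluster characters (of Caldero--Chapoton type) as the key step. Since the statement concerns the specialization $q^{\Hf}\mapsto 1$, we work entirely with the commutative cluster algebra $\clAlg_\Z$, so the quantization matrix $\Lambda$ plays no role in the argument.

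For the base case $t=t_0$, the object $T_i(t_0)=T_i=\pi(e_i\Gamma)$ is a direct summand of the cluster-tilting object $T$, so $FT_i=\Ext^1_\cC(T,T_i)=0$ and $\Ind^T T_i=e_i\in\Z^m$. Therefore $x_{T_i}=x^{e_i}\cdot 1=X_i$, which agrees with $X_i(t_0)|_{q^{\Hf}\mapsto 1}$. For the inductive step, suppose the identification $X_j(t)|_{q^{\Hf}\mapsto 1}=x_{T_j(t)}$ holds for all $j$ at a seed $t$, and let $t'=\mu_k(t)$. By construction of $T(t')$, we have $T_j(t')=T_j(t)$ for $j\ne k$, so it suffices to identify $x_{T_k(t')}$ with $X_k(t')|_{q^{\Hf}\mapsto 1}$. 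Since $X_k(t')$ is determined by the exchange relation
\[
X_k(t)\cdot X_k(t')=\prod_i X_i(t)^{[b_{ik}(t)]_+}+\prod_i X_i(t)^{[-b_{ik}(t)]_+},
\]
and since $X_k(t')$ is uniquely determined by this relation in the skew-field of fractions, it is enough to check that the cluster characters satisfy the same identity.

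This is precisely the content of Palu's multiplication formula, adapted by Plamondon to the presentable cluster category $\cD$. One shows that the two summands $T_k(t)$ and $T_k(t')$ are connected by a conflation pair of non-split triangles
\[
T_k(t')\ra B^+\ra T_k(t)\ra S T_k(t'),\qquad T_k(t)\ra B^-\ra T_k(t')\ra S T_k(t),
\]
where the middle terms $B^\pm=\oplus_i T_i(t)^{[\pm b_{ik}(t)]_+}$ have no direct summand isomorphic to $T_k(t)$ or $T_k(t')$. Since $\dim\Ext^1_\cC(T_k(t'),T_k(t))=1$ by the 2-Calabi--Yau property together with the rigidity properties of cluster-tilting objects in $\cD$, the Palu/Plamondon formula yields
\[
x_{T_k(t)}\cdot x_{T_k(t')}=x_{B^+}+x_{B^-}.
\]
Combined with the multiplicativity $x_{B^{\pm}}=\prod_i x_{T_i(t)}^{[\pm b_{ik}(t)]_+}$, which follows from the additivity of indices and the definition of $F$, this reproduces the exchange relation and completes the induction.

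The main obstacle is the multiplication formula itself, which in this generality is delicate: one must verify that $\dim\Ext^1_\cC(T_k(t'),T_k(t))=1$ in the presentable cluster category $\cD$ (not just in the finite-dimensional part $\cD_{fd}\Gamma$), and one must control the Euler characteristics of the Grassmannians $\Gr_eF(\cdot)$ appearing in the cluster character expansion when passing through the two triangles. The key input here is Plamondon's extension of Palu's formula to infinite-dimensional objects in $\cD$, together with the compatibility between the index/coindex identity \eqref{eq:ind_coind} and the $g$-vector recursion of Derksen--Weyman--Zelevinsky for $F$-polynomials. With these tools, the multiplicativity of Grassmannian Euler characteristics across the mutation triangle reduces the identity to a combinatorial equality of $F$-polynomials, which is exactly the mutation rule established in \cite{DerksenWeymanZelevinsky09}.
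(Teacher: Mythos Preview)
The paper does not provide its own proof of this theorem: it is stated as a cited result from \cite{DerksenWeymanZelevinsky09}, \cite{Plamondon10a}, and \cite{Nagao10}, with no accompanying argument. So there is nothing in the paper to compare your proposal against directly.

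That said, your sketch is essentially the standard proof as carried out in the cited references, particularly \cite{Plamondon10a}. The induction on mutation distance together with the Palu/Plamondon multiplication formula for cluster characters is exactly the mechanism by which the identification is established there: one checks that the cluster character of the mutated object satisfies the same exchange relation as the mutated cluster variable, and uniqueness in the ambient field forces equality. Your identification of the main technical obstacles (the one-dimensionality of $\Ext^1_\cC(T_k(t'),T_k(t))$ in the presentable setting, and the control of Grassmannian Euler characteristics through the mutation triangles) is accurate, and these are precisely the points handled in \cite{Plamondon10a}. The reference to \cite{DerksenWeymanZelevinsky09} supplies the $F$-polynomial mutation rule on the combinatorial side, and \cite{Nagao10} gives an alternative route via Donaldson--Thomas theory; your outline follows the categorical route of Palu--Plamondon.
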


\subsection{Calabi-Yau reduction}
\label{sec:CY_reduction}

In this subsection, we assume that the \emph{Jacobian algebra} of the
quiver with potential $(\tQ,\tW)$ is finite dimensional. Consider the
full subcategory of $\cU$ consisting of the coefficient-free
objects. Take the ideal $(\Gamma_F)$ consisting of the morphisms factoring through some
$T_j$, $j>n$. Then, by \cite{IyamaYoshino08}\cite{plamondon2013generic}, $\underline{\cU}=\cU/(\Gamma_F)$ is equivalent
to the generalized cluster category
of $(Q,W)$.

For any
coefficient-free object $X$,
denote its image in $\underline{\cU}$ by
$\underline{X}$. Let $\Sigma$ denote the shift functor in
$\cU/(T_F)$. Any given $\Sigma^d
\underline{T_k}$, $1\leq k\leq n$, $d\in \Z$, has the lift $\Sigma^d T_k$ by
\cite{IyamaYoshino08}. Define $\Sigma^d T=(\oplus_{k=1}^n \Sigma^d T_k)\oplus (\oplus_{n<j\leq m} T_j)$.


For any given $1\leq k\leq n$, by taking $M=\Sigma^{-1}T_k$ in
\eqref{eq:ind_coind}, we get
\begin{align*}
  \Ind^{S^{-1}T}\Sigma^{-1}T_k+\Ind^T (\Sigma^{-1}T_k)=-\tB(T)\cdot \dim\Ext^1_\cC(\oplus_{1\leq i\leq n}T_i,\Sigma^{-1}T_k).
\end{align*}

\begin{Lem}
  We have $\Ind^{S^{-1}T}\Sigma^{-1}T_k=-\Ind^{\Sigma ^{-1} T}T_k$.
\end{Lem}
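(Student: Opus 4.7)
The plan is to compute both sides from the single Iyama--Yoshino defining triangle
\[ \Sigma^{-1}T_k \to F \to T_k \to S\Sigma^{-1}T_k, \qquad F \in \add(\oplus_{j>n}T_j), \]
and to recognize them both as $\pm([F] - e_k)$.

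First, I would view this triangle as an index presentation of $T_k$ in $\add\Sigma^{-1}T$: both $\Sigma^{-1}T_k$ and the frozen object $F$ belong to $\add\Sigma^{-1}T$, so reading off directly from \eqref{eq:define_index_coindex} gives $\Ind^{\Sigma^{-1}T}T_k = [F] - [\Sigma^{-1}T_k] = [F] - e_k$, using the canonical identification of $K_0(\add\Sigma^{-1}T)$ with $\Z^m$ sending $\Sigma^{-1}T_k\mapsto e_k$ and each frozen $T_j\mapsto e_j$.

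Second, I would view the same triangle as a coindex presentation of $\Sigma^{-1}T_k$ in $\add T$ (both $F$ and $T_k$ lie in $\add T$), which yields $\Coind^T\Sigma^{-1}T_k = [F] - [T_k] = [F] - e_k$. To convert this into a statement about $\Ind^{S^{-1}T}\Sigma^{-1}T_k$, I would combine formula \eqref{eq:ind_coind} with the Palu--Plamondon companion identity $\Coind^T M - \Ind^T M = \tB\cdot\dim\Ext^1_\cC(T_{ex},M)$, which holds in the same setting under the orthogonality hypothesis $\Ext^1_\cC(T_j,M)=0$ for $j>n$. This hypothesis is satisfied for $M=\Sigma^{-1}T_k$ because the chosen Iyama--Yoshino lift is orthogonal to the ideal $(\Gamma_F)$. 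Subtracting the two formulas eliminates $\Ind^T M$ and gives the clean identity $\Ind^{S^{-1}T}M = -\Coind^T M$, whence $\Ind^{S^{-1}T}\Sigma^{-1}T_k = -([F]-e_k) = -\Ind^{\Sigma^{-1}T}T_k$, as desired.

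The main delicacy lies not in the computation itself but in the bookkeeping: verifying the Palu--Plamondon companion formula and the orthogonality hypothesis in the paper's exact conventions, and confirming that the three Grothendieck groups $K_0(\add T)$, $K_0(\add S^{-1}T)$, $K_0(\add\Sigma^{-1}T)$ are compatibly identified with $\Z^m$ via the canonical indexing of summands (exchangeable summands shifted in the obvious way, frozen summands preserved). All of this is standard in the presentable-cluster-category framework, after which the identity follows from the short calculation above.
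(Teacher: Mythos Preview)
Your first step is correct and coincides with the paper's computation of $\Ind^{\Sigma^{-1}T}T_k$. The detour through $\Coind^T$ in steps 2--3, however, does not work in the paper's conventions. The paper defines the coindex via a triangle $M^0\to M^1\to S^{-1}M$ with $M^i\in\add T$, so that $\Coind^T M=-\Ind^T S^{-1}M$; the triangle $\Sigma^{-1}T_k\to F\to T_k$ has the shape $M\to T^0\to T^1$ and therefore does not compute $\Coind^T\Sigma^{-1}T_k$ in this sense. Consequently the identity you aim for, $\Ind^{S^{-1}T}M=-\Coind^T M$, fails here: by shift-equivariance $-\Coind^T M=\Ind^T S^{-1}M=\Ind^{ST}M$, whereas $\Ind^{S^{-1}T}M=\Ind^T SM$, and these differ in general. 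Equivalently, adding your companion identity to \eqref{eq:ind_coind} would force $\Ind^T SM=\Ind^T S^{-1}M$, which is false. (If you silently switch to the alternative coindex convention $[T^0]-[T^1]$ from $M\to T^0\to T^1$, then both of your claims become tautologies by shift-equivariance and the companion identity is never needed; but this is not the paper's convention.)

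The paper avoids $\Coind$ entirely and is shorter. Rotating and shifting the same Iyama--Yoshino triangle gives $S^{-1}T_F\to S^{-1}T_k\to \Sigma^{-1}T_k\to T_F$, which is directly an index presentation of $\Sigma^{-1}T_k$ with respect to $S^{-1}T$, yielding $\Ind^{S^{-1}T}\Sigma^{-1}T_k=e_k-[T_F]$. Matching this against your step~1 value $\Ind^{\Sigma^{-1}T}T_k=[T_F]-e_k$ finishes the proof, the only bookkeeping being that the frozen summands $T_j$ ($j>n$) contribute the same multiplicity vector whether read in $K_0(\add S^{-1}T)$ or in $K_0(\add\Sigma^{-1}T)$.
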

\begin{proof}
$T_k$ fits into a triangle of the following type (\cite[Lemma 3.15]{plamondon2013generic}):
\begin{align*}
  \Sigma^{-1}T_k\ra T_F\ra T_k\ra  S\Sigma^{-1}T_k,
\end{align*}
where $T_F$ is some direct sum of $T_j$, $j>n$. Rewrite it as the
triangle
\begin{align*}
 S^{-1}T_F\ra S^{-1}T_k\ra \Sigma^{-1}T_k\ra T_F.
\end{align*}

Recall by \eqref{eq:define_index_coindex} that $\Ind^{S^{-1}T}\Sigma^{-1}T_k$ (resp.$\Ind^{\Sigma ^{-1} T}T_k$) is defined as the multiplicities of the summands of $S^{-1}T$ (resp. $\Sigma^{-1}T$) appearing in the isoclass of $\Sigma^{-1}T_k$ in $K_0(\add S^{-1}T)$ (resp. $K_0(\Sigma^{-1}T)$). Also, we have 
\begin{align*}
S^{-1}T=&(\oplus_{1\leq i\leq n}S^{-1}T_i)\oplus (\oplus_{n<j\leq m}S^{-1}T_j)\\
\Sigma^{-1}T=&(\oplus_{1\leq i\leq n}\Sigma^{-1}T_i)\oplus (\oplus_{n< j\leq m}T_j)
\end{align*}
We deduce that
\begin{align*}
\Ind^{S^{-1}T}\Sigma^{-1}T_k&=[S^{-1}T_k:S^{-1}T]-[S^{-1}T_F:S^{-1}T]\\
&=[\Sigma^{-1}T_k:\Sigma^{-1}T]-[T_F:\Sigma^{-1}T]\\
&=-\Ind^{\Sigma^{-1}T}T_k.
\end{align*}
\end{proof}

The following equality follows as a consequence
\begin{align}\label{eq:quiver_coindex}
\Ind^T (\Sigma^{-1}T_k)+\tB(T)\cdot
\dim\Ext^1_\cC(\oplus_{1\leq i\leq n}T_i,\Sigma^{-1}T_k)=\Ind^{\Sigma ^{-1}T}T_k.
\end{align}

Notice that $\Ext^1_\cC(T,\Sigma^{-1}T_k)$ is the $k$-th projective
right-module of the Jacobian algebra principal of $(Q,W)$. We will
later discuss this equality in Example \ref{eg:compare_inj_proj} with more details.

\section{Dominance order and pointed sets}
\label{sec:dominant}
\subsection{Constructions}
\label{sec:construction}

Let $t$ be any seed. We associate with it a rank $m$ lattice $\degL(t)$, called the \emph{degree lattice}. While $\degL(t)$ is always isomorphic to $\Z^m$, throughout the paper, we shall distinguish $\degL(t)$ for different $t$.
\begin{Def}[Dominance order]\label{def:dominant_order}
For any given $\tg',\tg\in\degL(t)$, we say $\tg'\prec_t\tg$ (or $\tg$
dominates $\tg'$) if we have $\tg'=\tg+\tB(t)v$ for some $0\neq
v\in\N^n$. We denote $\tg'\preceq_t\tg$ if $\tg'\prec_t \tg$ or $\tg'=\tg$.
\end{Def}

\begin{Lem}\label{lem:interval_bounded}
  For any $\tg^1\prec_t\tg^0$, there exists finitely many $\tg^2$ such
  that $\tg^1\prec_t\tg^2\prec_t\tg^0$.
\end{Lem}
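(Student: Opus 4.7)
The plan is to reduce the statement to the finiteness of decompositions of a fixed vector in $\N^n$ as an ordered sum of two nonzero vectors in $\N^n$, using the full rank of $\tB(t)$.

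First, I would unfold the definition: since $\tg^1\prec_t\tg^0$, there exists $0\neq v_0\in\N^n$ with $\tg^1=\tg^0+\tB(t)v_0$. Similarly, any $\tg^2$ satisfying $\tg^1\prec_t\tg^2\prec_t\tg^0$ gives rise to $v_1,v_2\in\N^n\setminus\{0\}$ with
\begin{align*}
\tg^2=\tg^0+\tB(t)v_2, \qquad \tg^1=\tg^2+\tB(t)v_1.
\end{align*}
Adding these yields $\tg^1=\tg^0+\tB(t)(v_1+v_2)$, so $\tB(t)(v_1+v_2-v_0)=0$.

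The next step invokes the standing assumption from Section~\ref{sec:quantum_cluster_algebra} that the exchange matrix $\tB(t)$ has full rank $n$. This forces $v_1+v_2=v_0$, so in particular $v_2$ ranges over the set of $v\in\N^n$ with $v\leq v_0$ componentwise and $v\neq 0, v_0$. This set is visibly finite, since each coordinate of $v$ is bounded by the corresponding coordinate of the fixed vector $v_0$. Each such $v_2$ determines $\tg^2=\tg^0+\tB(t)v_2$ uniquely (again by full rank), so the collection of possible $\tg^2$ is finite.

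There is really no obstacle here: the argument is a two-line consequence of unfolding the definition plus the full-rank hypothesis on $\tB(t)$, so the lemma is essentially a sanity check that the dominance order behaves like a well-founded partial order on intervals. The only point worth flagging is that the full rank assumption is what ensures the map $v\mapsto \tB(t)v$ is injective, without which one could have infinitely many distinct $v_2$ producing the same $\tg^2$; but since the paper fixes $\tB$ of full rank from the outset, this causes no trouble.
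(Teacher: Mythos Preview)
Your proof is correct and is precisely the argument the paper has in mind: the paper's own proof is just the one-line remark ``Notice that $\tB(t)$ is of full rank. The claim follows from definition,'' and your write-up simply spells out the details behind that sentence.
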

\begin{proof}
Notice that $\tB(t)$ is of full rank. The claim follows from definition.
\end{proof}

Recall that the quantum torus $\cT(t)$ is the Laurent polynomial ring generated by $X_i(t),1\leq i\leq m$.

\begin{Def}[Leading term]\label{def:leading_term}
For any Laurent polynomial $Z\in\cT(t)$, the leading terms are those Laurent
monomials of $Z$ such that, among all the Laurent monomials of $Z$ with non-zero
coefficients, they have maximal multidegrees with respect
to the order $\prec_t$.
\end{Def}

\begin{Def}[Pointed element]\label{def:pointed_element}
If a Laurent polynomial
$Z\in\cT(t)$ has a unique leading term whose degree is $\eta$, we
say its leading (or maximal) degree is $\deg^t Z=\tg$. If its leading term has coefficient
$q^\alpha$ for some $\alpha\in\Hf\Z$, we define the
normalization of $Z$ in the quantum torus $\cT(t)$ to be $[Z]^t=q^{-\alpha}Z$.

We say $Z$ is pointed at $\tg$ (or $\tg$-pointed) if $Z$ has a
unique leading term, and this term has degree $\tg$ with coefficient $1$.
\end{Def}
We remark that the normalized element $[Z]^t$ is pointed by construction. For
simplicity, we often denote $\deg^t Z$ and $[Z]^t$ by $\deg Z$ and
$[Z]$ respectively when the context is clear. 

By Theorem \ref{thm:quantum_cluster_expansion}, the Laurent expansions of quantum cluster variables in any quantum seed $\cT(t)$ are pointed elements.

\begin{Notation*}
	The set of Laurent polynomials in $\cT(t)$ pointed in degree $\tg\in\degL(t)$ is denoted by $\ptSet_\tg(t)$. The set of all pointed elements in $\cT(t)$ is denoted by $\ptSet(t)$.
\end{Notation*}
Apparently, we have $\ptSet(t)=\sqcup_{\tg\in\degL(t)}\ptSet_\tg(t)$.

\begin{Def}[Pointed set]
  Let $W$ be any given subset of $\degL(t)$. A subset $\gen $ of $\ptSet(t)$ is
  said to be pointed at $W$ (or $W$-pointed) if the degree map induces a bijection $$\deg^t:\gen \simeq W.$$ 

 In this case, we use $\gen(\tg)$ to denote the preimage of $\tg\in W$ in $\gen$. Namely, we use $\gen(\ )$ to denote the inverse of the degree map.
\end{Def}
 It
immediately follows from the definition that any pointed set is $\Z[q^\Hf]$-linearly independent.

Given any $W$-pointed subset $\gen$ of $\ptSet(t)$ and a pointed element $Z\in\ptSet(t)$, we define the set \begin{align}
[Z*\gen]^t=\{[Z*\gen(\tg)]^t|\tg\in W]\}.
\end{align}
Then $[Z*\gen]^t$ is a $(\deg^t Z+W)$-pointed set, where $\deg^t Z+W$ denote the degree set $\{\deg^t Z+\tg|\tg\in W\}$.

\begin{Eg}
  By \cite{plamondon2013generic}, for any commutative cluster algebra studied in
  \cite{GeissLeclercSchroeer10}, the dual semicanonical basis, after
  localization at the frozen variables, is a
  $\degL(t)$-pointed set for any seed $t$.
\end{Eg}

Let $\{Z_i\}$ be any (possibly infinite) collection of elements in some finitely generated submodule $\sum_{j=1,\ldots,s} X^{\eta^{(j)} }\hat{\mathbb{A}}_B$ of $\hat{\cT}$. Assume that they have distinct unique maximal leading degrees. Then, by Lemma \ref{lem:interval_bounded}, for any Laurent degree $\eta$, only finitely many $Z_i$ have leading degree superior than $\eta$. It follows that the (possibly infinite) sum $\sum_i Z_i$ is well defined by \eqref{eq:infinite_sum}.

\begin{Def}[Unitriangularity]\label{def:triangular}
Let $\gen $ be a given $W$-pointed subset of $\ptSet(t)$ for some $W\subset \degL(t)$. Any
pointed element $Z\in\ptSet(t)$ is said to be $\prec_t$-unitriangular to
$\gen $, if it has the following (possibly infinite) expansion into $\gen$:
\begin{align}\label{eq:triangular}
  Z=\gen(\deg Z)+\sum_{\tg'\prec_t \deg Z}b_{\tg'}\gen(\tg'), b_{\tg'}\in\qBase.
\end{align}
If the coefficients $b_{\tg'}$ are further contained in $\bm=q^\mHf\Z[q^\mHf]$, we say $Z$ is $(\prec_t,\bm)$-unitriangular to $\gen $.

A subset of $\ptSet(t)$ is said to be $\prec_t$-unitriangular
(resp. $(\prec_t,\bm)$-unitriangular) to $\gen $ if all its elements
have this property.
\end{Def}
We might write the notation $\prec_t$ as $\prec$ or simply omit it, when this
order is clear from the context.

\begin{Rem}\label{rem:expansion_algorithm}
It follows from Lemma \ref{lem:interval_bounded} that the coefficients
$b_{\tg'}$ in \eqref{eq:triangular} are uniquely determined by $Z$. In
fact, denote the coefficient of the Laurent expansion of $Z$ at any
given degree $\tg''\prec_t \deg^t Z$ by $z_{\tg''}$ and that of
$\gen(\tg')$ by $a_{\tg',\tg''}$. We must have the following equation:
\begin{align*}
  z_{\tg''}=1\cdot a_{\deg^t Z, \tg''}+\sum_{\tg''\prec_t\tg'\prec_t \deg^t
    Z}b_{\tg'}a_{\tg',\tg''}+b_{\tg''}\cdot 1.
\end{align*}
It follows that the coefficient $b_{\tg''}$ is determined by those
$b_{\tg'}$ appearing. By Lemma \ref{lem:interval_bounded}, this is a
finite algorithm for any given $\tg''$.

In general, there could exist many infinite expansion of $Z$ into
$\gen $ if we don't require them to take the unitriangular form \eqref{eq:triangular}.
\end{Rem}
\begin{Eg}
Take a seed $t$ with $\tB=
\begin{pmatrix}
0\\
-1
\end{pmatrix}$ and, for simplicity, $q=1$. The quantum torus becomes $\cT=\Z[X_1^\pm,X_2^\pm]$. We have $Y_1=X_2^{-1}$, $\mathbb{A}_B=\Z[Y_1]=\Z[X_2^{-1}]$ and the completion $\hat{\cT}=\cT\otimes_{\mathbb{A}_B} \hat{\mathbb{A}}_B=\Z[X_1^\pm,X_2][[X_2^{-1}]]$. 

Let us take the $\Z^2$-pointed set to be $\{X_2^{d_2} X_1^{d_1}|d_2\in\Z,d_1\in\N\}\cup \{X_2^{d_2}(X_1')^{d_1'}|d_2\in\Z,d_1'\in\N\}$, where $X_1'=X_1^{-1}X_2+X_1^{-1}=X_1^{-1}X_2(1+Y_1)$. By the algorithm in Remark \ref{rem:expansion_algorithm}, we obtain the unique $\prec_t$-unitriangular decomposition of the pointed element $Z=X_1^{-1}$ as:
$$
Z=X_2^{-1}X_1'-X_2^{-2}X_1'+X_2^{-3}X_1'-X_2^{-4}X_1'\cdots.
$$
\end{Eg}

\begin{Lem}[Expansion properties]\label{lem:triangular}
Let $\gen $ be any $\degL(t)$-pointed subset of $\cT(t)$ and $Z$ be any pointed element in $\cT(t)$. Then the following statements are true.

(i) $Z$ is $\prec_t$-unitriangular to $\gen $.

(ii) If $Z$ and $\gen $ are bar-invariant and $Z$ is $(\prec_t,\bm)$-unitriangular to $\gen $, then $Z$ equals $\gen(\deg Z)$.

(iii) If $Z$ is a $\qBase$-linear combination of finitely many elements
of $\gen $, then such a combination is unique and takes the
unitriangular form \eqref{eq:triangular}.
\end{Lem}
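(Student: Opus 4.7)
I would handle the three parts in the order (iii), (i), (ii), since (iii) is the cleanest, (i) is the existence statement, and (ii) is a uniqueness-plus-bar-invariance corollary.

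For (iii), suppose $Z = \sum_{i} c_i \gen(\tg_i)$ is a finite $\qBase$-linear combination with all $c_i \neq 0$ and the $\tg_i$ distinct. Since $\gen(\tg_i)$ is pointed at $\tg_i$, its Laurent support lies in $\{\eta : \eta \preceq_t \tg_i\}$. I would pick a $\prec_t$-maximal $\tg_j$ among the $\tg_i$; then no other $\gen(\tg_i)$ contributes to the Laurent monomial $X^{\tg_j}$, so the coefficient of $X^{\tg_j}$ in $Z$ is exactly $c_j$, hence non-zero. Because $Z \in \ptSet(t)$ has a unique leading monomial of degree $\deg^t Z$ with coefficient $1$, any $\prec_t$-maximal $\tg_i$ must equal $\deg^t Z$ (so it is unique) and the corresponding $c_j$ must equal $1$. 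This yields the unitriangular form. For uniqueness, if two such finite expansions of $Z$ agree, applying the same maximality argument to the difference rules out any non-zero coefficient.

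For (i), the plan is to construct the coefficients $b_{\tg'}$ recursively by the algorithm sketched in Remark \ref{rem:expansion_algorithm}. Setting $b_{\deg^t Z} = 1$, for each $\tg'' \prec_t \deg^t Z$ I would match the coefficient of $X^{\tg''}$ on both sides of \eqref{eq:triangular}: by Lemma \ref{lem:interval_bounded} only finitely many $\tg'$ with $\tg'' \prec_t \tg' \prec_t \deg^t Z$ are present, so $b_{\tg''}$ is determined by previously computed $b_{\tg'}$ and the (known) coefficients of $Z$ and of $\gen(\tg')$ at $X^{\tg''}$. To verify the resulting formal sum makes sense, note that for any fixed Laurent degree $\eta$, a summand $b_{\tg'}\gen(\tg')$ contributes at $X^\eta$ only when $\eta \preceq_t \tg' \preceq_t \deg^t Z$, and Lemma \ref{lem:interval_bounded} bounds these $\tg'$ in finite number. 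Hence the sum is well-defined in the completion $\hat{\cT}(t)$, and by construction it agrees with $Z$ coefficient by coefficient.

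For (ii), the key point is that the $\prec_t$-unitriangular expansion of a pointed element into a $\degL(t)$-pointed set is unique, which follows by applying the same degree-matching algorithm to the difference of two such expansions (essentially the argument of (iii) pushed to the infinite case via Lemma \ref{lem:interval_bounded}). Given this, I would apply the bar involution to the unitriangular decomposition $Z = \gen(\deg^t Z) + \sum_{\tg' \prec_t \deg^t Z} b_{\tg'} \gen(\tg')$. Bar-invariance of $Z$ and of $\gen$ produces a second $\prec_t$-unitriangular expansion with coefficients $\overline{b_{\tg'}}$; uniqueness forces $b_{\tg'} = \overline{b_{\tg'}}$. But $b_{\tg'} \in \bm = q^{-\Hf}\Z[q^{-\Hf}]$ contains no non-zero bar-invariant element, so every $b_{\tg'}$ vanishes and $Z = \gen(\deg^t Z)$.

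The only genuine subtlety is in (i): one must confirm that the possibly infinite sum produced by the recursion is legitimately an element of $\hat{\cT}(t)$ and coincides with $Z$ on the nose. Both reduce to the finiteness statement in Lemma \ref{lem:interval_bounded}, so once that is invoked the rest of the argument is purely bookkeeping; parts (ii) and (iii) are then short consequences.
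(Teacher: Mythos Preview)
Your proof is correct and follows essentially the same route as the paper's. The paper proves (i) by citing the algorithm in Remark~\ref{rem:expansion_algorithm}, proves (ii) by the same bar-invariance argument (noting that $\bm$ contains no nonzero bar-fixed element), and proves (iii) by the identical leading-degree analysis---choosing a $\prec_t$-maximal index among those appearing and reading off its coefficient from $Z$. Your write-up is somewhat more detailed (you explicitly check well-definedness of the infinite sum in (i) and uniqueness in (iii), both of which the paper leaves implicit via Remark~\ref{rem:expansion_algorithm}), and you reorder the parts, but the underlying arguments coincide.
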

\begin{proof}
  (i)The statement follows from the algorithm in Remark \ref{rem:expansion_algorithm}.
  
  (ii) We have a $(\prec_t,\bm)$-unitriangular decomposition:
  $$ Z=b+\sum_{\deg^t b'\prec_t \deg^t b} c'b',$$
  where $b,b'\in\gen$, coefficients $c'\in\bm$.
  Because $Z,b,b'$ are invariant under the bar involution, we have all $c'=0$ and, consequently, $Z=b$.

(iii) Suppose that some $\gen^{t}(\eta)$, $\eta\in D(t)$, appears in the
expansion of $Z$ into $\gen $ with non-zero coefficient $b(\eta)$, such that $\eta\nprec_t \deg^t
Z$. Because the expansion is finite, we can always find a degree
$\eta$ such that it is maximal among those having this property. Then
the coefficient of $X(t)^{\eta}$ of $Z$ equals $b(\eta)\neq 0$. But the coefficient of this Laurent monomial in the pointed
element $Z$ must vanish. This contradiction shows that such $\eta$ cannot exist.
\end{proof}

Let us consider triangular matrices $P=(P_{g^1 g^2})_{g^1,g^2\in\degL(t)}$ such that $P_{g^1 g^2}\in \Z[q^{\pm\Hf}]$ and $P_{g^1 g^2}=0$ unless $g^2\preceq_t g^1$. Then the product of two such such infinite matrices is well defined, because its entries are given by finite sum by Lemma \ref{lem:interval_bounded}, and the result remains triangular.

\begin{Lem}[Inverse transition]\label{lem:inverse_transition}
Let $\stdMod$, $\can$ be any two $\degL(t)$-pointed subsets of $\cT(t)$.
If $\stdMod$ is $(\prec_{t},\bm)$-unitriangular to $\can$, then $\can$ is $(\prec_{t},\bm)$-unitriangular to $\stdMod$ as well.
\end{Lem}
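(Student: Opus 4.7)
The plan is to produce the inverse transition matrix explicitly and to verify, by induction along the (finite) dominance intervals, that its entries lie in $\bm$. Write the hypothesis as
\[
\stdMod(\tg)=\can(\tg)+\sum_{\tg'\prec_t\tg}a_{\tg\tg'}\can(\tg'),\qquad a_{\tg\tg'}\in\bm,
\]
and view $A=(a_{\tg\tg'})_{\tg,\tg'\in\degL(t)}$ as a formal upper-unitriangular matrix (with $a_{\tg\tg}=1$ and $a_{\tg\tg'}=0$ unless $\tg'\preceq_t\tg$). By Lemma~\ref{lem:interval_bounded} the product of two such triangular matrices has entries given by finite sums, hence is well defined.

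I then define the candidate inverse $B=(b_{\tg\tg'})$ by $b_{\tg\tg}=1$ and, for $\tg^*\prec_t\tg$,
\[
b_{\tg\tg^*}\;:=\;-a_{\tg\tg^*}-\sum_{\tg^*\prec_t\tg'\prec_t\tg}b_{\tg\tg'}\,a_{\tg'\tg^*}.
\]
The recursion is well-founded because each $b_{\tg\tg'}$ on the right lives on an interval $[\tg',\tg]$ strictly smaller than $[\tg^*,\tg]$, and Lemma~\ref{lem:interval_bounded} makes the sum finite. A direct entrywise check (splitting the sum $(BA)_{\tg\tg^*}=\sum_{\tg'}b_{\tg\tg'}a_{\tg'\tg^*}$ into $\tg'=\tg$, $\tg'=\tg^*$ and $\tg^*\prec_t\tg'\prec_t\tg$) shows $BA=I$. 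The inductive claim $b_{\tg\tg^*}\in\bm$ now follows by induction on $n=|\{\tg''\,:\,\tg^*\preceq_t\tg''\preceq_t\tg\}|$: the base case $n=2$ gives $b_{\tg\tg^*}=-a_{\tg\tg^*}\in\bm$, and for $n>2$ each intermediate $\tg'$ satisfies $|[\tg',\tg]|<n$, so $b_{\tg\tg'}\in\bm$; since $\bm=q^{-\Hf}\Z[q^{-\Hf}]$ is an ideal in $\Z[q^{\pm\Hf}]$, each product $b_{\tg\tg'}\,a_{\tg'\tg^*}$ stays in $\bm$, and so does the finite sum plus $-a_{\tg\tg^*}$.

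It remains to identify $B$ with the transition matrix from $\can$ to $\stdMod$. By Lemma~\ref{lem:triangular}(i), $\can(\tg)$ admits a unique $\prec_t$-unitriangular expansion $\can(\tg)=\stdMod(\tg)+\sum_{\tg'\prec_t\tg}c_{\tg\tg'}\stdMod(\tg')$ with $c_{\tg\tg'}\in\qBase$. Substituting the expansion of each $\stdMod(\tg')$ into $\can$, collecting the coefficient of $\can(\tg^*)$ degree by degree via the algorithm of Remark~\ref{rem:expansion_algorithm}, forces $CA=I$. Since left inverses of the upper-unitriangular matrix $A$ are unique, $C=B$, and $c_{\tg\tg^*}=b_{\tg\tg^*}\in\bm$ for all $\tg^*\prec_t\tg$, which is exactly what we want.

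The only delicate step is legitimizing the substitution and rearrangement when the expansions may be infinite. I expect this to reduce to finite bookkeeping: by Lemma~\ref{lem:interval_bounded}, for each fixed $\tg^*$ only finitely many $\tg'$ in $[\tg^*,\tg]$ can contribute to the coefficient of $\can(\tg^*)$, so both the identification $CA=I$ and the recursive verification happen inside a finite sub-matrix of $A$ and $C$, and no genuine convergence question arises.
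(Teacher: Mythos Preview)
Your argument is essentially the same as the paper's: both substitute the expansion of $\stdMod$ into that of $\can$, obtain the recursion
\[
b_{\tg\tg^*}=-a_{\tg\tg^*}-\sum_{\tg^*\prec_t\tg'\prec_t\tg}b_{\tg\tg'}\,a_{\tg'\tg^*},
\]
and induct along the finite dominance intervals (Lemma~\ref{lem:interval_bounded}) to push the coefficients into $\bm$. The paper does this directly on the coefficients of the expansion from Lemma~\ref{lem:triangular}(i), without the detour of first constructing an abstract matrix inverse $B$ and then identifying $C=B$; your extra step is harmless but unnecessary.

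One minor slip: $\bm=q^{-\Hf}\Z[q^{-\Hf}]$ is \emph{not} an ideal in $\Z[q^{\pm\Hf}]$ (e.g.\ $q^{\Hf}\cdot q^{-\Hf}=1\notin\bm$). What you actually use, and what is true, is that $\bm$ is closed under multiplication (it is an ideal in $\Z[q^{-\Hf}]$); since in your recursion both factors $b_{\tg\tg'}$ and $a_{\tg'\tg^*}$ lie in $\bm$, the products remain in $\bm$ and the argument goes through unchanged.
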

\begin{proof}
For any $\tg\in\degL(t)$, let us denote
\begin{align*}
  \stdMod(\tg)&=\can(\tg)+\sum_{\tg^1\prec_t \tg}c_{\tg,
    \tg^1}\can(\tg^1),\ c_{\tg,\tg^1}\in\bm,\\
  \can(\tg)&=\stdMod(\tg)+\sum_{\tg^1\prec_t \tg}b_{\tg,\tg^1}\stdMod(\tg^1).
\end{align*}
Expand $\RHS$ of the second equation by using the first equation. Then
the coefficient of any factor $\can(\tg^2)$, $\tg^2\prec_t\tg$,
appearing in $\RHS$ is the following three terms' sum
\begin{align*}
  c_{\tg,\tg^2}+b_{\tg,\tg^2}+\sum_{\tg^2\prec_t \tg^1\prec_t \tg}b_{\tg,\tg^1}c_{\tg^1,\tg^2},
\end{align*}
which must vanish.

We verify the claim by induction on the order of the degrees $\tg^2$ such that
$\tg^2\prec_t \tg$. If $\tg^2$ is maximal among such degrees, the last term
vanishes and we have $b_{\tg,\tg^2}\in\bm$. Assume $b_{\tg,(\tg^2)'}\in\bm$ for any $(\tg^2)'$ such that $\tg^2\prec_t
(\tg^2)'$. Then the first and last term belong to $\bm$. Consequently,
$b_{\tg,\tg^2}\in\bm$ belong to $\bm$ as well.
\end{proof}

\subsection{Tropical transformation}

\begin{Def}[Tropical transformation, {\cite{FockGoncharov03}\cite[(7.18)]{FominZelevinsky07}}]
  For any $1\leq k\leq n$, the tropical transformation
  $\phi_{\mu_k t, t}:\degL(t)\ra \degL(\mu_k t)$ is the piecewise linear map such that, $\forall \tg=(\tg_j)_{1\leq j\leq m}\in\degL(t)$, the image
  $\tg'=(\tg'_j)_{1\leq j\leq m}=\phi_{\mu_k t,t}(\tg)$ is given by
  \begin{align}\label{eq:degree_mutation}
    \begin{split}
      \tg'_k&=-\tg_k,\\
      \tg'_i&=\tg_i+b_{ik}(t)[\tg_k]_+,\ \mathrm{if}\ 1\leq i\leq
      m,\ i\neq k,\ b_{ik}\geq 0,\\
      \tg'_j&=\tg_j+b_{jk}(t)[-\tg_k]_+,\ \mathrm{if}\ 1\leq j\leq m,\
      j\neq k,\ b_{jk}\leq 0.
    \end{split}
  \end{align}
\end{Def}
\begin{Rem}
The piecewise linear map \eqref{eq:degree_mutation} is the
\emph{tropicalization} of the rule \cite[(13)]{FockGoncharov03}. It describes
the mutation of the \emph{tropical $\Z$-points}
$\mathcal{X}(\Z^m)=\degL(t)$ of the
$\mathcal{X}$-variety (the variety of the \emph{tropical $Y$-variables} for the
cluster algebra defined via $\tB(t)^T$). Fock and Goncharov conjectured that these
tropical points parametrize a ``canonical'' basis, \cite{FockGoncharov03conj}\cite[Section
5]{FockGoncharov03}.

On the cluster algebra side, Fomin and Zelevinsky wrote this
transformation in
\cite[(7.18)]{FominZelevinsky07} and conjectured that it describes
the transformation of the $g$-vectors of cluster variables. When the cluster algebra admits a categorification by a cluster
category, this formula \eqref{eq:degree_mutation} can be interpreted as the
transformation rule of the indices of object in the cluster category,
\cf \cite{DehyKeller07}\cite{KellerYang09}\cite{plamondon2013generic}. We refer the reader
to \cite[Remark 7.15]{FominZelevinsky07} \cite[Section
3.5]{plamondon2013generic}  \cite{FockGoncharov03}for more details.
\end{Rem}

The following observation is obvious.
\begin{Lem}[Sign coherent transformation]\label{lem:additivity}
  Assume that two vectors $\eta^1$, $\eta^2$ in $\degL(t)$ are sign
  coherent at the $k$-th component, namely, we have simultaneously
  $\eta^1_k,\eta^2_k\geq 0$ or $\eta^1_k,\eta^2_k\leq 0$. Then the tropical transformation
\eqref{eq:degree_mutation} is additive on them:
  \begin{align*}
    \phi_{\mu_k t,t}\eta^1+\phi_{\mu_k t,t}\eta^2= \phi_{\mu_k t,t}(\eta^1+\eta^2).
  \end{align*}
\end{Lem}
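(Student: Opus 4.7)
The plan is a direct computation using the piecewise-linear formula \eqref{eq:degree_mutation}. The only nonlinearity in $\phi_{\mu_k t, t}$ comes from the functions $[\tg_k]_+$ and $[-\tg_k]_+$ applied to the $k$-th component, so everything reduces to verifying that these two functions are additive on pairs of real numbers of the same sign.

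First, I would record the elementary fact: for $a, b \in \R$ with $a, b \geq 0$ or $a, b \leq 0$, one has
\begin{equation*}
[a+b]_+ = [a]_+ + [b]_+ \quad \text{and} \quad [-(a+b)]_+ = [-a]_+ + [-b]_+.
\end{equation*}
Indeed, if both are $\geq 0$ then $[a]_+ + [b]_+ = a+b = [a+b]_+$ and $[-a]_+ + [-b]_+ = 0 = [-(a+b)]_+$; the case $a, b \leq 0$ is symmetric. Note this fails precisely when $a$ and $b$ have strictly opposite signs, which is the hypothesis we are excluding.

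Next, I would apply this componentwise. Write $\tg^s = \phi_{\mu_k t, t}(\eta^s)$ for $s = 1, 2$ and $\tg = \phi_{\mu_k t, t}(\eta^1 + \eta^2)$. For the $k$-th component, linearity is immediate:
\begin{equation*}
\tg_k = -(\eta^1_k + \eta^2_k) = -\eta^1_k + (-\eta^2_k) = \tg^1_k + \tg^2_k.
\end{equation*}
For an index $i \neq k$ with $b_{ik}(t) \geq 0$, using the sign coherence of $\eta^1_k$ and $\eta^2_k$,
\begin{equation*}
\tg_i = (\eta^1_i + \eta^2_i) + b_{ik}(t)[\eta^1_k + \eta^2_k]_+ = \bigl(\eta^1_i + b_{ik}(t)[\eta^1_k]_+\bigr) + \bigl(\eta^2_i + b_{ik}(t)[\eta^2_k]_+\bigr) = \tg^1_i + \tg^2_i,
\end{equation*}
and the case $b_{jk}(t) \leq 0$ is identical with $[-\eta_k]_+$ in place of $[\eta_k]_+$. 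This gives the claimed equality $\phi_{\mu_k t, t}(\eta^1) + \phi_{\mu_k t, t}(\eta^2) = \phi_{\mu_k t, t}(\eta^1 + \eta^2)$.

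There is essentially no obstacle: the statement is a routine verification, and the only thing one has to be careful about is that the formula \eqref{eq:degree_mutation} is unambiguous when $b_{ik}(t) = 0$ (both rules give the same answer $\tg'_i = \tg_i$), so the case analysis on the sign of $b_{ik}(t)$ exhausts all indices $i \neq k$ consistently.
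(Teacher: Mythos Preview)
Your proof is correct and is precisely the direct verification the paper has in mind; the paper itself gives no proof, merely stating before the lemma that ``The following observation is obvious.''
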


Given two seeds $t_1,t_2$ which are related by a mutation sequence $\overleftarrow{\mu}$ from $t_1$ to $t_2$, we define $\phi_{t_2,t_1}$ to be the composition of tropical transformations given by \eqref{eq:degree_mutation} along $\overleftarrow{\mu}$.
\begin{Thm}[{\cite[Theorem 1.1 1.3]{plamondon2013generic} \cite{gross2014canonical}}]\label{thm:change_index_compatible}
	We have
	
(i) The transformation $\phi_{t_2,t_1}$ is independent of the choice of the mutation sequence from $t_1$ to $t_2$.

(ii)For any quantum cluster variable $X_i(t)$, we have
  \begin{align}\label{eq:index_change_compatible}
  \deg^{t^2}X_i(t)=\phi_{t^2,t^1}\deg^{t^1}X_i(t).
  \end{align}
\end{Thm}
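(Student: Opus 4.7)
The plan is to prove part (ii) first and deduce part (i). For (ii), proceed by induction on the length of a mutation sequence from $t^1$ to $t^2$, which reduces the problem to a single mutation $t^2 = \mu_k t^1$. Fix a quantum cluster variable $X_i(t)$, set $\tg := \deg^{t^1} X_i(t)$, and use Theorem \ref{thm:quantum_cluster_expansion} to write
\[
X_i(t) = X(t^1)^{\tg}\bigl(1 + \sum_{0 \neq v \in \N^n} c_v(q^\Hf)\, Y(t^1)^v\bigr).
\]
I would then re-expand this in $\cT(t^2)$ by substituting the mutation rules of Definition \ref{def:mutation} for $X_k(t^1)$ and for the $Y(t^1)$-monomials, and read off the new leading degree.

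The key step is to verify that the substitution produces a unique leading Laurent monomial whose degree is exactly $\phi_{\mu_k t^1, t^1}(\tg)$. The analysis splits according to the sign of $\tg_k$: when $\tg_k \geq 0$, the relevant contribution to the leading term comes from the first binomial $\prod_i X_i(t^2)^{[b_{ik}]_+}$ in the exchange relation for $X_k(t^1)$; when $\tg_k \leq 0$, it comes from $\prod_j X_j(t^2)^{[-b_{jk}]_+}$. Checking that the $F$-polynomial factor does not spoil the leading degree, namely that no cancellation against the $Y$-monomials kills the expected maximal term, is the crux and uses the sign coherence of $c$-vectors, due to \cite{DerksenWeymanZelevinsky09} in the skew-symmetric case and \cite{gross2014canonical} in general. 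A direct computation then matches the exponent of the surviving leading Laurent monomial with the piecewise-linear formula \eqref{eq:degree_mutation}.

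For part (i), once (ii) is available, any two mutation sequences from $t^1$ to $t^2$ induce piecewise-linear maps that agree on every vector of the form $\deg^{t^1} X_i(t)$, since both equal the intrinsically defined $\deg^{t^2} X_i(t)$. To extend this agreement to all of $\degL(t^1)$, I would exploit that each elementary $\phi_{\mu_k t, t}$ is linear on each of the two closed half-spaces $\{\tg_k \geq 0\}$ and $\{\tg_k \leq 0\}$, together with the sign-coherence of $g$-vectors, which guarantees that $g$-vectors of cluster monomials fill out these half-spaces at every intermediate seed. This pins down the composition uniquely and yields (i). The main obstacle throughout is the sign-coherence input; granted it, the remainder is bookkeeping with the separation formula and the explicit mutation rules.
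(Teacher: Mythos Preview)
Your plan for part (ii) is sound and is essentially the standard direct argument: reduce to a single mutation, use the separation formula of Theorem~\ref{thm:quantum_cluster_expansion}, and identify the leading monomial after substitution, with sign-coherence of $c$-vectors (from \cite{DerksenWeymanZelevinsky09} or \cite{gross2014canonical}) ensuring that the $F$-polynomial factor does not disturb the expected leading term. This is a genuinely different route from the paper, which simply cites \cite{plamondon2013generic} and \cite{gross2014canonical} rather than carrying out the computation; your approach is more elementary and self-contained for (ii).

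The gap is in your deduction of part (i). You propose to conclude that two composite piecewise-linear maps $\phi_1,\phi_2:\degL(t^1)\to\degL(t^2)$ coincide because they agree on every vector of the form $\deg^{t^1}X_i(t)$. But a piecewise-linear map is not determined by its values on a countable set, and the phrase ``$g$-vectors of cluster monomials fill out these half-spaces at every intermediate seed'' does not repair this: in infinite type the cones spanned by $g$-vectors of cluster monomials do \emph{not} cover all of $\Z^m$ (there are regions outside the $g$-vector fan), so agreement on all such $g$-vectors does not force agreement everywhere. Moreover, the two mutation paths pass through different intermediate seeds, so an inductive comparison ``at every intermediate seed'' has nothing to match up.

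This is exactly where the paper's cited machinery does real work. Both \cite{plamondon2013generic} (via indices of generic objects in the cluster category) and \cite{gross2014canonical} (via the theta basis $\{\vartheta_{\tg}\}$) produce a family of elements parametrized by \emph{all} of $\degL(t^1)=\Z^m$, not just by $g$-vectors of cluster monomials, whose leading degrees in every seed are related by the tropical rule. That global parametrization is what pins down $\phi_{t^2,t^1}$ intrinsically and yields (i). If you want a direct argument avoiding these constructions, one viable route is to observe that $\phi_{\mu_k t,t}$ is the tropicalization of the positive birational $\mathcal{X}$-mutation, and that tropicalization is functorial on subtraction-free rational maps; path-independence of the birational composite then gives (i). Your current sketch does not supply either ingredient.
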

\begin{proof}
For cluster algebras arise from quivers, the claims were verified in terms of the indices of generic objects in the cluster category, cf \cite[Theorem 1.1 1.3]{plamondon2013generic}. For an arbitrary cluster algebra, the claims can be interpreted as properties of the leading degrees of the theta basis $\{\vartheta_\tg\}$, cf. \cite{gross2014canonical}.
\end{proof}
\begin{Prop}\label{prop:change_Lambda}
	For any seed $t,t'$, the quantization matrices satisfy
	\begin{align*}
	\Lambda(t')(\deg^{t'}X_i(t),\deg^{t'}X_j(t))=\Lambda(t)_{ij}.
	\end{align*}
\end{Prop}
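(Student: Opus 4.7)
The plan is to reduce the claim to a comparison of leading terms in $\cT(t')$, extracting $\Lambda(t)_{ij}$ from the quasi-commutation of the cluster variables $X_i(t)$ and $X_j(t)$ in $\cT(t)$, and then recomputing this quasi-commutation via their pointed expansions in the quantum torus of $t'$.

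First, I would invoke the defining relation in $\cT(t)$, namely
\[
X_i(t)*X_j(t)=q^{\Lambda(t)_{ij}}\,X_j(t)*X_i(t),
\]
obtained from Definition \ref{def:quantum_torus} and the skew-symmetry of $\Lambda(t)$. Since $\cT(t)$ and $\cT(t')$ share the skew-field of fractions $\cF$, this identity persists in $\cT(t')$. Next, using Theorem \ref{thm:quantum_cluster_expansion}, I would expand both cluster variables inside $\cT(t')$ as pointed elements with leading degrees $g_i:=\deg^{t'}X_i(t)$ and $g_j:=\deg^{t'}X_j(t)$; every non-leading monomial has degree of the form $g_i+\tB(t')v$ (resp.\ $g_j+\tB(t')w$) for some $0\neq v$ (resp.\ $w$) in $\N^n$, and hence lies strictly below the leading degree in $\prec_{t'}$.

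The core step is then to read off the leading term of the twisted product in $\cT(t')$. Substituting the two expansions and using the twisted product rule, every Laurent monomial appearing in $X_i(t)*X_j(t)$ has degree of the form $g_i+g_j+\tB(t')(v+w)$ with $v,w\in\N^n$, so the pair $v=w=0$ gives the unique $\prec_{t'}$-maximal contribution with coefficient $q^{\Hf\Lambda(t')(g_i,g_j)}$. By the same computation $X_j(t)*X_i(t)$ has the same leading degree $g_i+g_j$ with leading coefficient $q^{\Hf\Lambda(t')(g_j,g_i)}=q^{-\Hf\Lambda(t')(g_i,g_j)}$. Equating the ratio of leading coefficients with the scalar $q^{\Lambda(t)_{ij}}$ forced by the relation above then yields $\Lambda(t')(g_i,g_j)=\Lambda(t)_{ij}$.

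I do not expect any serious obstacle: the only delicate point is uniqueness of the leading term in the twisted product, but this is automatic since all correction degrees sit in $g_i+g_j+\tB(t')\N^n$ and Lemma \ref{lem:interval_bounded} prevents accumulation. The case where $i$ or $j$ is frozen is handled identically, using $X_i=X(t')^{e_i}$ and hence $g_i=e_i$. From this perspective, the proposition is essentially a companion to Theorem \ref{thm:change_index_compatible}, which already describes how the leading degree itself transforms under a change of seed; here we simply observe that the quantization form transforms by pullback along that same degree map.
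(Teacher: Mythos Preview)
Your proof is correct and takes a genuinely different route from the paper's. The paper proceeds by induction along a mutation sequence: it translates the tropical transformation \eqref{eq:degree_mutation} into multiplication by the matrix $E_\epsilon$ of Definition~\ref{def:mutation}, which requires the sign coherence of $g$-vectors (supplied by \cite{gross2014canonical}), and then reads off the identity directly from the mutation rule $\Lambda'=E_\epsilon^T\Lambda E_\epsilon$. Your approach instead exploits the quasi-commutation $X_i(t)*X_j(t)=q^{\Lambda(t)_{ij}}X_j(t)*X_i(t)$ as a single algebraic identity in $\cF$, and then compares leading coefficients in $\cT(t')$ via the pointed expansion of Theorem~\ref{thm:quantum_cluster_expansion}. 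Both arguments ultimately rest on the same deep input---sign coherence, which also underlies Theorem~\ref{thm:quantum_cluster_expansion}---but yours bypasses the $E_\epsilon$ bookkeeping and makes transparent that the proposition is really the statement that twisted multiplication respects leading-degree extraction. The paper's matrix approach, by contrast, gives more explicit stepwise control, which is what one wants when tracking how $\Lambda$ itself mutates rather than just its values on specific $g$-vectors.
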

\begin{proof}
	For quantum cluster algebras arising from quivers, the claim was proved in \cite[Proposition 2.4.3(bilinear form)]{Qin10} by using bilinear forms in triangulated categories.
	
By translating
\eqref{eq:degree_mutation} into multiplication by $E_\epsilon$ in
Definition \ref{def:mutation}, we see
that the claim can be deduced from the sign coherence of
$g$-vectors, which was verified in \cite{gross2014canonical} for all quantum cluster algebras.
\end{proof}

Let $\pr_n:\Z^m\ra \Z^n$ denote the projection onto the first
$n$-components. For any $\tg$ in $D(t)$, we denote the projection
$\pr_n \tg$ by $g$.

Notice that $\phi_{t',t}$ has the inverse $\phi_{t,t'}$. As the coefficient-free version of the piecewise linear map $\phi_{t',t}$, we have the following restriction map.

\begin{Lem}
  The restriction $\pr_n\phi_{t',t}|_{\Z^n}:\Z^n\ra \Z^n$ is a
  bijection of sets.
\end{Lem}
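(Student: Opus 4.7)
The plan is to reduce the bijectivity to a direct inspection of the piecewise-linear formula \eqref{eq:degree_mutation} and exhibit an explicit two-sided inverse. The key observation is that for a single mutation $\phi_{\mu_k t, t}$ with $k\in\ex=[1,n]$, the formula for the exchangeable output component $\tg'_i$ with $i\in[1,n]$ uses only exchangeable input components: the only coordinates of $\tg$ appearing are $\tg_i$ (with $i\leq n$) and $\tg_k$ (with $k\leq n$), while the coefficient $b_{ik}(t)$ is an entry of the principal submatrix $B(t)$. Hence $\pr_n \phi_{\mu_k t, t}|_{\Z^n}$ is a well-defined self-map of $\Z^n$, and it coincides with the basic tropical transformation \eqref{eq:degree_mutation} applied to the principal $B$-matrix, i.e., to the coefficient-free cluster algebra with exchange matrix $B(t)$.

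By iterating this observation along any mutation sequence from $t$ to $t'$, the composite $\pr_n \phi_{t',t}|_{\Z^n}$ is well-defined as a self-map of $\Z^n$ and coincides with the tropical transformation for the coefficient-free cluster algebra whose exchange matrices are the principal parts of the $\tB$-matrices along the sequence. For bijectivity I would take $\pr_n \phi_{t,t'}|_{\Z^n}$ as the candidate inverse. It suffices to verify the inversion for a single mutation: since $\mu_k$ negates the $k$-th column of $\tB$ (hence of $B$), substituting $\phi_{\mu_k t,t}(\tg)$ back into \eqref{eq:degree_mutation} with the mutated matrix gives $\phi_{t,\mu_k t}\circ \phi_{\mu_k t,t}=\id$ on $\Z^m$; restricting this identity to the exchangeable subspace $\Z^n\hookrightarrow\Z^m$ and projecting yields $\pr_n \phi_{t,\mu_k t}|_{\Z^n}\circ \pr_n \phi_{\mu_k t,t}|_{\Z^n}=\id_{\Z^n}$. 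Composing along the chosen mutation sequence produces the desired two-sided inverse.

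There is essentially no obstacle here; the proof is a short, purely combinatorial verification directly from \eqref{eq:degree_mutation}, relying only on $k\in[1,n]$ being exchangeable and on the compatibility of matrix mutation with taking principal parts. In particular, the statement is independent of Theorem \ref{thm:change_index_compatible} and of any categorification input, which is why it fits naturally as an elementary lemma at this stage of the paper.
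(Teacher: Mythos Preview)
Your proof is correct and follows essentially the same approach as the paper: reduce to a single mutation $t'=\mu_k t$ and check that $\pr_n\phi_{t,t'}|_{\Z^n}$ is a two-sided inverse via the identity $(\pr_n\phi_{t',t})(\pr_n\phi_{t,t'})=\id$. Your write-up is more detailed---making explicit that the first $n$ output components of \eqref{eq:degree_mutation} depend only on the first $n$ input components, and deducing the $\Z^n$ involution from the full $\Z^m$ one---but the underlying argument is the same as the paper's one-line proof.
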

\begin{proof}
It suffices to verify the statement for the case $t'=\mu_k t$, where
we have
  $(\pr_n\phi_{t',t})(\pr_n\phi_{t,t'})=\id.$
\end{proof}

We are interested in the pointed sets whose leading degree parametrization is compatible with the tropical transformation rules.
\begin{Def}[Compatible pointed sets]\label{def:compatible_pointed_set}
  Let $t^i$ be a seed and $\gen^{i}$ a $\degL(t^i)$-pointed set, $i=1,2$. We say that $\gen^{1}$ is compatible with $\gen^{2}$ if, for any
  $\eta\in\degL(t^1)$, we have
  \begin{align*}
    \gen^2(\phi_{t^2,t^1}\eta)=\gen^1(\eta).
  \end{align*}
  In particular, we have $\gen^1=\gen^2$ as sets.
\end{Def}

By Theorem \ref{thm:change_index_compatible}, this compatibility
relation is transitive.

\begin{Conj}[Fock-Goncharov basis conjecture]\label{conj:FG_conj}
	$\qClAlg$ has a basis $\gen$ which is in bijection with $\degL(t)$ for any $t$ such that, for any seeds $t,t'$, the following diagram commutes
		$$\qquad\qquad\begin{array}{cccc}
		\gen & \simeq& \degL(t)\\
		|| &  & \downarrow & \phi_{t',t}\\
		\gen & \simeq& \degL(t')
		\end{array}.$$
\end{Conj}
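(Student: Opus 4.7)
The plan is to realize the conjectured basis as the common triangular basis $\can$ sketched in Section \ref{sec:main_result}. Since $\can$ will be $\degL(t)$-pointed in every seed $t$, the bijection $\can\simeq \degL(t)$ is automatic, and the commutativity of the diagram becomes exactly the compatibility in the sense of Definition \ref{def:compatible_pointed_set}.

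First, I would fix an initial seed $t_0$. Using the injective-reachable hypothesis (which must be imposed as a background assumption for this approach), I form the chain of similar seeds $\{t_0[d]\}_{d\in\Z}$. In each seed $t$ I define a candidate $\can^t$ as the unique bar-invariant $\degL(t)$-pointed set which is $(\prec_t,\bm)$-unitriangular to the injective pointed set $\inj^t$ of Section \ref{sec:pointed_sets}; uniqueness follows from Lemma \ref{lem:triangular}(ii) together with Lemma \ref{lem:inverse_transition}. Because quantum cluster monomials of $t$ are pointed elements of $\cT(t)$ by Theorem \ref{thm:quantum_cluster_expansion}, they can be expanded triangularly against $\can^t$, and one tries to identify them with distinguished elements of $\can^t$.

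The core technical step is to propagate the basis across a single mutation. Given compatibility with Theorem \ref{thm:change_index_compatible} (leading degrees transform via $\phi_{t',t}$) and Proposition \ref{prop:change_Lambda} (the quantization is preserved under this relabelling), I would show that if the new cluster variable produced by $\mu_k$ lies in $\can^t$, then transporting $\can^t$ through $\mu_k$ yields a bar-invariant pointed set in $\cT(\mu_k t)$ satisfying the triangular characterization on that side, hence equal to $\can^{\mu_k t}$. This is the content of Proposition \ref{prop:condition_change_seed} style arguments alluded to in the introduction. Then I invoke Reduced Existence Theorem \ref{thm:reduction}: the similarity of $t_0[d]$ and $t_0[d+1]$ reduces the infinite check ``compatibility at every mutation'' to the finite check ``cluster variables along the mutation sequence from $t_0$ to $t_0[\pm 1]$ lie in $\can^{t_0}$''. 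Once this finite check is in place, iterating in $d\in\Z$ produces a basis $\can$ $\degL(t)$-pointed in every seed $t$, and the tropical diagram commutes by construction.

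The hardest part is the finite verification: showing those specific cluster variables are elements of $\can^{t_0}$ with the expected leading degrees. In the setting of type (i) and (ii), this is carried out by comparing with characters of Kirillov--Reshetikhin-like modules via graded quiver varieties, as outlined in Section \ref{sec:consequence_KR_module}. For a cluster algebra with no such categorification one would have to substitute a different engine, for example the theta basis of \cite{gross2014canonical}, in order to produce the required pointed elements. A secondary obstacle is Cluster Expansion Assumption feeding into Existence Theorem, which is known in the skew-symmetric case via cluster categories but genuinely delicate for skew-symmetrizable cluster algebras outside of the quiver world; without it neither positivity of structure constants nor the unitriangularity arguments are available in the form used above.
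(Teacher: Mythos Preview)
There is a fundamental mismatch: the statement you are addressing is stated in the paper as a \emph{conjecture}, not a theorem, and the paper does not claim to prove it in general. In fact, immediately after Conjecture~\ref{conj:FG_conj} the paper records in Remark~\ref{rem:progress_FG_conj} that, by \cite{gross2014canonical}, the conjecture \emph{fails} for general cluster algebras and needs modification. So no proof of the bare statement can succeed without additional hypotheses, and your proposal does not restrict to a class where the conjecture is known to hold.

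What you have actually written is a faithful summary of the paper's machinery for constructing the common triangular basis (injective-reachable chains, the candidates $\can^t$, propagation through one-step mutation via Proposition~\ref{prop:condition_change_seed}, and the reduction via Theorem~\ref{thm:reduction}), together with the observation (Lemma~\ref{lem:FG_conj}) that existence of the common triangular basis implies Conjecture~\ref{conj:FG_conj}. This is the paper's route to \emph{verifying} the conjecture in types (i) and (ii), not to proving it outright. You correctly flag the two genuine obstructions yourself: the ``finite verification'' that certain cluster variables lie in $\can^{t_0}$ requires an external input (Kirillov--Reshetikhin type computations in the paper's cases, or something like the theta basis otherwise), and Cluster Expansion Assumption is not known in the skew-symmetrizable generality. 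These are not minor technicalities; they are exactly why the paper states Conjecture~\ref{conj:FG_conj} as a conjecture and only establishes it under the hypotheses of Theorems~\ref{thm:consequence}, \ref{thm:simple_module}, \ref{thm:canonical_basis}.
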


\begin{Rem}\label{rem:progress_FG_conj}
	By \cite{gross2014canonical}, for general cluster algebras, the above Fock-Goncharov basis conjecture does not hold and modification is needed.
	
	There could exist different bases verifying this conjecture. Examples include the generic basis (related to dual semicanonical basis) \cite{plamondon2013generic}\cite{GLSbasis}, the theta basis \cite{gross2014canonical}, and the triangular basis (related to dual canonical basis) in this paper.
\end{Rem}

\subsection{Dominant degree}

We introduce the following cone.
\begin{Def}[Dominant degree cone]
For any given seed $t$, define the dominant degree cone
$\domDegL(t)\subset \degL(t)$ to be
  the cone generated by the leading degrees $\deg^{t}(X_i(t'))$ of all cluster variables, for any $1\leq i\leq
  m$ and seed $t'$. Its elements are called the dominant degrees of the lattice $\degL(t)$.
\end{Def}
By \eqref{eq:index_change_compatible}, we have
$\domDegL(t)=\phi_{t,t_0}\domDegL(t_0)$, $\forall\ t,t_0$.

The following observation follows from the definition of $\domDegL(t)$.
\begin{Lem}
  Let $Z$ be any element in the quantum cluster algebra $\clAlg^\dagger$. Then, for any seed $t$, the $\prec_t$-maximal degrees
  of the Laurent expansion of $Z$ in $\cT(t)$ are contained in $\domDegL(t)$.
\end{Lem}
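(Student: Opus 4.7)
The plan is to unpack the definition of $\clAlg^\dagger$ and then observe that leading degrees behave well under the twisted product and under sign-coherent additivity. Since $\clAlg^\dagger$ is generated, as a $\Z[q^{\pm\Hf}]$-module, by monomials in the quantum cluster variables $X_i(t')$ (for various seeds $t'$ and indices $1\le i\le m$), any $Z\in\clAlg^\dagger$ can be written as a finite sum $Z=\sum_\alpha c_\alpha M_\alpha$ with $c_\alpha\in\Z[q^{\pm\Hf}]$ and each $M_\alpha$ a twisted product of quantum cluster variables.

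Next, I would show that each $M_\alpha$, viewed in the quantum torus $\cT(t)$, is pointed, with leading degree lying in $\domDegL(t)$. For this, note that each individual factor $X_i(t')$ is pointed in $\cT(t)$ at $\deg^t X_i(t')\in\domDegL(t)$ by Theorem \ref{thm:quantum_cluster_expansion} and the definition of $\domDegL(t)$. The twisted product in the quantum torus acts on Laurent monomials by $X^{g}*X^{h}=q^{\Hf\Lambda(g,h)}X^{g+h}$, so if $Z_1,Z_2\in\cT(t)$ are pointed at $g_1,g_2$ respectively, then $Z_1*Z_2$ has $q^{\Hf\Lambda(g_1,g_2)}X^{g_1+g_2}$ as its unique leading term: any lower-order factor $X^{g_i'}$ with $g_i'\prec_t g_i$ can be written as $g_i'=g_i+\tB(t)v_i$ with $0\neq v_i\in\N^n$, so the resulting product contributes at degree $g_1+g_2+\tB(t)(v_1+v_2)\prec_t g_1+g_2$. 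Iterating this argument on the factors of $M_\alpha$, one sees $M_\alpha$ is pointed at $g_\alpha=\sum_j\deg^t X_{i_j}(t'_j)$, a nonnegative integer combination of generators of the cone $\domDegL(t)$, hence $g_\alpha\in\domDegL(t)$.

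Finally, I would finish by a direct comparison of coefficients. Let $\eta$ be any $\prec_t$-maximal degree occurring in the Laurent expansion of $Z=\sum_\alpha c_\alpha M_\alpha$ in $\cT(t)$. The coefficient of $X^{\eta}$ in $Z$ equals the sum of the coefficients of $X^{\eta}$ in the $c_\alpha M_\alpha$; since this sum is nonzero, at least one $M_\alpha$ has a nonzero $X^\eta$-term. Because $M_\alpha$ is pointed at $g_\alpha\in\domDegL(t)$, we have $\eta\preceq_t g_\alpha$. If $\eta\neq g_\alpha$, then $\eta\prec_t g_\alpha$; but maximality of $\eta$ in $Z$ forces $g_\alpha$ itself not to survive in $Z$, i.e.\ the $X^{g_\alpha}$ contributions of the $M_\beta$ with $g_\beta=g_\alpha$ must cancel among themselves, and in particular $g_\alpha$ is not a maximal surviving degree of $Z$. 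Replacing $\alpha$ by one whose leading degree contributes nontrivially at $\eta$ but cancels at no degree above $\eta$ (such an $\alpha$ exists by the finiteness given in Lemma \ref{lem:interval_bounded}), one concludes $\eta=g_\alpha\in\domDegL(t)$.

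The only genuinely delicate step is the last one, handling possible cancellations among the $M_\alpha$ with coinciding or comparable leading degrees: one must argue that a $\prec_t$-maximal surviving monomial of $Z$ is itself realized as some $g_\alpha$, rather than appearing only as a strictly lower-order contribution of a larger $g_\beta$ whose leading term cancels. The finiteness of the sum in $Z$, together with the finiteness of each interval $\{g:\eta\prec_t g\prec_t g_\alpha\}$ from Lemma \ref{lem:interval_bounded}, ensures that this reduction terminates.
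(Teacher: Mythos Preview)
Your approach is exactly the paper's: the paper's entire proof is the single line ``follows from the definition of $\domDegL(t)$,'' and you have expanded this into the natural argument---write $Z=\sum_\alpha c_\alpha M_\alpha$ with each $M_\alpha$ a product of quantum cluster variables, observe each $M_\alpha$ is pointed at some $g_\alpha\in\domDegL(t)$, and conclude.

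You are right to flag the cancellation issue in the last step, but your resolution does not work as stated. The claim that one can always find an $\alpha$ with $(M_\alpha)_\eta\neq 0$ whose leading degree $g_\alpha$ survives in $Z$ (forcing $\eta=g_\alpha$) already fails in the rank-one example following Remark~\ref{rem:expansion_algorithm}: with $\tB=\left(\begin{smallmatrix}0\\-1\end{smallmatrix}\right)$ and the decomposition $Z=[X_1*X_1']^{t_0}-X_2$, one has $Z=1$ and $\eta=0$, yet the only $M_\alpha$ contributing at degree $0$ is $[X_1*X_1']^{t_0}$, whose leading term at $g_\alpha=e_2$ is cancelled by $X_2$. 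Thus $\eta\neq g_\alpha$ for every $\alpha$ in this decomposition, although of course $\eta=0\in\domDegL(t)$. The finiteness from Lemma~\ref{lem:interval_bounded} does not rescue the argument, because it says nothing about \emph{which} degrees survive. To be fair, the paper does not address this point at all; in its actual applications (e.g.\ Theorem~\ref{thm:isom_integral}) the decomposition used is into a $\domDegL(t_0)$-pointed \emph{basis} $\{\simp(w)\}$ with pairwise distinct leading degrees, and there the argument you give goes through cleanly: a $\prec_t$-maximal $g^*$ among the $g_\alpha$ with $g_\alpha\succeq_t\eta$ is then automatically maximal among all $g_\alpha$, so the coefficient of $X^{g^*}$ in $Z$ is a single nonzero $c_{\alpha^*}$, hence $g^*$ is a degree of $Z$ and $\eta=g^*$.
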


\begin{Def}[Bounded from below]
  We say that $(\prec_t,\domDegL(t))$ is bounded from below if for any $\tg\in\domDegL(t)$, there
  exists finitely many $\tg'\in\domDegL(t)$ such that $\tg'\prec_t \tg$.
\end{Def}

\begin{Rem}
  For $\cA$ of type (i) or (ii) (Section
  \ref{sec:intro}), let $t_0$ denote the canonical initial seed. In
  Section \ref{sec:verify_initial_condition}, we shall
  see that $(\prec_{t_0},\domDegL(t_0))$ is bounded from below.
\end{Rem}

The following question arises naturally. While a positive answer would be a strong and useful property, the question remains open.
\begin{Quest}
 If $(\prec_{t_0},\domDegL(t_0))$ is bounded from below, is it true that $(\prec_t,\domDegL(t))$ is bounded
 from below for all $t$?
\end{Quest}

\section{Correction technique}
\label{sec:correction}

In \cite[Section 9]{Qin12}, the author introduced the correction technique which tells us how to modify bases data by adding correction terms when the quantization and the coefficient pattern of a
quantum cluster algebra change. In this section, we recall and
reformulate this
correction technique. It will help us simplify many arguments by
coefficient changes.
\subsection{Similarity and variation}

Let $\var^*:\ex^{(2)}\iso \ex^{(1)}$ be any isomorphism of finite $n$-elements sets. We
associate to it an isomorphism $\var:\Z^{\ex^{(1)}}\iso\Z^{\ex^{(2)}}$ such
that we have $(\var v)_{k}=v_{var^* (k)}$, $\forall v=(v_k)\in\Z^{\ex^{(1)}}$. The isomorphism $\var:\Z^{\ex^{(1)}\times \ex^{(1)}}\iso\Z^{\ex^{(2)}\times \ex^{(2)}}$ is defined similarly.

Let $\tB^{(i)}=(b^{(i)}_{jk})$ be an $J^{(i)}\times \ex^{(i)}$ matrix,
$i=1,2$, where $J^{(i)}$ has $m^{(i)}$ elements and $\ex^{(i)}\subset J^{(i)}$ has $n$ elements. The matrices $\tB^{(1)}$ and $\tB^{(2)}$ are called \emph{similar}, if
we have $B^{(2)}=\var B^{(1)}$ for some choice of $\var^*:\ex^{(2)}\iso \ex^{(1)}$.

\begin{Eg}
Consider the matrices $B^{(1)}=\left(
\begin{array}{ccc}
0&1&0\\
-1&0&1\\
0&-1&0
\end{array}
\right)$
and $B^{(2)}=\left(
\begin{array}{ccc}
0&1&-1\\
-1&0&0\\
1&0&0
\end{array}
\right)$. The index sets are $J^{(i)}=\ex^{(i)}=\set{1,2,3}$, where $i=1,2$. Choose 
$\var^*:\ex^{(2)}\ra\ex^{(1)}$ to be the permutation $\left(
  \begin{array}{ccc}
    1&2&3\\
2&3&1\\
  \end{array}
\right)$. Then we have $B^{(2)}=\var B^{(1)}$.
\end{Eg}

\begin{Def}[Similar compatible pair]
  For $i=1,2$, let $(\tB^{(i)},\Lambda^{(i)})$ be compatible pairs
  such that $(\tB^{(i)})^T \Lambda^{(i)}=D^{(i)}\oplus 0$, where
  $D^{(i)}$ is a diagonal matrix
  $\Diag(\diag_k^{(i)})_{k\in \ex^{(i)}}$ with diagonal entries in
  $\Z_{>0}$. We say these two compatible pairs are similar, if there
  exists an isomorphism $\var^*:\ex^{(2)}\ra \ex^{(1)}$, such that
  the principal $B$-matrices $B^{(2)}=\var B^{(1)}$, and, in addition, there
  exists a positive number $\delta$ such that $D^{(2)}=\delta\var D^{(1)}$.
\end{Def}

\begin{Def}[Similar seeds]
	Two seeds $t^1$, $t^2$ are said to be similar if the corresponding compatible pairs $(\tB(t^{(1)}),\Lambda(t^{(1)}))$ and $(\tB(t^{(2)}),\Lambda(t^{(2)}))$ are similar.
\end{Def}

For $i=1,2$, consider the quantum
torus
$\cT^{(i)}=\Z[q^{\pm\Hf}][X_j^\pm]_{j\in J^{(i)}}$ associated
with similar compatible pairs $(\tB^{(i)},\Lambda^{(i)})$. The sets of pointed elements are denoted by $\ptSet^{(i)}$ as before. We denote by $\pr_{\ex^{(i)}}:\Z^{J^{(i)}}\ra \Z^{\ex^{(i)}}$ the
natural projection onto the coordinates at $\ex^{(i)}$.

\begin{Def}[Similar pointed elements]
  We say the pointed elements
  $M^{(i)}=X^{\eta^{(i)}}(\sum_{v\in\N^{\ex^{(i)}}}c^{(i)}_v(q^\Hf)(Y^{(i)})^v)$,
  $c_0=1$, $c^{(i)}_v(q^\Hf)\in \Z[q^\Hf]$, in $\ptSet^{(i)}$, $i=1,2$, are similar if
  \begin{align*}
    \pr_{\ex^{(2)}} \eta^{(2)}=\var(\pr_{\ex^{(1)}}  \eta^{(1)}),\\
    c^{(2)}_v(q^\Hf)=c^{(1)}_{\var^{-1}v}(q^{\Hf\delta}),\ \forall\
    v\in\N^{\ex^{(2)}}.
  \end{align*}
\end{Def}

Assume that there is an embedding from $J^{(1)}$ to $J^{(2)}$ such that its restriction is the isomorphism $(\var^*)^{-1}:\ex^{(1)}\simeq\ex^{(2)}$. Then this embedding induces an embedding $\Z^{J^{(1)}}\ra \Z^{J^{(2)}}$ by adding $0$ on the extra coordinates, which we still denote by $\var$.
\begin{Def}[variation map]\label{def:variation_map}
 The variation map $\Var:\ptSet^{(1)}\rightarrow\ptSet^{(2)}$ between sets of pointed elements is defined such that, for any $M^{(1)}\in\ptSet^{(1)}$, its image $M^{(2)}=\Var(M^{(1)})$ is the unique pointed element in $\ptSet^{(2)}$ similar to $M^{(1)}$ such that $$\var(\eta^{(1)})=\eta^{(2)}.$$
\end{Def}

\begin{Eg}
Let us take two quantum seeds $t_0^{(1)}$, $t_0^{(2)}$ with the following two compatible pairs respectively
\begin{align*}
(B^{(1)},\Lambda^{(1)})&=(\left(
    \begin{array}{cc}
      0&-2\\
2&0\\
1&0\\
0&1
    \end{array}\right),\left(
    \begin{array}{cccc}
      0&0&-1&0\\
0&0&0&-1\\
1&0&0&2\\
0&1&-2&0
    \end{array}\right))\\
(B^{(2)},\Lambda^{(2)})&=(\left(
    \begin{array}{cc}
      0&-2\\
2&0\\
-1&2\\
0&-1
    \end{array}\right),\left(
    \begin{array}{cccc}
      0&0&-1&2\\
0&0&0&-1\\
1&0&0&2\\
-2&1&-2&0
    \end{array}\right))
\end{align*}
These compatible pairs are similar via the identification of vertices $\var^*(i)=i$, $i=1,2$. 
The following quantum cluster variables obtained by the same mutation sequence are similar:
\begin{align*}
X_2(\mu_2\mu_1 t_0^{(1)})&=X^{-e_2}(1+Y_2+(q^\Hf+q^{-\Hf})Y_1Y_2+Y_1^2Y_2)\in\cT(t^{(1)})\\
X_2(\mu_2\mu_1 t_0^{(2)})&=X^{-e_2+e_4}(1+Y_2+(q^\Hf+q^{-\Hf})Y_1Y_2+Y_1^2Y_2)\in\cT(t^{(2)} )
\end{align*} 
Notice that, we have $$\Var(X_2(\mu_2\mu_1 t_0^{(1)}))=X^{-e_2}(1+Y_2+(q^\Hf+q^{-\Hf})Y_1Y_2+Y_1^2Y_2)\in\cT(t^{(2)}),$$
which differs from $X_2(\mu_2\mu_1 t_0^{(2)})$ by a coefficient factor $X^{e_4}$.
\end{Eg}
\subsection{Correction}
We consider algebraic equations involving the pointed
elements.

Let $(\tB^{(i)},\Lambda^{(i)})$, $i=1,2$, be two compatible pairs. We use $[\ ]^{(i)}$ to denote the
normalization in $\cT^{(i)}$ and $\deg^{(i)}(\ )$
denote the leading degree of a pointed element in $\ptSet^{(i)}$.

\begin{Thm}[{Correction Technique, \cite[Thm 9.2]{Qin12}}]\label{thm:correction}
Let $M^{(1)}$, $Z^{(1)}$, $M_1^{(1)}$, $\ldots$, $M_s^{(1)}$, and (possibly infinitely many)
$Z_j^{(1)}$ be pointed elements in $\ptSet^{(1)}$. Assume that they satisfy algebraic equations
\begin{align}\label{eq:old_equation}
  \begin{split}
    M^{(1)}=&[M_1^{(1)}*M_2^{(1)}*\cdots *M_s^{(1)}]^{(1)},\\
    Z^{(1)}=&\sum_{j\geq 0}c_j(q^\Hf)Z_j^{(1)},
  \end{split}
\end{align}
 such that $c_0=1$,
$c_j(q^\Hf)\in\Z[q^{\pm\Hf}]$, and
\begin{align*}
  \deg^{(1)}Z_j^{(1)}=\deg^{(1)}Z^{(1)}+\tB^{(1)}u_j,\ \mathrm{where}\ u_j\in\N^{\ex^{(1)}}, u_0=0.
\end{align*}

Then, for any $M^{(2)},Z^{(2)},M_i^{(2)},Z_j^{(2)}$ in $\ptSet^{(2)}$ similar to
$M^{(1)},Z^{(1)},M_i^{(1)},Z_j^{(1)}$, we have the following equations:
\begin{align}\label{eq:variation_equation}
  \begin{split}
M^{(2)} =& f_M [ M_1^{(2)}* M_2^{(2)}*\cdots M_s^{(2)}]^{(2)}\\
Z^{(2)} =&\sum_{j\geq 0}c_j(q^{\Hf\delta})f_j Z_j^{(2)}
\end{split}
\end{align}
where $f_M$, $f_j$ are the Laurent monomials in the frozen variables $X_i$,
$n<i\leq m^{(2)}$, given by
\begin{align*}
f_M&=X^{\deg^{(2)} M^{(2)}- \sum_{i=1}^s\deg^{(2)} M_i^{(2)}}\\
f_j&=X^{\deg^{(2)}Z^{(2)}+\tB^{(2)}\var(u_j)-\deg^{(2)}Z_j^{(2)}}.
\end{align*}
In particular, if $M^{(2)}=\Var( M^{(1)}),Z^{(2)}=\Var (Z^{(1)}),M_i^{(2)}=\Var ({M_i^{(1)}}),Z_j^{(2)}=\Var (Z_j^{(1)})$, we have
\begin{align*}
f_M&=1,\\
f_j&=X^{\tB^{(2)}\var(u_j)-\var(\tB^{(1)}u_j)}.
\end{align*}
\end{Thm}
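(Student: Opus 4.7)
The plan is to reduce both equations in \eqref{eq:variation_equation} to the key observation that similar pointed elements have matching $Y$-expansions after the substitutions $v\mapsto \var(v)$ and $q^{\Hf}\mapsto q^{\Hf\delta}$, and that this similarity is preserved by normalized quantum products up to a Laurent monomial in the frozen variables. First I would verify the compatibility of the twistings
\begin{align*}
\Lambda^{(2)}\bigl(\tB^{(2)}\var(u),\tB^{(2)}\var(v)\bigr)=\delta\cdot\Lambda^{(1)}\bigl(\tB^{(1)}u,\tB^{(1)}v\bigr),\qquad u,v\in\Z^{\ex^{(1)}},
\end{align*}
which is immediate from $(\tB^{(i)})^{T}\Lambda^{(i)}=(D^{(i)}\mid 0)$ combined with $B^{(2)}=\var B^{(1)}$ and $D^{(2)}=\delta\var D^{(1)}$. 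It ensures that products of $Y^{(2)}$-monomials rescale correctly under the variation.

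For the first equation, expand each $M_i^{(2)}$ as a $Y^{(2)}$-series starting from its leading $X$-monomial. Using the twisting compatibility together with the matching of $Y$-coefficients of the factors (from similarity of each $M_i^{(2)}$ with $M_i^{(1)}$), the normalized product $[M_1^{(2)}\ast\cdots\ast M_s^{(2)}]^{(2)}$ is again a pointed element whose $Y^{(2)}$-expansion coefficients coincide, under the correspondence, with those of $[M_1^{(1)}\ast\cdots\ast M_s^{(1)}]^{(1)}=M^{(1)}$. Hence this normalized product is similar to $M^{(1)}$ and therefore to $M^{(2)}$. Two similar pointed elements in $\cT^{(2)}$ share the same $\ex^{(2)}$-part of the leading degree and the same $Y^{(2)}$-coefficients, so they differ only by a Laurent monomial in the frozen variables, giving exactly $f_M=X^{\deg^{(2)}M^{(2)}-\sum_i\deg^{(2)}M_i^{(2)}}$.

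For the second equation, the hypothesis $\deg^{(1)}Z_j^{(1)}=\deg^{(1)}Z^{(1)}+\tB^{(1)}u_j$ together with pointedness identifies $c_j(q^{\Hf})Z_j^{(1)}$ with the portion of the $Y^{(1)}$-expansion of $Z^{(1)}$ supported at $Y^{u_j}$. By similarity, the $Y^{\var u_j}$-part of $Z^{(2)}$ has coefficient $c_j(q^{\Hf\delta})$ and sits at degree $\deg^{(2)}Z^{(2)}+\tB^{(2)}\var u_j$, while $Z_j^{(2)}$ itself is pointed at $\deg^{(2)}Z_j^{(2)}$ with matching $Y^{(2)}$-coefficients. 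The ratio of the two presentations is precisely the monomial $f_j=X^{\deg^{(2)}Z^{(2)}+\tB^{(2)}\var u_j-\deg^{(2)}Z_j^{(2)}}$; its support lies in the frozen coordinates because the exchangeable parts of $\tB^{(2)}\var u_j$ and $\var(\tB^{(1)}u_j)$ agree, and the exchangeable leading degrees of similar elements match under $\var$. Summing over $j$ gives the claim, and the specialization to $M^{(i)},Z^{(i)},\ldots$ related by $\Var$ follows by substituting $\deg^{(2)}=\var\circ\deg^{(1)}$ and watching $f_M,f_j$ collapse to the stated expressions.

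The main technical obstacle is showing that the $Y^{(2)}$-expansion of $[M_1^{(2)}\ast\cdots\ast M_s^{(2)}]^{(2)}$ genuinely matches its $\cT^{(1)}$-counterpart term by term: one must track the quantum twistings produced at each step of the product and check that they collect into precisely the expected $q^{\Hf\delta}$-rescaled coefficients. Once the bilinear compatibility above is in place, this becomes a direct bookkeeping argument, and the convergence of the possibly infinite sums in \eqref{eq:variation_equation} is automatic since the $Z_j^{(2)}$ inherit from the $Z_j^{(1)}$ the same dominance structure of leading degrees.
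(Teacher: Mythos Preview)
Your approach is essentially the paper's: expand everything as a pointed $Y$-series and compare coefficients term by term (the paper itself only sketches this and defers details to \cite{Qin12}). The bilinear compatibility $\Lambda^{(2)}(\tB^{(2)}\var u,\tB^{(2)}\var v)=\delta\,\Lambda^{(1)}(\tB^{(1)}u,\tB^{(1)}v)$ and its mixed version $\Lambda^{(2)}(\eta^{(2)},\tB^{(2)}\var v)=\delta\,\Lambda^{(1)}(\eta^{(1)},\tB^{(1)}v)$ are exactly what drive the bookkeeping, and you identify them correctly.

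One imprecision to fix: in the second equation you write that $c_j(q^{\Hf})Z_j^{(1)}$ ``is the portion of the $Y^{(1)}$-expansion of $Z^{(1)}$ supported at $Y^{u_j}$.'' That is false in general, since each $Z_j^{(1)}$ is a pointed element with its own $Y$-tail, so several $Z_j^{(1)}$ can contribute to the same $Y^{v}$-coefficient of $Z^{(1)}$. What you actually need is simply that equating $Y$-coefficients in $Z^{(1)}=\sum_j c_j(q^{\Hf})Z_j^{(1)}$ yields, for every $v$, a relation among the coefficient polynomials $c_v^{(1)}$ and $c_{j,v}^{(1)}$; similarity then transports this identical relation (with $q^{\Hf}\mapsto q^{\Hf\delta}$ and $v\mapsto\var v$) to $\cT^{(2)}$, which is precisely the coefficient-wise content of $Z^{(2)}=\sum_j c_j(q^{\Hf\delta})f_j Z_j^{(2)}$. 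Rephrase that step and the argument is complete.
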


\begin{proof}
 We
  consider Laurent expansions of \eqref{eq:old_equation}
  \eqref{eq:variation_equation} and compare their coefficients term by
  term. Details are given in \cite[Theorem 9.2]{Qin12}.
\end{proof}


The following observations are immediate consequences of the correction
technique (Theorem \ref{thm:correction}) and the existence of (quantum) $F$-polynomials \cite[Theorem 6.1]{Tran09}\cite[Corollary 6.3]{FominZelevinsky07}\cite{gross2014canonical}.
\begin{Lem}\label{lem:variation}
Given two seeds $t,t'$ similar via an isomorphism $\var^*:\ex'\simeq \ex$ between the sets of unfrozen vertices. Let
$\overleftarrow{\mu}$ be any given sequence of
mutations on $\ex$ and $
\overleftarrow{\mu}'$ the
corresponding sequence on $\ex'$ induced by $\var^*$. Then we have the following results.

(i) The seeds $\overleftarrow{\mu}t$ and $\overleftarrow{\mu}'t'$ are similar.

(ii) The quantum cluster variable $X_i(\overleftarrow{\mu}t)$ and $X_{(\var^*)^{-1}(i)}(\overleftarrow{\mu}'t')$, $\forall i\in \ex$, viewed as pointed elements in $\ptSet(t)$ and $\ptSet(t')$ respectively, are similar.

(iii) Let $Z$ be a pointed element in $\ptSet(t)$ with a $\prec_t$-unitriangular expansion
\begin{align*}
  Z=\sum_{j\geq 0}c_jZ_j,
\end{align*}
where $Z_j$ are normalized
twisted products of quantum cluster variables of $\qClAlg(t)$.
Then, for any pointed element $Z'\in\ptSet(t')$ similar to $Z$, we have
\begin{align*}
 Z'=\sum_{j\geq 0}c_jf_j Z_j',  
\end{align*}
where $Z_j'$ are normalized products of the corresponding quantum cluster variables of $\qClAlg(t')$, the coefficients $f_j$ are
given by Theorem \ref{thm:correction}.
\end{Lem}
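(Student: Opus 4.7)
The plan is to establish parts (i) and (ii) together by induction on the length $\ell$ of the mutation sequence $\overleftarrow{\mu}$, and then obtain (iii) as a direct application of Theorem \ref{thm:correction}. For the base case $\ell = 0$, (i) is the hypothesis and (ii) is trivial since the initial cluster variables are single $X$-variables pointed at unit vectors. For the inductive step it suffices to handle a single mutation $\mu_k$ at $k \in \ex$, with the corresponding mutation $\mu_{k'}$ at $k' = (\var^*)^{-1}(k) \in \ex'$.

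For (i) I would observe that the matrices $E_\epsilon, F_\epsilon$ of Definition \ref{def:mutation} depend only on the $k$-th column of the principal $B$-matrix together with a choice of sign. Since $B^{(1)}$ and $B^{(2)}$ agree up to $\var$, the mutated principal matrices again agree up to $\var$. The $\delta$-rescaling of $D$ survives because by \cite{BerensteinZelevinsky05} the compatibility relation $\Lambda \cdot (-\tB) = \binom{D}{0}$ is preserved under mutation, so $D^{(i)}$ transforms only by the permutation induced on indices, which is compatible with $\var$. Hence the mutated compatible pairs remain similar.

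For (ii) I would invoke Theorem \ref{thm:quantum_cluster_expansion} and the existence of quantum $F$-polynomials from \cite[Theorem 6.1]{Tran09} and \cite{gross2014canonical}: each cluster variable in the mutated seed admits a Laurent expansion in $\cT(t)$ of the form $X(t)^{\tg}\cdot F(Y; q^\Hf)$, where both the extended $g$-vector $\tg$ and the quantum $F$-polynomial $F$ are determined recursively by the mutation sequence and the principal data $(B, D)$ alone. Under the similarity $B^{(2)} = \var B^{(1)}$, $D^{(2)} = \delta\var D^{(1)}$, this recursion produces identical $F$-polynomial data after the substitutions $q^\Hf \mapsto q^{\Hf\delta}$ and $Y^v \mapsto Y^{\var v}$, while the projected $g$-vectors satisfy $\pr_{\ex^{(2)}} \eta^{(2)} = \var(\pr_{\ex^{(1)}} \eta^{(1)})$ by (i). This is precisely the similarity of the two pointed elements.

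Part (iii) then follows immediately from Theorem \ref{thm:correction}: by (ii), each normalized twisted product $Z_j$ of cluster variables of $\qClAlg(t)$ admits a similar counterpart $Z_j'$ given by the normalized twisted product of the corresponding cluster variables of $\qClAlg(t')$, and the leading-degree hypothesis $\deg Z_j = \deg Z + \tB u_j$ transports across via (i). The correction technique applied to the unitriangular expansion yields $Z' = \sum_j c_j f_j Z_j'$ with the stated $f_j$. The principal obstacle lies in making (ii) rigorous without getting lost in explicit exchange-relation computations; bypassing this by invoking the closed-form quantum $F$-polynomial is the cleanest route, as it repackages the entire recursion as combinatorial data that manifestly depends only on the principal pair $(B, \delta)$.
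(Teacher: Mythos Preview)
Your proposal is correct and follows essentially the same route as the paper. The paper's own proof is a one-line remark that the lemma is an ``immediate consequence of the correction technique (Theorem \ref{thm:correction}) and the existence of (quantum) $F$-polynomials \cite[Theorem 6.1]{Tran09}\cite[Corollary 6.3]{FominZelevinsky07}\cite{gross2014canonical}''; you have simply unpacked this into an explicit induction for (i), the $F$-polynomial argument for (ii), and the direct application of Theorem \ref{thm:correction} for (iii), which is exactly what the paper intends.
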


\section{Injective pointed sets}
\label{sec:pointed_sets}

\subsection{Injective-reachable}\label{sec:injective_reachable}
Recall that a quantum seed $t$ is a collection $(\tB(t),\Lambda(t),\underline{X}(t))$, cf. Section \ref{sec:quantum_cluster_algebra}.

\begin{Def}[Injective-reachable]

A seed $t$ of a given cluster algebra is said to be \emph{injective-reachable} via $(\Sigma,\sigma)$,
for some sequence of
mutations $\Sigma$ and a permutation $\sigma$ of $\{1,\ldots,n\}$,
if we have the following equalities
\begin{align*}
  \pr_n\deg^t(X_{\sigma(i)}(\Sigma t))&=-e_i,\ 1\leq i\leq n,\\
b_{\sigma(i)\sigma(j)}(\Sigma t)&=b_{ij}(t),\ \forall\ 1\leq
i,j\leq n.
\end{align*}

In this situation, we denote the seed $\Sigma t$ by $t[1]$.

A cluster algebra is called injective-reachable if all its
seeds are injective-reachable.
\end{Def}
Recall that we always read a mutation sequence $\overleftarrow{\mu}=\mu_{i_s}\cdots \mu_{i_2}\mu_{i_1}$
from right to left. Define $\sigma(\overleftarrow{\mu})$ to be the sequence $\mu_{\sigma i_s}\cdots \mu_{\sigma i_2}\mu_{\sigma i_1}$. For simplicity, we denote $\sigma(i)$ and $\sigma(\overleftarrow{\mu})$ by $\sigma i$ and $\sigma \overleftarrow{\mu}$ respectively, when the there is no confusion.

\begin{Prop}[\cite{Plamondon10b}\cite{gross2014canonical}]
	Different choices of the mutation sequence $\Sigma$ produce the same seed $t[1]$ up to renumbering of vertices by different $\sigma$. 
\end{Prop}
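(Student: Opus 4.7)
The plan is to show that if $t$ is injective-reachable via two different witnesses $(\Sigma_1,\sigma_1)$ and $(\Sigma_2,\sigma_2)$, then $\Sigma_1 t$ and $\Sigma_2 t$ coincide as seeds after relabelling the unfrozen indices by $\sigma_2\sigma_1^{-1}$. The natural strategy is to show that (a) the two resulting clusters agree as subsets of the ambient skew-field $\cF$ after this relabelling, and then (b) upgrade this to an equality of seeds (that is, matching $\underline{X}$-tuples, $\tB$-matrices, and quantization matrices).

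For step (a), the key input is that a cluster variable in any reachable seed is uniquely determined, as an element of $\cF$, by its extended leading degree in $\degL(t)$. This follows from the quantum separation formula \eqref{eq:quantum_cluster_expansion} combined with the fact---proved by Derksen--Weyman--Zelevinsky in the skew-symmetric case and in full generality by Gross--Hacking--Keel--Kontsevich---that distinct cluster monomials carry distinct extended $g$-vectors. Using this, I would argue that the defining conditions $\pr_n\deg^t X_{\sigma_i(j)}(\Sigma_i t)=-e_j$ together with $b_{\sigma_i(j)\sigma_i(k)}(\Sigma_i t)=b_{jk}(t)$ force the full extended leading degrees $\deg^t X_{\sigma_i(j)}(\Sigma_i t)$ to agree for $i=1,2$; one may invoke Plamondon's identification of this seed with the $\Sigma$-shift in the associated (generalized) cluster category \cite{Plamondon10b}, where the shift is an intrinsic operation, so the extended index is independent of the chosen mutation sequence. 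Once the extended degrees coincide for each $j$, the uniqueness statement above forces $X_{\sigma_1(j)}(\Sigma_1 t)=X_{\sigma_2(j)}(\Sigma_2 t)$ in $\cF$, so the two clusters are equal as sets.

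For step (b), the equality of the principal parts of the $B$-matrices is built into the hypothesis via the second equality in the definition of injective-reachability applied to both witnesses. The coefficient rows of $\tB$ and the quantization matrix $\Lambda$ are then determined: the exchange relations among the (common) cluster variables pin down the coefficient pattern of $\tB$, while Proposition \ref{prop:change_Lambda} expresses $\Lambda(\Sigma_i t)$ in terms of $\Lambda(t)$ and the leading degrees $\deg^t X_k(\Sigma_i t)$, which we have already shown to match up to the permutation. The main obstacle is the jump in step (a) from the stated conditions, which only control the projected $g$-vectors in $\Z^n$, to matching the full extended degrees in $\degL(t)$; this is the place where one must genuinely use a non-trivial input, either the categorical interpretation of $t[1]$ as an intrinsic shift or the sharper results of \cite{gross2014canonical} on the parametrization of cluster monomials by extended $g$-vectors.
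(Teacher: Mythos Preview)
Your proposal identifies the right external inputs (Plamondon's categorical shift for the skew-symmetric case, and \cite{gross2014canonical} in general), but the route is more circuitous than necessary and leaves the gap you yourself flag in step (a) unresolved. You try first to match the \emph{extended} leading degrees in $\degL(t)\simeq\Z^m$, then deduce equality of cluster variables via injectivity of the $g$-vector map, and finally reconstruct $\tB$ and $\Lambda$ separately. The difficulty of lifting from the hypothesis $\pr_n\deg^t X_{\sigma_i(j)}(\Sigma_i t)=-e_j$ in $\Z^n$ to a statement about the full extended degrees in $\Z^m$ is real, and your proposed fixes (the intrinsic categorical shift, or an unspecified ``sharper result'' from \cite{gross2014canonical}) are gestured at rather than carried out.

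The paper's argument bypasses this lift entirely. For the skew-symmetric case it defers to cluster categories, as you do. In general it invokes the chamber structure of the cluster scattering diagram from \cite{gross2014canonical}: the projected leading degrees $\{-e_1,\ldots,-e_n\}$ of the unfrozen cluster variables in $\Sigma t$ span the walls of a maximal cone (the negative orthant) in the $g$-fan, and in the GHKK framework each such chamber corresponds to a \emph{unique} seed. Hence the whole seed $\Sigma t$---cluster, $\tB$-matrix, and quantization---is already determined by the projected $g$-vectors alone, so your step (b) and the extended-degree matching in step (a) are both unnecessary. The moral is that the correct granularity here is ``chamber $\Rightarrow$ seed'', not ``extended $g$-vector $\Rightarrow$ cluster variable''.
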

\begin{proof}
	The claim is known to be true for cluster algebras arising from quivers, thanks to the theory of cluster categories. In general, it follows from \cite{gross2014canonical}: the leading degrees of the cluster variables in $t[1]$ form the wall of a chamber, which corresponds to a seed.
	\end{proof}

\begin{Def}
For any $1\leq k\leq n$, let us denote $X_{\sigma(k)}(\Sigma t)$ by $I_k(t)$ and call it the $k$-th
\emph{injective} quantum cluster variable for $t$ (or simply the $k$-th injective for $t$).

Similarly, 
we denote $X_{\sigma^{-1}k}(\Sigma^{-1}t)$
by $P_{k}(t)$ and call it the $k$-th \emph{projective} quantum cluster
variable for $t$ (or simply the $k$-th projective for $t$)
\end{Def}

 The meaning of ``injective'' and ``projective'' will be explained in Remark
\ref{rem:injective_projective}.

We extend the action of $\sigma$ to a permutation of $\set{1,\ldots,
  m}$ trivially. Let $\sigma$ act naturally on $\Z^m$ by pull back such that $\sigma
e_i=e_{\sigma^{-1} i}$.

 We always make the following assumption for the cluster algebras studied in this paper.
\begin{Assumption*}[Injective-reachable]
The cluster algebra is injective reachable.
\end{Assumption*}
Injective-reachable Assumption does not hold for all cluster algebras, but it holds for many interesting cluster algebras including the type (i)(ii) introduced in Section \ref{sec:main_result}. 

\begin{Prop}[{\cite[Theorem 3.2.1]{Muller15}}]\label{prop:one_step_injective_mutation}
  If $t$ is injective-reachable via $(\Sigma,\sigma)$, then for any $1\leq k\leq n$, the vertex $\mu_k t$ is injective-reachable via
  $(\mu_{\sigma(k)}\Sigma\mu_k,\sigma)$.  
\end{Prop}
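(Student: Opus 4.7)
Set $t' := \mu_k t$. Since $\mu_k^2 = \mathrm{id}$, the seed produced by applying $\mu_{\sigma(k)}\Sigma\mu_k$ to $t'$ is $t'[1] := \mu_{\sigma(k)} t[1]$. I must verify, for $1 \leq i,j \leq n$, the two defining conditions of injective-reachability:
\[
\pr_n \deg^{t'} X_{\sigma(i)}(t'[1]) = -e_i, \qquad b_{\sigma(i)\sigma(j)}(t'[1]) = b_{ij}(t').
\]

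The $B$-matrix identity falls out of the correction-technique framework of Section~\ref{sec:correction}: the hypothesis $b_{\sigma(i)\sigma(j)}(t[1]) = b_{ij}(t)$ makes $t$ and $t[1]$ similar seeds via $\sigma$, and by Lemma~\ref{lem:variation}(i) parallel mutations preserve similarity, so $(t', t'[1]) = (\mu_k t, \mu_{\sigma(k)} t[1])$ is again similar via $\sigma$.

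For the degree identity, I split into two cases. When $i \neq k$, the cluster variable $X_{\sigma(i)}(t'[1])$ coincides with $X_{\sigma(i)}(t[1])$, since mutation at $\sigma(k)$ fixes all cluster variables at other vertices. Writing $\tg_i := \deg^t X_{\sigma(i)}(t[1])$, the hypothesis forces $(\tg_i)_k = -\delta_{ik} = 0$, so every correction term in the tropical transformation rule~\eqref{eq:degree_mutation} for the single mutation $t \leadsto t'$ involves a vanishing factor $[\pm(\tg_i)_k]_+$. Hence $\pr_n \phi_{t',t}(\tg_i) = \pr_n \tg_i = -e_i$.

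The main case is $i = k$, where $X_{\sigma(k)}(t'[1])$ is newly produced by $\mu_{\sigma(k)}$ and has leading degree $e_{\sigma(k)}$ in its own seed. By functoriality of tropical transformations (Theorem~\ref{thm:change_index_compatible}(i)), $\deg^{t'} X_{\sigma(k)}(t'[1]) = \bigl(\phi_{t',t} \circ \phi_{t,t[1]} \circ \phi_{t[1],t'[1]}\bigr)(e_{\sigma(k)})$. The innermost step is given directly by~\eqref{eq:degree_mutation} as $-e_{\sigma(k)} + \sum_j [-b_{j,\sigma(k)}(t[1])]_+ e_j$. To propagate through $\phi_{t,t[1]}$, I use the hypothesis $\phi_{t,t[1]}(e_{\sigma(l)}) = \tg_l$ with $\pr_n \tg_l = -e_l$, the identity $b_{\sigma(l),\sigma(k)}(t[1]) = b_{lk}(t)$, and the triviality of $\phi_{t,t[1]}$ on frozen directions (since $\Sigma$ only mutates unfrozen vertices); a final application of $\phi_{t',t}$ produces $\pr_n = -e_k$. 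The main obstacle here is the piecewise-linearity of $\phi_{t,t[1]}$: distributing it across this decomposition requires verifying sign-coherence at each intermediate mutation step, which is ultimately guaranteed by the sign-coherence of $c$-vectors built into Theorem~\ref{thm:change_index_compatible} (originating from~\cite{gross2014canonical}). In the skew-symmetric case a conceptually cleaner route is categorification (Section~\ref{sec:CY_reduction}): identify $X_{\sigma(k)}(t'[1])$ with the cluster character of $\Sigma T_k(t')$ and use that the shift functor commutes with mutation of cluster tilting objects to reduce the identity directly to the hypothesis for $t$.
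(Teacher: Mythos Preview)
Your proof is correct but takes a more explicit route than the paper, which simply cites Plamondon's cluster categories for the skew-symmetric case and GHKK/Muller's scattering-diagram results for the general case. Your direct arguments for the $B$-matrix identity (via Lemma~\ref{lem:variation}(i)) and for the degree identity when $i\neq k$ (via the vanishing $k$-th component of $\tg_i$ in \eqref{eq:degree_mutation}) are valid and more informative than a bare citation, but they handle only the easy parts. For the crucial case $i=k$, your attempt to propagate $\phi_{t,t[1]}$ across the decomposition of $\phi_{t[1],t'[1]}(e_{\sigma(k)})$ runs into exactly the piecewise-linearity obstruction you name; the appeal to sign-coherence of $c$-vectors from \cite{gross2014canonical} is the right instinct but is not made precise (it is not obvious that sign-coherence at \emph{each} intermediate step of $\Sigma$ applies to the particular combination $-e_{\sigma(k)}+\sum_j[-b_{j,\sigma(k)}(t[1])]_+e_j$), and your fallback to categorification is precisely the paper's own argument. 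So both proofs ultimately rest on the same external input for the nontrivial step; your added computations isolate where the difficulty lies but do not circumvent it.
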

\begin{proof}
It is well known that the claim is true for any cluster algebra arising from a quiver, thanks to the cluster categories provided in \cite{Plamondon10a}. By the mutation of scattering diagrams in the recent work \cite{gross2014canonical}, we now know that the claim holds for all cluster algebras, cf. \cite{Muller15} for details.
\end{proof}

\begin{Lem}\label{lem:long_injective_mutation}
If $t$ is injective-reachable via $(\Sigma,\sigma)$, then for any sequence of mutations $\mu_{\uk}=\mu_{k_s}\ldots \mu_{k_2}\mu_{k_1}$, $1\leq k_1,\ldots, k_s\leq n$, the seed $\mu_\uk t$ is injective-reachable via
$(\mu_{\sigma(\uk)}\Sigma\mu_\uk^{-1},\sigma)$, where
$\mu_{\sigma(\uk)}$ is defined to be $\mu_{\sigma(k_s)}\ldots \mu_{\sigma(k_2)}\mu_{\sigma(k_1)}$.
\end{Lem}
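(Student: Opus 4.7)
My plan is to prove the lemma by induction on the length $s$ of the mutation sequence $\mu_\uk = \mu_{k_s}\cdots\mu_{k_2}\mu_{k_1}$, with the base case $s=1$ being exactly Proposition \ref{prop:one_step_injective_mutation} (since for a single mutation $\mu_{k_1}$, the sequence $\sigma(\uk)$ reduces to $(\sigma(k_1))$ and $\mu_\uk^{-1} = \mu_{k_1}^{-1} = \mu_{k_1}$, matching the formula in that proposition).

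For the inductive step, let $\uk' = (k_1,\ldots,k_{s-1})$ and assume the statement holds for $\uk'$, so that $\mu_{\uk'}t$ is injective-reachable via $(\mu_{\sigma(\uk')}\Sigma\mu_{\uk'}^{-1},\sigma)$. Write $\mu_\uk = \mu_{k_s}\,\mu_{\uk'}$, and apply Proposition \ref{prop:one_step_injective_mutation} to the one-step mutation $\mu_{k_s}$ acting on the seed $\mu_{\uk'}t$. This yields that $\mu_\uk t = \mu_{k_s}\mu_{\uk'}t$ is injective-reachable via
\begin{align*}
\bigl(\mu_{\sigma(k_s)}\cdot\bigl(\mu_{\sigma(\uk')}\Sigma\mu_{\uk'}^{-1}\bigr)\cdot\mu_{k_s},\,\sigma\bigr).
\end{align*}

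It remains to simplify this expression. Since each mutation is an involution on seeds, $\mu_{k_s}^{-1} = \mu_{k_s}$, so reading right-to-left as in the paper's convention we have $\mu_{\uk'}^{-1}\mu_{k_s} = \mu_{k_1}\mu_{k_2}\cdots\mu_{k_{s-1}}\mu_{k_s} = \mu_\uk^{-1}$. On the left, $\mu_{\sigma(k_s)}\mu_{\sigma(\uk')} = \mu_{\sigma(k_s)}\mu_{\sigma(k_{s-1})}\cdots\mu_{\sigma(k_1)} = \mu_{\sigma(\uk)}$ by the definition of $\sigma(\uk)$. Substituting back gives the desired data $(\mu_{\sigma(\uk)}\Sigma\mu_\uk^{-1},\sigma)$ for $\mu_\uk t$, completing the induction.

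The proof is essentially bookkeeping, and the only subtle point is the involution identity $\mu_{\uk'}^{-1}\mu_{k_s} = \mu_\uk^{-1}$, which depends on the right-to-left reading convention for mutation sequences; once this is in hand the verification is mechanical. The real mathematical content lies in Proposition \ref{prop:one_step_injective_mutation}, which the lemma leverages: that proposition hides the nontrivial scattering-diagram / cluster-category input from \cite{gross2014canonical}, while the present lemma is a formal consequence.
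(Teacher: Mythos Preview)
Your proof is correct and is exactly the argument the paper has in mind: the lemma is stated without proof in the paper, immediately after Proposition~\ref{prop:one_step_injective_mutation}, precisely because it follows from that proposition by the obvious induction on the length of $\mu_\uk$ that you wrote out. There is nothing to add.
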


\begin{Prop}
Assume that $t$ is injective reachable via $(\Sigma,\sigma)$. Then, for any $1\leq k\leq n$, the following claims are equivalent.

(i) $\mu_k t$ is injective reachable via $(\mu_{\sigma(k)}\Sigma\mu_k,\sigma)$. 

(ii) We have
\begin{align}\label{eq:mutated_injective_degree}
  \deg^t X_{\sigma k}(\mu_{\sigma k}\Sigma t)=-\deg^tI_k(t)+\sum_{1\leq i\leq n}[b_{ik}(t)]_+\deg^t
  I_i(t)+\sum_{0<j\leq m-n}[b_{n+j,\sigma k}(\Sigma t)]_+ e_{n+j}.
\end{align}
\end{Prop}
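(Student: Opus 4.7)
The plan is to reduce condition (i) to a single scalar equation, then compute the relevant leading degree through the exchange relation, and finally match the two conditions via the piecewise-linear tropical transformation formula.

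First I would peel off the automatic parts of condition (i). The $B$-matrix compatibility $b_{\sigma i,\sigma j}(\mu_{\sigma k}\Sigma t)=b_{ij}(\mu_k t)$ for $i,j\in[1,n]$ follows mechanically from the Fomin-Zelevinsky mutation rule applied to both sides once we substitute the hypothesis $b_{\sigma i,\sigma j}(\Sigma t)=b_{ij}(t)$, since mutating the $\sigma k$-th column of $B(\Sigma t)$ mirrors mutating the $k$-th column of $B(t)$ under the permutation $\sigma$. For the $\pr_n$-degree condition at $i\neq k$, the variable $X_{\sigma i}(\mu_{\sigma k}\Sigma t)$ coincides with $X_{\sigma i}(\Sigma t)=I_i(t)$, whose $\deg^t$-leading degree has $\pr_n$-projection $-e_i$; since $i\neq k$, the $k$-th coordinate of $\deg^t I_i(t)$ is $0$, so the piecewise-linear formula \eqref{eq:degree_mutation} of $\phi_{\mu_k t,t}$ at vertex $k$ fixes the vector, yielding $\pr_n\deg^{\mu_k t}I_i(t)=-e_i$ for free. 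Hence (i) collapses to the single condition $\pr_n\deg^{\mu_k t}X_{\sigma k}(\mu_{\sigma k}\Sigma t)=-e_k$.

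Next I would compute $\deg^t X_{\sigma k}(\mu_{\sigma k}\Sigma t)$ via the exchange relation at $\sigma k$ in $\Sigma t$, which (up to a power of $q^{1/2}$) reads $X_{\sigma k}(\mu_{\sigma k}\Sigma t)*X_{\sigma k}(\Sigma t)=M_++M_-$ with $M_\pm=\prod_i X_i(\Sigma t)^{[\pm b_{i,\sigma k}(\Sigma t)]_+}$. Substituting $X_{\sigma j}(\Sigma t)=I_j(t)$ for $j\in[1,n]$ (so $b_{\sigma j,\sigma k}(\Sigma t)=b_{jk}(t)$) and $X_i(\Sigma t)=X_i$ for $i>n$ gives
\begin{align*}
\deg^t M_\pm = \sum_{i=1}^n[\pm b_{ik}(t)]_+\deg^t I_i(t)+\sum_{j=1}^{m-n}[\pm b_{n+j,\sigma k}(\Sigma t)]_+ e_{n+j}.
\end{align*}
Pointedness of quantum cluster variables in $\cT(t)$ (Theorem \ref{thm:quantum_cluster_expansion}) forces exactly one of $M_\pm$ to strictly $\prec_t$-dominate the other, so $\deg^t X_{\sigma k}(\mu_{\sigma k}\Sigma t)=-\deg^t I_k(t)+\deg^t M_{\mathrm{dom}}$; the formula in (ii) is precisely the scenario $M_{\mathrm{dom}}=M_+$.

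The final step is to verify that this dominance choice is exactly what is detected by the $\pr_n\deg^{\mu_k t}$-condition. Using $(\deg^t I_j(t))_k=-\delta_{jk}$, a direct calculation gives $(\deg^t X_{\sigma k}(\mu_{\sigma k}\Sigma t))_k=1$ in both the $M_+$ and $M_-$ scenarios, so \eqref{eq:degree_mutation} applies with positive $k$-th coordinate. Case-splitting on the sign of $b_{ik}(t)$ and simplifying shows that $\pr_n\deg^{\mu_k t}X_{\sigma k}(\mu_{\sigma k}\Sigma t)$ equals $-e_k$ in the $M_+$ scenario, matching (i), whereas in the $M_-$ scenario it equals $-e_k+B(t)e_k$. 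Thus (i) and (ii) select the same scenario, proving the equivalence. The main technical obstacle is the sign bookkeeping in the piecewise-linear formula \eqref{eq:degree_mutation}; the degenerate case $B(t)e_k=0$ needs separate treatment, but there the principal parts of $\deg^t M_\pm$ already coincide and sign coherence together with the full-rank hypothesis on $\tB(t)$ pins down the dominant monomial to be the one in (ii), so the equivalence still holds.
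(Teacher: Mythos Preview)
Your proposal is correct and follows essentially the same approach as the paper's own proof: both arguments use the exchange relation at $\sigma k$ in $\Sigma t$ to produce two candidate leading degrees, then apply the tropical transformation $\phi_{\mu_k t,t}$ (using that the $k$-th coordinate is $+1$) to test which candidate yields $\pr_n\deg^{\mu_k t}=-e_k$. The main organizational difference is that you treat both scenarios symmetrically and deduce the equivalence in one pass, whereas the paper does the two implications separately—first computing $\phi_{\mu_k t,t}$ assuming \eqref{eq:mutated_injective_degree}, then in the converse direction invoking invertibility of $\pr_n\phi_{\mu_k t,t}|_{\Z^n}$ to recover the $\pr_n$-version of (ii) and using the exchange relation to pin down the full formula. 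Your reduction of (i) to the single condition at $i=k$ (checking the $B$-matrix compatibility and the $i\neq k$ degree conditions separately) is more explicit than the paper, which leaves those checks implicit. The degenerate case $B(t)e_k=0$ is glossed over in both treatments: the paper simply calls it ``the trivial case when $\sigma k$ is an isolated point in $Q(\Sigma t)$'' without further argument, and your appeal to sign coherence and full rank is likewise brief; neither omission affects the main line of reasoning.
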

\begin{proof}
First assume that \eqref{eq:mutated_injective_degree} holds. Omitting the
symbol $(t)$, we can write
\begin{align*}
  \pr_n \deg^t X_{\sigma k}(\mu_{\sigma k}\Sigma t)=e_k+\sum_{1\leq i\leq n:b_{ik}\geq 0}(-b_{ik})e_i.
\end{align*}
And Claim (i) follows
from the following straightforward calculation:
\begin{align*}
\pr_n \deg^{\mu_k t}X_{\sigma k}(\mu_{\sigma k}\Sigma t)=&\pr_n\phi_{\mu_k t,t}\deg^t X_{\sigma k}(\mu_{\sigma k}\Sigma t)\\
=&\pr_n\phi_{\mu_k t,t}\pr_n\deg^t X_{\sigma k}(\mu_{\sigma k}\Sigma t)\\
=&\pr_n(\phi_{\mu_k t,t} e_k +\phi_{\mu_k t,t}\sum_{1\leq i\leq n:b_{ik}\geq 0}(-b_{ik})e_i)\\
=&\pr_n(-e_k+\sum_{1\leq i\leq m:b_{ik}\geq 0}b_{ik}e_i + \sum_{1\leq i\leq n:b_{ik}\geq 0}(-b_{ik})e_i)\\
=&-e_k.
\end{align*}

Conversely, assume that Claim (i)
is true. Because the map
$\pr_n\phi_{\mu_k t,t}$ is invertible on $\Z^n$, the last two steps of above calculation implies
\begin{align}\label{eq:restricted_mutated_injective_degree}
  \pr_n\deg^t X_{\sigma k}(\mu_{\sigma k}\Sigma t)=\pr_n(-\deg^tI_k(t)+\sum_{1\leq i\leq n}[b_{ik}]_+\deg^t I_i(t)).
\end{align}
On the other hand, consider the exchange rule in the seed $\Sigma t$:
\begin{align*}
  X_{\sigma k}(\mu_{\sigma k}\Sigma t)=\frac{\prod_{1\leq i\leq m}X_{\sigma i}(\Sigma
    t)^{[b_{\sigma i,\sigma k}(\Sigma t)]_+}+\prod_{1\leq j\leq m}X_{\sigma j}(\Sigma
    t)^{[b_{\sigma k,\sigma j}(\Sigma t)]_+}}{X_{\sigma k}(\Sigma t)}.
\end{align*}
View this equation in $\cT(t)$, by which we mean consider the Laurent expansions of all terms of the equation in the quantum torus $\cT(t)$. Because $X_{\sigma k}(\mu_{\sigma k}\Sigma t)$ is a
pointed element whose unique leading term has coefficient $1$, its
leading degree must be chosen from either $$\deg^t(\prod_{1\leq i\leq m}X_{\sigma i}(\Sigma
    t)^{[b_{\sigma i,\sigma k}(\Sigma t)]_+})-\deg^tX_{\sigma k}(\Sigma t)$$ or $$\deg^t (\prod_{1\leq j\leq m}X_{\sigma j}(\Sigma
    t)^{[b_{\sigma k,\sigma j}(\Sigma t)]_+})-\deg^t X_{\sigma k}(\Sigma
    t).$$
Their images in $\Z^n$ under the projection $\pr_n$ are different except the trivial case when $\sigma k$ is an isolated point in $Q(\Sigma t)$. \eqref{eq:restricted_mutated_injective_degree} implies that we should choose the former one. This choice gives us
    \eqref{eq:mutated_injective_degree}.
\end{proof}

\begin{Rem}
Notice that the definition of injective-reachable is not affected by quantization
or frozen
variables. 


Let us impose the principal coefficients on $t$, namely $$\tB(t)=\left(
  \begin{array}{c}
    B(t)\\
\mathrm{Id}_n
  \end{array}
\right)$$
 \cf \cite{FominZelevinsky07}.
We can calculate the leading degrees of the cluster variables appearing in \eqref{eq:mutated_injective_degree} following the mutation rule of (extended) $g$-vectors
\cite[(6.12)]{FominZelevinsky07}\cite[(3.32)]{Tran09}. Then we deduce that \eqref{eq:mutated_injective_degree} and Proposition \ref{prop:one_step_injective_mutation} are equivalent to the following property of the matrix
$\tB(\Sigma t)$:
$$b_{n+\sigma(i),\sigma(j)}(\Sigma
t)=-\delta_{ij},\ 1\leq i,j\leq n.$$
\end{Rem}

\subsection{Injective-reachable chain}
\label{sec:chain_seed}

Let $t$ be any seed which is injective-reachable via an associated
pair $(\Sigma_t, \sigma_t)$. We
denote the seed $t$ by $t[0]$.


For all $d\geq 0$, we recursively define the seeds $t[d+1]$ to be $\Sigma_{t[d]}t[d]$ where $\Sigma_{t[d]}=\sigma^d\Sigma_t$. Then $t[d]$ is injective
reachable via the pair $(\Sigma_{t[d]},\sigma_{t[d]})=(\sigma^d\Sigma_t,\sigma_t)$ by Lemma \ref{lem:long_injective_mutation}.

Let
$\ord\sigma_t$ denote the order of the permutation $\sigma_t$. So we have $\sigma_t^{\ord\sigma-1}=\sigma_t^{-1}$. Then
$t[\ord\sigma_t-1]$ is injective-reachable via
\begin{align*}
(\Sigma_{t[\ord\sigma_t-1]},\sigma_{t[\ord\sigma_t-1]})=&(\sigma_t^{\ord\sigma-1}\Sigma_t,\sigma_t)\\
=&(\sigma_t^{-1}\Sigma_t,\sigma_t).
\end{align*}
Moreover,
the principal part of the quiver of the seed $t[\ord\sigma_t]$
is the same as that of $t$. Since injective-reachable is independent
of the coefficient part, the seed
$(\sigma_t^{-1}\Sigma_t)^{-1}t$ is injective-reachable via
$(\sigma_t^{-1}\Sigma_t,\sigma_t)$. In general, for any integer $d\geq 1$, we recursively define the
the seed $t[-d]=(\sigma_t^{-d}\Sigma_t)^{-1} t[-d+1]$, which is injective-reachable
via $(\sigma_t^{-d}\Sigma_t,\sigma_t)$.

Notice that we have $(\sigma_t^{-d}\Sigma_t)^{-1}=\sigma_t^{-d}(\Sigma_t^{-1})$ by the action of $\sigma$ on mutation sequences, which is also denoted by $\sigma_t^{-d}\Sigma_t^{-1}$.

\begin{Def}[Injective-reachable chain]
	The chain of seeds $(t[d])_{d\in \Z}$ is called an
	injective-reachable chain.
\end{Def}

\subsection{Cluster expansions of injective and projectives}

We
make the following assumption on the Laurent expansions of $I_i(t)$ and $P_{\sigma i}(\Sigma
t)$ for the rest of this paper. Its meaning will be clear in Remark \ref{rem:injective_projective}.

Recall that $\sigma^{-1} e_i=e_{\sigma i}$, $1\leq i\leq m$. The following assumption holds for all cluster algebras arising from quivers, \cf Remark \ref{rem:injective_projective}. We refer the reader to Example \ref{eg:compare_inj_proj} for an explicit example.
\begin{Assumption*}[Cluster Expansion]
Given any seed $t$ and vertex $1\leq i\leq n$. When $q^\Hf$ specializes to $1$, the Laurent expansion of
$I_i(t)|_{q^\Hf \mapsto 1}$ in $\cT(t)$ takes the following form:
\begin{align}\label{eq:expand_injective}
  \begin{split}
    I_i(t)|_{q^\Hf \mapsto 1}&=X_{\sigma{i}}(\Sigma t)|_{q^\Hf \mapsto 1}\\
&=X(t)^{\deg^t
      I_i(t)}(1+Y(t)^{e_i}+\sum_{d\in\N^n : d>e_i}\alpha_d
    Y(t)^{d}),
  \end{split}
\end{align}
and the Laurent expansion of
$P_i(\Sigma t)|_{q^\Hf \mapsto 1}$ in $\cT(\Sigma t)$ takes the following form:
\begin{align}\label{eq:expand_projective}
  \begin{split}
    P_{i}(\Sigma t)|_{q^\Hf \mapsto 1}=& X_{\sigma^{-1}i}(t)|_{q^\Hf \mapsto 1}\\
    =&X(\Sigma t)^{\deg^{\Sigma t}P_i(\Sigma t)}Y(\Sigma t)^{p(i,\Sigma
      t)}\\&\qquad\cdot(\sum_{d\in\N^n:e_{i}<d\leq
      p(i,\Sigma t)}\beta_d Y(\Sigma t)^{-d}+Y(\Sigma t)^{-e_{i}}+1),
  \end{split}
\end{align} 
for some dimension vector $p(i, \Sigma t)\in\N^n$, coefficients $\alpha_d,\beta_d\in\Z$.

Moreover, the degrees satisfy
\begin{align}\label{eq:index_coindex}
  \deg^{\Sigma t}(P_i( \Sigma t)Y(\Sigma t)^{p( i,\Sigma
    t)})=\sigma^{-1} \deg^tI_{\sigma^{-1}i}(t).
\end{align}
\end{Assumption*}

\begin{Rem}[Injectives and projectives]\label{rem:injective_projective}
Assume that the cluster algebra arises from a quiver, in other words, the matrix $\tB(t)$ is skew-symmetric. We explain the Cluster Expansion Assumption by the theory of cluster categories. An example will be given in Example \ref{eg:compare_inj_proj}.

We use the cluster category $\cC$ in Section \ref{sec:ice_quiver} to categorify the cluster algebra. Then the quantum cluster variable $X_i(t)$ correspond to an indecomposable rigid object $T_i$ for any $1\leq i\leq m$.

On the one hand, the object $\Sigma T_k$, $1\leq k\leq n$, corresponds
to an injective module: $\Ext^1_{\cU/(\Gamma_F)}(\oplus_{1\leq i\leq n} T_i,\Sigma T_k)$ is the $k$-th injective right module of $\End_{\cU/(\Gamma_F)}(\oplus_{1\leq i\leq n}T_i)$, \cf Section \ref{sec:CY_reduction} and \cite[Section 3.1]{Plamondon10a}. On the other hand, because the object $\Sigma T_k$ has index $-e_k$ in the quotient cluster category $\cU/(\Gamma_F)$, it corresponds to the quantum cluster variable $I_k(t)=X_{\sigma k}(t[1])$. Furthermore, the object $\Sigma T_k$ corresponds to the $\sigma k$-th cluster variable in the seed $t[1]$ for some permutation $\sigma$.

Similarly, the object $T_k$ corresponds to a projective module: $\Ext^1(\oplus_i \Sigma T_i,T_k)$ is the indecomposable projective right module of $\End_{\cU/(\Gamma_F)}(\oplus_{1\leq i\leq n}\Sigma T_i)$ on the vertex corresponding to $\Sigma T_k$, which is labeled as $\sigma k$. Let us denote its dimension vector by $$p(\sigma k,t[1])=\dim \Ext^1_{\cU/(\Gamma_F)}(\Sigma T,T_k)(=\dim \Ext^1_{\cC}(\Sigma T,T_k)).$$

Now \eqref{eq:expand_injective} and \eqref{eq:expand_projective} are just the cluster characters of injective and projective modules respectively (\cite{DerksenWeymanZelevinsky09}).

To see \eqref{eq:index_coindex}, we
rewrite \eqref{eq:quiver_coindex} as
\begin{align*}
  \Ind^{\Sigma T}T_k+\tB(\Sigma T)\cdot \Ext_\cC(\Sigma T,T_k)=\Ind^T \Sigma T_k,
\end{align*}
where the indices count the multiplicities of the summands of $\Sigma T$ and $T$ on each side. Noticing that $\Sigma T_i$ corresponds to the $\sigma i$-th cluster variable in $t[1]$, $T_i$ corresponds to the $i$-th cluster variable in $t$, and $\sigma e_{\sigma i}=e_i$, we rewrite the above equation as
\begin{align*}
\sigma(  \deg^{t[1]}P_{\sigma k}+\tB(t[1])\cdot p(\sigma k,t[1]))=\deg^t I_k(t).
\end{align*}
This give us \eqref{eq:index_coindex} by taking $k=\sigma^{-1}i$.
\end{Rem}

\begin{Eg}[Compare injectives and projectives]\label{eg:compare_inj_proj}
Consider a seed $t$ whose ice quiver $\tQ(t)$ is given in
Figure \ref{fig:A_3_principal_injective}. It is of principal coefficients
with the canonical quantization matrix
\begin{align*}
  \Lambda(t)=\left(
    \begin{array}{cccccccc}
      0&0&0&0&-1&0&0&0\\
0&0&0&0&0&-1&0&0\\
0&0&0&0&0&0&-1&0\\
0&0&0&0&0&0&0&-1\\
1&0&0&0&0&1&0&-1\\
0&1&0&0&-1&0&1&1\\
0&0&1&0&0&-1&0&0\\
0&0&0&1&1&-1&0&0\\
    \end{array}
\right).
\end{align*}

Take the sequence $\Sigma=\mu_1\mu_3\mu_2\mu_4\mu_1$ and the permutation $$\sigma=\left(
  \begin{array}{cccc}
    1&2&3&4\\
4&2&3&1
  \end{array}
\right).$$

The seed $t[-1]$ is obtained from $t$ by applying
$\sigma^{-1}\Sigma^{-1}=\mu_4\mu_1\mu_2\mu_3\mu_4$. Its ice quiver is given in Figure
\ref{fig:A_3_principal_projective}. $t[-1]$ is injective-reachable via $(\sigma^{-1}\Sigma,\sigma)=(\mu_4\mu_3\mu_2\mu_1\mu_4,\sigma)$.

In the quantum torus $\cT(t[-1])$, we have
  \begin{align*}
I_1(t[-1])=X_4(t)=&X^{-e_1+e_5+e_8}(1 + Y_1 + Y_1Y_2),\\
I_2(t[-1])=X_2(t)=&X^{-e_2+e_6+e_7+e_8}(1 + Y_2 + Y_2Y_4),\\
I_3(t[-1])=X_3(t)=&X^{-e_3+e_7}(1 + Y_3 + Y_2Y_3 + Y_2Y_3Y_4),\\
I_4(t[-1])=X_1(t)=&X^{-e_4+e_5+e_6+e_7}(1 + Y_4 + Y_1Y_4).
  \end{align*}

It is straightforward to check that the projective right modules of the
Jacobian algebra of $(Q(t),W(t))$ have dimensions
\begin{align*}
p(1, t)=&e_1+e_2+e_3,\\
p(2, t)=&e_2+e_3+e_4,\\
  p(3, t)=&e_3,\\
p(4, t)=&e_1+e_4.
\end{align*}
The
$Y$-variables of the seed $t$ in the quantum torus $\cT(t)$ are given by
\begin{align*}
  Y_1=&X^{e_2-e_4+e_5}\\
Y_2=&X^{-e_1+e_3+e_4+e_6}\\
Y_3=&X^{-e_2+e_7}\\
Y_4=&X^{e_1-e_2+e_8}.
\end{align*}
The projectives for $t$, obtained by applying the sequence
$(\sigma\Sigma)^{-1}$, take the following form in the quantum torus
$\cT(t)$:
\begin{align*}
P_1( t)=X_4(t[-1])=&X^{-e_3}(1 + Y_3 + Y_2Y_3 + Y_1Y_2Y_3)\\
=&X^{-e_1+e_5+e_6+e_7}(Y^{-e_1-e_2-e_3}+Y^{-e_1-e_2}+Y^{-e_1}+1),\\
P_2( t)=X_2(t[-1])=&X^{e_2-e_3-e_4}(1 + Y_3 + Y_4+Y_3Y_4 + Y_2Y_3Y_4)\\
=&X^{-e_2+e_6+e_7+e_8}(Y^{-e_2-e_3-e_4}+Y^{-e_2-e_4}+Y^{-e_2-e_3}+Y^{-e_2}+1),\\
P_3( t)=X_3(t[-1])=&X^{e_2-e_3}(1 + Y_3)\\
=&X^{-e_3+e_7}(Y_3^{-1}+1),\\
P_4( t)=X_1(t[-1])=&X^{-e_1}(1 + Y_1 + Y_1Y_4)\\
=&X^{-e_4+e_5+e_8}(Y^{-e_1-e_4}+Y^{-e_4}+1).
\end{align*}
Cluster Expansion Assumption holds for this example. Notice that, this example is an accidental case where the Laurent expansions of quantum cluster variables do not involve $q$-coefficients.
\end{Eg}

\begin{figure}[htb!]
 \centering
\beginpgfgraphicnamed{fig:A_3_principal_injective}
  \begin{tikzpicture}
\node [shape=circle, draw] (v1) at (7,4) {1}; 
    \node  [shape=circle, draw] (v2) at (6,3) {2}; 
    \node [shape=circle,  draw] (v3) at (5,2) {3};

\node  [shape=circle, draw] (v4) at (5,4) {4};

    \node [shape=diamond,  draw] (v8) at (3,4) {8};
\node  [shape=diamond, draw] (v6) at (4,3) {6};
    \node  [shape=diamond, draw] (v7) at (3,2) {7}; 
    \node [shape=diamond,  draw] (v5) at (9,4) {5};

    \draw[-triangle 60] (v3) edge (v2); 
    \draw[-triangle 60] (v2) edge (v1);
     \draw[-triangle 60] (v4) edge (v2);
    \draw[-triangle 60] (v1) edge (v4);

    \draw[-triangle 60] (v5) edge (v1); 
    \draw[-triangle 60] (v6) edge (v2); 
    \draw[-triangle 60] (v7) edge (v3); 
    \draw[-triangle 60] (v8) edge (v4); 
     
  \end{tikzpicture}
\endpgfgraphicnamed
\caption{An ice quiver $\tQ(t)$ of principal coefficients}
\label{fig:A_3_principal_injective}
\end{figure}
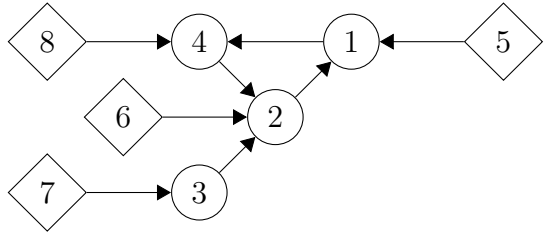

\begin{figure}[htb!]
 \centering
\beginpgfgraphicnamed{fig:A_3_principal_projective}
\begin{tikzpicture}

\node [shape=circle, draw] (v1) at (7,4) {4}; 
    \node  [shape=circle, draw] (v2) at (6,3) {2}; 
    \node [shape=circle,  draw] (v3) at (5,2) {3};

\node  [shape=circle, draw] (v4) at (5,4) {1};

    \node [shape=diamond,  draw] (v8) at (3,4) {8};
\node  [shape=diamond, draw] (v6) at (8,2.5) {6};
    \node  [shape=diamond, draw] (v7) at (3,2) {7}; 
    \node [shape=diamond,  draw] (v5) at (9,3) {5};

    \draw[-triangle 60] (v3) edge (v2); 
    \draw[-triangle 60] (v2) edge (v1);
     \draw[-triangle 60] (v4) edge (v2);
    \draw[-triangle 60] (v1) edge (v4);

    \draw[-triangle 60] (v1) edge (v6);
    \draw[-triangle 60] (v1) edge (v7);
 
    \draw[-triangle 60] (v4) edge (v5);
 
    \draw[-triangle 60] (v2) edge (v8); 
    \draw[-triangle 60] (v8) edge (v3); 

    \draw[-triangle 60] (v6) edge (v3); 
     
\end{tikzpicture}\endpgfgraphicnamed
\caption{The quiver $\tQ(t[-1])$ obtained from Figure {\ref{fig:A_3_principal_injective}}
  after mutations at $(4, 1, 2, 3, 4)$}
\label{fig:A_3_principal_projective}
\end{figure}
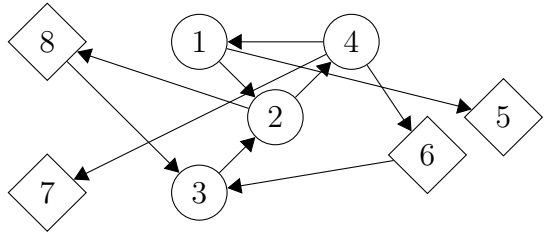

\subsection{Injective pointed set}\label{sec:injective_pointed}

For any $g\in\Z^n$, denote $g_+=([g_i]_+)_{1\leq i\leq n}$
and $g_-=([-g_i]_+)_{1\leq i\leq n}$. We will also view $g$,
$g_\pm\in\Z^n$ as elements of $\Z^m$ by putting $0$ at the last $m-n$ components.

For any $(f,d_X,d_I)\in \Z^{m-n}\oplus\N^n\oplus \N^n$, we define the
following pointed element of $\cT(t)$
\begin{align}
  \inj^t(f,d_X,d_I)=[\prod_{1\leq i\leq m-n }X_{n+i}^{f_i}*X(t)^{d_X}*I(t)^{d_I}]^t,
\end{align}
where $I(t)^{d_I}$ denotes $[\prod_{1\leq k\leq n} I_k(t)^{(d_I)_k}]^t$.

Notice that $\deg^t I_k(t)=-e_k+f^{(k)}$ for some $f^{(k)}\in\Z^{\{n+1,\ldots,m\}}$.

\begin{Def}[Injective pointed set]\label{def:injective_pointed_element}
For any $\tg\in\degL(t)$, put $g=\pr_n \tg\in\Z^n$ and $g_f=\tg-g-\sum_{1\leq k\leq n}[-g_k]_+\cdot f^{(k)}\in\Z^{\{n+1,\ldots,m\}}$. Define the pointed element $\inj^t(\tg)=\inj^t(g_f,g_+,g_-)$.

The injective pointed set $\inj^t$ is the set $\{\inj^t(\tg)|\tg\in\degL(t)\}$.
\end{Def}

By definition, $\inj^t$ is a $\degL(t)$-pointed $\Z[q^{\pm\Hf}]$-linearly independent subset of
$\cT(t)$. The following lemma suggests that its elements behave well under one-step mutations.
\begin{Lem}[Neighboring injective pointed set]\label{lem:neighbouring_pointed}
For any given seed $t$ and $1\leq k\leq n$, denote $t'=\mu_kt$. Then the set $\inj^{t'}$ is
$\degL(t)$-pointed in $\cT(t)$ with respect to the order
$\prec_{t}$. Furthermore, for any $\tg\in\degL(t')$, we have
\begin{align*}
\deg^t  \inj^{t'}(\tg)=\phi_{t,t'}(\tg).
\end{align*}
\end{Lem}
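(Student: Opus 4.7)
The plan is to reduce the claim to the facts that (a) each cluster variable of $t'$ and each injective $I_k(t')$, viewed in the quantum torus $\cT(t)$, is pointed with leading degree obtained by applying the tropical map $\phi_{t,t'}$; (b) the twisted product of pointed elements in a fixed quantum torus is pointed, with leading degree the sum of the factors' leading degrees; and (c) the tropical map $\phi_{t,t'}$ is additive on sign-coherent collections (Lemma \ref{lem:additivity}). The ``only'' content of the statement is that these three facts line up correctly for the particular product defining $\inj^{t'}(\tg)$.

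First I would unpack the definition: for $\tg \in \degL(t')$, write $\inj^{t'}(\tg)=[X^{g_f}*X(t')^{g_+}*I(t')^{g_-}]^{t'}$ and note that $\tg = g_f + g_+ - g_- + \sum_{k'}(g_-)_{k'}f^{(k')}$, where $\deg^{t'}I_{k'}(t')=-e_{k'}+f^{(k')}$. Then by Theorem~\ref{thm:change_index_compatible}, in $\cT(t)$ each factor $X_j(t')$ is pointed at $\phi_{t,t'}(e_j)$, each $I_{k'}(t')=X_{\sigma(k')}(t'[1])$ is pointed at $\phi_{t,t'}(-e_{k'}+f^{(k')})$, and each frozen $X_{n+i}$ is pointed at $e_{n+i}=\phi_{t,t'}(e_{n+i})$. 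Applying fact (b) inductively, $\inj^{t'}(\tg)$ is pointed in $\cT(t)$ with leading degree
\[
\phi_{t,t'}(g_f)+\sum_{j}(g_+)_j\phi_{t,t'}(e_j)+\sum_{k'}(g_-)_{k'}\phi_{t,t'}(-e_{k'}+f^{(k')}).
\]

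The key step is to collapse this into $\phi_{t,t'}(\tg)$ using Lemma~\ref{lem:additivity}. Here is where I expect the main check: one must verify that all the vectors appearing above are sign coherent at the $k$-th component (the mutation direction). The $k$-th components are: $0$ for $g_f$ and for $(g_+)_j e_j$ with $j\neq k$; $[g_k]_+ \geq 0$ for $j=k$; $0$ for $(g_-)_{k'}(-e_{k'}+f^{(k')})$ with $k'\neq k$ (since $f^{(k')}$ is supported on frozen indices); and $-[-g_k]_+ \leq 0$ for $k'=k$. Because $[g_k]_+$ and $[-g_k]_+$ cannot both be nonzero, all nonzero $k$-th components share a common sign, so sign coherence holds. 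Lemma~\ref{lem:additivity} then moves $\phi_{t,t'}$ outside the sum, yielding
\[
\deg^t \inj^{t'}(\tg) = \phi_{t,t'}\bigl(g_f + g_+ - g_- + \sum_{k'}[-g_{k'}]_+ f^{(k')}\bigr) = \phi_{t,t'}(\tg).
\]

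Finally, since $\phi_{t,t'}:\degL(t')\to\degL(t)$ is a bijection (inverse $\phi_{t',t}$), the leading degrees $\{\deg^t\inj^{t'}(\tg)\mid \tg\in\degL(t')\}$ exhaust $\degL(t)$, so $\inj^{t'}$ is $\degL(t)$-pointed in $\cT(t)$ with respect to $\prec_t$, completing the argument. The only substantive point is the sign-coherence verification above; the rest is bookkeeping that the definition of $\inj^{t'}(\tg)$ was tailored to support.
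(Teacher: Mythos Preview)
Your argument correctly establishes that $\inj^{t'}(\tg)$ has a \emph{unique maximal degree} in $\cT(t)$, equal to $\phi_{t,t'}(\tg)$. However, ``pointed'' (Definition~\ref{def:pointed_element}) also requires the leading coefficient to be $1$, and this is where your fact~(b) is not enough. The twisted product of pointed elements in $\cT(t)$ has leading coefficient $q^{\frac{1}{2}\Lambda(t)(\cdot,\cdot)}$, not $1$; it is $[\,\cdot\,]^{t}$ that restores coefficient~$1$. But $\inj^{t'}(\tg)$ is defined with the normalization $[\,\cdot\,]^{t'}$, i.e.\ using $\Lambda(t')$. So when you view $\inj^{t'}(\tg)$ in $\cT(t)$, its leading coefficient is $q^{\alpha-\alpha'}$, where $\alpha$ (resp.\ $\alpha'$) is the normalization exponent computed with $\Lambda(t)$ (resp.\ $\Lambda(t')$) on the leading degrees of the factors. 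You need $\alpha=\alpha'$.

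The paper's proof supplies exactly this missing step: after the degree computation (which matches yours), it checks that for each pair of factors the bilinear form is preserved, namely
\[
\Lambda(t)\bigl(\phi_{t,t'}\eta^{1},\,\phi_{t,t'}\eta^{2}\bigr)=\Lambda(t')(\eta^{1},\eta^{2}),
\]
where $\eta^{1},\eta^{2}$ run over the leading degrees of the factors $X(t')^{g_f}$, $X(t')^{g_+}$, $I(t')^{g_-}$. By decomposing into sign-coherent unit vectors (the same sign coherence you verified), this reduces to $\Lambda(t)(\phi_{t,t'}e_i,\phi_{t,t'}(\pm e_j))=\Lambda(t')(e_i,\pm e_j)$, which follows directly from the mutation rule $\Lambda(t)=E_\epsilon^T\Lambda(t')E_\epsilon$ and the explicit form of $E_\epsilon$. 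Once this is in place, $[\,\cdot\,]^{t'}=[\,\cdot\,]^{t}$ on this product, and pointedness in $\cT(t)$ follows.
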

\begin{proof}
The degrees in $\degL(t')$ change by the piecewise linear formula
\eqref{eq:degree_mutation} which depends on the sign of the $k$-th
components. In our situation, the degrees of the factors of $\inj^{t'}(\tg)$
are $\deg^{t'}X(t')^{g_f}$, $\deg^{t'}X(t')^{g_+}$,
$\deg^{t'}I(t')^{g_-}$. They are sign coherent at the $k$-th
component. Therefore, we can use Lemma \ref{lem:additivity} and deduce that
\begin{align*}
\phi_{t,t'}(\tg)=&\phi_{t,t'}\deg^{t'}(X(t')^{g_f}*X(t')^{g_+}*I(t')^{g_-})\\
=&\phi_{t,t'}\deg^{t'}(X(t')^{g_f})+\phi_{t,t'}\deg^{t'}(X(t')^{g_+})+\phi_{t,t'}\deg^{t'}(I(t')^{g_-})\\
=&\deg^t(X(t')^{g_f})+\deg^t(X(t')^{g_+})+\deg^t(I(t')^{g_-})\ \ (\mathrm{Theorem\ \ref{thm:change_index_compatible}})\\
=&\deg^t(X(t')^{g_f}*X(t')^{g_+}*I(t')^{g_-})\\
=&\deg^t \inj^{t'}(\tg).
\end{align*}

It remains to check that the coefficient of the leading term remains
to be $1$. We have to verify the following equality:
\begin{align*}
\Lambda(t)(\phi_{t, t'}\deg^{t'}X(t')^{g_f},\phi_{t,t'}X(t')^{g_+})=&\Lambda(t')(\deg^{t'}X(t')^{g_f},\deg^{t'}X(t')^{g_+}),\\
\Lambda(t)(\phi_{t,t'}\deg^{t'}X(t')^{g_f},\phi_{t,t'}I(t')^{g_-})=&\Lambda(t')(\deg^{t'}X(t')^{g_f},\deg^{t'}I(t')^{g_-}),\\
\Lambda(t)(\phi_{t,t'}\deg^{t'}X(t')^{g_+},\phi_{t,t'}I(t')^{g_-})=&\Lambda(t')(\deg^{t'}X(t')^{g_+},\deg^{t'}I(t')^{g_-})
\end{align*}
Decompose the degrees appearing above into sum of
unit vectors by Lemma \ref{lem:additivity}. Then it suffices to verify the
following equations for any pair of unit vectors $(e_i,e_j)$, $1\leq i,j\leq m$,
sign-coherent at the $k$-th component:
\begin{align*}
\Lambda(t)(\phi_{t,t'}e_i,\phi_{t,t'}(\pm e_j))=&\Lambda(t')(e_i,\pm e_j),\\
\end{align*}
These equations follow from the mutation rule of quantization
matrices, $\forall i,j\neq k$:
\begin{align*}
  \Lambda(t)_{ij}&=\Lambda(t')_{ij},\\
\Lambda(t)(e_i,-e_k+[-b_{jk}(t')]_+e_j)&=e_i^T(E_{-1}(t')^{T}\Lambda(t')E_{-1}(t'))E_{-1}(t')e_k\\
&=e_i^T\Lambda(t')e_k\\
\Lambda(t)(e_i,e_k-[-b_{jk}(t')]_+e_j)&=e_i^T(E_{1}(t')^{T}\Lambda(t')E_{1}(t'))E_{1}(t')(-e_k)\\
&=e_i^T\Lambda(t')(-e_k).
\end{align*}
\end{proof}

\begin{Rem}
  The notion of leading terms depends on the choice of seed and so does
  the normalization factor. Different pointed elements
  $Z_i\in\cT(t^1)$ might have identical leading
  degrees in the quantum torus $\cT(t^2)$ of the new seed $t_2$, and a
  pointed element $Z$ in $\cT(t^1)$ might not
  be pointed in the $\cT(t^2)$. The normalization factor of the twisted
  product $Z_1*Z_2$
  might also change if they are not quantum cluster variables in the same cluster, cf. Example \ref{eg:A_2}.
\end{Rem}

\begin{Eg}[Injective pointed sets]\label{eg:A_2}
  We consider the initial acyclic quiver $Q=Q(t_0)$ of type $A_2$ in Figure
  \ref{fig:A_2}. Then $\tB(t_0)=B=
  \left(\begin{array}{cc}
    0&-1\\
1&0
  \end{array}\right).
$ Take the quantization matrix $\Lambda(t_0)$ to be $-B^{-1}=B$.

Then $t_0$ is injective-reachable via
$(\Sigma,\sigma)=(\mu_2\mu_1,\id)$. We have 
\begin{align*}
X_i(t_0)&=X_i,\ i=1,2,\\
I_1(t_0)&=X_1^{-1} + X_1^{-1}X_2=X^{-e_1}(1+Y_1)\\
  I_2(t_0)&=X_1^{-1}X_2^{-1} + X_2^{-1} + X_1^{-1}=X^{-e_2}(1+Y_2+Y_1Y_2)
\end{align*}
The set $\inj^{t_0}$ consists of the normalized ordered products of $X_i(t_0)$ and $I_j(t_0)$
such that $X_i(t_0)$ and $I_i(t_0)$ do not appear simultaneously. It is apparently $\degL(t_0)=\Z^2$ pointed. 

Consider the neighboring seed $\mu_1 t_0$, its quantization matrix is
$\Lambda(\mu_1 t_0)=-B$. In the
quantum torus $\cT(\mu_1 t_0)$, we have the following Laurent expansions
\begin{align*}
  X_1(t_0)&=X^{-e_1+e_2}(1+Y_1)\\
X_2(t_0)&=X_2\\
I_1(t_0)&=X_1\\
I_2(t_0)&=X^{-e_2}(1+Y_2).
\end{align*}
It is straightforward to check that $\inj^{t_0}$ is a $\degL(\mu_1 t_0)$-pointed set.

If we proceed to the seed $\mu_2\mu_1t_0=t_0[1]$. The quantization
matrix $\Lambda(t_0[1])=B$. In the
quantum torus $\cT(t_0[1])$, we have the following Laurent expansions
\begin{align*}
X_1(t_0)&=P_1(t_0[1])=X^{-e_2}(1+Y_2+Y_1Y_2)\\
X_2(t_0)&=P_2(t_0[1])=X^{e_1-e_2}(1+Y_2)\\
I_1(t_0)&=X_1\\
I_2(t_0)&=X_2.
\end{align*}

It is straightforward to check that the set of leading degrees of
$\inj^{t_0}$ in this seed is $\N\times \Z$. In fact, the different
pointed elements $[X_1(t_0)*I_2(t_0)]^{t_0}$ and $1$
in $\cT(t_0)$ will have the same leading degree
$0$ in $\cT(t_0[1])$. The normalization factors of $X_1*I_2$ are also
different in $\cT(t_0)$ and $\cT(t_0[1])$.
\end{Eg}

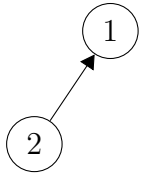
\begin{figure}[htb!]
 \centering
\beginpgfgraphicnamed{fig:A_21}
\begin{tikzpicture}
\node [shape=circle, draw] (v1) at (2,4) {1};
    \node [shape=circle, draw] (v2) at (1,2.5) {2};
    \draw[-triangle 60] (v2) edge (v1); 
\end{tikzpicture}
\endpgfgraphicnamed
\caption{A quiver $Q$ of type $A_2$.}
\label{fig:A_2}
\end{figure}

\section{Triangular bases}
\label{sec:triangular_basis}

In this section, we introduce the key tool of this paper: the common triangular basis of a quantum cluster algebra.

\subsection{Common triangular bases in brief}
\label{sec:triangular_basis_brief}
In this subsection, we define the common triangular bases and present theorems that guarantee their existence. We give a brief strategy of how we will treat them for the rest of this paper.

\subsubsection*{Definitions}

Let $\qClAlg$ be a given injective-reachable quantum cluster algebras. We will assume that Cluster Expansion Assumption is satisfied, which is true for cluster algebras arising from quivers.

Let $t$ be any given seed of $\qClAlg$. Recall that we have defined the dominance order (partial order) $\prec_t$ on the degree lattice $\degL(t)$. Also, we have the degree map $\deg^t$ from the set of pointed elements $\ptSet(t)$ to $\degL(t)$. In addition, the normalization operator $[\ ]^t$ gives $[q^s Z]^t=Z$ for any pointed element $Z\in\ptSet(t)$, $s\in\Hf\Z$.

Recall that the injectives $I_k(t)$, $1\leq k\leq n$, are the quantum cluster variables whose leading degrees $\deg^t I_k(t)$ take the form $-e_k+f^{(k)}$ for some $f^{(k)}\in\Z^{\{n+1,\ldots,m\}}$. They are quantum cluster variables of the seed $t[1]$.

\begin{Def}[Triangular basis]\label{def:triangularBasis}
Let $t$ be any seed of a given quantum cluster algebra $\qClAlg$. A $\Z[q^{\pm\Hf}]$-basis of $\qClAlg$ is called a triangular basis with respect to $t$, if it satisfies the following properties.
\begin{itemize}
\item It contains the quantum cluster monomials in the seeds $t$ and $t[1]$.
\item (bar-invariance) All of its elements are invariant under the bar involution.
\item (parameterization) It is $\degL(t)$-pointed: it is in bijection with $\degL(t)$ via the degree map $\deg^t$.
\item (triangularity)  For any quantum cluster variable $X_i(t)$ in the seed $t$ and any basis element $S$, the normalized twisted product $[X_i(t)*S]^t$ is $(\prec_t,\bm)$-unitriangular to the basis:
$$
[X_i(t)*S]^t=b+\sum_{\deg^t b'\prec_t \deg^t b}c_{b'}b',
$$
where $c_{b'}\in \bm$, $\deg^t b=\deg^t X_i(t)+\deg^t S$, and $b,b'$ are basis elements.
\end{itemize}
\end{Def}

We can lessen the triangularity condition in the definition of a triangular basis and obtain the notion of a weakly triangular basis (Definition \ref{def:weakly_triangular_basis}). A weakly triangular basis \wrt a seed $t$, if exists, is unique (Lemma \ref{lem:basis_property}(i)), which we denote by $\can^t$. A triangular basis is the weakly triangular basis (Lemma \ref{lem:basis_property}(ii)) and, correspondingly, we still use $\can^t$ to denote it and indicate that it is the triangular basis.

We will expect the basis to have the following property.
\begin{Def}[Positive basis]\label{def:positive_basis}
	A basis of a $\Z[q^{\pm\Hf}]$-algebra said to be positive, if its structure constants with respect to multiplication belong to $\N[q^{\pm\Hf}]$.
\end{Def}

Notice that Definition \ref{def:triangularBasis} depends on the seed $t$ chosen. The parametrization is demanded by Fock-Goncharov conjecture (Conjecture \ref{conj:FG_conj}), and the triangularity is similar to Leclerc's conjecture (Remark \ref{rem:Leclerc_conjecture}). By the following definition, we expect the parametrization to be compatible \wrt different seeds (Definition \ref{def:compatible_pointed_set}).

\begin{Def}[Common triangular basis]\label{def:common_triangular_basis}
	Let $T$ be a set of seeds. A basis of the quantum cluster algebra $\qClAlg$ is called the common triangular basis \wrt $T$, if it is the triangular basis $\can^t$ \wrt each $t\in V$ and, moreover, $\can^t$, $\can^{t'}$ are compatible for any $t,t'\in T$.
	
	When $T$ consists of all seeds of $\qClAlg$, the corresponding common triangular basis is called the common triangular basis of $\qClAlg$.
\end{Def}

It follows from its definition that the common triangular basis, if exists, contains all the quantum cluster monomials (Lemma \ref{lem:basis_property}(iii)).

\begin{Lem}\label{lem:FG_conj}
	If the common triangular basis of $\qClAlg$ exists, then it implies the Fock-Goncharov basis conjecture (Conjecture \ref{conj:FG_conj}). 
\end{Lem}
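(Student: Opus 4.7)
The plan is to directly unpack the definitions; no substantive work is required, since Conjecture \ref{conj:FG_conj} is built into the notion of a common triangular basis. Let $\can$ be the common triangular basis of $\qClAlg$. By Definition \ref{def:common_triangular_basis}, for every seed $t$ the basis $\can$ agrees with the triangular basis $\can^t$ \wrt $t$, and by Definition \ref{def:triangularBasis} this set is $\degL(t)$-pointed. In particular, the leading-degree map $\deg^t \colon \can \iso \degL(t)$ is a bijection, which supplies the horizontal arrows required by the diagram in Conjecture \ref{conj:FG_conj}.

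For commutativity, I fix two seeds $t$ and $t'$. Definition \ref{def:common_triangular_basis} asserts that $\can^t$ and $\can^{t'}$ are compatible pointed sets in the sense of Definition \ref{def:compatible_pointed_set}, i.e.\
\[
\can^{t'}(\phi_{t',t}\eta)=\can^{t}(\eta),\qquad \forall\ \eta\in\degL(t).
\]
Rewriting this in terms of the inverse of the degree map, it says that a single element $b\in\can$ with $\deg^t b=\eta$ also satisfies $\deg^{t'} b=\phi_{t',t}\eta$. Thus the square
\[
\begin{array}{ccc}
\can & \xrightarrow{\deg^t} & \degL(t)\\
\| & & \downarrow\phi_{t',t}\\
\can & \xrightarrow{\deg^{t'}} & \degL(t')
\end{array}
\]
commutes, which is exactly Conjecture \ref{conj:FG_conj}.

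There is no genuine obstacle: the entire content of the lemma is that the adjective ``common'' in Definition \ref{def:common_triangular_basis}, unwound through Definition \ref{def:compatible_pointed_set}, is verbatim the compatibility of parameterizations under tropical transformations demanded by Fock and Goncharov. The only thing worth noting is that the tropical transformations $\phi_{t',t}$ used here agree, by Theorem \ref{thm:change_index_compatible}, with the change of leading degrees of any fixed element of $\qClAlg$ between the quantum tori $\cT(t)$ and $\cT(t')$, so the statement of Conjecture \ref{conj:FG_conj} and that of the compatibility in Definition \ref{def:compatible_pointed_set} refer to the same piecewise-linear map.
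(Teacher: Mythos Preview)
Your proof is correct and is exactly the unpacking of definitions that the paper intends; the paper itself leaves this lemma without proof because, as you observe, the compatibility clause in Definition~\ref{def:common_triangular_basis} (via Definition~\ref{def:compatible_pointed_set}) is precisely the commutativity of the diagram in Conjecture~\ref{conj:FG_conj}. Your final remark invoking Theorem~\ref{thm:change_index_compatible} is unnecessary, since both the conjecture and the compatibility definition refer to the same map $\phi_{t',t}$ by notation, but it does no harm.
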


\subsubsection*{Existence Theorems}

We now present various theorems guaranteeing the existence of the common triangular basis, whose proofs will be postponed to the end of this section.

\begin{Thm}[Existence Theorem]\label{thm:induction}
	Let $\qClAlg$ be an injective-reachable quantum cluster algebra which satisfies Cluster Expansion Assumption. Let $(t[d])_{d\in \Z}$ be an injective-reachable
	chain of seeds. Assume that the common triangular basis with respect to $\{t[d]\}$ exists and is positive. Then, the common triangular basis of $\qClAlg$ exists.
\end{Thm}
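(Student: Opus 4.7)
The plan is to show by induction on mutation distance that the given common triangular basis $\can$ \wrt the chain $\{t[d]\}_{d\in\Z}$ is in fact the triangular basis \wrt every seed, with parametrizations compatible under all tropical transformations. Since any seed $t^{*}$ can be connected to some $t[d]$ by a finite mutation sequence $\mu_{k_N}\cdots\mu_{k_1}$, it suffices to establish the following one-step propagation statement: if $\can$ is the triangular basis \wrt a seed $t$ with parametrization compatible (in the sense of Definition \ref{def:compatible_pointed_set}) with those of the chain, and $t'=\mu_k t$ for some exchangeable $k$, then $\can$ is also the triangular basis \wrt $t'$, again with compatible parametrization, and is still positive.

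I would carry out this one-step propagation in two stages, exactly as foreshadowed in the introduction. First, I would invoke Proposition \ref{prop:prove_X} to show that the new quantum cluster variable $X_k(t')$ belongs to $\can^{t}$; the hypotheses of that proposition are provided by the expected compatibility of parametrizations and the positivity of the structure constants of $\can$, both of which are available by the inductive hypothesis and by the positivity assumption on the chain. Second, with $X_k(t')$ placed in $\can^{t}$, I would apply Proposition \ref{prop:condition_change_seed} to construct a triangular basis \wrt $t'$ from $\can$, and then invoke the uniqueness of the weakly triangular basis (Lemma \ref{lem:basis_property}) to identify this construction with $\can$ itself as a set. Compatibility of the new parametrization $\deg^{t'}$ with $\deg^{t}$ follows from Theorem \ref{thm:change_index_compatible} combined with Lemma \ref{lem:neighbouring_pointed}, and transitivity of compatibility then secures compatibility with the whole chain $\{t[d]\}$.

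To close the induction, I must also transport positivity across each mutation. This is automatic once $\can$ has been identified as the triangular basis \wrt $t'$, because multiplicative structure constants are intrinsic to $\qClAlg$ and independent of the seed used to describe them. The main obstacle is verifying the hypothesis of Proposition \ref{prop:prove_X} at each step, which amounts to controlling the $\prec_{t}$-leading degrees of the normalized products $[X_k(t')*\can(\tg)]^{t}$ and ensuring that their expansions into $\can$ remain $(\prec_{t},\bm)$-unitriangular. This relies crucially on the sign-coherence built into the tropical transformation $\phi_{t',t}$ (Lemma \ref{lem:additivity}), on the behavior of the injective pointed set recorded in Lemma \ref{lem:neighbouring_pointed}, and ultimately on the positivity of $\can$, which prevents cancellation and forces the expected leading term to actually dominate.
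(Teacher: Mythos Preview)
Your proposal identifies the two workhorse propositions correctly, but the inductive hypothesis you carry is too weak, and this causes two concrete gaps.

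First, Proposition \ref{prop:prove_X} does not apply with only the triangular basis at the single seed $t$: its hypotheses require the weakly triangular bases at \emph{both} $t$ and $t[-1]$, together with the compatibility $\can^{t}(\deg^{t}X_k')=\can^{t[-1]}(\deg^{t[-1]}X_k')$. Your one-step statement only assumes $\can$ is triangular at $t$ (compatible with the original chain), but once $t$ has been mutated away from the initial chain, nothing in your hypothesis supplies the triangular basis at $t[-1]$; note that $t[-1]$ is at the same mutation distance from the original chain as $t$ itself (Lemma \ref{lem:long_injective_mutation}), so a distance induction on single seeds cannot bootstrap it.

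Second, even at the first step you only produce $X_k(\mu_k t)\in\can^{t}$, whereas Proposition \ref{prop:condition_change_seed} requires \emph{admissibility} in direction $k$, i.e.\ also $I_k(\mu_k t)\in\can^{t}$. Since $I_k(\mu_k t)=X_{\sigma k}(\mu_{\sigma k}t[1])$, this is obtained by a second application of Proposition \ref{prop:prove_X} at the seed $t[1]$ (using the pair $t[1],t[0]$). You never invoke this.

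Both gaps disappear if you strengthen the induction to carry an entire injective-reachable chain, which is exactly what the paper does: assume the common triangular basis w.r.t.\ a chain $\{t'[d]\}_{d\in\Z}$; then Proposition \ref{prop:prove_X} applied at every $t'[d]$ (using its neighbor $t'[d-1]$) yields $X_{\sigma^d k}(\mu_{\sigma^d k}t'[d])\in\can$ for all $d$, hence admissibility in direction $\sigma^d k$ at each $t'[d]$; Proposition \ref{prop:condition_change_seed} then upgrades the whole chain to $\{(\mu_k t')[d]\}=\{\mu_{\sigma^d k}t'[d]\}$. Iterating along any mutation sequence reaches every seed.
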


The correction technique (Theorem \ref{thm:correction}) allows us to reduce the conditions in Existence theorem to a finite criterion.
\begin{Thm}[Reduced Existence Theorem]\label{thm:reduction}
	Let $\qClAlg$ be an injective-reachable quantum cluster algebra which satisfies Cluster Expansion Assumption. Let $t$ be a seed
	injective-reachable via $(\Sigma, \sigma)$. Assume that the triangular basis $\can^t$ exists, is positive, and 
	contains the quantum cluster variables
	obtained along the mutation sequences $\Sigma$ from $t$ to $t[1]$ and along the sequence $\sigma^{-1}\Sigma^{-1}$ from $t$ to $t[-1]$. Then the common triangular basis of $\qClAlg$ exists.
\end{Thm}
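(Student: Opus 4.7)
The strategy is to promote the finite data at the single seed $t$ to a common triangular basis with respect to the entire injective-reachable chain $\{t[d]\}_{d\in\Z}$, and then invoke Existence Theorem \ref{thm:induction}. The key is that the seeds $t[d]$ are pairwise similar (via the powers of $\sigma$, as constructed in Section \ref{sec:chain_seed}), so once the triangular basis is established at $t$ with the prescribed finite properties, similarity transports everything along the chain.

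First, I would pass from $\can^t$ to a triangular basis $\can^{t[1]}$ compatible with $\can^t$ by iterating the one-step mutation analysis along the sequence $\Sigma$. The hypothesis that every quantum cluster variable encountered along $\Sigma$ lies in $\can^t$ is exactly the finite input needed so that at each intermediate seed along $\Sigma$ the previous triangular basis admits a well-defined mutated counterpart which remains bar-invariant, $\degL$-pointed, triangular, and positive. This step relies on the anticipated Proposition \ref{prop:condition_change_seed} (the ``forward'' half of the one-step mutation dictionary), which combines the sign-coherent additivity of Lemma \ref{lem:additivity} with Lemma \ref{lem:neighbouring_pointed} to relate the dominance orders $\prec_t$ and $\prec_{\mu_k t}$ on the common underlying set of the two bases.

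Second, I would leverage the similarity between $t$ and $t[1]$ together with the correction technique (Theorem \ref{thm:correction}) and the variation map $\Var$ (Definition \ref{def:variation_map}, Lemma \ref{lem:variation}) to transfer the finite criterion: the quantum cluster variables along $\sigma\Sigma$ from $t[1]$ to $t[2]$ are, up to correction monomials in frozen variables, the $\Var$-images of those along $\Sigma$ from $t$ to $t[1]$, and the set $\can^{t[1]}$ (which coincides with $\can^t$ as a set in $\qClAlg$ by compatibility) contains them. Positivity of structure constants is preserved because the correction factors are monomials in frozen variables with trivial sign. Iterating this step in both directions produces a coherent family of triangular bases $\can^{t[d]}$ for every $d\in\Z$ that agree as a single subset of $\qClAlg$ and whose degree parametrizations intertwine the tropical transformations $\phi_{t[d'],t[d]}$. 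This is precisely a common triangular basis with respect to $\{t[d]\}_{d\in\Z}$.

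Finally, with the hypotheses of Existence Theorem \ref{thm:induction} in force, the common triangular basis of $\qClAlg$ exists, completing the reduction. The principal obstacle lies in the first step: verifying that the one-step mutation process genuinely preserves the triangularity condition of Definition \ref{def:triangularBasis} and produces a basis compatible with the tropical transformation, despite the fact that pointedness in $\cT(t)$ does not automatically imply pointedness in $\cT(\mu_k t)$ (as warned in Example \ref{eg:A_2}). Controlling this requires the expected compatibility with tropical transformations to force normalized twisted products $[X_i(\mu_k t)*\can^{\mu_k t}(\eta)]^{\mu_k t}$ to unfold $(\prec_{\mu_k t},\bm)$-unitriangularly into $\can^{\mu_k t}$, and it is here that the finite cluster-expansion hypothesis at $t$ does the essential work by supplying enough known triangular decompositions to propagate through the mutation.
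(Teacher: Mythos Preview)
Your overall architecture is right: use Proposition \ref{prop:condition_change_seed} to walk along a mutation sequence and get compatibility of the triangular basis with that of a neighboring seed in the chain, then use similarity (Proposition \ref{prop:initial_seed_cond}, Lemma \ref{lem:variation_triangular_basis}) to propagate to all $t[d]$, and finally invoke Theorem \ref{thm:induction}. That is exactly the paper's strategy.

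However, there is a genuine gap in your first step. To apply Proposition \ref{prop:condition_change_seed} at an intermediate seed $t'$ in direction $k$, you need \emph{admissibility}: both $X_k(\mu_k t')$ \emph{and} $I_k(\mu_k t')$ must lie in $\can^{t'}$. You account only for the $X$-variables, saying the hypothesis that cluster variables along $\Sigma$ lie in $\can^t$ is ``exactly the finite input needed.'' It is not. If you walk along $\Sigma$ from $t$ toward $t[1]$, then for an intermediate seed $t'$ the injective $I_k(\mu_k t')$ is a cluster variable of $(\mu_k t')[1]$, and by Lemma \ref{lem:long_injective_mutation} this seed lies on the path from $t[1]$ toward $t[2]$ via $\sigma\Sigma$. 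Those cluster variables are \emph{not} among the ones assumed to belong to $\can^t$.

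The paper's key maneuver, which you are missing, is to walk in the \emph{other} direction: along $\sigma^{-1}\Sigma^{-1}$ from $t$ to $t[-1]$. Writing this sequence as $\mu_{i_s}\cdots\mu_{i_1}$ (so that $\Sigma=\mu_{\sigma i_1}\cdots\mu_{\sigma i_s}$), for an intermediate seed $t'=\mu_{i_u}\cdots\mu_{i_1}t$ one has, again by Lemma \ref{lem:long_injective_mutation} and cancellation of involutive mutations,
\[
t'[1]=\mu_{\sigma i_u}\cdots\mu_{\sigma i_1}\,t[1]=\mu_{\sigma i_{u+1}}\cdots\mu_{\sigma i_s}\,t,
\]
which lies on the path from $t$ to $t[1]$. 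Hence the needed injective $I_{i_{u+1}}(t')=X_{\sigma i_{u+1}}(t'[1])$ is one of the cluster variables along $\Sigma$, and the needed new $X$-variable is along $\sigma^{-1}\Sigma^{-1}$; both are covered by the two halves of the hypothesis. This is why the theorem requires \emph{both} mutation sequences in its statement, and it is this interlocking of the two sequences that your proposal does not exploit. Once compatibility of $\can^t$ with $\can^{t[-1]}$ is established this way, Proposition \ref{prop:initial_seed_cond}(iii) transports it along the chain and Theorem \ref{thm:induction} finishes the job, as you indicate.
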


As a consequence of the correction technique (Theorem \ref{thm:correction}), a change of the coefficient pattern or quantization will not affect the existence of the common triangular basis.
\begin{Thm}\label{thm:change_coefficient}
	Let $t$, $t'$ be two similar seeds in the sense of Section \ref{sec:correction} and $\clAlg(t)$,
	$\clAlg(t')$ the corresponding quantum cluster algebras respectively. If the common triangular basis of $\qClAlg(t)$ exists, then the common triangular basis of $\qClAlg(t')$ exists as well.
\end{Thm}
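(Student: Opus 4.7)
\medskip
\noindent\textbf{Proof proposal.} The plan is to transport the given common triangular basis of $\qClAlg(t)$ through the variation map of Definition \ref{def:variation_map}, adjusted by frozen monomials, to produce a candidate basis of $\qClAlg(t')$, and then to verify each axiom of Definition \ref{def:common_triangular_basis} via the correction technique of Theorem \ref{thm:correction}.

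First I would set up a bijection between the seeds of the two cluster algebras. Every mutation sequence from $t$ transports under $\var^*$ to one from $t'$, producing pairs of similar seeds $(s, s')$ together with lattice embeddings $\var : \degL(s) \hookrightarrow \degL(s')$ (which are bijections on principal parts and add zeros on the extra frozen coordinates) and variation maps $\Var : \ptSet(s) \to \ptSet(s')$ on pointed elements; see Lemma \ref{lem:variation}(i)(ii). Let $\can = \{\can(\tg)\}_{\tg \in \degL(t)}$ denote the given common triangular basis of $\qClAlg(t)$, viewed via its Laurent expansion in $\cT(t)$.

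Next I define the candidate basis of $\qClAlg(t')$. For each $\tg' \in \degL(t')$ there is a unique $\tg \in \degL(t)$ whose principal part matches that of $\tg'$ under $\var$ and whose frozen coordinates vanish; set $h(\tg') := \tg' - \var(\tg)$, a vector supported only on the frozen coordinates of $t'$, and put
\[
\can'(\tg') \;:=\; [X^{h(\tg')} \ast \Var(\can(\tg))]^{t'}.
\]
By construction, $\can'(\tg')$ is pointed at $\tg'$ in $\cT(t')$, and it is bar-invariant since $\Var$ preserves bar-invariance (the rescaling $q^{\Hf} \mapsto q^{\Hf\delta}$ commutes with the bar involution), the monomial $X^{h(\tg')}$ is bar-invariant, and the normalized twisted product of bar-invariant elements is bar-invariant. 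Consequently $\can' = \{\can'(\tg')\}_{\tg' \in \degL(t')}$ is a $\degL(t')$-pointed, bar-invariant subset of $\cT(t')$.

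To verify containment in $\qClAlg(t')$ and the triangularity of Definition \ref{def:triangularBasis}, I would apply the correction technique together with Lemma \ref{lem:variation}(iii) to each triangularity expansion
\[
[X_i(s) \ast \can(\tg)]^s \;=\; \can(\tg^{(0)}) + \sum_{j \geq 1} c_j(q^{\Hf})\, \can(\tg^{(j)}),
\]
where $\tg^{(j)} = \tg^{(0)} + \tB(s)\, u_j$ with $u_j \in \N^n$ and $c_j \in \bm$, holding in $\cT(s)$ by hypothesis. Theorem \ref{thm:correction} converts this to the analogous equation in $\cT(s')$ with each pointed factor replaced by its image under $\Var$ and multiplied by an explicit Laurent monomial in the frozen variables of $s'$. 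These corrections are exactly absorbed by the shifts $h(\cdot)$ built into $\can'$, so the resulting identity reorganizes as a $(\prec_{s'}, \bm)$-unitriangular expansion of $[X_{(\var^*)^{-1}(i)}(s') \ast \can'(\tg')]^{s'}$ into $\can'$ for the appropriate $\tg' \in \degL(t')$. Taking $\can(\tg)$ to be a quantum cluster monomial expresses every cluster monomial of $\qClAlg(t')$ in the $\Z[q^{\pm\Hf}]$-span of $\can'$; combined with its $\degL(t')$-pointedness, this shows $\can' \subset \qClAlg(t')$ and that $\can'$ is a basis triangular with respect to every similar seed $s'$.

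Compatibility of $\can'$ across seeds (Definition \ref{def:common_triangular_basis}) follows from the fact that the tropical transformation \eqref{eq:degree_mutation} depends only on the principal part of the $B$-matrix, which is preserved under $\var$, while its effect on the frozen coordinates is compensated by the shifts $h(\cdot)$. The main obstacle is precisely this bookkeeping: one must check that a single choice of $h(\tg')$ simultaneously absorbs the frozen correction factors arising from every triangularity equation and is consistent with the frozen updates under every tropical transformation, so that $\can'$ is well-defined and triangular with respect to all similar seeds at once. Once this is verified, the uniqueness built into Definition \ref{def:triangularBasis} identifies $\can'$ as the common triangular basis of $\qClAlg(t')$.
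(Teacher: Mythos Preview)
Your route is genuinely different from the paper's, and the comparison is worth making explicit. The paper does not transport the whole common triangular basis. Instead it transports only the triangular basis at the single seed $t'$ via Lemma~\ref{lem:variation_triangular_basis}, observes (via part~(iii) of that lemma) that this basis is positive and contains the quantum cluster variables along the mutation sequences from $t'$ to $t'[\pm 1]$, and then invokes the Reduced Existence Theorem (Theorem~\ref{thm:reduction}). All the work of propagating to every seed is already packaged inside that theorem, so the proof is two lines.

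Your direct approach constructs the right candidate set --- it agrees with the $\can^{t'}$ of Lemma~\ref{lem:variation_triangular_basis} after the frozen adjustments --- but the ``main obstacle'' you flag at the end is a genuine gap, not mere bookkeeping. To check triangularity and compatibility at an arbitrary seed $s'$ you need $\Var(\can(\tg))$, built via the variation map at the pair $(t,t')$, to be pointed in $\cT(s')$ with leading degree $\phi_{s',t'}(\var(\tg))$; equivalently, that variation commutes with mutation up to the frozen shifts $h(\cdot)$ you prescribe. This is not free: the variation map of Definition~\ref{def:variation_map} is tied to one fixed pair of quantum tori, and pushing it across a mutation step essentially redoes the one-step analysis of Section~\ref{sec:one_step_mutation} and Proposition~\ref{prop:condition_change_seed}. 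The paper's approach sidesteps this entirely because Theorem~\ref{thm:reduction} reduces existence of the common triangular basis to a finite check at a single seed, and that finite check transports cleanly under similarity by Lemma~\ref{lem:variation_triangular_basis}(iii). A minor side remark: your assertion that the tropical transformation \eqref{eq:degree_mutation} ``depends only on the principal part of the $B$-matrix'' holds only on the unfrozen coordinates; on frozen coordinates it depends on the coefficient pattern, which is precisely why the shifts $h(\cdot)$ are needed and why their consistency across all seeds is a nontrivial claim rather than an automatic one.
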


\subsubsection*{Strategy}

The construction of the (common) triangular basis is not clear from the definition, because Lusztig's Lemma does not apply. This situation is very different from that of Berenstein-Zelevinsky's triangular basis \cite{BerensteinZelevinsky12}. In applications (Section \ref{sec:application}), we will adopt the following approach:

\begin{enumerate}
\item Start with some known basis $\can$ of $\qClAlg$, prove that it is the triangular basis \wrt some chosen initial seed.
\item  Show that this basis is the common triangular basis \wrt all seeds $t$ by induction on $t$.
\end{enumerate}

Most arguments in Section \ref{sec:triangular_basis} are not difficult. When verifying the $\bm$-unitriangularity, we do not rely on sensitive calculation of $q$-powers, but use the following basic properties:
\begin{enumerate}
\item $\bm$-unitriangularity is preserved under composition and under taking inverse (Lemma \ref{lem:inverse_transition}).
\item By Lemma \ref{lem:preserve_triangular}, if we have a unitriangular decomposition of pointed Laurent polynomials $Z,Z_j$:
$$Z=Z_0+\sum_{j\geq 1:\deg Z_j\prec \deg Z_0=\deg Z} b_j Z_j,\ b_j\in\bm,$$
then it remains unitriangular after left multiplying $X_i$:
\begin{align}\label{eq:triangle_preservation}
[X_i*Z]&=[X_i*Z_0]+\sum b_j q^{\alpha_j}[X_i*Z_j],\ \alpha_j\leq 0,.
\end{align}

\end{enumerate}
The right hand side can be further decomposed by the triangularity property of the triangular basis in Definition \ref{def:triangularBasis}.

Finally, we point out the difficult part in the inductive verification of the common triangular basis: after one step mutation $\mu_k$, prove that the new cluster variable $X_k(\mu_k t)$ belongs to the triangular basis $\can^t$ (Proposition \ref{prop:prove_X}). Our proof can be sketched as follows.
\begin{enumerate}
\item Assume that $\can^t$ equals the triangular basis $\can^{t[-1]}$ as sets. We consider the desired basis element $b$ whose leading degree in $\cT(t[-1])$ equals $\deg^{t[-1]}X_k(\mu_k t)$. It must appear as the leading term in some product of elements of $\can^{t[-1]}$:
\begin{align*}
q^{-\alpha} b_1*b_2=b+(\prec_{t[-1]}-\mathrm{lower}\ \mathrm{terms}),
\end{align*}
where $\alpha\in\frac{\Z}{2}$, $b_1,b_2\in\can^{t[-1]}=\can^t$ with $$\deg^{t[-1]}b_1+\deg^{t[-1]} b_2=\deg^{t[-1]}b.$$

\item Assume compatibility of degrees: $\deg^t b=\phi_{t,t[-1]}\deg^{t[-1]}b$. Show that, with respect to $t$, the above equation becomes
\begin{align*}
q^{-\alpha}b_1*b_2=b+(\prec_{t}-\mathrm{higher}\ \mathrm{terms}).
\end{align*} 

\item
Cluster Expansion Assumption gives the lowest several Laurent monomials of the product $b_1*b_2$ in $\cT(t)$. Then, by assuming the positivity of $\can^t$, we can determine the Laurent expansion of the lowest term $b$, which equals that of $X_k(\mu_k t)$ in $\cT(t)$.
\end{enumerate}
This proof is based on good degree parametrization of basis elements \wrt different seeds $t$, $t[-1]$, which is expected by Fock-Goncharov conjecture (Conjecture \ref{conj:FG_conj}).

\subsection{Basic properties}\label{sec:basic_property}

\begin{Lem}\label{lem:triangular_basis_property}
	Let $\can^t$ be the triangular basis \wrt a given seed $t$. Then the following claims hold.
\begin{enumerate}[(i)]
	\item $\can^t$ factors through the frozen variables, namely, we have
	 \begin{align}
	 \label{eq:factorization_basis}
	 [X_{n+i}(t)^{f_i}*\can^t(\tg)]^t\in\can^t,\ \forall 1\leq i\leq m-n,\tg\in\degL(t),f_i\in\Z.
	 \end{align}
	 \item We have $\can^t(0)=1$.
	 \item The injective pointed set $\inj^t$ is $(\prec_t,\bm)$-unitriangular to $\can^t$.
	\end{enumerate}
\end{Lem}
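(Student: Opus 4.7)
The plan is to establish (i), (ii), (iii) in that order, each leveraging its predecessor.

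For (i), I would combine the triangularity property with bar-invariance. For any $\tg\in\degL(t)$, the triangularity of $\can^t$ applied to the cluster variable $X_{n+i}\in\can^t$ and the basis element $\can^t(\tg)$ gives a $(\prec_t,\bm)$-unitriangular expansion of $[X_{n+i}*\can^t(\tg)]^t$ with leading degree $\tg+e_{n+i}$. Both $X_{n+i}$ and basis elements are bar-invariant, hence so is the left-hand side; Lemma \ref{lem:triangular}(ii) then forces the expansion to collapse to its leading term, giving $[X_{n+i}*\can^t(\tg)]^t=\can^t(\tg+e_{n+i})\in\can^t$. Iteration via associativity (the intermediate normalizations are absorbed into the outer normalization) handles any $f_i\geq 0$. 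For $f_i<0$, the identity just proved shows that the translation $\can^t(\eta)\mapsto\can^t(\eta+e_{n+i})$ of basis elements is realized by $S\mapsto[X_{n+i}*S]^t$, whose inverse on pointed elements is $S\mapsto[X_{n+i}^{-1}*S]^t$ (since $X^{-e_{n+i}}*X^{e_{n+i}}=1$ by skew-symmetry of $\Lambda$); hence $[X_{n+i}^{-1}*\can^t(\tg)]^t=\can^t(\tg-e_{n+i})\in\can^t$, and iteration finishes the case $f_i<0$.

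Claim (ii) is then immediate: $X_{n+i}$ is pointed at $e_{n+i}$ and lies in $\can^t$, so $X_{n+i}=\can^t(e_{n+i})$, and applying (i) with $f_i=-1$ yields $\can^t(0)=[X_{n+i}^{-1}*X_{n+i}]^t=[1]^t=1$.

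For (iii), the central tool is a preservation lemma: if $Z\in\cT(t)$ is $(\prec_t,\bm)$-unitriangular to $\can^t$ and $X_i(t)$ is any cluster variable in $t$, then so is $[X_i(t)*Z]^t$. Writing $Z=\can^t(\deg Z)+\sum_{\tg'\prec_t\deg Z}c_{\tg'}\can^t(\tg')$ with $c_{\tg'}\in\bm$ and normalizing term by term, one finds
\begin{align*}
[X_i(t)*Z]^t=[X_i(t)*\can^t(\deg Z)]^t+\sum_{\tg'}c_{\tg'}\,q^{\Hf\Lambda(e_i,\tg'-\deg Z)}\,[X_i(t)*\can^t(\tg')]^t.
\end{align*}
Since $\tg'-\deg Z=\tB(t)v$ for some $v\in\N^n$, the compatible pair identity $\Lambda\tB=-\binom{D}{0}$ forces $\Lambda(e_i,\tB(t)v)\leq 0$ for $i\leq n$ and $=0$ for $i>n$, so the $q$-factor is a non-positive power of $q^{\Hf}$ and each new coefficient stays in $\bm$; expanding every $[X_i(t)*\can^t(\tg'')]^t$ by the basis triangularity preserves the unitriangular form. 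An analogous statement for left-multiplication by $X_{n+i}^{-1}$ follows from (i) together with the vanishing $\Lambda(e_{n+i},\tB(t)v)=0$. Now $\inj^t(\tg)=[\prod_{i} X_{n+i}^{f_i}*X(t)^{d_X}*I(t)^{d_I}]^t$, where $I(t)^{d_I}$ is a cluster monomial in $t[1]$ and hence lies in $\can^t$ (trivially $\bm$-unitriangular to itself); starting from this factor and successively left-multiplying by the remaining cluster-variable factors of $X(t)^{d_X}$ and by the frozens $X_{n+i}^{\pm 1}$, the preservation lemma propagates $(\prec_t,\bm)$-unitriangularity to the whole expression. The only nontrivial ingredient is the sign inequality $\Lambda(e_i,\tB(t)v)\leq 0$ forced by the compatible pair convention; the rest is formal bookkeeping.
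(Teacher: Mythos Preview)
Your proof is correct and follows essentially the same route as the paper: for (i) you use triangularity plus bar-invariance to collapse the expansion (the paper does this in one stroke for $f_i\geq 0$ rather than iterating, but the content is identical), and for (iii) you spell out in detail the preservation argument that the paper summarizes as ``follows from the triangularity'' and formalizes immediately afterward in Lemma~\ref{lem:preserve_triangular}. The only genuine difference is (ii): the paper simply observes that $1$ is itself a cluster monomial in the seed $t$ and hence lies in $\can^t$ by definition, which is more direct than your route through (i); both arguments are valid.
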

\begin{proof}
	Omit the symbol $t$ for simplicity.
	
(i)	First assume $f_i\geq 0$. By the triangularity of $\can$, the normalized product $[X_{n+i}^{f_i}*\can(\tg)]$ has a $(\prec,\bm)$-unitriangular decomposition into $\can$. But $[X_{n+i}^{f_i}*\can(\tg)]$ itself is bar-invariant. It follows that $[X_{n+i}^{f_i}*\can(\tg)]=\can(\tg+f_i e_{n+i})$ when $f_i\geq 0$. Finally, for any $f_i<0$, since $[X_{n+i}^{-f_i}*\can(\tg+f_i e_{n+i})]=\can(\tg)$, we have $[X_{n+i}^{f_i}*\can(\tg)]=[X_{n+i}^{f_i}*[X_{n+i}^{-f_i}*\can(\tg+f_i e_{n+i})]]=\can(\tg+f_i e_{n+i})$.
	
	(ii) Since $1$ is trivially a cluster monomial in the seed $t$, the claim follows from definition of triangular basis.
	
(iii) The claim follows from the triangularity of the triangular basis. 
\end{proof}

\begin{Rem}
	\eqref{eq:factorization_basis} is an analogue of the factorization
	property of the dual canonical bases of quantum unipotent subgroups, cf. \cite[Section 6.3]{Kimura10}.
	
	Claim (iii) is an analog of the transition property between
	dual PBW bases and dual canonical bases. If the seed $t$ has acyclic principal part with appropriate
	coefficient pattern, up to localization at frozen variables,  $\inj^t(f,d_X,d_I)$ are dual PBW basis elements, and we can
	take $\can^t$ to be the dual canonical basis. \cite{KimuraQin11} proved that such $\can^t$ contains all quantum cluster monomials by. In fact, this phenomenon motivates our definition of weakly triangular bases (Definition \ref{def:weakly_triangular_basis}).
\end{Rem}


The following important lemma suggests that the $(\prec_t,\bm)$-unitriangularity of an expansion is preserved under multiplication by $X_k$ from the left or by $I_k$ from the right.
\begin{Lem}[Triangularity Preservation]\label{lem:preserve_triangular}
For any $\tg'\preceq_t\tg\in\degL(t)$, $1\leq i\leq m$, $1\leq k\leq n$, we have
\begin{align}
\Lambda(t)(\deg^t X_i(t),\deg^t Y_k(t))=\Lambda(t)(\deg^t Y_k(t),\deg^t I_i(t))=-\delta_{ik}\mathbf{d}_i,\\
\Lambda(t)(\deg^t X_k(t),\tg')\leq \Lambda(t)(\deg^t X_k(t),\tg),  \\
\Lambda(t)(\tg',\deg^t I_k(t))\leq\Lambda(t) (\tg,\deg^t I_k(t)).
\end{align}
\end{Lem}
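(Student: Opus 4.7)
The plan is to reduce every assertion to the compatibility identity $\Lambda(t)\cdot(-\tB(t)) = \begin{pmatrix}D\\0\end{pmatrix}$ together with the structural description of $\deg^t I_i(t)$ from Cluster Expansion Assumption. Throughout, I will use $\deg^t X_i(t) = e_i$ and $\deg^t Y_k(t) = \tB(t) e_k$ freely.

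First I would establish the two identities in the top line. For the first, since $\Lambda(t)(e_i, \tB(t)e_k) = e_i^T \Lambda(t) \tB(t) e_k$ is the $(i,k)$-entry of the matrix $\Lambda(t)\tB(t) = -\begin{pmatrix}D\\0\end{pmatrix}$, this entry equals $-\mathbf{d}_i\delta_{ik}$ when $1\leq i\leq n$ and equals $0$ when $i>n$; in the latter case $\delta_{ik}=0$ as well (since $k\leq n<i$), so in both cases we obtain $-\mathbf{d}_i\delta_{ik}$ as required. For the second identity, Cluster Expansion Assumption tells us that $\deg^t I_i(t) = -e_i + f^{(i)}$ with $f^{(i)}$ supported on the frozen indices $\{n+1,\ldots,m\}$. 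By skew-symmetry of $\Lambda(t)$, the computation just performed gives $\Lambda(t)(\tB(t)e_k, -e_i) = -\Lambda(t)(e_i,\tB(t)e_k)\cdot(-1) = -\mathbf{d}_i\delta_{ik}$. It remains to show $\Lambda(t)(\tB(t)e_k, f^{(i)}) = 0$: since $f^{(i)}$ is a combination of $e_j$ with $j>n$, and $\Lambda(t)(\tB(t)e_k, e_j) = -(\Lambda(t)\tB(t))_{jk}$ vanishes on all frozen rows $j>n$ of $\Lambda(t)\tB(t)$, the contribution is indeed zero.

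For the two inequalities, I would invoke the definition of $\prec_t$: since $\tg'\preceq_t\tg$, write $\tg' = \tg + \tB(t) v$ for some $v\in\N^n$ (the case $\tg'=\tg$ is trivial). Then
\begin{align*}
\Lambda(t)(e_k, \tg') - \Lambda(t)(e_k, \tg) = \Lambda(t)(e_k, \tB(t)v) = \sum_{l} v_l\, \Lambda(t)(e_k, \tB(t)e_l) = -\mathbf{d}_k v_k,
\end{align*}
which is $\leq 0$ since $v\in\N^n$ and $\mathbf{d}_k>0$; this gives the second inequality after substituting $\deg^t X_k(t)=e_k$. The third inequality is entirely analogous:
\begin{align*}
\Lambda(t)(\tg',\deg^t I_k(t)) - \Lambda(t)(\tg,\deg^t I_k(t)) = \sum_l v_l\, \Lambda(t)(\tB(t)e_l, \deg^t I_k(t)) = -\mathbf{d}_k v_k \leq 0,
\end{align*}
where the last equality uses the identity just proved. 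Since no step involves anything beyond the compatibility of $(\Lambda(t),\tB(t))$ and the shape of $\deg^t I_i(t)$ from Cluster Expansion Assumption, there is no serious obstacle; the only mild bookkeeping is ensuring the case $i>n$ of the first identity is handled, which is automatic via $\delta_{ik}=0$.
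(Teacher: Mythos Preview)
Your proof is correct and follows exactly the approach the paper indicates: the paper's proof reads in its entirety ``The first equation follows from the definition of compatible pairs, and it implies the remaining equations,'' and you have simply spelled out those details. One small attribution correction: the fact that $\deg^t I_i(t) = -e_i + f^{(i)}$ with $f^{(i)}$ supported on the frozen indices comes directly from the definition of injective-reachable (the condition $\pr_n\deg^t I_i(t)=-e_i$), not from Cluster Expansion Assumption, which concerns the higher-order terms of the Laurent expansion rather than the leading degree.
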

\begin{proof}
The first equation follows from the definition of compatible
pairs, and it implies the remaining equations.
\end{proof}

The following lemma is a direct consequence of Lemma \ref{lem:preserve_triangular}. Notice that its assumption is satisfied when the triangular basis $\can^t$ exists.
\begin{Lem}[Substitution]\label{lem:substitute_expansion}
	Let $t$ be any given seed. Assume that $\inj^t(f,d_X,d_I)$ is unitriangular to $\inj^t$, $\forall f\in\Z^{m-n}$, $d_X,d_I\in\N^n$. Let $Z$ be an element in $\ptSet(t)$.
	
	If a pointed element $Z\in\ptSet(t)$ is
	$(\prec_{t},\bm)$-unitriangular to $\inj^{t}$, then the
	normalized products $[\prod_{i}X_{n+i}(t)^{f_i}*X(t)^{d_X}*Z*I(t)^{d_I}]^{t}$ are $(\prec_{t},\bm)$-unitriangular to $\inj^{t}$ too.
\end{Lem}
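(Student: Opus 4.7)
The plan is to substitute the given decomposition of $Z$ into the normalized twisted product and then reduce the claim to two pieces: the renormalization exponents stay non-positive, which follows from Lemma~\ref{lem:preserve_triangular}, and each intermediate normalized product is again an element of the form $\inj^t(f',d_X',d_I')$ covered by the hypothesis, which can therefore be re-expanded into $\inj^t$.

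Explicitly, write $Z = \inj^t(\eta) + \sum_{\eta'\prec_t\eta} c_{\eta'}\,\inj^t(\eta')$ with $\eta=\deg^t Z$ and $c_{\eta'}\in\bm$, and set $P=\prod_i X_{n+i}(t)^{f_i}*X(t)^{d_X}$, $Q=I(t)^{d_I}$. Substituting into the left-hand side and renormalizing,
$$
[P*Z*Q]^t = [P*\inj^t(\eta)*Q]^t + \sum_{\eta'\prec_t\eta} c_{\eta'}\,q^{\alpha(\eta')}\,[P*\inj^t(\eta')*Q]^t,
$$
where $\alpha(\eta') = \tfrac{1}{2}\Lambda(t)(\deg^t P,\eta'-\eta) + \tfrac{1}{2}\Lambda(t)(\eta'-\eta,\deg^t Q)$. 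Applying Lemma~\ref{lem:preserve_triangular} coordinate-wise to each unfrozen $X_k$-factor of $P$ (the frozen factors $X_{n+i}^{f_i}$ contribute $0$ by compatibility of $(\Lambda(t),\tB(t))$) and to each $I_k$-factor of $Q$ yields $\alpha(\eta')\leq 0$ for every $\eta'\prec_t\eta$. Hence each coefficient $c_{\eta'}q^{\alpha(\eta')}$ still lies in $\bm$.

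Next, $[P*\inj^t(\eta')*Q]^t$ coincides with $\inj^t(f+g_f(\eta'),\,d_X+g_+(\eta'),\,d_I+g_-(\eta'))$, where $(g_f(\eta'),g_+(\eta'),g_-(\eta'))$ is the triple from Definition~\ref{def:injective_pointed_element} parametrizing $\inj^t(\eta')$: both sides are normalized twisted products of the same generators, and the inner normalization scalar is absorbed by the outer one. The hypothesis then supplies a $(\prec_t,\bm)$-unitriangular expansion of each such factor into $\inj^t$, with leading degree $\deg^t P+\eta'+\deg^t Q$, which is strictly $\prec_t \deg^t P+\eta+\deg^t Q$ whenever $\eta'\prec_t\eta$. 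Composing the two layers of unitriangular expansions produces the desired $(\prec_t,\bm)$-unitriangular decomposition of $[P*Z*Q]^t$ into $\inj^t$, with leading term $\inj^t(\deg^t P+\eta+\deg^t Q)$ of coefficient $1$; the (possibly infinite) composed sum is well-defined by Lemma~\ref{lem:interval_bounded}, which ensures that only finitely many summands contribute at any fixed degree.

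The main obstacle is essentially absent: the only delicate point is the non-positivity of $\alpha(\eta')$, which is precisely the content of Lemma~\ref{lem:preserve_triangular}, so no further $q$-power calculations are required beyond those already encoded there.
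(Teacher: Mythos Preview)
Your proof is correct and follows exactly the same approach as the paper's: substitute the unitriangular expansion of $Z$ into $\inj^t$ and invoke Lemma~\ref{lem:preserve_triangular} to control the renormalization $q$-powers. The paper's proof is a two-line sketch of precisely this argument, and you have simply filled in the details (the identification of $[P*\inj^t(\eta')*Q]^t$ with an element $\inj^t(f',d_X',d_I')$ and the composition of the two layers of unitriangular expansions).
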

\begin{proof}
	Substitute the factor $Z$ by its expansion in $\inj^t$. The claim follows from Lemma
	\ref{lem:preserve_triangular}.
\end{proof}

\begin{Lem}[Positive Laurent expansion]\label{lem:positive_expansion}
Let $t$ be a given seed and $\gen$ any positive basis of the quantum cluster algebra. If $\gen$ contains the quantum cluster monomials of $t$, then the Laurent expansions of its elements in $\cT(t)$ have coefficients in $\N[q^{\pm\Hf}]$.
\end{Lem}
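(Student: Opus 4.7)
The plan is to exploit the twisted-product structure of the quantum torus to reduce positivity of the Laurent expansion of $b\in\gen$ to positivity of the structure constants of $\gen$.

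First, for a fixed $b\in\gen$, I would write its Laurent expansion in $\cT(t)$ as $b=\sum_{\eta\in\Z^m}c_\eta(q^\Hf)X(t)^\eta$, which is a finite sum by the quantum Laurent phenomenon, with $c_\eta\in\Z[q^{\pm\Hf}]$. Then I choose a vector $N\in\N^m$ large enough so that $N+\eta\in\N^m$ for every $\eta$ in the (finite) support of this expansion.

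The key structural observation is that every Laurent monomial $X(t)^\mu$ with $\mu\in\N^m$ is a normalized ordered twisted product of the quantum cluster variables of $t$, and hence is a quantum cluster monomial; by hypothesis it therefore lies in $\gen$. In particular $M:=X(t)^N\in\gen$. Computing the twisted product in $\cT(t)$ gives
$$M * b=\sum_{\eta}c_\eta(q^\Hf)\,q^{\Hf\Lambda(t)(N,\eta)}\,X(t)^{N+\eta},$$
a quantum polynomial in the $X_i(t)$, each of whose Laurent monomials $X(t)^{N+\eta}$ is itself an element of $\gen$. By uniqueness of the expansion with respect to the basis $\gen$, the displayed formula \emph{is} the $\gen$-expansion of $M * b$; no non-cluster-monomial basis element can appear.

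Since both $M$ and $b$ lie in $\gen$ and the basis $\gen$ is positive, each coefficient of this $\gen$-expansion lies in $\N[q^{\pm\Hf}]$, so $c_\eta(q^\Hf)\,q^{\Hf\Lambda(t)(N,\eta)}\in\N[q^{\pm\Hf}]$, which forces $c_\eta(q^\Hf)\in\N[q^{\pm\Hf}]$ as desired. The only subtle point is recognising that uniqueness of the $\gen$-expansion collapses the product $M*b$ onto cluster-monomial basis elements only; this is what marries the positivity of structure constants to the positivity of Laurent coefficients, and it crucially uses the assumption that $\gen$ contains all quantum cluster monomials of $t$. Once this observation is in hand, no further computation is needed.
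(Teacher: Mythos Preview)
Your argument is correct and is precisely the argument the paper has in mind: the paper's one-line proof (``the expansion coefficients can be interpreted as $q^\Hf$-shifted positive structure constants of $\gen$'') is exactly your computation of $M*b$, citing \cite[Proposition~2.2]{HernandezLeclerc09} for the same trick of clearing denominators with a large cluster monomial. You have simply written out the details that the paper leaves implicit.
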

\begin{proof}
  The expansion coefficients can be interpreted as $q^\Hf$-shifted of
  the positive structure constants of $\gen$, \cf\cite[Proposition
  2.2]{HernandezLeclerc09}.
\end{proof}

\subsection{Weakly triangular bases}\label{sec:weakly_triangular_basis}

\begin{Def}[weakly triangular basis]\label{def:weakly_triangular_basis}
	A weakly triangular basis with respect to a seed $t$ is a bar-invariant $\degL(t)$-pointed basis of $\qClAlg$, such that it factors through the frozen variables (\eqref{eq:factorization_basis}), and $\inj^t$ is $(\prec_t,\bm)$-unitriangular to it. 
\end{Def}
By Lemma \ref{lem:basis_property}(i), the weakly triangular basis with respect to a seed $t$ is unique, which we still denote by $\can^t$. 
By Lemma \ref{lem:basis_property}, the triangular basis and the weakly triangular basis only differ at the triangularity properties.

\begin{Lem}\label{lem:basis_property}
Let $t$ be any given seed. We have the following:
\begin{enumerate}[(i)]
\item a weakly triangular basis with respect to $t$, if exists, is unique;

\item the triangular basis with respect to $t$, if exists, equals the weakly triangular basis;

\item if the weakly triangular basis $\can^t$ exists, then it contains the quantum cluster monomials
$X(t)^{g+f}$, $I(t)^g \cdot X(t)^f$, $\forall g\in\N^n, f\in\Z^{\{n+1,\ldots ,m\}}$. In particular, $\can^t(0)=1$.

\end{enumerate}
\end{Lem}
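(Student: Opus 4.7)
All three claims ultimately reduce to the rigidity statement of Lemma~\ref{lem:triangular}(ii): a bar-invariant pointed element is uniquely determined by its leading degree among those $(\prec_t,\bm)$-unitriangular to a fixed bar-invariant $\degL(t)$-pointed set. Combined with Lemma~\ref{lem:inverse_transition}, this handles (i) and (iii) almost formally, leaving (ii) as the substantive step.

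For (i), let $\can$ and $\can'$ both be weakly triangular bases with respect to $t$. By Lemma~\ref{lem:inverse_transition}, $\can$ is $(\prec_t,\bm)$-unitriangular to $\inj^t$, and composing with the defining $\inj^t$-to-$\can'$ unitriangularity shows that $\can$ is $(\prec_t,\bm)$-unitriangular to $\can'$. Since both are bar-invariant and $\degL(t)$-pointed, Lemma~\ref{lem:triangular}(ii) forces $\can(\tg)=\can'(\tg)$ for every $\tg\in\degL(t)$.

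For (iii), assume $\can^t$ exists; by Lemma~\ref{lem:inverse_transition} it is $(\prec_t,\bm)$-unitriangular to $\inj^t$. The compatibility of $(\Lambda,\tB)$, whose product has vanishing frozen block, implies $\Lambda(f,\tB v)=0$ for $f\in\Z^{m-n}$ supported in the frozen coordinates and any $v\in\N^n$. Hence $X(t)^f$ quasi-commutes with every Laurent monomial appearing in $I(t)^g$ by the \emph{same} $q$-factor, yielding $I(t)^g\cdot X(t)^f = [X(t)^f * I(t)^g]^t = \inj^t(f,0,g)$ and likewise $X(t)^{g+f}=\inj^t(f,g,0)$. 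These elements are bar-invariant and pointed, so Lemma~\ref{lem:triangular}(ii) identifies each with $\can^t$ at the corresponding leading degree; specializing to $g=0$ and $f=0$ gives $\can^t(0)=1$.

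The substantive step is (ii): that a triangular basis $\can^t$ in the sense of Definition~\ref{def:triangularBasis} is a weakly triangular basis. Bar-invariance and $\degL(t)$-parameterization are built into Definition~\ref{def:triangularBasis}, and the factorization property \eqref{eq:factorization_basis} is Lemma~\ref{lem:triangular_basis_property}(i). The remaining task---$(\prec_t,\bm)$-unitriangularity of $\inj^t$ to $\can^t$---I would establish by induction on the number of factors in $\inj^t(f,d_X,d_I) = [\prod X_{n+i}^{f_i}*X(t)^{d_X}*I(t)^{d_I}]^t$, starting from the base case $I(t)^{d_I}$, which lies in $\can^t$ as a cluster monomial of $t[1]$. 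The inductive step---left-multiplication by a cluster variable $X_i(t)$ or a frozen $X_{n+i}^{\pm 1}$, followed by normalization---is governed by the triangularity axiom of $\can^t$, the factorization property \eqref{eq:factorization_basis}, and the $q$-power inequality Lemma~\ref{lem:preserve_triangular}. The latter guarantees that the normalization factor $q^{(\Lambda(e_i,\tg')-\Lambda(e_i,\deg Z))/2}$ arising from lower-order terms $\tg'\prec_t \deg Z$ is a non-positive half-integer power of $q$, preserving $\bm=q^{-\Hf}\Z[q^{-\Hf}]$. Verifying this triangularity preservation at each inductive step---essentially identity \eqref{eq:triangle_preservation}---is the only delicate point; once done, (i) yields the desired equality between the triangular and weakly triangular bases.
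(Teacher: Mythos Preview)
Your proposal is correct and follows essentially the same approach as the paper. Part (i) matches the paper verbatim; for (ii) you unpack inline the argument that the paper packages as Lemma~\ref{lem:triangular_basis_property}(iii) (whose one-line proof ``follows from the triangularity'' is exactly your induction via Lemma~\ref{lem:preserve_triangular}); and for (iii) your argument is the paper's, with the frozen-variable factors handled more explicitly---the initial invocation of Lemma~\ref{lem:inverse_transition} there is harmless but unused, since the definition of weakly triangular basis already gives the needed direction of unitriangularity.
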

\begin{proof}
	The statements (i)(ii) easily follow from properties of unitriangular transitions:
	
	(i) Let $\can^{(i)}$, $i=1,2$, be any two weakly triangular bases \wrt $t$. Because $\inj^t$ is $(\prec_t,\bm)$-unitriangular to $\can^{(1)}$, conversely, $\can^{(1)}$ is also $(\prec_t,\bm)$-unitriangular to $\inj^t$ (Lemma \ref{lem:inverse_transition}). Now composed with the unitriangular transition from $\inj^t$ to $\can^{(2)}$, we get that $\can^{(1)}$ is $(\prec_t,\bm)$-unitriangular to $\can^{(2)}$. Because all basis elements in $\can^{(i)}$ are bar-invariant, we have $\can^{(1)}(\tg)=\can^{(2)}(\tg)$ for all $\tg\in \degL(t)$, \cf Lemma \ref{lem:triangular}(ii).
	
	(ii) The claim follows from definition and Lemma \ref{lem:triangular_basis_property}.
	
	(iii) The bar-invariant elements $X(t)^d$ and $I(t)^d$ belong to $\inj^t$ and, consequently, are $(\prec_t,\bm)$-unitriangular to $\can^t$. Lemma \ref{lem:triangular}(ii) implies that they belong to $\can^t$.
\end{proof}

The following lemma easily follows from properties of unitriangular transitions. It helps simplify many arguments in later sections.

\begin{Lem}\label{lem:condition_equivalence}
Let $\can^t$ be the weakly triangular basis with respect to a seed $t$ and  $k\in[1,n]$ any vertex.
  \begin{enumerate}[(i)]
  \item The pointed set $[X_k(t)*\can^t]^t$ is $(\prec_t,\bm)$-unitriangular to $\can^t$ if and only if $\inj^t$ is $(\prec_t,\bm)$-unitriangular to the pointed set $[X_k(t)*\inj^t]^t$.
  
  \item $[X_k(t)*\inj^t]^t$ is $(\prec_t,\bm)$-unitriangular to $\inj^t$ for all $1\leq k\leq n$ if and only if $\inj^t(f,d_X,d_I)$ is $(\prec_t,\bm)$-unitriangular to $\inj^t$, $\forall f\in\Z^{\{n+1,\ldots,m\}},d_X,d_I\in\N^n$.
  
  \item For any $d\in\N$, $X_k(\mu_k t)^d$ belongs to $\can^t$ if and only if $X_k(\mu_k t)^d$ is $(\prec_t,\bm)$-unitriangular to $\inj^t$.
  
\item For any $d\in\N$, $I_k(\mu_k t)^d$ belongs to $\can^t$ if and only if $I_k(\mu_k t)^d$ is $(\prec_t,\bm)$-unitriangular to $\inj^t$.
  \end{enumerate}
  
\end{Lem}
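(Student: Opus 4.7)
The plan is to deduce all four equivalences from a single basic machine. By hypothesis $\can^t$ is a weakly triangular basis, so $\inj^t$ is $(\prec_t,\bm)$-unitriangular to $\can^t$ and, by Lemma \ref{lem:inverse_transition}, the reverse transition $\can^t\to\inj^t$ is $(\prec_t,\bm)$-unitriangular as well; one can therefore freely pass between $\inj^t$ and $\can^t$ without leaving $\bm$. Combined with the preservation of $(\prec_t,\bm)$-unitriangularity under left-multiplication by $X_k$ (a direct consequence of Lemma \ref{lem:preserve_triangular}), the substitution principle of Lemma \ref{lem:substitute_expansion}, and the bar-invariance uniqueness in Lemma \ref{lem:triangular}(ii), each of the four claims reduces to chasing compositions of unitriangular transitions.

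For (i), I would use the chain
\[
\inj^t \longrightarrow \can^t \longrightarrow [X_k \ast \can^t] \longrightarrow [X_k \ast \inj^t],
\]
whose first arrow is the inverse of the defining transition, whose second arrow is the hypothesis $[X_k \ast \can^t] \to \can^t$ combined with Lemma \ref{lem:inverse_transition}, and whose third arrow is obtained by multiplying the transition $\can^t \to \inj^t$ by $X_k$ on the left using Lemma \ref{lem:preserve_triangular}; composing yields $\inj^t \to [X_k \ast \inj^t]$, and the converse direction uses the reverse chain in identical fashion. For (ii), the direction ($\Leftarrow$) is immediate from Lemma \ref{lem:substitute_expansion} applied to $Z = \inj^t(\tg)$ with $d_X = e_k$. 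For ($\Rightarrow$) I would induct on $\sum_j (d_X)_j$: the base case $d_X = 0$ gives $\inj^t(f, 0, d_I) \in \inj^t$ directly; otherwise pick $k \in [1,n]$ with $(d_X)_k > 0$, observe from Definition \ref{def:injective_pointed_element} that $\inj^t(f, d_X, d_I) = [X_k \ast \inj^t(f, d_X - e_k, d_I)]$, and apply the inductive hypothesis together with Lemma \ref{lem:preserve_triangular} and the assumed transition $[X_k \ast \inj^t] \to \inj^t$.

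For (iii) and (iv), observe first that $X_k(\mu_k t)^d$ and $I_k(\mu_k t)^d$ are bar-invariant in $\cT(t)$: each is a quantum cluster monomial in an adjacent seed, hence bar-invariant in the corresponding quantum torus and therefore, by \cite[Proposition 6.2]{BerensteinZelevinsky05}, in $\cT(t)$. Direction ($\Rightarrow$) is immediate from $\can^t \to \inj^t$. For ($\Leftarrow$), composing the given transition with $\inj^t \to \can^t$ produces a $(\prec_t,\bm)$-unitriangular expansion of a bar-invariant element into the bar-invariant set $\can^t$, which by Lemma \ref{lem:triangular}(ii) forces the element to coincide with $\can^t$ at its leading degree. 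None of the steps poses a genuine obstacle; the entire argument is bookkeeping around unitriangular compositions. The one point that merits minor attention is in (i), where one must check that $[X_k \ast \can^t]$ and $[X_k \ast \inj^t]$ share the same parametrizing lattice $\deg^t X_k + \degL(t) = \degL(t)$ so that the phrase ``$\inj^t$ is unitriangular to $[X_k \ast \inj^t]$'' is literally well-posed; this is automatic since $\deg^t X_k \in \degL(t)$.
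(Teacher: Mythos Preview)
Your proposal is correct and follows essentially the same approach as the paper: both arguments reduce all four claims to compositions of $(\prec_t,\bm)$-unitriangular transitions using Lemma~\ref{lem:inverse_transition} and Lemma~\ref{lem:preserve_triangular}, together with bar-invariance (via Lemma~\ref{lem:triangular}(ii)) for (iii) and (iv). Your presentation is in places slightly more explicit than the paper's---for instance, you spell out the induction on $\sum_j(d_X)_j$ in (ii)($\Rightarrow$) and invoke \cite[Proposition~6.2]{BerensteinZelevinsky05} for bar-invariance in (iii)(iv)---but the substance is the same.
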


\begin{proof}
We call $(\prec_t,\bm)$-unitriangular by unitriangular for simplicity.

(i) Triangularity Preservation Lemma (Lemma \ref{lem:preserve_triangular}) implies that
$[X_k*\inj^t]^t$ is unitriangular to $[X_k(t)*\can^t]^t$ and vice versus.

First, assume that $[X_k(t)*\can^t]^t$ is unitriangular to $\can^t$.  Because $[X_k*\inj^t]^t$ is unitriangular to $[X_k(t)*\can^t]^t$, it is  unitriangular to $\can^t$ and, consequently, unitriangular to $\inj^t$. Conversely, $\inj^t$ is
unitriangular to $[X_k*\inj^t]^t$.

Similarly, assume that $\inj^t$ is
unitriangular to $[X_k*\inj^t]^t$. Because $[X_k*\can^t]^t$ is unitriangular to $[X_k(t)*\inj^t]^t$, it is unitriangular to $\inj^t$ and, consequently, unitriangular to $\can^t$. Conversely, $\can^t$ is
unitriangular to $[X_k*\can^t]^t$.

(ii) Notice that all elements in $[X_k(t)*\inj^t]^t$ takes the form $\inj^t(f,d_X,d_I)$. Therefore, if all $\inj^t(f,d_X,d_I)$ are unitriangular to $\inj^t$, so does the pointed set $\inj^t$.

Conversely, assume $[X_k(t)*\inj^t]^t$ is unitriangular to $\inj^t$ $\forall 1\leq k\leq n$. Notice that any $\inj^t(f,d_X,d_I)$ can be obtained from the triangular basis element $\inj^t(f,0,d_I)\in\inj^t$ by repeatedly left multiplying the $X$-variables. It follows from Triangularity Preservation Lemma (Lemma \ref{lem:preserve_triangular}) that $\inj^t(f,d_X,d_I)$ remains unitriangular to $\inj^t$.

(iii)(iv) The claims are obvious by the bar-invariance and Lemma \ref{lem:basis_property}(i).
\end{proof}


The following easy lemma compares weakly triangular bases in similar seeds.
\begin{Lem}\label{lem:variation_triangular_basis}
Let $t $ and $t '$ be two seeds similar via an isomorphism between sets of unfrozen vertices $\var^*:\ex'\simeq \ex$. Assume we have the weakly triangular basis $\can^t$ of
$\qClAlg(t)$ with respect to $t$. Let $\can^{t'}$ denote the set of the elements in $\ptSet(t')$ which are similar
to the elements of $\can^t$.

(i) $\can^{t'}$ is the weakly triangular basis of $\qClAlg(t')$.

(ii) If $\can^t$ is positive, so does $\can^{t'}$. If $\can^t$ is the triangular basis with respect to $t$, then $\can^{t'}$ is the triangular basis with respect to $t'$. 

(iii) Let $\overleftarrow{\mu}$ be a mutation sequence on $\ex$ and $\overleftarrow{\mu}'$ the corresponding sequence on $\ex'$. Then, for any $i\in\ex$, the quantum cluster variable $X_i(\overleftarrow{\mu}t)$ belongs to $\can^{t}$ if and only if the corresponding quantum cluster variable $X_{(\var^*)^{-1}i}(\overleftarrow{\mu}'t')$ belongs to $\can^{t'}$.
\end{Lem}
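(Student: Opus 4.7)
My plan is to systematically transport properties from $t$ to $t'$ using the Correction Technique (Theorem~\ref{thm:correction}) together with Lemma~\ref{lem:variation}. The key observations are that similarity preserves bar-invariance (the coefficient substitution $c_v(q^\Hf) \mapsto c_v(q^{\Hf\delta})$ with $\delta\in\Z_{>0}$ sends bar-invariant polynomials to bar-invariant polynomials and maps the ideal $\bm = q^\mHf\Z[q^\mHf]$ into itself), and that the dominance orders $\prec_t$ and $\prec_{t'}$ agree on the $\ex$-components of the degree lattices via $\var$, because $B(t') = \var B(t)$.

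For~(i), I verify each condition of Definition~\ref{def:weakly_triangular_basis}. The parametrization of $\can^{t'}$ by $\degL(t')$ and the factorization through the frozen variables of $t'$ both follow from the definition of $\can^{t'}$ as the set of all elements of $\ptSet(t')$ similar to elements of $\can^t$; this set is automatically closed under multiplication by frozen variables of $t'$ since similarity does not constrain the frozen components. Bar-invariance transfers as noted. For the $(\prec_{t'},\bm)$-unitriangularity of $\inj^{t'}$ to $\can^{t'}$: given a $(\prec_t,\bm)$-unitriangular decomposition of $\inj^t(\tg)$ into $\can^t$, I apply Theorem~\ref{thm:correction} to obtain the corresponding identity in $\cT(t')$; the coefficients become $c_j(q^{\Hf\delta})$ times monomials $f_j$ in the frozen variables. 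Since $c_j(q^{\Hf\delta})$ remains in $\bm$ and the frozen factors are absorbed by the factorization property, this yields the required unitriangular decomposition of $\inj^{t'}(\tg')$ into $\can^{t'}$.

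For~(ii), positivity transfers: a product relation $b_1 * b_2 = \sum c_{b_3}(q^\Hf)\, b_3$ in $\can^t$ with $c_{b_3}\in\N[q^{\pm\Hf}]$ becomes, via Theorem~\ref{thm:correction}, a relation in $\cT(t')$ with coefficients $c_{b_3}(q^{\Hf\delta})\in\N[q^{\pm\Hf}]$ times frozen monomials, so positivity is preserved. The upgrade from weakly triangular to triangular basis follows analogously: the containment of quantum cluster monomials of $t'$ and $t'[1]$ in $\can^{t'}$ is provided by~(iii) applied to cluster variables, combined with the observation that injective-reachability depends only on the principal data and hence transfers under similarity (Lemma~\ref{lem:variation}(i) applied to the mutation sequence $\Sigma$ realizing $t\to t[1]$); the stronger triangularity condition on the products $[X_i(t')*\can^{t'}]^{t'}$ transfers analogously to the weak one. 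For~(iii), Lemma~\ref{lem:variation}(ii) identifies $X_i(\overleftarrow{\mu}t)$ and $X_{(\var^*)^{-1}i}(\overleftarrow{\mu}'t')$ as similar pointed elements, so $X_{(\var^*)^{-1}i}(\overleftarrow{\mu}'t')\in\can^{t'}$ if and only if it is similar to some $b\in\can^t$; using that two elements of $\ptSet(t)$ similar to a common element of $\ptSet(t')$ can only differ by a frozen monomial factor, and that $\can^t$ factors through frozen variables, this is equivalent to $X_i(\overleftarrow{\mu}t)\in\can^t$.

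The main (mild) obstacle throughout is the bookkeeping of the frozen monomial factors $f_j$ produced by the correction formula; the factorization of a weakly triangular basis through the frozen variables (built into Definition~\ref{def:weakly_triangular_basis}) is precisely what lets these factors be absorbed into basis elements without disturbing unitriangularity, bar-invariance, or positivity, so this is the only technical point that really requires care.
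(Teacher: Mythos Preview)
Your approach is essentially the same as the paper's: both rely on the Correction Technique (Theorem~\ref{thm:correction}) and Lemma~\ref{lem:variation} to transport each axiom from $t$ to $t'$, and both absorb the frozen-monomial correction factors $f_j$ using the factorization property. Your treatment of bar-invariance, $\degL(t')$-pointedness, the unitriangularity of $\inj^{t'}$, positivity, and part~(iii) matches the paper's reasoning closely.

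There is one genuine omission in your proof of~(i): you verify the listed properties of Definition~\ref{def:weakly_triangular_basis} but never check that $\can^{t'}$ is actually a \emph{basis} of $\qClAlg(t')$. A priori, $\can^{t'}$ is only defined as a subset of $\ptSet(t')\subset\cT(t')$; you must show both that each element lies in the quantum cluster algebra $\qClAlg(t')$ (not merely the torus) and that $\can^{t'}$ spans $\qClAlg(t')$. The paper handles containment by writing each $b\in\can^t$ as a finite polynomial in quantum cluster variables of $\qClAlg(t)$ with coefficients in $\Z[q^{\pm\Hf}]P$, viewing this as a $\prec_t$-unitriangular decomposition (Lemma~\ref{lem:triangular}(iii)), and then applying Lemma~\ref{lem:variation}(iii) to produce an analogous polynomial expression for the similar element $b'$ in terms of cluster variables of $\qClAlg(t')$. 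Spanning then follows from Theorem~\ref{thm:correction}. Your argument does not supply this step, and it cannot be extracted from the unitriangularity of $\inj^{t'}$ alone, since $\inj^{t'}$ need not span $\qClAlg(t')$ and the unitriangular decompositions could in principle be infinite in the completed torus. Once you add this short argument, your proof is complete and coincides with the paper's.
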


\begin{proof}
  (i) By construction, $\can^{t'}$ factors through the frozen variables, is $D(t')$-pointed and, therefore, linearly
  independent. 
  
  Any given basis element $b\in \can^t$ is a polynomial of quantum
  cluster variables of $\qClAlg(t)$ with coefficients in
  $\Z[q^{\pm\Hf}]P$, in other words, a finite sum of products of quantum cluster variables. We can view the finite sum as $\prec_t$-unitriangular decomposition by Lemma \ref{lem:triangular}(iii) and, by Lemma \ref{lem:variation}(iii), deduce that any $b'\in\can^{t'}$
  similar to $b$ is a polynomial of quantum
  cluster variables of $\qClAlg(t')$ with coefficients in $\Z[q^{\pm\Hf}]P'$. Therefore, $b'$ is contained in $\qClAlg(t')$. 
  
  We see that $\can^{t'}$ spans
  $\qClAlg(t')$ by Theorem \ref{thm:correction}. It is $(\prec_t,\bm)$-unitriangular to $\inj^{t'}$ by Lemma \ref{lem:variation}(iii).

(ii) The claims follow from Theorem \ref{thm:correction} and Lemma
\ref{lem:variation}(iii) respectively.

(iii) Notice that $X_{i}(\overleftarrow{\mu}t)$ is similar to $X_{(\var^*)^{-1}i}(\overleftarrow{\mu}'t')$ by Lemma
\ref{lem:variation}. The claim follows from the construction of $\can^{t'}$.
\end{proof}

\subsection{admissibility}
\begin{Def}[admissible]
	Let $k\in[1,n]$ be a given vertex.	A basis $\gen$ of the quantum cluster algebra $\qClAlg$ is said to be admissible in direction $k$ with respect to a seed $t$, if the quantum cluster variables $X_k(\mu_k t)$, $I_k(\mu_k t)$ belongs to $\gen$.
\end{Def}

\begin{Lem}\label{lem:admissible_cluster_monomial}
	Let $\gen$ be a positive basis of $\qClAlg$, such that it contains the quantum cluster monomials in $t$. If $\gen$ contains the quantum cluster variable $X_k(\mu_k t)$ for some $1\leq k\leq n$, then it contains the quantum cluster monomials in $\mu_k t$.
\end{Lem}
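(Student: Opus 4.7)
My plan is to argue by induction on the exponent $a_k \in \N$ of $X_k(\mu_k t)$ that every cluster monomial $M = \bigl[X_k(\mu_k t)^{a_k} * \prod_{i\neq k} X_i(t)^{a_i}\bigr]$ of $\mu_k t$ belongs to $\gen$; the case $a_k = 0$ is immediate, since $M$ is then a cluster monomial of $t$ and so lies in $\gen$ by hypothesis.

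For the inductive step $a_k \geq 1$, the quantum exchange relation $X_k(t)*X_k(\mu_k t) = q^{\beta_1}P_1 + q^{\beta_2}P_2$, with $P_1 = \prod_{i\neq k} X_i(t)^{[b_{ik}]_+}$ and $P_2 = \prod_{j\neq k} X_j(t)^{[-b_{jk}]_+}$, lets me compute
\begin{align*}
X_k(t)*M = q^{\eta_1}M^{(1)} + q^{\eta_2}M^{(2)},
\end{align*}
where each $M^{(i)}$ is a cluster monomial of $\mu_k t$ of $X_k(\mu_k t)$-exponent $a_k-1$, hence in $\gen$ by induction. On the other hand, positivity of $\gen$ (combined with $X_k(\mu_k t) \in \gen$ and the cluster monomials of $t$ being in $\gen$) yields a decomposition $M = \sum_b c'_b\, b$ with $c'_b \in \N[q^{\pm\Hf}]$; multiplying by $X_k(t)$ and expanding positively again gives $X_k(t)*M = \sum_{b,b'} c'_b\, d^b_{b'}\, b'$ with $d^b_{b'} \in \N[q^{\pm\Hf}]$. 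Comparing the two expressions and using that an $\N[q^{\pm\Hf}]$-combination equaling a single monomial $q^{\eta_i}$ forces exactly one nonzero summand, I conclude that $c'_b \neq 0$ for at most two basis elements $\tilde{b}_1, \tilde{b}_2 \in \gen$, and that $X_k(t)*\tilde{b}_i = q^{?}M^{(i)}$ as a single basis element.

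The crucial and most delicate step is to show $\tilde{b}_1 = \tilde{b}_2$. Arguing by contradiction, if they were distinct then $\tilde{b}_i = q^{?}\,X_k(t)^{-1}\cdot M^{(i)}$ as Laurent polynomials in $\cT(t)$; but by the quantum Laurent phenomenon, $\tilde{b}_i \in \qClAlg$ must also be a Laurent polynomial in $\cT(\mu_k t)$. In that cluster, $X_k(t)^{-1} = q^{?}\,X_k(\mu_k t)\bigl(P_1(\mu_k t) + P_2(\mu_k t)\bigr)^{-1}$, and since $M^{(i)}$ is a single Laurent monomial in $\cT(\mu_k t)$ (being a cluster monomial of $\mu_k t$), the product $X_k(t)^{-1}\cdot M^{(i)}$ fails to be a Laurent polynomial there---a contradiction. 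Hence $\tilde{b}_1 = \tilde{b}_2 =: \tilde{b}$, and $M = c'\,\tilde{b}$ with $c' \in \N[q^{\pm\Hf}]$ a single monomial. Matching the leading Laurent coefficient of $M$ in $\cT(t)$ (which is $1$, by Theorem \ref{thm:quantum_cluster_expansion}) against that of $c'\tilde{b}$ (using Lemma \ref{lem:positive_expansion} for the positivity of the Laurent expansion of $\tilde{b}$) and invoking bar-invariance of $M$ and of $\gen$ then forces $c' = 1$, so that $M = \tilde{b} \in \gen$.
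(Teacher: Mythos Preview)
Your proof is correct and takes a genuinely different route from the paper's.

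The paper argues directly with Laurent expansions: writing $M = X^{d_{\hat{k}}}(X_k')^{d_k}$ in $\cT(t)$ as $X^{\deg M}\,(1+Y_k)^{d_k}$ (at $q^{\Hf}=1$), positivity forces every $\gen(\tg)$ in the expansion $M=\sum b(\tg)\gen(\tg)$ to have the shape $X^{\deg M}F_\tg(Y_k)$ with $F_\tg$ coefficientwise dominated by $(1+Y_k)^{d_k}$; then the Laurent phenomenon in $\cT(\mu_k t)$, together with this coefficient bound, pins down $F_\tg=(1+Y_k)^{d_k}$, so $\gen(\tg)=M$. Your argument instead proceeds by induction on $a_k$, uses the exchange relation to produce $X_k(t)*M = q^{\eta_1}M^{(1)}+q^{\eta_2}M^{(2)}$ with the $M^{(i)}$ already in $\gen$, and then exploits positivity plus the Laurent phenomenon in $\cT(\mu_k t)$ to rule out the two-term possibility $\tilde{b}_1\neq\tilde{b}_2$. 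This is a clean alternative that is closer in spirit to the exchange-relation manipulations used later in Proposition~\ref{prop:condition_change_seed}; the paper's approach is more direct and avoids induction.

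One remark: at the very end you invoke bar-invariance of $\gen$ to conclude $c'=1$ from $M=c'\tilde{b}$ with $c'$ a $q$-monomial. Bar-invariance of $\gen$ is not part of the stated hypotheses (Definition~\ref{def:positive_basis} only requires positive structure constants), and without it the conclusion can fail: replacing each $b\in\gen$ by $q^{f(b)}b$ for an arbitrary $f$ vanishing on the cluster monomials of $t$ and on $X_k(\mu_k t)$ keeps the basis positive but may shift other cluster monomials of $\mu_k t$ out of the basis. The paper's own proof, written at $q^{\Hf}=1$, has the same hidden dependence when lifted to general $q$ (one only gets $M=q^{\gamma}\gen(\tg)$). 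In every application of the lemma in the paper the basis $\gen=\can^t$ is bar-invariant, so this is harmless in context---but it is worth being explicit that you are using an extra hypothesis here.
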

\begin{proof}
	Denote $t'=\mu_k t$, $X_k(\mu_k t)=X'_k(t)$, $Y_k(\mu_k t)=Y'_k(t)$.
	
	The statement follows from the positive
	Laurent expansions of $\gen$ in the $\cT(t)$ and the Laurent phenomenon in $t'$.
	
	For simplicity, we specialize $q^\Hf$ to $1$ and omit the symbol $t$. Any quantum cluster monomial $M$ in $t'$ is a product of basis elements in $\gen$
	\begin{align*}
	M=X^{d_{\hat{k}}}(X_k')^{d_k}=\sum_{\tg\preceq \deg M}b(\tg)\gen(\tg),
	\end{align*}
	where $b(\tg)\in
	\N$, $b(\deg M)=1$, $d_k\in\N$,$d_{\hat{k}}\in\N^{[1,m]-\{k\}}$. The Laurent expansion of $\LHS$ is $X^{d_{\hat{k}}}\prod_j X_j^{d_k[-b_{jk}]_+}X_k^{-d_k}(1+Y_k)^{d_k}$. By Lemma \ref{lem:positive_expansion}, the positive Laurent expansion of each basis element $\gen(\tg)$ appearing must take the form
	\begin{align*}
	X^{d_{\hat{k}}}\prod_{1\leq j\leq m} X_j^{d_k[-b_{jk}]_+}X_k^{-d_k}F_{\tg}(Y_k), 
	\end{align*}
	where $F_{\tg}(Y_k)$ is a polynomial in $Y_k$ whose coefficients are non-negative but not larger than those of $(1+Y_k)^{d_k}$ at all degrees. It follows that, if we let $F^\circ _\tg(Y_k)$ denote the polynomial $F_{\tg}(Y_k^{-1})Y_k^{d_k}$. Then its coefficients are also non-negative but not larger than those of $(1+Y_k^{-1})^{d_k} Y_k^{d_k}=(1+Y_k)^{d_k}$.
	
	Consider the Laurent expansion of $\gen(\tg)$ in $\cT(t')$. Using $Y_k=Y_k'^{-1}$, we obtain
	\begin{align*}
	\gen(\tg)&=X^{d_{\hat{k}}}X_k'^{d_k}\cdot \frac{1}{(1+Y_k)^{d_k}}F_{\tg}(Y_k)\\
	&=X^{d_{\hat{k}}}X_k'^{d_k}\cdot \frac{Y_k'^{d_k}}{(1+Y_k')^{d_k}}\frac{F^\circ_{\tg}(Y_k')}{Y_k'^{d_k}}.
	\end{align*}
	This is an Laurent polynomial in $\cT(t')$ if and only if $\frac{F^\circ_{\tg}(Y_k')}{(1+Y_k')^{d_k}}$ is a Laurent polynomial in $\cT(t')$. But $F^\circ_{\tg}(Y_k')$ has non-negative coefficients no larger than those of $(1+Y_k')^{d_k}$. It follows that $F^\circ_{\tg}(Y_k')=(1+Y_k')^{d_k}$ and, consequently, $X^{d_{\hat{k}}}(X_k')^{d_k}=\gen(\tg)$.
\end{proof}
\subsection{One step mutations}\label{sec:one_step_mutation}

As the most difficult in the verification of the common triangular basis, we shall show in Proposition \ref{prop:prove_X} that the expected
  transformation rule \eqref{eq:degree_mutation} of the leading degrees of basis elements provides a crucial step towards the admissibility.

We will control the
Laurent expansions of triangular basis elements with the
help of the positivity
of the basis $\can^t$ and Cluster Expansion Assumption. To warm up, let us start with an
unsuccessful but inspiring calculation. 

Fix a given vertex $k\in[1,n]$, denote $\mu_k t=t'$, $X_k(\mu_k t)=X'_k(t)$, $I_k(\mu_k t)=I'_k(t)$. Recall that $\deg^t I_k=-e_k+f^{(k)}$ for some $f^{(k)}\in\Z^{\{n+1,\ldots,m\}}$.
\begin{Prop}\label{prop:contain_X}
Let $\can^t$ be the weakly triangular basis with respect to a seed $t$ and assume it to be positive. Then we have the following Laurent expansion in $\cT(t)$:
\begin{align*}
\can^t(\deg^t X'_k(t))=X'_k(t)+\sum_{d\in\N^n, d>e_k}u_d X(t)^{\deg^t X'_k(t)}Y(t)^d,
\end{align*}
where $u_d\in\N[q^\pm]$.
\end{Prop}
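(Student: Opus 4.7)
Let $\tg = \deg^t X'_k(t)$. By the exchange relation of Definition \ref{def:mutation}, the Laurent expansion of $X'_k(t)$ in $\cT(t)$ consists of exactly two monomials, $X'_k(t) = X(t)^\tg + X(t)^{\tg + \tB(t)e_k}$. Combined with Theorem \ref{thm:quantum_cluster_expansion}, which writes $\can^t(\tg) = X(t)^\tg\bigl(1 + \sum_{0 \neq v \in \N^n} a_v Y(t)^v\bigr)$ with bar-invariant coefficients $a_v$, the proposition reduces to three claims: (a) $a_{e_k} = 1$; (b) $a_v = 0$ whenever $v \neq 0$ and $v_k = 0$; and (c) $a_v \in \N[q^\pm]$ for $v > e_k$. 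Statement (c) is immediate from the positivity of $\can^t$ and Lemma \ref{lem:positive_expansion}.

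For (b), the plan is to exploit the direct transition $\inj^t(\tg) = \can^t(\tg) + \sum_{\tg' \prec_t \tg} b_{\tg'}\can^t(\tg')$ of Definition \ref{def:weakly_triangular_basis}. Since both the cluster monomial $X(t)^{g_f + g_+}$ and the element $I_k(t) = I(t)^{e_k}$ lie in $\can^t$ by Lemma \ref{lem:basis_property}(iii), the identity $\inj^t(\tg) = [X(t)^{g_f + g_+} * I_k(t)]^t$ together with the non-negativity of the structure constants of $\can^t$ forces $b_{\tg'} \in \bm \cap \N[q^\pm] = q^{-\Hf}\N[q^{-\Hf}]$. Meanwhile, Cluster Expansion Assumption \eqref{eq:expand_injective} constrains the Laurent support of $\inj^t(\tg)$ to the degrees $\tg + \tB d$ with $d = 0$ or $d \geq e_k$. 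Extracting the Laurent coefficient at $X^{\tg + \tB d}$ for a bad $d$ (with $d \neq 0$ and $d_k = 0$) then gives $a_d = -\sum_{\tg' \prec_t \tg} b_{\tg'}\,[\can^t(\tg') : X^{\tg + \tB d}]$, whose right-hand side lies in $-\N[q^\pm]$ while the left-hand side lies in $\N[q^\pm]$; hence both must vanish, proving (b).

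For (a), I compare Laurent coefficients at $X^{\tg + \tB e_k}$ in the same transition. The only $(v, u) \in \N^n \times \N^n$ with $v \neq 0$ solving $e_k = v + u$ is $(e_k, 0)$, so only $\tg' = \tg + \tB e_k$ can contribute through its leading term; subleading contributions are ruled out by extending the argument of (b) to the basis elements $\can^t(\tg + \tB v')$ for $v'_k = 0$. This produces the identity $c_{e_k}^{I_k} = a_{e_k} + b_{\tg + \tB e_k}$, where $c_{e_k}^{I_k}$ is the $Y_k$-coefficient of the Laurent expansion of $I_k(t)$. This coefficient is bar-invariant, specializes to $1$ by \eqref{eq:expand_injective}, and lies in $\N[q^\pm]$ because $I_k(t) \in \can^t$ has positive Laurent expansion; a short bar-symmetry argument shows that any bar-invariant non-negative element of $\Z[q^{\pm\Hf}]$ specializing to $1$ must equal $1$ exactly, since positive contributions at $q^{\pm\Hf j}$ for $j \neq 0$ would pair up and overshoot the specialization. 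Hence $c_{e_k}^{I_k} = 1$. The main obstacle, and the final step, is deducing $a_{e_k} = 1$ from $a_{e_k} + b_{\tg + \tB e_k} = 1$: because $a_{e_k}$ has non-negative coefficients at every power of $q^\Hf$ while $b_{\tg + \tB e_k}$ is supported strictly in negative $q^\Hf$-powers, a power-by-power comparison forces $a_{e_k} = 1$ and $b_{\tg + \tB e_k} = 0$, completing the argument.
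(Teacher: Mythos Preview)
Your proof is correct and follows the same approach as the paper: expand the normalized product $\inj^t(\tg) = [X^{g_f+g_+}*I_k]^t$ both as a Laurent polynomial (via Cluster Expansion Assumption and positivity of the Laurent expansion of $I_k$) and in the basis $\can^t$ (via weak triangularity combined with positivity of the structure constants), then compare term by term. One minor correction: the form $\can^t(\tg) = X^\tg\bigl(1 + \sum_{v\neq 0} a_v Y^v\bigr)$ with bar-invariant $a_v$ follows from the fact that $\can^t(\tg)$ is pointed and bar-invariant, not from Theorem~\ref{thm:quantum_cluster_expansion}, which is stated only for quantum cluster variables; also, the ``subleading contributions'' clause in your step (a) is unnecessary, since for any $\tg'=\tg+\tB v'$ with $v'>0$ the condition $\tg+\tB e_k\preceq_t\tg'$ already forces $v'=e_k$.
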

\begin{proof}
Omit the symbol $t$ for simplicity. Denote $\tb^{-}=\sum_{j}[-b_{jk}]_+e_j$, $X_-=X^{\tb^{-}}$. Then we have $\deg X'_k=\tb^{-}-e_k$
and $X'_k=X_k^{-1}X_-(1+Y_k)$.

We consider the normalized twisted
product $[X^{-f^{(k)}}*X_-*I_k]$. By Lemma
\ref{lem:basis_property}(iii), all factors belong to the
basis $\can$. By Lemma \ref{lem:positive_expansion}, the Laurent
expansion of $I_k$ has non-negative coefficients. Further applying Cluster Expansion Assumption, we see that the Laurent expansion of $I_k$
must take the form
$$
I_k=X^{-e_k+f^{(k)}}(1+\alpha_{e_k} Y_k+\sum_{d\in \N^n: d>e_k}\alpha_dY^d),\ \alpha_d\in \N[q^{\pm\Hf}],
$$
where the bar-invariant $q$-coefficient $\alpha_{e_k}$ of $I_k$ at the term $X^{-e_k+f^{(k)}} Y_k$ belongs to $\N[q^{\pm\Hf}]$ and satisfies $\alpha_{e_k}(1)=1$ and, consequently, equals $1$.

Therefore, the Laurent expansion of $[X^{-f^{(k)}}*X_-*I_k]$ becomes
\begin{align}\label{eq:expand_XI_assumption} 
  \begin{split}
    [X^{-f^{(k)}}*X_-*I_k]=&X^{-f^{(k)}}X_-X^{-e_k+f^{(k)}}(1+Y_k+\sum_{d>e_k}\alpha_dq^{\Hf\Lambda(\tb^{-},\deg
      Y^d)}Y^d)\\
    =&X_k^{-1}X_-+X_k^{-1}X_-Y_k+\sum_{d:d>e_k}\alpha_dq^{\Hf\Lambda(\tb^{-},\deg Y^d)}X_k^{-1}X_-Y^d.
  \end{split}
\end{align}
On the other hand, by the triangularity of $\can$, we have
\begin{align}\label{eq:expand_XI_condition}
  [X^{-f^{(k)}}*X_-*I_k]=\can(\tb^{-}-e_k)+\sum_{\eta\prec
    \tb^{-}-e_k}c_\eta \can(\eta),
\end{align}
where the coefficients $c_\eta\in \N[q^{\pm\Hf}]\cap \bm$.

Compare \eqref{eq:expand_XI_assumption} with the Laurent expansion
of \eqref{eq:expand_XI_condition}. They all have non-negative
coefficients in $\N[q^{\pm\Hf}]$. By analyzing these coefficients, we easily deduce that the monomial $X_k^{-1}X_-Y_k$ can only be the contribution of $\can(\tb^{-}-e_k)$.

Therefore, the Laurent expansion of $\can(\tb^{-}-e_k)$ takes the form
\begin{align*}
  \can(\tb^{-}-e_k)=X_k^{-1}X_-+X_k^{-1}X_-Y_k+\sum_{d\in\N^n,d>e_k}u_dX_k^{-1}X_-Y^d.
\end{align*}
The claim follows.
\end{proof}

\begin{Rem}
  In the proof of Proposition \ref{prop:contain_X}, the calculation of
  the desired basis element $\can^t(\deg^t X'_k(t))$ is based on the
  following two steps:

  \begin{enumerate}[(i)]
  \item put the desired basis element $\can^t(\deg^t X'_k(t))$ in
    the product of cluster variables contained in $\can^t$
    (cf. \eqref{eq:expand_XI_condition});
\item analyze its Laurent
    expansion by the positivity and the estimation from Cluster Expansion Assumption.
  \end{enumerate}

However, Proposition \ref{prop:contain_X} can not precisely
  control $\can^t(\deg^t X'_k(t))$, because the desired basis element
  appears as the leading term in \eqref{eq:expand_XI_condition}. To
  fix this, in Proposition \ref{prop:prove_X}, we put it as the
  last term in a similar but less intuitive equation. 
  \end{Rem}


 \begin{Prop}[One step mutation]\label{prop:prove_X}
Given any seed $t$ and any vertex $k\in [1, n]$. Let $\can^t$ and $\can^{t[-1]}$ be the weakly triangular bases with respect to $t$, $t[-1]$, respectively and assume them to be positive. Furthermore, assume $\can^t=\can^{ t[-1]}$ as sets such that we have the following equality of basis elements:
$$\can^{t}(\deg^{t} X'_{k}( t))=\can^{t[-1]}(\deg^{t[-1]} X'_k(t)).$$
Then the following quantum cluster variable is contained in $\can^t$:
  $$X'_{k}(t)=\can^{t}(\deg^{t} X'_{k}( t)).$$
\end{Prop}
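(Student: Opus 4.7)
The plan is to execute the three-step program outlined at the end of Section~\ref{sec:triangular_basis_brief}. Set $b := \can^{t}(\deg^{t} X'_{k}( t)) = \can^{t[-1]}(\deg^{t[-1]} X'_k(t))$; the goal is to identify $b$ with the pointed Laurent polynomial $X'_k(t) \in \cT(t)$.

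The first step is to exhibit $b$ as the $\prec_{t[-1]}$-leading basis element appearing in the triangular expansion of a suitable normalized twisted product of two elements of $\can^{t[-1]}$. I will choose basis elements $b_1, b_2 \in \can^{t[-1]}$ with $\deg^{t[-1]} b_1 + \deg^{t[-1]} b_2 = \deg^{t[-1]} b$ such that, in $\cT(t[-1])$,
$$q^{-\alpha}\,b_1 * b_2 \;=\; b + \sum_{\tg' \prec_{t[-1]} \deg^{t[-1]} b} c_{b'}\,b', \qquad c_{b'}\in\bm.$$
The natural candidates arise from injective-reachability: the injectives $I_j(t[-1]) = X_{\sigma(j)}(t)$ of $t[-1]$ are cluster variables of $t$ and lie in $\can^{t[-1]}$ by Lemma~\ref{lem:basis_property}(iii), while cluster variables of $t[-1]$ itself are also in $\can^{t[-1]}$. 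These provide a rich supply of basis elements whose degrees decompose $\deg^{t[-1]} b$ additively; the existence of the desired product, with $b$ genuinely as the $\prec_{t[-1]}$-leading basis element, follows from applying the $(\prec_{t[-1]},\bm)$-unitriangularity of $\inj^{t[-1]}$ to $\can^{t[-1]}$ to the natural element of $\inj^{t[-1]}$ pointed at $\deg^{t[-1]} b$.

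The second step is to translate this identity to $\cT(t)$. By the hypothesis $\can^t = \can^{t[-1]}$ and the tropical compatibility of parametrizations (Theorem~\ref{thm:change_index_compatible}), the degrees $\tg'$ on the right correspond to leading degrees $\phi_{t, t[-1]} \tg'$ in $\cT(t)$. The key claim is that every such $\tg' \prec_{t[-1]} \deg^{t[-1]} b$ satisfies $\phi_{t, t[-1]} \tg' \succ_t \deg^t b$; the $\prec_{t[-1]}$-lower terms become $\prec_t$-higher terms. This order reversal will follow from the piecewise linearity and sign coherence of the tropical transformation (Lemma~\ref{lem:additivity}) applied to the particular structure of $\deg^{t[-1]} X'_k(t)$, which originates from a one-step mutation at $k$. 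The identity in $\cT(t)$ then reads
$$q^{-\alpha}\,b_1 * b_2 \;=\; b + \sum_{\deg^t b' \succ_t \deg^t b} c_{b'}\,b'.$$

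The third step extracts the Laurent expansion of $b$ in $\cT(t)$. Cluster Expansion Assumption explicitly controls the $\prec_t$-lowest Laurent monomials of $b_1 * b_2$; on the right, positivity of $\can^t$ (Lemma~\ref{lem:positive_expansion}) forbids cancellation among Laurent coefficients of distinct basis elements, and every $b'$ with $\deg^t b' \succ_t \deg^t b$ contributes only at Laurent degrees strictly $\succ_t \deg^t b$. Combined with Proposition~\ref{prop:contain_X}, which already pins $b$ down to $X'_k(t) + \sum_{d > e_k} u_d X(t)^{\deg^t X'_k(t)} Y(t)^d$ with $u_d \in \N[q^{\pm\Hf}]$, the matching of lowest Laurent monomials will force each $u_d$ to vanish, yielding $b = X'_k(t)$. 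The main obstacle will be the second step — rigorously justifying the order reversal for the specific degrees arising in the triangular expansion, which requires careful sign analysis of $\phi_{t, t[-1]}$ applied to degrees of the form $\deg^{t[-1]} b - \tB(t[-1]) v$ with $v \in \N^n$, exploiting the sign coherence available at the exchangeable direction $k$.
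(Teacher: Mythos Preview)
Your three-step outline follows the sketch at the end of Section~\ref{sec:triangular_basis_brief}, and Steps~1 and~3 are on the right track. The genuine gap is Step~2.

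First, the choice of $b_1,b_2$ is not a detail to be filled in later; it is the heart of the argument. The paper takes (up to frozen factors) $b_1 = X_{\sigma^{-1}k}(t[-1])$ and $b_2 = X(t[-1])^{f^{+}} I(t[-1])^{\sigma b^{+}}$, where $b^{+}=\sum_i [b_{ik}(t)]_+ e_i$. The point of this particular choice is that in $\cT(t)$ the product becomes $P_k(t)$ times a Laurent monomial in cluster variables of $t$, and Cluster Expansion Assumption for the \emph{projective} $P_k(t)$ --- equations~\eqref{eq:expand_projective} and~\eqref{eq:index_coindex} --- then yields the full Laurent expansion explicitly.

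Second, and this is the real problem: the order-reversal claim in your Step~2 is not proved in the paper and is not provable by the method you suggest. The hypothesis gives compatibility of parametrizations only for the single element $b$; for the remaining $\can^{t[-1]}(\eta)$ you know only that they lie in $\can^t$ as a set, not that $\deg^t \can^{t[-1]}(\eta)=\phi_{t,t[-1]}(\eta)$. Even granting full compatibility, $\phi_{t,t[-1]}$ is a long composition of piecewise-linear maps along the mutation sequence $\sigma^{-1}\Sigma^{-1}$, and sign coherence at a single vertex says nothing about reversal of the global dominance order. The paper explicitly flags this obstacle (``At this moment, we haven't shown that the remaining basis elements $\can^{t[-1]}(\eta)$ \ldots\ have degrees higher than that of $X'_k(t)$'') and \emph{bypasses} it. What replaces your Step~2 is a direct computation: in the Laurent expansion of $P_k(t)\cdot(\text{monomial})$ in $\cT(t)$, one checks that the only monomials at degrees $\preceq_t \deg^t X'_k(t)$ are $X^{\deg^t X'_k(t)}$ and $X^{\deg^t X'_k(t)}Y_k(t)$, each with coefficient $1$. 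Since $b$ is pointed at $\deg^t X'_k(t)$ and has nonnegative Laurent coefficients bounded by those of the product (positivity), $b$ is supported on these two monomials only. The paper then uses bar-invariance together with $p_\eta\in\bm$ to force the second monomial into $b$; your idea of invoking Proposition~\ref{prop:contain_X} at this point would also finish the job, since that proposition already fixes the coefficient of $X^{\deg^t X'_k(t)}Y_k(t)$ in $b$ to be $1$ --- but this still requires the explicit expansion that your Steps~1--2 do not supply.
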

This proposition is the crucial step in verifying the admissibility of a basis.
\begin{proof}
	We provide a tricky proof, whose idea was sketched in the end of Section \ref{sec:triangular_basis_brief}.
	
	(i) We want to find out the leading degree of the Laurent expansion of $X_k'(t)$ in the quantum torus $\cT(t[-1])$.

	The quantum cluster variable $X'_k(t)$ is given by the exchange relation $$X'_k(t)=X_k(t)^{-1}X(t)^{f^{+}+b^{+}}
	+X_k(t)^{-1}X(t)^{f^{-}+b^{-}},$$
	where we denote the multiplicities over the exchangeable vertices by
	\begin{align*}
	{b^{+}}&=\sum_{1\leq i\leq n}[b_{ik}(t)]_+e_i,\\
	b^{-}&=\sum_{1\leq j\leq
		n}[-b_{jk}(t)]_+e_j,
	\end{align*}
	and the multiplicities over the frozen vertices (coefficients) by
	\begin{align*}
	f^{+}&=\sum_{1\leq i\leq
		m-n}[b_{n+i,k}(t)]_+e_{n+i},\\
	f^{-}&=\sum_{1\leq j\leq
		m-n}[-b_{j+n,k}(t)]_+e_{j+n}.
	\end{align*}
	
	Recall that we have a mutation sequence $\sigma^{-1}(\Sigma)$ from $t[-1]$ to $t$ together with a permutation $\sigma$ of $[1,n]$ such that $I_i(t[-1])=X_{\sigma i}(t)$ for any $i\in[1,n]$ (Section \ref{sec:chain_seed}). Also, recall that we denote $\sigma e_i=e_{\sigma^{-1}i}$. Then, in $\cT(t[-1])$, the above exchange equation becomes
	\begin{align}\label{eq:mutation_in_I}
	\begin{split}
	X'_k(t)=&I_{\sigma^{-1}
		k}(t[-1])^{-1}X(t[-1])^{f^{+}} I(t[-1])^{\sigma b^{+}}\\
	&+I_{\sigma^{-1}
		k}(t[-1])^{-1}X(t[-1])^{f^{-}}I(t[-1])^{\sigma b^{-}}.
	\end{split}
	\end{align}
	
	By
	\eqref{eq:mutated_injective_degree}, the desired leading degree is given by
	\begin{align*}
	\deg^{t[-1]} X'_k(t)&=-\deg^{t[-1]}I_{\sigma^{-1} k}(t[-1])+f^{+}+\deg^{t[-1]}I(t[-1])^{\sigma b^{+}}\\
	&=e_{\sigma^{-1}k}-f^{(\sigma^{-1} k)}+f^{+}+\deg^{t[-1]}I(t[-1])^{\sigma b^{+}}.
	\end{align*}

	(ii) Consider the desired basis element $\can^{t[-1]}(\deg^{t[-1]}X_k'(t))$ of the triangular basis $\can^{t[-1]}$, which shares the desired leading degree $\deg^{t[-1]}X_k'(t)$ with $X_k'(t)$ in the quantum torus $\cT(t[-1])$. Notice that the basis $\can^t=\can^{t[-1]}$ is positive. By step (i), this basis element must appear as the leading term of the following product of elements in $\can^{t[-1]}$:
	\begin{align}\label{eq:expand_product_I}
	\begin{split}
	q^{-\alpha}X(t[-1])^{e_{\sigma^{-1}k}-f^{(\sigma^{-1} k)}}*&X(t[-1])^{f^{+}} I(t[-1])^{\sigma b^{+}}\\
	&=\can^{t[-1]}(\deg^{t[-1]} X'_k(t))+\sum_{\eta}
	p_\eta\can^{t[-1]}(\eta),
	\end{split}
	\end{align}
	where $\eta\prec_{t[-1]}\deg^{t[-1]} X'_k(t)$, the non-negative coefficients $p_\eta$ belongs to $\bm$ by
	the triangularity of $\can^{t[-1]}$, and the normalization factor $q^{-\alpha}$ given by
	\begin{align*}
	\alpha &=\Hf\Lambda(t[-1])(-\deg^{t[-1]}I_{\sigma^{-1}k}(t[-1]),f^{+}+\deg^{t[-1]} I(t[-1])^{\sigma b^{+}}).
	\end{align*}

	(iii) We try to locate the desired basis element as the last term in the new quantum torus $\cT(t)$ with respect to the new order $\prec_t$.
	
	View equation \eqref{eq:expand_product_I} in the quantum torus
	$\cT(t)$. It becomes
	\begin{align}\label{eq:expand_product_P}
	\begin{split}
	q^{-\alpha}P_k(t)X(t[-1])^{-f^{(\sigma^{-1} k)}}*X(t)^{f^{+}+ b^{+}}&=\can^{t[-1]}(\deg^{t[-1]}
	X'_k(t))+\sum_{\eta} p_\eta\can^{t[-1]}(\eta)\\
	&=\can^{t}(\deg^{t}
	X'_k(t))+\sum_{\eta} p_\eta\can^{t[-1]}(\eta).
	\end{split}
	\end{align}

	Let us compute the desired basis element's leading degree:
	\begin{align*}
	\deg^{t}X'_k(t)=&b^{-}+ f^{-}-e_k\\
	=&-e_k+f^{(\sigma^{-1} k)}-( b^{+}+f^{+}-b^{-}- f^{-})+ b^{+}+f^{+}-f^{(\sigma^{-1} k)}\\
	=&\sigma^{-1}\deg^{t[-1]}I_{\sigma^{-1}k}(t[-1])-\deg^tY_k(t)+ b^{+}+f^{+}-f^{(\sigma^{-1} k)}\\
	\stackrel{\eqref{eq:index_coindex}}{=}&\deg^tP_k(t)Y(t)^{p(k,t)-e_k}X(t)^{ b^{+}+f^{+}-f^{(\sigma^{-1} k)}}.
	\end{align*}
	
	At this moment, we haven't shown that the remaining basis elements $\can^{t[-1])}(\eta)$ in \eqref{eq:expand_product_P} have degrees higher than that of $X'_k(t)$ in the quantum torus $\cT(t)$. Nevertheless, let us proceed by looking at Laurent expansions.
	
	(iv) We want to take Laurent expansions of \eqref{eq:expand_product_P} and explicitly determine the desired basis element as the last few terms.
	
	By Lemma \ref{lem:positive_expansion}, the Laurent expansion of $P_k(t)$ have coefficients in $\N[q^{\pm\Hf}]$. By \eqref{eq:expand_projective}, it must take the form
	\begin{align*}
	P_k(t)=X(t)^{\deg^{t}P_k(t)}Y(t)^{p(k,t)}\cdot(\sum_{e_k<d\leq p(k,t)}\beta_dY(t)^{-d}+Y_k(t)^{-1}+1),
	\end{align*}
	where $\beta_d\in\N[q^{\pm\Hf}]$. Notice that the coefficients $X(t)^{-f^{(\sigma^{-1} k)}}=X(t[-1])^{-f^{(\sigma^{-1} k)}}$. Consequently, the
	Laurent expansion of $\LHS$ of \eqref{eq:expand_product_P} becomes
	\begin{align}\label{eq:pre_calculate_X}
	\begin{split}
	&q^{-\alpha}P_k(t)X(t)^{-f^{(\sigma^{-1} k)}}*X(t)^{f^{+}+ b^{+}}\\
	=&q^{-\alpha}
	(X(t)^{\deg^{t}P_k(t)-f^{(\sigma^{-1} k)}}Y(t)^{p(k,t)}\cdot(\sum_{e_k<d\leq p(k,t)}\beta_dY(t)^{-d}+Y_k(t)^{-1}+1))*X(t)^{f^{+}+ b^{+}}\\
		\stackrel{\eqref{eq:index_coindex}}{=}&q^{-\alpha}
		(X(t)^{-e_k}\cdot(\sum_{e_k<d\leq p(k,t)}\beta_dY(t)^{-d}+Y_k(t)^{-1}+1))*X(t)^{f^{+}+ b^{+}}.
	\end{split}
	\end{align}
	
	We observe that $q^{-\alpha}$ is the normalization $q$-factor for the last two Laurent monomials in \eqref{eq:pre_calculate_X}: by Proposition
	\ref{prop:change_Lambda}, we have
	\begin{align*}
	\alpha=&\Hf\Lambda(t[-1])(-\deg^{t[-1]}I_{\sigma^{-1}k}(t[-1]),f^{+}+\deg^{t[-1]}
	I(t[-1])^{\sigma b^{+}})\\
	=&-\Hf\Lambda(t[-1])(\deg^{t[-1]}I_{\sigma^{-1}k}(t[-1]),f^{+}+\deg^{t[-1]} I(t[-1])^{\sigma b^{+}})\\
	=&-\Hf\Lambda(t)(\deg^t X_k(t),\deg^{t}X(t)^{f^{+}+ b^{+}})\\
	=&\Hf\Lambda(t)(-e_k,\deg^{t}X(t)^{f^{+}+ b^{+}}).
	\end{align*}
Thus the last Laurent monomial has coefficient $1$. The same holds for the second last Laurent monomial because $\Lambda(t)(\deg^t Y_k(t)^{-1},f^{+}+ b^{+})=0$. Now the \eqref{eq:pre_calculate_X} becomes
	\begin{align}\label{eq:calculate_X}
	\begin{split}  
	&q^{-\alpha}P_k(t)X(t)^{-f^{(\sigma^{-1} k)}}*X(t)^{f^{+}+ b^{+}}\\
	=& \sum_{e_k<d\leq p(k,t)}q^{\alpha_d}\beta_d
	X(t)^{b^{+}+f^{+}-e_k}Y(t)^{-d}
	+X(t)^{b^{+}+f^{+}-e_k}Y(t)^{-e_k}+X(t)^{b^{+}+f^{+}-e_k}\\
	=&\sum_{e_k<d\leq p(k,t)}q^{\alpha_d}\beta_d
	X(t)^{b^{+}+f^{+}-e_k}Y(t)^{-d}+X(t)^{\deg^{t}X_k'(t)}+X(t)^{\deg^{t}X_k'(t)}Y_k(t),
	\end{split}
	\end{align}
	where $\alpha_d=\Hf\Lambda(t)(\deg^t Y(t)^{-d}, b^{+})\leq 0$.
	
	Notice that the last two terms sum to $X'_k(t)$. Similar to the discussion in Proposition \ref{prop:contain_X}, we compare this Laurent
	expansion to the Laurent expansion of the $\RHS$ of
	\eqref{eq:expand_product_P}, which has non-negative coefficients by Lemma \ref{lem:positive_expansion}.
	
	First, $\can^t(\deg X'_k(t))$ contains $X^{\deg^{t}X_k'(t)}$ because it
	needs to have the leading degree $\deg^t X_k'(t)$. Notice that the Laurent
	expansion of $\can^t(\deg X'_k(t))$ has non-negative coefficients less than those in
	\eqref{eq:calculate_X}, and $X(t)^{\deg^tX_k'(t)}Y_k(t)$ is the only term whose degree is dominated by
	$\deg^t X_k'(t)$. Moreover, if the term $X^{\deg^{t}X_k'(t)}Y_k(t)$ is
	the contribution from some other basis elements $\can^{t[-1]}(\eta)$ in
	\eqref{eq:expand_product_P}, this will contradicts to the properties
	that $p_\eta\in\bm$ and the bar-invariance of
	$\can^{t[-1]}(\eta)$. Therefore, the desired basis element
	$\can^t(\deg X'_k(t))$ equals $X^{\deg^{t}X_k'(t)}+X^{\deg^{t}X_k'(t)}Y_k(t)=X'_k(t)$.
\end{proof}
\begin{Eg}[Control Laurent expansion]\label{eg:control_expansion}
	We continue Example \ref{eg:compare_inj_proj}. Take $k=4$. We want to control the Laurent expansion of $\can^t(\deg^t
	X_k'(t))$ by locating it as the last few terms of a Laurent
	polynomial in $\cT(t)$ and deduce that it is the cluster variable $X_k'(t)$.
	
	We have the following quantum cluster variables in $\cT(t)$:
	\begin{align*}
	X_4'(t)=&X^{e_2-e_4}(1+Y_4)=X^{e_1-e_4+e_8}(Y_4^{-1}+1),\\
	P_4(t)=&X^{-e_4+e_5+e_8}(Y^{-e_1-e_4}+Y^{-e_4}+1).
	\end{align*}
	Denote the coefficient part of the degree
	$\deg^{t[-1]}\inj_{\sigma^{-1}4}(t[-1])=-e_1+e_5+e_8$ by $f^{(\sigma^{-1} k)}=e_5+e_8$. Denote $ b^{+}=\sum_{1\leq i\leq n}[b_{ik}(t)]_+e_i=e_1$,
	$f^{+}=\sum_{i>n}[b_{ik}(t)]_+e_i=e_8$. Then $\sigma  b^{+}=e_4$.
	
	The quantization matrix $\Lambda(t[-1])$ is given by
	\begin{align*}
	\Lambda(t[-1])=\left(
	\begin{array}{cccccccc}
	0&0&0&0&1&0&0&0\\
	0&0&0&0&0&-1&1&1\\
	0&0&0&0&0&-1&1&0\\
	0&0&0&0&0&0&1&0\\
	-1&0&0&0&0&1&0&-1\\
	0&1&1&0&-1&0&1&1\\
	0&-1&-1&-1&0&-1&0&0\\
	0&-1&0&0&1&-1&0&0\\
	\end{array}
	\right).
	\end{align*}
	
	Assume that $\can^t=\can^{t[-1]}$ with positive structure
	constants and
	$\can^{t}(\deg^tX_4'(t))=\can^{t[-1]}(\deg^{t[-1]}X_4'(t))$. Compute the degree
	\begin{align*}
	\deg^{t[-1]}X_4'(t)&=e_1-e_4+e_6+e_7\\
	&=e_{\sigma^{-1}4}-f^{(\sigma^{-1} k)}+f^{+}+\deg^{t[-1]}I(t[-1])^{\sigma b^{+}}.
	\end{align*}
	Then the desired basis element $\can^{t}(\deg^tX_4'(t))$ appears in
	the $(\prec_{t[-1]},\bm)$-unitriangular expansion of the following normalized product in $\cT(t[-1])$:
	\begin{align}\label{eq:eg_locate_simple}
	\begin{split}
	q^{-\alpha}
	X(t[-1])^{e_{\sigma^{-1}4}-f^{(\sigma^{-1} k)}}&*X(t[-1])^{f^{+}}I(t[-1])^{\sigma b^{+}}\\
	&=q^{-\alpha}P_k(t)X(t[-1])^{-f^{(\sigma^{-1} k)}}*X(t)^{ b^{+}+f^{+}}.
	\end{split}
	\end{align}
	Compute the quantization degree
	\begin{align*}
	\alpha=&\Hf\Lambda(t[-1])(e_1-f^{(\sigma^{-1} k)},f^{+}+\deg^{t[-1]}I(t[-1])^{\sigma b^{+}})\\
	&=\Hf\Lambda(t[-1])(e_1-e_5-e_8,-e_4+e_5+e_6+e_7+e_8)\\
	&=\Hf\\
	&=\Hf\Lambda(t)(-e_4,e_1+e_8)\\
	&=\Hf\Lambda(t)(\deg^tP_4(t)Y_1Y_4-f^{(\sigma^{-1} k)},f^{+}+ b^{+}).
	\end{align*}
	Now, view the twisted product \eqref{eq:eg_locate_simple} in $\cT(t)$, we obtain
	\begin{align*}
	q^{-\alpha}P_4(t)X^{-f^{(\sigma^{-1} k)}}*X^{f^{+}+ b^{+}}=&q^{-\Hf}X^{e_1-e_4+e_8}Y_1^{-1}Y_4^{-1}+X^{e_1-e_4+e_8}(Y_4^{-1}+1)\\
	=&q^{-\Hf}X^{-e_5}+X_4'(t).
	\end{align*}
	
	Recall that the expansion of the normalized product into $\can^{t[-1]}$ takes the form
	\begin{align*}
	q^{-\alpha}P_4(t)X(t)^{-f^{(\sigma^{-1} k)}}*X(t)^{f^{+}+ b^{+}}&=\can^{t[-1]}(\deg^{t[-1]}X_4'(t))+\sum_{\eta\prec_{t[-1]}\deg^{t[-1]}X_4'(t)}p_\eta\can^{t[-1]}(\eta)\\
	&=\can^{t}(\deg^{t}X_4'(t))+\sum_{\eta\prec_{t[-1]}\deg^{t[-1]}X_4'(t)}p_\eta\can^{t[-1]}(\eta)
	\end{align*}
	with non-negative coefficients $p_\eta\in\bm$.
	
	By using bar-invariance of basis elements (\cf proof(iv) of Proposition \ref{prop:prove_X}), we deduce that the Laurent expansion $\can^{t}(\deg^tX_4'(t))$ in $\cT(t)$ consists of the lowest two terms of the Laurent expansion of the left hand side which is given by Cluster Expansion Assumption. Consequently, $\can^{t}(\deg^tX_4'(t))$ equals $X_k'(t)$.
\end{Eg}

\subsection{Triangular bases \wrt new seeds}\label{sec:change_seed}

In this subsection, we show that if the triangular basis
$\can^t$ is admissible in direction $k$, then it lifts to the common triangular basis \wrt $\{t,t'\}$, where $t'=\mu_k t$ (Proposition \ref{prop:condition_change_seed}).

Most arguments in this subsection easily follow from basic properties of unitriangular transitions, except the nontrivial part (Case ii-b) in the proof of Proposition \ref{prop:condition_change_seed}, where the unitriangularity comes from the exchange relation of a quantum cluster variable.

The following easy lemma tells us that unitriangularity remains unchanged even when the seeds and dominance orders change, if the leading degrees follow tropical transformations (\cf Definition \ref{def:compatible_pointed_set}).
 \begin{Lem}[Dominance order change]\label{lem:change_order} Let $t^{(i)}$, $i=1,2$, be two seeds of a given quantum cluster algebra and $\gen^{(i)}$  bar-invariant $\degL(t^{(i)})$-pointed bases. Assume that $\gen^{(1)}$ and $\gen^{(2)}$ are compatible. Let $Z$ be any element in $\qClAlg$ such that it has unique leading degrees in both $\cT(t^{(1)})$ and $\cT(t^{(2)})$ and $\deg^{t^{(2)}} Z=\phi_{t^{(2)},t^{(1)}}(\deg^{t^{(1)}} Z)$.

If $Z$ belongs to $\ptSet(t^{(1)})$ and is $(\prec_{t^{(1)}},\bm)$-unitriangular to $\gen^{(1)}$, then it belongs to $\ptSet(t^{(2)})$ and is $(\prec_{t^{(2)}},\bm)$-unitriangular to $\gen^{(2)}$ as well.
 \end{Lem}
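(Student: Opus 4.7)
The plan is to transfer the $(\prec_{t^{(1)}},\bm)$-unitriangular expansion of $Z$ in $\gen^{(1)}$ directly to a $\gen^{(2)}$-expansion via the compatibility, and then read off both pointedness in $\cT(t^{(2)})$ and the $(\prec_{t^{(2)}},\bm)$-unitriangularity from the uniqueness of the leading degree of $Z$. First I would write the hypothesis as
\[
Z \;=\; \gen^{(1)}(\deg^{t^{(1)}}Z) \;+\; \sum_{\eta \prec_{t^{(1)}} \deg^{t^{(1)}}Z} c_\eta\, \gen^{(1)}(\eta), \qquad c_\eta \in \bm,
\]
which is a finite sum because $\gen^{(1)}$ is a $\Z[q^{\pm\Hf}]$-basis of $\qClAlg$. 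Substituting $\gen^{(1)}(\eta)=\gen^{(2)}(\phi_{t^{(2)},t^{(1)}}\eta)$ and using $\deg^{t^{(2)}}Z=\phi_{t^{(2)},t^{(1)}}(\deg^{t^{(1)}}Z)$ produces a finite $\gen^{(2)}$-expansion of $Z$ with the same coefficients $c_\eta \in \bm$.

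The core of the argument is to verify that every non-leading index $\phi_{t^{(2)},t^{(1)}}\eta$ appearing in this new sum satisfies $\phi_{t^{(2)},t^{(1)}}\eta \prec_{t^{(2)}} \deg^{t^{(2)}}Z$. I would proceed by contradiction: suppose some $\eta$ violates this, and among all such $\eta$ choose one whose image $\phi_{t^{(2)},t^{(1)}}\eta$ is $\preceq_{t^{(2)}}$-maximal within the (finite) support of the expansion. Because $\phi_{t^{(2)},t^{(1)}}$ is a bijection and each $\gen^{(2)}(\phi_{t^{(2)},t^{(1)}}\eta)$ is pointed at $\phi_{t^{(2)},t^{(1)}}\eta$, the leading Laurent monomial of this chosen summand (at its own degree) cannot be cancelled: any lower monomial of another $\gen^{(2)}(\phi_{t^{(2)},t^{(1)}}\eta')$ contributing to that degree would force $\phi_{t^{(2)},t^{(1)}}\eta \prec_{t^{(2)}} \phi_{t^{(2)},t^{(1)}}\eta'$, contradicting maximality. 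Hence $Z$ would carry a nonzero Laurent coefficient at a degree which is either strictly above $\deg^{t^{(2)}}Z$ or incomparable with it, violating the hypothesis that $Z$ has a unique leading degree in $\cT(t^{(2)})$ equal to $\deg^{t^{(2)}}Z$.

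With this inequality in hand, the conclusion follows quickly. Only the term $\gen^{(2)}(\deg^{t^{(2)}}Z)$ can contribute to the Laurent monomial of $Z$ at degree $\deg^{t^{(2)}}Z$, since a lower term of any other $\gen^{(2)}(\phi_{t^{(2)},t^{(1)}}\eta)$ at that degree would require $\phi_{t^{(2)},t^{(1)}}\eta \succ_{t^{(2)}} \deg^{t^{(2)}}Z$, which we have just ruled out. This contribution has coefficient $1$, so $Z \in \ptSet_{\deg^{t^{(2)}}Z}(t^{(2)})$, and the resulting expansion is $(\prec_{t^{(2)}},\bm)$-unitriangular by construction, the coefficients $c_\eta \in \bm$ being inherited from the original $\gen^{(1)}$-expansion.

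The main obstacle I anticipate is precisely this maximality step. The piecewise linear map $\phi_{t^{(2)},t^{(1)}}$ does not in general preserve the dominance order $\prec$, so one cannot simply transport the inequality $\eta \prec_{t^{(1)}} \deg^{t^{(1)}}Z$ into $\phi_{t^{(2)},t^{(1)}}\eta \prec_{t^{(2)}} \deg^{t^{(2)}}Z$ by formal manipulation with $\tB$-matrices. The correct route, as above, is to bypass this and instead combine the bijectivity of $\phi_{t^{(2)},t^{(1)}}$ with the pointedness of each basis element and the uniqueness of the leading degree of $Z$, so that a non-cancellation argument on leading Laurent monomials rules out problematic indices.
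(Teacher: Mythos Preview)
Your proposal is correct and follows essentially the same approach as the paper. The paper writes the same finite $\gen^{(1)}$-expansion, transports it via compatibility to a $\gen^{(2)}$-expansion, and then invokes Lemma~\ref{lem:triangular}(iii) (whose proof is exactly the maximality/non-cancellation argument you spell out inline) to conclude that the expansion is $\prec_{t^{(2)}}$-unitriangular; pointedness of $Z$ in $\cT(t^{(2)})$ then follows since only $\gen^{(2)}(\deg^{t^{(2)}}Z)$ contributes at the leading degree.
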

 \begin{proof}
Denote $\deg^{t^{(1)}} Z=\eta_Z$ and $\deg^{t^{(2)}} Z=\tg_Z$ for simplicity. We have
\begin{align*}
  Z=\gen^{(1)}(\eta_Z)+\sum_{\eta'\prec_{t^{(1)}}\eta_Z} c_{\eta'}\gen^{(1)}(\eta'),\ c_{\eta'}\in \bm.
\end{align*}
Because $Z\in\qClAlg$, $\RHS$ is a finite sum. Since $\gen^{(1)}$ and $\gen^{(2)}$ are compatible, we can rewrite $\RHS$ and obtain
\begin{align*}
 Z=\gen^{(2)}(\tg_Z)+\sum_{\eta'\prec_{t^{(1)}}\eta_Z} c_{\eta'}\gen^{(2)}(\phi_{t^{(2)},t^{(1)}}(\eta')),\ c_{\eta'}\in \bm.
\end{align*}
Notice that $\phi_{t^{(2)},t^{(1)}}\eta'\neq \phi_{t^{(2)},t^{(1)}}\eta_Z=\tg_Z$. By this
equation, $Z$ has coefficient $1$ at the degree $\tg_Z$. Since $\tg_Z$ is the leading degree of $Z$ by our hypothesis, $Z$ is pointed. Moreover, this
expansion is a finite sum. By applying Lemma
\ref{lem:triangular}(iii), we see it must take the form
\begin{align*}
Z=\gen^{(2)}(\tg_Z)+\sum_{\tg'\prec_{t^{(2)}}\tg_Z} c_{\tg'}\gen^{(2)}(\tg'),\ c_{\tg'}\in\bm.
\end{align*}
\end{proof}

\begin{Lem}[Weakly triangular basis \wrt new seed]\label{lem:basis_change_seed}
Let $t,t'$ be two seeds related by a mutation at some vertex
$k$. Let $\can^t$ be the weakly triangular basis with respect to $t$ and assume it to be positive and admissible in direction $k$. Furthermore, assume that, the injective pointed set $\inj^{t'}$ is
$(\prec_t,\bm)$-unitriangular to $\inj^t$ in the quantum torus $\cT(t)$. Then the following claims hold.
\begin{enumerate}[(i)]
\item The weakly triangular basis $\can^{t'}$ with respect to $t'$ exists and is compatible with $\can^t$.
\item $\inj^t$ is $(\prec_{t'},\bm)$-unitriangular to $\inj^{t'}$ in $\cT(t')$.
\end{enumerate}
\end{Lem}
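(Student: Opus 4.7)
My plan is to take $\can^{t'}$ as the same underlying set as $\can^t$, parameterized by $\degL(t')$ via the relabeling $\can^{t'}(\tg):=\can^t(\phi_{t,t'}\tg)$ for $\tg\in\degL(t')$. Compatibility between $\can^t$ and $\can^{t'}$ is then built into the definition, and what remains is to verify the four weakly-triangular-basis axioms with respect to $t'$: bar-invariance, $\degL(t')$-pointedness (with $\deg^{t'}\can^{t'}(\tg)=\tg$), factoring through frozen variables, and $(\prec_{t'},\bm)$-unitriangularity of $\inj^{t'}$ to $\can^{t'}$. Bar-invariance is inherited from $\can^t$; the factoring axiom carries over because frozen variables are common to $t$ and $t'$ and the bar-invariant normalization of a twisted product with $X_{n+i}$ does not depend on the ambient quantum torus.

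First I would assemble the needed unitriangularities in $\cT(t)$. The hypothesis that $\inj^{t'}$ is $(\prec_t,\bm)$-unitriangular to $\inj^t$, composed with the weakly-triangular property $\inj^t\to\can^t$, yields $(\prec_t,\bm)$-unitriangularity of $\inj^{t'}$ to $\can^t$. Inverting via Lemma \ref{lem:inverse_transition} produces, in $\hat{\cT}(t)$, a decomposition
\begin{align*}
\can^t(\eta)=\inj^{t'}(\phi_{t',t}\eta)+\sum_{\eta'\prec_t\eta}b_{\eta'}\inj^{t'}(\phi_{t',t}\eta'),\quad b_{\eta'}\in\bm.
\end{align*}
Admissibility and positivity, combined with Lemma \ref{lem:admissible_cluster_monomial}, ensure that $\can^t$ contains every quantum cluster monomial of $t'$, so a large supply of basis elements has $\deg^{t'}$ pinned down a priori by Theorem \ref{thm:change_index_compatible}.

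The core step is to show that each $\can^t(\eta)$ is $\phi_{t',t}\eta$-pointed in $\cT(t')$ and that the above decomposition becomes $(\prec_{t'},\bm)$-unitriangular when reinterpreted there. Lemma \ref{lem:neighbouring_pointed}, together with its mirror obtained by swapping $t$ and $t'$, realizes $\inj^{t'}$ as compatible $\degL(t)$- and $\degL(t')$-pointed subsets of $\cT(t)$ and $\cT(t')$ linked by $\phi_{t',t}$. I would then apply Lemma \ref{lem:change_order} (or adapt its proof, which is the main obstacle, since $\inj^{t'}$ is a pointed set rather than a basis and the displayed sum is a priori only convergent in the completion $\hat{\cT}(t)$) to transfer the decomposition. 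The cluster-monomial content of $\can^t$ guaranteed in the previous step forces the leading degree of $\can^t(\eta)$ in $\cT(t')$ not to exceed $\phi_{t',t}\eta$, while the coefficient-$1$ contribution from $\inj^{t'}(\phi_{t',t}\eta)$ shows that this degree is actually attained; this pins down $\deg^{t'}\can^t(\eta)=\phi_{t',t}\eta$. A final inversion via Lemma \ref{lem:inverse_transition} then yields $(\prec_{t'},\bm)$-unitriangularity of $\inj^{t'}$ to $\can^{t'}$, completing (i).

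For (ii), once $\can^t$ and $\can^{t'}$ are established as compatible bases, Lemma \ref{lem:change_order} applies in the opposite direction: the weakly-triangular decomposition $\inj^t\to\can^t$ in $\cT(t)$ transfers to $\inj^t\to\can^{t'}$ in $\cT(t')$, where Lemma \ref{lem:neighbouring_pointed} (with roles swapped) identifies the leading degrees of $\inj^t$ in $\cT(t')$ as $\phi_{t',t}$-images. Composing with $\can^{t'}\to\inj^{t'}$, the inverse of the expansion obtained in (i), produces the required $(\prec_{t'},\bm)$-unitriangularity of $\inj^t$ to $\inj^{t'}$ in $\cT(t')$. The technical hurdle throughout is controlling leading degrees in the new quantum torus; once the compatible-pointed structure of $\inj^{t'}$ and the admissibility-driven abundance of known degrees handle this, the remaining axioms and part (ii) follow by routine application of Lemmas \ref{lem:inverse_transition} and \ref{lem:change_order}.
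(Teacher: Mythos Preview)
Your overall framework matches the paper's: define $\can^{t'}(\tg):=\can^t(\phi_{t,t'}\tg)$, so compatibility and bar-invariance are automatic, and reduce to proving $\degL(t')$-pointedness and the unitriangularity of $\inj^{t'}$ to $\can^{t'}$. Your part (ii) is also essentially the paper's argument.

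The gap is in your pointedness argument for part (i). You invert to obtain an expansion $\can^t(\eta)=\inj^{t'}(\phi_{t',t}\eta)+\sum_{\eta'\prec_t\eta}b_{\eta'}\inj^{t'}(\phi_{t',t}\eta')$ and then attempt to read off the leading degree of $\can^t(\eta)$ in $\cT(t')$. But $\eta'\prec_t\eta$ does not imply $\phi_{t',t}\eta'\prec_{t'}\phi_{t',t}\eta$, so nothing prevents the lower-order terms in $\cT(t)$ from having \emph{higher} leading degree in $\cT(t')$; the coefficient-$1$ term need not be the maximal one. Your sentence ``the cluster-monomial content of $\can^t$ \ldots\ forces the leading degree of $\can^t(\eta)$ in $\cT(t')$ not to exceed $\phi_{t',t}\eta$'' is the crux, and no mechanism is given: knowing that many particular basis elements are cluster monomials does not bound the $\cT(t')$-degree of an arbitrary $\can^t(\eta)$. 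You yourself flag this as ``the main obstacle,'' and indeed it is not resolved.

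The paper avoids this entirely by working with the \emph{uninverted}, finite decomposition
\[
\inj^{t'}(\tg)=\can^t(\eta)+\sum_{\eta'\prec_t\eta}c_{\eta\eta'}\can^t(\eta'),\qquad \eta=\phi_{t,t'}\tg,
\]
and then invoking positivity twice: since $\inj^{t'}(\tg)$ is a normalized product of elements of $\can^t$ (admissibility plus Lemma~\ref{lem:admissible_cluster_monomial}), the coefficients $c_{\eta\eta'}$ lie in $\N[q^{\pm\Hf}]$; and since $\can^t$ contains the cluster monomials of $t'$, each $\can^t(\eta')$ has a positive Laurent expansion in $\cT(t')$ by Lemma~\ref{lem:positive_expansion}. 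Hence there is no cancellation on the right-hand side when viewed in $\cT(t')$, so the unique leading monomial $X(t')^{\tg}$ of the left-hand side must be contributed, with coefficient $1$, by the unique term with coefficient $1$, namely $\can^t(\eta)$. This pins down $\deg^{t'}\can^t(\eta)=\tg$ directly. You used positivity only to populate $\can^t$ with cluster monomials of $t'$; the missing idea is to use it again, via no-cancellation, to locate the leading degree.
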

\begin{proof}

(i) Since the positive basis $\can^t$ is admissible in direction $k$, Lemma \ref{lem:admissible_cluster_monomial} implies that it contains the quantum cluster monomials in $t'$ and $t'[1]$.

It follows from Lemma \ref{lem:neighbouring_pointed} that $\inj^{t'}(\tg)$ is pointed at degree $\phi_{t,t'}\tg$
in $\cT(t)$, $\forall \tg\in\degL(t')$. By the hypothesis in the Proposition, it is $(\prec_t,\bm)$-unitriangular to $\inj^t$ and, consequently, to $\can^t$. So we have a $(\prec_t,\bm)$-unitriangular decomposition.
\begin{align}\label{eq:bad_expansion}
  \inj^{t'}(\tg)=\can^t(\eta)+\sum_{\eta'\prec_t \eta} c_{\eta \eta'}
  \can^t(\eta'),\ \eta,\eta'\in\degL(t),\ \tg=\phi_{t',t}(\eta),\ c_{\eta \eta'}\in\bm.
\end{align}
View \eqref{eq:bad_expansion} in $\cT(t')$ from now on, in which $\can^t$ consists of positive Laurent polynomials (Lemma \ref{lem:positive_expansion}). Since $\inj^{t'}(\tg)$ belong to $\qClAlg$, this linear combination is a
finite sum. Furthermore, because $X_k(t')$ and $I_k(t')$ belong to
$\can^t$, and $\can^t$ is positive, $c_{\eta\eta'}$ and the Laurent expansions of all terms appearing are non-negative. Compare both sides: $\LHS$ has a unique
leading Laurent monomial of degree $\tg$ with coefficient $1$. Therefore, $\RHS$ has a unique
leading Laurent monomial of degree $\tg$ with coefficient $1$, which must be
the contribution from $\can^t(\eta)$. Consequently, $\can^t(\eta)$ is
pointed at $\tg$ in $\cT(t')$.

Thus we have a $\degL(t')$-pointed basis $\can^{t'}$ defined by $\can^{t'}(\tg)=\can^t({\phi_{t,t'}\tg})$, $\forall \tg\in\degL(t')$. The bases $\can^t$ and $\can^{t'}$ are compatible. By Lemma \ref{lem:change_order}, $\inj^{t'}$ is $(\prec_{t'},\bm)$-unitriangular to $\can^{t'}$. The claim follows.

(ii) $\inj^t$ is $(\prec_{t'},\bm)$-unitriangular
to $\can^{t'}$ by Lemma \ref{lem:change_order} (use Lemma \ref{lem:neighbouring_pointed}). $\can^{t'}$ is $(\prec_{t'},\bm)$-unitriangular to $\inj^{t'}$
by its triangularity. The claim follows as a composition.
\end{proof}

\begin{Prop}[Triangular basis \wrt new seed]\label{prop:condition_change_seed}
Let $t$, $t'$ be two seeds related by a mutation at some given vertex $k$. Let $\can^t$ be the triangular basis with respect to $t$. If $\can^t$ is positive and admissible in direction $k$ with respect to $t$, then the triangular basis with respect to $t'$ exists and is compatible with $\can^t$.
\end{Prop}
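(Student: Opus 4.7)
The plan is to apply Lemma~\ref{lem:basis_change_seed} to produce a weakly triangular basis $\can^{t'}$ that is compatible with $\can^t$, and then promote it to a genuine triangular basis by verifying the triangularity axiom of Definition~\ref{def:triangularBasis}.

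First, to apply Lemma~\ref{lem:basis_change_seed} I need the hypothesis that $\inj^{t'}$ is $(\prec_t,\bm)$-unitriangular to $\inj^t$ in $\cT(t)$. By admissibility, $X_k(t')$ and $I_k(t')$ both lie in $\can^t$, while the remaining generators of $\inj^{t'}$ coincide with those of $\inj^t$; a double application of Lemma~\ref{lem:admissible_cluster_monomial} (at $(t,k)$ and at $(t[1],\sigma k)$) shows that the positive basis $\can^t$ already contains every cluster monomial in $t'$ and in $t'[1]$. Combining this with the inverse unitriangularity $\can^t\leftrightarrow\inj^t$ of Lemma~\ref{lem:inverse_transition}, the substitution principle of Lemma~\ref{lem:substitute_expansion}, and Triangularity Preservation (Lemma~\ref{lem:preserve_triangular}), one verifies the required $(\prec_t,\bm)$-unitriangularity by induction on the defining data of $\inj^{t'}$. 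Lemma~\ref{lem:basis_change_seed} then delivers the weakly triangular basis $\can^{t'}$ with $\can^{t'}(\tg)=\can^t(\phi_{t,t'}\tg)$, so $\can^{t'}=\can^t$ as sets; in particular $\can^{t'}$ is positive and contains the cluster monomials of $t'$ and $t'[1]$.

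It remains to check the triangularity axiom for $\can^{t'}$. For $i\neq k$ one has $X_i(t')=X_i(t)$, so the triangularity of $\can^t$ produces a $(\prec_t,\bm)$-unitriangular expansion of $[X_i(t)*S]^t$ in $\can^t$ for every $S\in\can^t=\can^{t'}$. Since $[X_i(t')*S]^{t'}$ and $[X_i(t)*S]^t$ differ only by a $q$-power as elements of $\qClAlg$, and since $\can^t$ and $\can^{t'}$ are compatibly parametrized pointed sets, Lemma~\ref{lem:change_order} converts this expansion into a $(\prec_{t'},\bm)$-unitriangular expansion in $\can^{t'}$. The delicate case is $i=k$: here I would multiply the quantum exchange relation
$$X_k(t)*X_k(t')=q^{\alpha_+}[X(t)^{b^{+}+f^{+}}]+q^{\alpha_-}[X(t)^{b^{-}+f^{-}}]$$
on the right by $S\in\can^{t'}=\can^t$. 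The right-hand side expands $(\prec_t,\bm)$-unitriangularly in $\can^t$ by iterated application of the triangularity of $\can^t$ to the cluster variables $X_j(t)$ appearing in $X(t)^{b^{\pm}+f^{\pm}}$, while the left-hand side equals $q^{\alpha}X_k(t)*[X_k(t')*S]^{t'}$ for a suitable normalizing power $q^{\alpha}$. Using positivity of $\can^t$ (Lemma~\ref{lem:positive_expansion}), the bar-invariance of the basis, and a leading-term analysis modelled on the arguments of Propositions~\ref{prop:contain_X} and~\ref{prop:prove_X}, I would extract a $(\prec_t,\bm)$-unitriangular decomposition of $[X_k(t')*S]^{t'}$ itself in $\can^t$, and then transport it to a $(\prec_{t'},\bm)$-unitriangular decomposition in $\can^{t'}$ via Lemma~\ref{lem:change_order} and the compatibility $\deg^{t'}=\phi_{t',t}\circ\deg^t$ on basis elements.

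The main obstacle is precisely this mutated direction $i=k$: the exchange relation only controls the product $X_k(t)*[X_k(t')*S]^{t'}$ and not $[X_k(t')*S]^{t'}$ itself, so one must carefully deduce the $\bm$-triangularity of the ``quotient'' from that of the product by exploiting pointedness, positivity, and bar-invariance of the basis together with the compatibility of parametrizations under the tropical transformation $\phi_{t',t}$.
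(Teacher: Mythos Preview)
Your overall structure matches the paper's: first invoke Lemma~\ref{lem:basis_change_seed} to get a compatible weakly triangular $\can^{t'}$ (the paper does exactly this via Lemma~\ref{lem:admissible_cluster_monomial} and Lemma~\ref{lem:substitute_expansion}), then check the triangularity axiom for each $X_i(t')$. For $i\neq k$ your argument is essentially the paper's, though you should make explicit that $\deg^{t'}(X_i(t)*S)=\phi_{t',t}\deg^t(X_i(t)*S)$ follows from sign coherence at the $k$-th component (Lemma~\ref{lem:additivity}); this is the hypothesis needed in Lemma~\ref{lem:change_order}.

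The genuine gap is the case $i=k$. Your plan is to control $X_k(t)*[X_k(t')*S]^{t'}$ via the exchange relation and then ``divide out'' $X_k(t)$ using positivity and bar-invariance. But left multiplication by $X_k(t)$, while $(\prec_t,\bm)$-unitriangular on $\can^t$, introduces nontrivial $q$-shifts on the non-leading terms (Lemma~\ref{lem:preserve_triangular} only gives $\alpha_\eta\leq 0$, not $\alpha_\eta=0$), so knowing that $X_k(t)*Z$ has an $\bm$-unitriangular expansion does not let you recover one for $Z$. The analogy with Propositions~\ref{prop:contain_X} and~\ref{prop:prove_X} is misleading: those arguments pin down a \emph{single} basis element as the lowest term of a product, not an entire $\bm$-triangular decomposition of an arbitrary $Z$.

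The paper avoids this division entirely. It first passes from $\can^{t'}$ to $\inj^t$ in $\cT(t')$ (this is legitimate because $\inj^t$ is $\degL(t')$-pointed with the correct tropical degrees by Lemma~\ref{lem:neighbouring_pointed}, hence mutually $(\prec_{t'},\bm)$-unitriangular with $\can^{t'}$ via Lemma~\ref{lem:change_order} and Lemma~\ref{lem:inverse_transition}). One is then reduced to showing each $[X_k'*\inj^t(\eta)]^{t'}$ is $(\prec_{t'},\bm)$-unitriangular to $\can^{t'}$. If $\eta_k\leq 0$ this follows from substitution and sign coherence. If $\eta_k>0$, the key move is to use the exchange relation \emph{inside} the product: $[X_k'*X_k]^{t'}=X^{\tb^+}+q^{-\mathbf d_k/2}X^{\tb^+}Y_k'$, which replaces $X_k'*X_k$ by two monomials and turns $[X_k'*\inj^t(\eta)]^{t'}$ into $A+q^{-\mathbf d_k/2+\alpha_k}B$ with $\alpha_k\leq 0$, where $A,B$ themselves lie in $\inj^t$. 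Sign coherence again gives the degree compatibility, and Lemma~\ref{lem:change_order} finishes. This route never requires inverting multiplication by a cluster variable.
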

\begin{proof}
Our proof is based on basic properties of unitriangular transitions except the nontrivial last step (Case-iib), where the triangularity arises from the exchange relation of a mutation. We prove the following two claims.

  \begin{enumerate}[{Claim} (i)]
  	\item If $\can^t$ is positive and admissible in direction $k$, then the weakly triangular basis $\can^{t'}$ exists and is compatible with $\can^t$.
  	
  	\item If the weakly triangular basis $\can^{t'}$ exists and is compatible with the triangular basis $\can^t$, then it is the triangular basis with respect to $t'$.
  \end{enumerate}

  For simplicity, we omit the notation $(t)$ inside $X_i(t)$, $X'_k(t)=X_k(t')$, $I'_k(t)=I_k(t')$, $Y_k(t)$, $Y'_k(t)=Y_k(t')$ in the following.

\emph{
Proof of claim (i):}

By Definition \ref{def:injective_pointed_element}, for any given $\tg\in\degL(t')$, we have, in $\cT(t')$,
\begin{align*}
  \inj^{t'}(\tg)=[X^{g_f}X^{[g_{\hk}]_+}*(X_k')^{[g_k]_+}*(I_k')^{[-g_k]_+}*I_k ^{[-g_{\hk}]_+}]^{t'},
\end{align*}
where $g_{\hk}\in\Z^n$ is obtained from $g$ by imposing $g_k=0$. By
Lemma \ref{lem:neighbouring_pointed}, it is normalized in $\cT(t)$ as well:
\begin{align*}
  \inj^{t'}(\tg)=[X^{g_f}X^{[g_{\hk}]_+}*(X_k')^{[g_k]_+}*(I_k')^{[-g_k]_+}*I_k ^{[-g_{\hk}]_+}]^{t}.
\end{align*}

Because that the factors $(X_k')^{[g_k]_+}$ or $(I_k')^{[-g_k]_+}$ belong to $\can^t$ by Lemma \ref{lem:admissible_cluster_monomial}, they are $(\prec_t,\bm)$-unitriangular to $\inj^t$ by the triangularity of $\can^t$. Then Lemma
\ref{lem:substitute_expansion} implies that $\inj^{t'}$ is
$(\prec_t,\bm)$-unitriangular to $\inj^t$. Then we can apply Lemma \ref{lem:basis_change_seed} and the claim follows.

\emph{Proof of claim (ii):
}
We want to show that $[X_i(t')*S]^{t'}$ is $(\prec_{t'},\bm)$-unitriangular to $\can^{t'}$, for any $S\in\can^{t'}$, $1\leq i\leq m$.

First, take any $i\neq k$. We have $X_i(t')=X_i(t)$. 
Then the normalized twisted product $[X_i(t')*S]^t=[X_i(t)*S]^t$ is $(\prec_t,\bm)$-unitriangular to $\can^t$.

In $\cT(t')$, the twisted product $[X_i(t)*S]^t$ has the unique maximal degree
$\deg^{t'}(X_i(t))+\deg^{t'}S$. This maximal degree turns out to be $\phi_{t',
  t}\deg^tX_i(t)+\phi_{t',t}\deg^t S=\phi_{t',
  t}\deg^t(X_i(t)*S)$, thanks to the sign coherence at the $k$-th
components (Lemma \ref{lem:additivity}). Therefore, we can apply Lemma \ref{lem:change_order} and
deduce that $[X_i(t')*S]^t$ is pointed and $(\prec_{t'},\bm)$-unitriangular to
$\can^{t'}$ in $\cT(t')$.

Then, it remains to prove that the normalized twisted product $[X_k'*S]^{t'}$ is
$(\prec_{t'},\bm)$-unitriangular to $\can^{t'}$.

By Lemma \ref{lem:neighbouring_pointed}, $\inj^t$ is $\degL(t')$-pointed in $\cT(t')$ with the expected leading degree change via tropical transformation. Since $\can^t$ and $\can^{t'}$ are compatible, it is further $(\prec_{t'},\bm)$-unitriangular to $\can^{t'}$ by Lemma \ref{lem:change_order}.  Conversely, $\can^{t'}$ is $(\prec_{t'},\bm)$-unitriangular to $\inj^t$ as well by Lemma \ref{lem:inverse_transition}. Correspondingly, taking the $\prec_{t'}$-unitriangular decomposition of $S$ in $\inj^t$ and applying Lemma
Triangularity Preservation Lemma (Lemma \ref{lem:preserve_triangular}) in $\cT(t')$, we obtain
\begin{align*}
  [X'_k*S]^{t'}&=[X'_k*\sum_{\phi_{t',t}\eta
    \preceq_{t'}\deg^{t'}S}c_{\eta}\inj^t(\eta)]^{t'}\\
&=\sum_{\phi_{t',t}\eta
    \preceq_{t'}\deg^{t'}S}c_{\eta}q^{\alpha_\eta}[X'_k*\inj^t(\eta)]^{t'},
\end{align*}
where $c_{\eta}=1$, $\alpha_\eta=0$, if $\phi_{t',t}\eta=\deg^{t'}S$, and
$c_\eta\in\bm$, $\alpha_\eta\leq 0$ otherwise. It suffices to show that each term
$[X'_k*\inj^t(\eta)]^{t'}$ is $(\prec_{t'},\bm)$-unitriangular to $\can^{t'}$.

We can write
\begin{align*}
  \inj^t(\eta)=[X^{\eta_f}*X^{[\eta_\hk]_+}*X_k^{[\eta_k]_+}*I_k^{[-\eta_k]_+}*I^{[-\eta_\hk]_+}]^t
\end{align*}
where $\eta_\hk$ is obtained from $\pr_n \eta$ by setting the $k$-th
component $\eta_k$ to $0$, $\eta_f\in\Z^{\{n+1,\ldots,m\}}$ (Definition \ref{def:injective_pointed_element}). This element belongs to $\ptSet(t')$ by Lemma \ref{lem:neighbouring_pointed}, namely:
\begin{align*}
  \inj^t(\eta)=[X^{\eta_f}*X^{[\eta_\hk]_+}*X_k^{[\eta_k]_+}*I_k^{[-\eta_k]_+}*I^{[-\eta_\hk]_+}]^{t'}.
\end{align*}

(Case ii-a) If $\eta_k\leq 0$, then $[\eta_k]_+=0$ and $I^t(\eta)$ does not contain any factor $X_k$. Consider the following normalized product in $\cT(t)$
\begin{align*}
  [X'_k*I^t(\eta)]^{t}&=[X^{\eta_f}*X^{[\eta_\hk]_+}*X'_k*I_k^{[-\eta_k]_+}*I^{[-\eta_\hk]_+}]^{t}.
\end{align*}

The leading
degrees $\deg^t X'_k$ and $\deg^t I^t(\eta)$ are sign coherent at the
$k$-th components. The compatibility of degree change follows by Lemma \ref{lem:additivity}:
\begin{align*}
  \deg^{t'}(X'_k*I^t(\eta))=\phi_{t',t}\deg^t(X'_k*I^t(\eta)).
\end{align*}
In addition, $X_k'\in \can^{t'}=\can^t$ is
$(\prec_t,\bm)$-unitriangular to $\inj^t$ and, by Substitution Lemma
(Lemma \ref{lem:substitute_expansion}), $[X'_k*I^t(\eta)]^t$ is $(\prec_t,\bm)$-unitriangular to $\inj^t$. Therefore, we can change the seed from $t$ to $t'$ by
applying Lemma \ref{lem:change_order} and obtain that
$[X'_k*I^t(\eta)]^{t}=[X'_k*I^t(\eta)]^{t'}$ and that $[X'_k*I^t(\eta)]^{t'}$ is $(\prec_{t'},\bm)$-unitriangular to $\can^{t'}$.

(Case ii-b) This last case is nontrivial. If $\eta_k>0$, we have the following normalized product in $\cT(t')$
\begin{align}\label{eq:subtle_product}
  \begin{split}
    [X'_k*I^t(\eta)]^{t'}=&[X^{\eta_f}X^{[\eta_\hk]_+}*X'_k*X_k^{\eta_k}*I^{[-\eta_\hk]_+}]^{t'}\\
    =&[X^{\eta_f}X^{[\eta_\hk]_+}*(X'_k*X_k)*X_k^{\eta_k-1}*I^{[-\eta_\hk]_+}]^{t'}.
\end{split}
  \end{align}

Our crucial ingredient is the following exchange relation of the mutation at vertex $k$ in the quantum torus $\cT(t')$:
\begin{align*}
[X'_k*X_k]^{t'}=&X^{\tb^{+}}+q^{-\frac{\mathbf{d}_k}{2}}X^{\tb^{-}} \\
=&X^{\tb^{+}}+q^{-\frac{\mathbf{d}_k}{2}}X^{\tb^{+}} Y_k',
\end{align*}
where $ \tb^{+}=\sum_{i}[b_{ik}(t)]_+e_i$, $\tb^{-}=\sum_{j}[-b_{jk}(t)]_+e_j$, $Y_k'=Y_k(t')=Y_k(t)^{-1}$.

Notice that the leading term of the pointed element $[X'_k*X_k]^{t'}$ in
$\cT(t')$ is $X^ {\tb^{+}}$ and we have $\deg^{t'}X_k= {\tb^{+}}-e_k$.

By the definition of normalization, \eqref{eq:subtle_product} becomes
\begin{align*}
[X'_k*I^t(\eta)]^{t'}=&[X^{\eta_f}X^{[\eta_\hk]_+}*(X^ {\tb^{+}}+q^{-\frac{\mathbf{d}_k}{2}}X^ {\tb^{+}}
Y_k')*X_k^{\eta_k-1}*I^{[-\eta_\hk]_+}]^{t'}\\
=&[X^{\eta_f}X^{[\eta_\hk]_+}X^ {\tb^{+}}
X_k^{\eta_k-1}*I^{[-\eta_\hk]_+}]^{t'}\\
&+q^{-\frac{\mathbf{d}_k}{2}}q^{\alpha_k}[X^{\eta_f}X^{[\eta_\hk]_+}X^ {\tb^{+}} Y_k' X_k^{\eta_k-1}*I^{[-\eta_\hk]_+}]^{t'}\\
\stackrel{\mathrm{def}}{=}&A+q^{-\frac{\mathbf{d}_k}{2}}q^{\alpha_k}B,
\end{align*}
where $\alpha_k$ is given by
\begin{align*}
\alpha_k=&\Hf\Lambda(t')([\eta_\hk]_+,\deg^{t'}Y_k')+ \Hf\Lambda(t')(\deg^{t'}Y_k',(\eta_k-1)\deg^{t'}X_k)\\
&+\Hf\Lambda(t')(\deg^{t'}Y_k',\deg^{t'}I^{[-\eta_\hk]_+}).
\end{align*}
Since $(B(t'),\Lambda(t'))$ is a compatible pair, one has $\Lambda(t')(e_i,\deg^{t'}Y_k')=-\delta_{ik}\mathbf{d}_k$, $\forall 1\leq i\leq m$. It follows that
\begin{align*}
\alpha_k=&0+ \Hf\Lambda(t')(\deg^{t'}Y_k',(\eta_k-1)( {\tb^{+}}-e_k))+0\\  
=&-(\eta_k-1)\frac{\mathbf{d}_k}{2}\leq 0.
\end{align*}
It remains to show that $A$, $B$ are $(\prec_{t'},\bm)$-unitriangular to $\can^{t'}$.
In $\cT(t)$, the leading degrees of the factors
in both terms $A$, $B$ are sign coherent at the vertex $k$-th components respectively. Thus Lemma \ref{lem:additivity} implies:
\begin{align*}
\deg^{t'}A=\phi_{t',t}\deg^t A,\quad
\deg^{t'}B=\phi_{t',t}\deg^t B.
\end{align*}

Because $[A]^t$ and $[B]^t$ belong to $\inj^t$, they are $(\prec_t,\bm)$-unitriangular to $\can^t$. By Dominance Order Change Lemma (Lemma 
\ref{lem:change_order}), we have $[A]^t=A$, $[B]^t=B$ and they are $(\prec_{t'},\bm)$-unitriangular to $\can^{t'}$.
\end{proof}

  

\subsection{Proofs of theorems}

As a consequence of previous subsections, we can now prove theorems about the existence of the common triangular basis.

\begin{proof}[{Proof of Theorem \ref{thm:induction}}]
For any injective reachable chain $(t'[d])_{d\in\Z}$, if the common triangular basis \wrt $\{t'[d]\}$ exists, Proposition \ref{prop:prove_X} implies that, for any $k\in[1,n]$, $d\in\Z$, it is admissible in direction $\sigma^d k$ \wrt the seed $t'[d]$. Then, Proposition \ref{prop:condition_change_seed} implies that the common triangular basis \wrt to $\{(\mu_k t')[d] |d\in\Z\}$ exists and is compatible with the previous common triangular basis \wrt $\{t'[d]\}$.

We repeatedly apply the above arguments along any mutation sequence starting from the initial injective reachable chain $(t[d])$. The claim follows.
\end{proof}


\begin{Prop}\label{prop:initial_seed_cond}
Let $t$ be any chosen seed and $(t[d])_{d\in\Z}$ an injective-reachable chain.
\begin{enumerate}[(i)]
	\item 
If the triangular basis $\can^t$ \wrt a seed $t$ exists, then the triangular basis $\can^{t[d]}$ \wrt the seed $t[d]$ exists, for any
$d\in\Z$.
 
\item If $\can^t$ is further positive, then so does $\can^{t[d]}$.

\item Let $\overleftarrow{\mu}$ be any mutation sequence and $k$ any integer in $[1,n]$. If the quantum cluster variable $X_k(\overleftarrow{\mu}t)$ belongs to $\can^t$, then the quantum cluster variable $X_{\sigma^d k}(\sigma^d\mu
(t[d]))$ belongs to $\can^{t[d]}$.
\end{enumerate}

\end{Prop}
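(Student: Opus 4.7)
The plan is to exploit the similarity between the seeds $t$ and $t[d]$ provided by the injective-reachable structure, together with Lemma \ref{lem:variation_triangular_basis} (variation of triangular bases under similarity), to propagate the triangular basis along the chain.

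First, I would verify that $t$ and $t[1]=\Sigma t$ are similar seeds in the sense of Section \ref{sec:correction}. Indeed, the relation $b_{\sigma(i)\sigma(j)}(\Sigma t)=b_{ij}(t)$ from the definition of injective-reachable says precisely that the principal $B$-matrices satisfy $B(t[1])=\var B(t)$ for the identification $\var^{*}=\sigma^{-1}$ of the two copies of $\ex$, while the diagonal matrix $D$ in the compatible pair is invariant under mutations (so one may take the scaling factor $\delta=1$). Iterating this observation, $t$ and $t[d]$ are similar via $\sigma^{d}$ for every $d\in\Z$ (using $\Sigma^{-1}$ and $\sigma^{-1}$ for negative $d$).

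For part (i), I would apply Lemma \ref{lem:variation_triangular_basis}(i) to the similarity above: the set $\can^{t[d]}$ consisting of pointed elements in $\ptSet(t[d])$ similar to elements of $\can^{t}$ is the weakly triangular basis of the quantum cluster algebra associated to the seed $t[d]$. Since $t$ and $t[d]$ are both seeds of the ambient cluster algebra $\qClAlg$, the correction technique (Theorem \ref{thm:correction}) controls the passage between the two descriptions by multiplication by Laurent monomials in the frozen variables, which is absorbed by the factorization property \eqref{eq:factorization_basis}. Parts (ii) and (iii) then follow formally: Lemma \ref{lem:variation_triangular_basis}(ii) converts positivity of $\can^{t}$ into positivity of $\can^{t[d]}$ (and upgrades the weakly triangular basis to the triangular basis when applicable); for (iii), Lemma \ref{lem:variation}(ii) identifies $X_{\sigma^{d}k}(\sigma^{d}\overleftarrow{\mu}\cdot t[d])$ as the variation of $X_{k}(\overleftarrow{\mu}t)$, after which Lemma \ref{lem:variation_triangular_basis}(iii) gives the claimed membership.

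The main obstacle, though essentially notational, is the careful bookkeeping in aligning the permutation $\sigma^{d}$ with the variation map $\var$ across iterated applications of Lemma \ref{lem:variation_triangular_basis}, and in checking that the correction terms produced by Theorem \ref{thm:correction} correctly identify $\can^{t[d]}$ as the desired subset of $\qClAlg$ (rather than merely of the cluster algebra abstractly generated by the similar seed). Once this alignment is set, the statement reduces to a formal consequence of the similarity of seeds along the injective-reachable chain and the previously established variation lemmas.
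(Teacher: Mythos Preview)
Your proposal is correct and follows essentially the same approach as the paper: observe that $t$ and $t[d]$ are similar via the permutation $\sigma^{d}$ of the unfrozen vertices, then invoke Lemma \ref{lem:variation_triangular_basis} for all three claims. The paper's proof is in fact just two sentences to this effect; your additional remarks about matching $\var^{*}$ with $\sigma^{-1}$, checking $\delta=1$, and absorbing frozen corrections via \eqref{eq:factorization_basis} are accurate elaborations of what the lemma already packages.
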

\begin{proof}
Notice that any seed $t[d]$,
$d\in\Z$, is similar to $t[0]=t$ via an permutation $\sigma^d$ of the unfrozen vertices $[1,n]$ (Section \ref{sec:chain_seed}). The claims follow from Lemma \ref{lem:variation_triangular_basis}.
\end{proof}

The following theorem is a possible reduction of the existence theorem
\ref{thm:induction}, which reduces.

\begin{proof}[{Proof of Theorem \ref{thm:reduction}}]
By Proposition \ref{prop:initial_seed_cond}, the existence of $\can^t$ implies the existence of the
triangular basis $\can^{t[d]}$, $d\in \Z$, which is also positive. We want to show that these
bases are compatible.

Denote the mutation sequence from $t$ to $t[-1]$ by (read from right to left)
 $$\sigma(\Sigma^{-1})=\mu_{i_s}\cdots \mu_{i_2}\mu_{i_1},\ s\in\N.$$
 Then the mutation sequence from $t$ to $t[1]$ is $\Sigma=\mu_{\sigma i_1}\mu_{\sigma i_2}\cdots \mu_{\sigma i_s}$. For any seed $t'=\mu_{i_u}\cdots \mu_{i_2}\mu_{i_1}t$, $1\leq u\leq s$, obtained along the mutation sequence from $t$ to $t[-1]$, Lemma \ref{lem:long_injective_mutation} implies:
$$t'[1]=\mu_{\sigma i_u}\cdots \mu_{\sigma i_1}t[1]\\
=\mu_{\sigma i_{u+1}}\cdots \mu_{\sigma i_s}t.$$
So, when $u<s$, we have $I_{i_{u+1}}(t')=X_{\sigma i_{u+1}}(t'[1])$. Thus the quantum cluster variables $X_{i_{u+1}}(t')$, $I_{i_{u+1}}(t')$ are contained in $\can^t$ by hypothesis. In other words, $\can^t$ is admissible in direction $i_{u+1}$ \wrt the seed $t'$ when $u<s$.

Repeatedly using Proposition \ref{prop:condition_change_seed} along the mutation sequence from $t$ to $t[-1]$, we
obtain that the triangular bases $\can^{t'}$ \wrt the seeds $t'$ obtained along the mutation
sequence exist and are compatible with
$\can^t$. In particular, $\can^{t[-1]}$ and $\can^t$ are compatible. By using Proposition \ref{prop:initial_seed_cond}(iii), we deduce the similar result for
$\can^{t[d]}$, $d\in\Z$. 

Finally, the claim follows from Theorem \ref{thm:induction}. 
\end{proof}

\begin{proof}[Proof of Theorem \ref{thm:change_coefficient}]
	By Lemma \ref{lem:variation_triangular_basis}, the triangular basis $\can^{t'}$ exists and, in addition, it contains the quantum cluster variables obtained along the mutation sequences from $t$ to $t[1]$ and $t[-1]$ respectively. The claim follows from Theorem \ref{thm:reduction}.
\end{proof}

\section{Graded quiver varieties and minuscule modules}
\label{sec:quiver_variety}
In the following, we fix a symmetric generalized Cartan matrix $C$. In this section, we review graded quiver quivers, working in a framework slightly generalized than that of \cite{Nakajima03}. Following the arguments of \cite{Nakajima03}, we compute the
characters of some simple modules (Proposition \ref{prop:KR_module_variant}(i)(ii)). Such calculation would be useful in Section \ref{sec:adaptable_word} and the proof of Theorem \ref{thm:consequence}.

Thanks to this generalized framework, we use graded quiver varieties to give a universal treatment of simply laced Cartan type, including the ADE type extensively studied in previous literature. Readers unfamiliar with graded quiver varieties, quantum affine algebras, or only interested in type $ADE$ might admit the construction in \cite{Nakajima03}\cite{Nakajima04} and skip this section, whose main result will only be used in the proof of Proposition \ref{prop:calculate_variable}.

\subsection{A review of graded quiver varieties}
\label{sec:definition_quiver_variety}

We start with a review of the graded quiver varieties used in
\cite{KimuraQin11}\cite{Qin12}, whose construction follows from that
of \cite{Nakajima01}\cite{Nakajima04}. We choose its grading as in \cite{Qin12}\cite{KimuraQin11}.

\subsubsection*{Graded quiver varieties}
Given any generalized $I\times I$ symmetric Cartan matrix $C=(C_{ij})$, we associate a diagram $\Gamma$ with it such that the set
of vertices is $I$ and $\Gamma$ has $-C_{ij}$ edges between any two different
vertices $i$ and $j$. Define $r=|I|$. We then choose an orientation $\Omega$ and get a quiver $(\Gamma,\Omega)$. For any
arrow $h$ in $\Omega$, we denote its opposite arrow by $\oh$. Let
$\overline{\Omega}$ denote the set of arrows opposite to the arrows in
$\Omega$. Define the $H$ to be the union of $\Omega$ and $\overline{\Omega}$.

Let us choose an acyclic orientation $\Omega$, \ie $(\Gamma,\Omega)$ is acyclic. We follow the torus grading in \cite[Section 4.3]{Qin12}\cite{KimuraQin11}. Choose a map $\xi$ from $I$ to $\set{1,2,\ldots,r}$, whose images
are denoted by $\xi_i$ for $i\in I$, such that
$\xi_i>\xi_j$ whenever there exists a nontrivial path from $i$ to $j$
in the acyclic quiver $(\Gamma,\Omega)$. Define $\epsilon_{ij}\in\set{\pm 1}$ to be the sign of $\xi_i-\xi_j$ and
	\begin{align}\label{eq:grading_dinstiction}
  \xi_{ij}=\frac{\xi_i-\xi_j}{r}.
\end{align}

It follows that $\epsilon_{s(h)t(h)}=1$ if $h\in \Omega$ and
$\epsilon_{s(h)t(h)}=-1$ if $h\in \overline{\Omega}$.

\begin{Eg}
  Consider the acyclic quiver $(\Gamma,\Omega)$ in Figure
  \ref{fig:quiver_A_5}. We have
  $\epsilon_{21}=\epsilon_{23}=\epsilon_{34}=\epsilon_{45}=1$, $\epsilon_{12}=\epsilon_{32}=\epsilon_{43}=\epsilon_{54}=-1$.
\end{Eg}

\begin{figure}[htb!]
 \centering
\beginpgfgraphicnamed{fig:quiver_A_5}
\begin{tikzpicture}
\node [shape=circle, draw] (v1) at (1,4) {1};
    \node [shape=circle, draw] (v2) at (2,3) {2};
    \node [shape=circle, draw] (v3) at (1,2) {3};
\node [shape=circle, draw] (v4) at (0,1) {4};
\node [shape=circle, draw] (v5) at (-1,0) {5};

    \draw[-triangle 60] (v2) edge (v1); 
\draw[-triangle 60] (v2) edge (v3);
\draw[-triangle 60] (v3) edge (v4);
\draw[-triangle 60] (v4) edge (v5);
\end{tikzpicture}
\endpgfgraphicnamed
\caption{An acyclic quiver $(\Gamma,\Omega)$ of Cartan type $A_5$}
\label{fig:quiver_A_5}
\end{figure}

Define the set of vertices
\begin{align*}
\sW=I\times 2\Z\\
\sV=I\times (1+2\Z).
\end{align*}
The \emph{$q$-analog of Cartan matrix} $C_q$ is defined to be the linear map
from $\Z^{I\times \Z}$ to $\Z^{I\times \Z}$, such that for any $\eta=(\eta_{i,b})$, we have
\begin{align}
  (C_q \eta)_{i,a}=\eta_{i,a-1}+\eta_{i,a+1}+\sum_{j\neq i}C_{ij}\eta_{j,a+\epsilon_{ij}}.
\end{align}
For any finitely supported $v\in\N^\sV$, $w\in\N^\sW$, we say the pair
$(v,w)$ is \emph{$l$-dominant} if $w-C_qv\geq 0$. 

For any finite
supported $w$, define $C_q^{-1}w$ to be the unique vector such that
$(C_q^{-1}w)_{i,b}=0$ when $b\ll 0$ and $C_q (C_q^{-1}w)=w$. We refer
the reader to Remark \ref{rem:compare_form} for comparing our notations, including $C_q$, with those of  \cite{HernandezLeclerc11}.

\begin{Eg}
  Consider the acyclic quiver $(\Gamma,\Omega)$ in Figure
  \ref{fig:quiver_A_21}. Then we can only choose $\xi_1=3,\xi_2=2,\xi_3=1$. We have
  $\epsilon_{12}=\epsilon_{23}=\epsilon_{13}=1$,
  $\epsilon_{21}=\epsilon_{32}=\epsilon_{31}=-1$, and the following equation
  \begin{align*}
    (C_q \eta)_{2,a}=\eta_{2,a-1}+\eta_{2,a+1}-\eta_{1,a-1}-\eta_{3,a+1}.
  \end{align*}
The component $\eta_{2,a+1}$ contributes to the components of degree $(2,a)$, $(2,a+2)$,
$(1,a+1+\epsilon_{21})$, $(3,a+1+\epsilon_{23})$ via the map $C_q$.
\end{Eg}
\begin{figure}[htb!]
 \centering
\beginpgfgraphicnamed{fig:quiver_A_21}
\begin{tikzpicture}
\node [shape=circle, draw] (v1) at (2,4) {1};
    \node [shape=circle, draw] (v2) at (1,2.5) {2};
    \node [shape=circle, draw] (v3) at (-1,2) {3};
    \draw[-triangle 60] (v1) edge (v2); 
\draw[-triangle 60] (v2) edge (v3);
\draw[-triangle 60] (v1) edge (v3);
\end{tikzpicture}
\endpgfgraphicnamed
\caption{An acyclic quiver $(\Gamma,\Omega)$ of Cartan type $A_2^{(1)}$}
\label{fig:quiver_A_21}
\end{figure}

For any $d\in \Z$, define the shift operator $[d]$ to be the linear map
from $\Z^{I\times \Z}$ to $\Z^{I\times \Z}$ such that, for any
$\eta\in \Z^{I\times \Z}$, we have
\begin{align*}
(\eta[d])_{i,a}=\eta_{i,a+d}.
\end{align*}

Take any finitely supported dimension vector $v$, $v'$, in $\N^\sV$ and $w$
in $\N^{\sW}$. Denote the associated graded $\C$-vector spaces by $V=\oplus V_{i,b}$, $V'=\oplus V'_{i,b}$ and $W=\oplus W_{i,a}$ respectively. Consider the following vector spaces
\begin{align*}
L(w,v)=&\oplus_{(i,a)\in \sW}\Hom(W_{i,a},V_{i,a-1}),\\
L(v,w)=&\oplus_{(i,b)\in \sV}\Hom(V_{i,b},W_{i,b-1}),\\
 E(\Omega;v,v')=&(\oplus_{h\in H, b\in 1+2\Z
 }\Hom(V_{s(h),b},V'_{t(h),b-1+\epsilon_{s(h)t(h)}}))\\
=&(\oplus_{h\in\Omega, b\in 1+2\Z
 }\Hom(V_{s(h),b},V'_{t(h),b}))
\\&\oplus (\oplus_{\oh\in\overline{\Omega}, b\in 1+2\Z
}\Hom(V_{s(\oh),b},V'_{t(\oh),b-2})).
\end{align*}
Define the vector space $\Rep(\Omega;v,w)=E(\Omega;v,v)\oplus
L(w,v)\oplus L(v,w)$, whose coordinates will be denoted by
\begin{align*}
  (B,\imath,\jmath)=&((B_h)_{h\in H},\imath,\jmath)\\
=&((B_h)_{h\in \Omega},(B_\oh)_{\oh\in\overline{\Omega}},(\imath_i),(\jmath_i))\\
=&((\oplus_bB_{h,b})_h,(\oplus_bB_{\oh,b})_{\oh},(\oplus_a\imath_{i,a}),(\oplus_b
\jmath_{i,b})).
\end{align*}
Notice that $\Rep(\Omega;v,w)$ can be naturally viewed as the vector
space of $(v,w)$-dimensional representations of a quiver with vertices
$\sV\sqcup \sW$. We denote this quiver by $\tilde{\Gamma}_\Omega$ can
call it the \emph{framed repetition quiver} associated with the acyclic
orientation $\Omega$. 

Different acyclic orientations produce isomorphic
framed repetition quivers. Choose and fix such a quiver $\tilde{\Gamma}_\Omega$
from now on.

The \emph{analog of the moment map} $\mu$ is the linear map from
$\Rep(\Omega;v,w)$ to $L(v,v[-1])$ given by
\begin{align*}
  \mu(B,\imath,\jmath)=&\oplus_{(i,b)\in\sV}\mu(B,\imath,\jmath)_{i,b}\\
=&\oplus_{(i,b)\in\sV}(\sum_{h\in\Omega,t(h)=i}b_{h,b-2}b_{\oh,b}-\sum_{h\in\Omega,s(h)=i}b_{\oh,b}b_{h,b}+\imath_{i,b-1}\jmath_{i,b}).
\end{align*}

\begin{Eg}
  Let the acyclic quiver $(\Gamma,\Omega)$ be given by Figure
  \ref{fig:quiver_A_5}. Part of the framed repetition quiver $\tilde{\Gamma}_\Omega$ is shown in Figure \ref{fig:quiver_variety_A_5}
\end{Eg}

\begin{figure}[htb!]
 \centering
\beginpgfgraphicnamed{fig:quiver_variety_A_5}
\begin{tikzpicture}

\node [draw] (w16) at (2,8) {1,a+6}; 
\node [ draw] (w26) at (4,6) {2,a+6};
    \node [ draw] (w36) at (2,4) {3,a+6};
\node [ draw] (w46) at (0,2) {4,a+6};
\node [ draw] (w56) at (-2,0) {5,a+6};

\node [draw] (w14) at (-2,8) {1,a+4}; 
\node [draw] (w24) at (0,6) {2,a+4}; 
\node [draw] (w34) at (-2,4) {3,a+4}; 
\node [draw] (w44) at (-4,2) {4,a+4};

\node [shape=circle,draw] (v15) at (0,8) {1,a+5}; 
\node [shape=circle,draw] (v25) at (2,6) {2,a+5}; 
\node [shape=circle,draw] (v35) at (0,4) {3,a+5}; 
\node [shape=circle,draw] (v45) at (-2,2) {4,a+5}; 
\node [shape=circle,draw] (v55) at (-4,0) {5,a+5};

\node [draw] (w12) at (-6,8) {1,a+2}; 
\node [draw] (w22) at (-4,6) {2,a+2}; 
\node [draw] (w32) at (-6,4) {3,a+2};

\node [shape=circle, draw] (v13) at (-4,8) {1,a+3}; 
\node [shape=circle,draw] (v23) at (-2,6) {2,a+3}; 
\node [shape=circle,draw] (v33) at (-4,4) {3,a+3}; 

\node [draw] (w20) at (-8,6) {2,a}; 

\node [shape=circle, draw] (v21) at (-6,6) {2,a+1};

\draw[-triangle 60](v25)edge (v15);
\draw[-triangle 60](v25)edge (v35);
\draw[-triangle 60](v35)edge (v45);

\draw[-triangle 60](v23)edge (v13);
\draw[-triangle 60](v23)edge (v33);

\draw[-triangle 60](v15)edge (v23);
\draw[-triangle 60](v35)edge (v23);
\draw[-triangle 60](v45)edge (v33);

\draw[-triangle 60](v13)edge (v21);
\draw[-triangle 60](v33)edge (v21);

\draw[-triangle 60](w16)edge (v15);
\draw[-triangle 60](v15)edge node[above]{$\jmath$} (w14);
\draw[-triangle 60](w14)edge node[above]{$\imath$} (v13);
\draw[-triangle 60](v13)edge (w12);

\draw[-triangle 60](w26)edge (v25);
\draw[-triangle 60](v25)edge (w24);
\draw[-triangle 60](w24)edge (v23);
\draw[-triangle 60](v23)edge (w22);
\draw[-triangle 60](w22)edge (v21);
\draw[-triangle 60](v21)edge (w20);

\draw[-triangle 60](w36)edge (v35);
\draw[-triangle 60](v35)edge (w34);
\draw[-triangle 60](w34)edge (v33);
\draw[-triangle 60](v33)edge (w32);

\draw[-triangle 60](w46)edge (v45);
\draw[-triangle 60](v45)edge  (w44);

\draw[-triangle 60](w56)edge  (v55);
\draw[-triangle 60](v45)edge  (v55);

\end{tikzpicture}
\endpgfgraphicnamed
\caption{Part of the framed repetition quiver $\tilde{\Gamma}_\Omega$}
\label{fig:quiver_variety_A_5}
\end{figure}
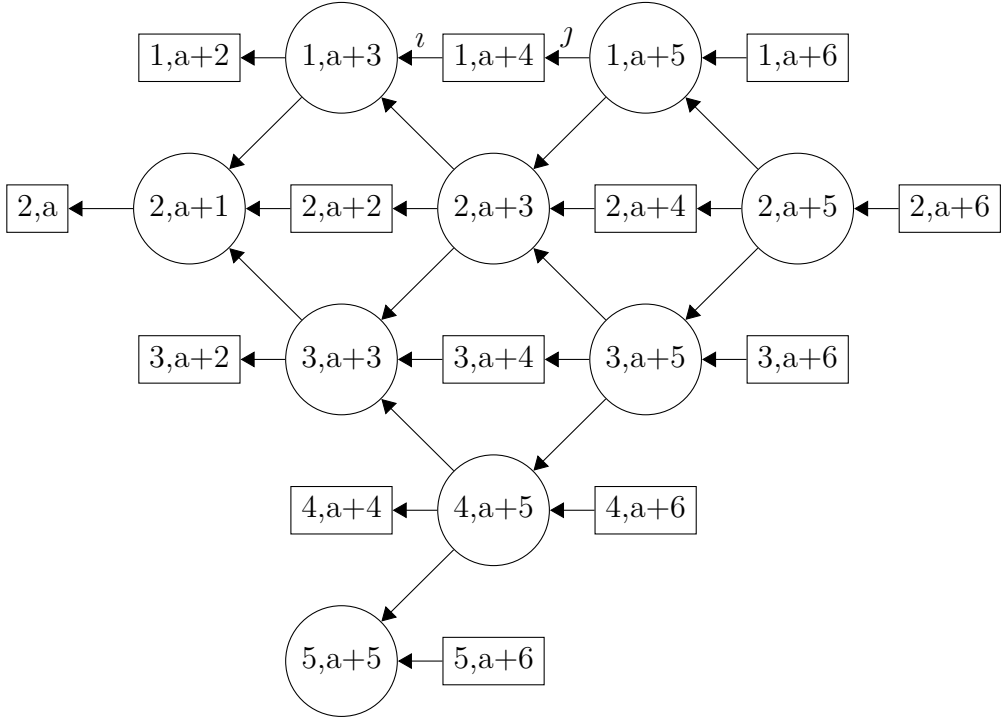

The variety $\mu^{-1}(0)$ has the natural action by the reductive
algebraic group $G_v=\prod_{(i,b)\in \sV}GL(V_{i,b})$ such that, for any given
$g=(g_{i,b})$, the action is given by
\begin{align*}
  g(\imath)&=g\imath,\\
g(\jmath)&=\jmath g^{-1},\\
g(b_{h})&=g_{t(h)}b_{h}g_{s(h)}^{-1},\ \forall h\in H.
\end{align*}
Define the character $\chi_v$ of $G_v$ by $\chi_v(g)=\prod_{i,b}(\det g_{i,b})^{-1}$. Fix this character from now on.

Define $\grProjQuot(v,w)$ to be the \emph{geometric invariant quotient} of
the $G_v$-variety $\mu^{-1}(0)$ associated with $\chi_v$ in the sense of Mumford, \cf \cite{mumford1994geometric}. Denote the
categorical quotient by $\grAffQuot(v,w)$ and the natural projective
morphism from $\grProjQuot(v,w)$ to $\grAffQuot(v,w)$ by $\pi$. Denote
the fiber $\pi^{-1}(0)$ by $\grLag(v,w)$. The varieties
$\grProjQuot(v,w)$, $\grAffQuot(v,w)$ and $\grLag(v,w)$ are called \emph{graded quiver varieties}.

For any $v'<v$, there is a natural embedding of $\grAffQuot(v',w)$
into $\grAffQuot(v,w)$. Moreover, by our construction,
$\grAffQuot(v,w)$ stabilizes when $v$ is large enough. Denote the
union $\cup_v\grAffQuot(v,w)$ by $\grAffQuot(w)$.

\subsubsection*{Grothendieck ring and characters}

We have intersection cohomology sheaves $IC(v,w)$, where $w-C_q v\geq 0$, for
closed subvarieties $\grAffQuot(v,w)$ of $\grAffQuot(w)$. Define the
operation of $\Z[t^\pm]$ on the Grothendieck ring of the derived
category of constructible sheaves over $\C$-vectors spaces on $\grAffQuot(w)$ such that $t$ acts as the shift functor. Let $K_w$
denote the free
abelian group generated by the classes of the sheaves $IC(v,w)$ over
$\Z[t^\pm]$ inside the Grothendieck group. 

Let $R_w=\Hom_{\Z[t^\pm]}(K_w,\Z[t^\pm])$ denote the dual of $K_w$. Define $\quotKGp$ to be the
subspace of $\prod_{w}R_w$ which consists of the basis $\set{S(w)}$,
such that the value of $S(w)$ on $IC(v,w')$ is $\delta_{w,w'-C_q v}$. Inspired by \cite{Nakajima01}, we call $S(w)$ the simple modules.

Define the quantum torus $\YTorus $ to be the Laurent polynomial ring $\Z[t^\pm][Y_{i,a}^\pm]_{(i,a)\in
  \sW }$ equipped with the twisted product $*$ defined by\footnote{Our
  quantum torus is the same as the quantum torus in \cite[(3.21)]{Nakajima09}: identify our form $-\cE$ and operator $[1]$ with the form $\varepsilon$ and the operator $q$ in \cite{Nakajima09}. It differ from that of \cite{Nakajima03} by taking $t$ to $t^{-1}$, cf. Remark \ref{rem:compare_form}.} 
  
   \begin{align}
  \label{eq:twist_prod}
  \begin{split}
    Y^{w^1}*Y^{w^2}=&t^{-\cE(m^1,m^2)}Y^{w^1+w^2},\\
    \cE(m^1,m^2)=&-w^1[1]\cdot C_q^{-1}w^2+w^2[1]\cdot C_q^{-1}w^1,
  \end{split}
\end{align}
for any $w^1,w^2\in\N^\sW$,\cf \cite{KimuraQin11}\cite[(40)]{Qin12}.

As before, we let $[\ ]$ denote the normalization by $t$-factors in $\YTorus $:
\begin{align}\label{eq:Y_torus_normalization}
[t^sY^w]\stackrel{\mathrm{def}}{=}Y^w,\ \forall w\in\N^\sW,s\in \Z.
\end{align}

 There exists a natural multiplication on $\quotKGp$, which is derived from the geometric restriction functor and
the Euler twist $\cE(\ ,\ )$, \cf \cite{Nakajima09}. Let $\quotKGp$
denote the Grothendieck ring equipped
with this multiplication. Furthermore, the structure
constants of its natural basis $\set{S(w)}$ are given by
\begin{align*}
  S(w^1)\otimes S(w^2)=\sum_{v\geq 0, w^1+w^2-C_q v\geq 0}a_v(t)S(w^1+w^2-C_q v),
\end{align*}
such that $a_v(t)\in\N[t^\pm]$, $a_0=1$. Let $\commQuotKGp$ denote the
specialization of $\quotKGp $ by taking $t=1$.

Let $\qtChar$ denote the $t$-analog of the $q$-characters
($q,t$-character for short)
\begin{align}
  \label{eq:qtChar}
  \qtChar :\quotKGp \ra \YTorus =\Z[Y_{i,a}^\pm]_{(i,a)\in I\times \Z}.
\end{align}
$\qtChar $ is an injective ring homomorphism with respect to the
twisted product $*$, \cf
\cite{Nakajima04}\cite{VaragnoloVasserot03}\cite{Hernandez02}. We refer the
reader to \cite{Qin12} for its precise definition in our convention. In particular, define the Laurent monomial introduced by \cite{FrenkelReshetikhin99}
\begin{align*}
  A_{i,b}=Y_{i,b-1}Y_{i,b+1}\prod_{j\neq i}Y_{j,b+\epsilon_{ij}}^{C_{ij}}.
\end{align*}
Then the character of any $S(w)$ is a formal series
\begin{align*}
  \qtChar  S(w)=Y^w(1+\sum_{v\neq 0} b_v(t) A^{-v}),
\end{align*}
such that $b_v(t)\in \N[t^\pm]$. Any monomial $Y^wA^v$ is said to be $l$-dominant if the pair $(v,w)$ is. We say the leading term of $\qtChar  S(w)$
is $Y^w$. The simple module $S(w)$ is called \emph{minuscule} if $Y^w$
is the only one $l$-dominant monomial appearing in its character.

\subsubsection*{Truncated characters}
We consider truncated $q,t$-characters which were first introduced in \cite{HernandezLeclerc09}.

Take a multi-degree $\uc=(c_i)_{i\in I}\in (2\Z)^I$. When $c_i$ takes a constant value $c\in 2\Z$ for any $i\in I$, we
simply denote $\uc=c$. We say $\uc$ is \emph{$\Omega'$-adaptable} (or
adaptable for short), if the
full subquiver of the fixed framed repetition quiver $\tilde{\Gamma}_\Omega$ on the vertices
$(i,c_i-1)$, $i\in I$, is the acyclic quiver
$(\Gamma,\Omega')$. We shall always assume $\uc$ to be adaptable for some $\Omega'$ from
now on.

Define the truncated character $\qtChar_{\leq \uc}$ to be 
$\qtChar$ composed by the projection from $\YTorus$ to its the quotient ring $\YTorus_{\leq \uc}=\Z[t^\pm][Y_{i,a}^\pm]_{(i,a)\in I\times \Z,a\leq
  c_i}$.






\subsection{A review of Kirillov-Reshetikhin modules}
\label{sec:KR_module}
In this section, we review Kirillov-Reshetikhin modules in the language of
graded quiver varieties, where we use the arguments of
\cite{Nakajima03} in our setting.

For any $i\in I$, $k\in \N$, $a\in 2\Z$, define the
dimension vector
\begin{align}\label{eq:KR_dimension}
  w^{(i)}_{k,a}=e_{i,a}+e_{i,a+2}+\ldots+e_{i,a+2(k-1)}.
\end{align}
Inspired
by \cite{Nakajima03}, we call the simple
modules $S(w^{(i)}_{k,a})$ the \emph{Kirillov-Reshetikhin modules} and
denote it by $W^{(i)}_{k,a}$. They were introduced in
\cite{KirillovReshetikhin90} for quantum affine algebras.

\begin{Rem}\label{rem:shift_degree}
  When taking the quiver $(\Gamma,\Omega)$ to be
  bipartite and choosing any $j\in I$, we can identify our graded
  framed quiver with the corresponding quiver in \cite{Nakajima03} by identifying our vertex $(i,s-\epsilon_{ij})$
  with the vertex $(i,s)$ in \cite{Nakajima03}. We can then translate the results of \cite{Nakajima03} to Section \ref{sec:KR_module}.
\end{Rem}

Let us proceed with some important techniques in calculating the characters of Kirillov-Reshetikhin modules, following \cite{Nakajima03}. For any Laurent monomial $m\neq 1$ in $\YTorus$ and $j\in I$, define
  \begin{align*}
    r_j(m)=\max \set{s\in \Q|m\mathrm{\ contains\ the\ factor\ }Y_{i,s+\xi_{ji}}\mathrm{\ for\
        some\ }i\in I}.
  \end{align*}
Here we use $\xi_{ji}$ (defined in \eqref{eq:grading_dinstiction}) to distinguish the grading of the vertices in
$(\Gamma,\Omega)$. It follows that $r_j(m)+\xi_{ji}=r_i(m)$, for any
$i,j$, and that the non-positive criterion in the following definition is independent of the choice of $j$.

\begin{Def}[right negative \cite{frenkel2001combinatorics}]\label{def:right_negative}
The monomial $m$ is said to be right negative if there exists some $j\in I$ such that the factors
$Y_{i,r_j(m)+\xi_{ji}}$ has non-positive powers for any $i\in I$.
\end{Def}

\begin{Eg}\label{eg:dominant_monomial}
  Let us take the quiver $(\Gamma,\Omega)$ in Figure
  \ref{fig:quiver_A_21}. Then the monomial $Y^{w^{(2)}_{k,0}}=Y_{2,0}Y_{2,2}\cdots Y_{2,2k-2}$ is
  $l$-dominant with $r_2(Y^{w^{(2)}_{k,0}})=2k-2$,
  $r_1(Y^{w^{(2)}_{k,0}})=2k-2-\frac{1}{3}$, $r_3(Y^{w^{2}_{k,0}})=2k-2+\frac{1}{3}$. 

For any $1\leq s\leq
  k$, the monomial
  $m_s=Y^{w^{(2)}_{k,0}}A^{-1}_{2,2k-2s+1}\cdots A^{-1}_{2,2k-3}A^{-2}_{2,2k-1}$
  is right negative with $r_2(m_s)=2k$.
\end{Eg}

Notice that the $l$-dominant monomials are not right negative. The
product of two right negative monomials is still right negative.

In \cite{Nakajima03}, Nakajima computed truncated $q,t$-characters of Kirillov-Reshetikhin
modules by studying right negative monomials and using combinatorial
properties of $q,t$-characters defined over graded quiver varieties. His arguments remain effective for our
graded quiver varieties and imply the following results.
\begin{Thm}[\cite{Nakajima03}]\label{thm:KR_module}
  (i) All monomials in $\qtChar W^{(i)}_{k,a}$ are right negative
  except its leading term $Y^{w^{(i)}_{k,a}}$.

(ii) Let $m$ be a right negative monomial appearing in the truncated
$q,t$-character $\qtChar_{\leq 2k}
W^{(i)}_{k,a}$, then we have
\begin{align}
  \label{eq:right_negative_monomial}
  \begin{split}
    m=&\prod^{k-1}_{t=0}Y_{i,a+2t}\prod_{t=s+1}^{k}A^{-1}_{i,a+2t-1}\\
=&\prod^{s-1}_{t=0}Y_{i,a+2t}\prod_{t=s+1}^{k}(Y^{-1}_{i,a+2t}\prod_{j\neq
i}Y^{-C_{ij}}_{j,a+2t-1+\epsilon_{ij}})\\
    =&Y_{i,a}\cdots Y_{i,a+2(s-1)}Y_{i,a+2(s+1)}^{-1}\cdots
    Y_{i,a+2k}^{-1}\prod_{j\neq i}(Y_{j,a+2s+1+\epsilon_{ij}}\cdots
    Y_{j,a+2k-1+\xi_{ij}})^{-C_{ij}},
  \end{split}
\end{align}
where $0\leq s\leq  k-1$.

(iii) We have the following equality in the quantum torus $\YTorus$;
\begin{align}
  \label{eq:exact_sequence}
  [
  \qtChar W^{(i)}_{1,a+2k}*\qtChar W^{(i)}_{k,a}]=\qtChar W^{(i)}_{k+1,a}+t^{-1}\qtChar S(w^{(i)}_{k-1,a}-\sum_{j\neq
  i}C_{ij}w^{(j)}_{1,a+2k-1+\epsilon_{ij}}).
\end{align}
\end{Thm}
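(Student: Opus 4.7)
The plan is to adapt Nakajima's arguments from \cite{Nakajima03} to the present framework of graded quiver varieties. The combinatorics of $q,t$-characters and the analysis of right negative monomials are local in nature and depend only on the structure of the maps $C_q$ and the Frenkel-Reshetikhin monomials $A_{i,b}$, which carry over once one tracks the grading shifts $\xi_{ij}$ correctly. The passage between our conventions and Nakajima's bipartite setup is sketched in Remark~\ref{rem:shift_degree}.

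For part (i), I would first verify that $W^{(i)}_{k,a}$ is minuscule: a direct inspection of the dimension vector \eqref{eq:KR_dimension}, which is concentrated at a single vertex $i$ at consecutive even degrees, shows that the pair $(0,w^{(i)}_{k,a})$ is the only $l$-dominant pair. Every non-leading monomial of $\qtChar W^{(i)}_{k,a}$ therefore arises from $Y^{w^{(i)}_{k,a}}$ by iterated right-multiplication by factors $A_{j,b}^{-1}$ along the Frenkel--Mukhin--type algorithm. An inductive step shows that the very first $A^{-1}$ factor that can legitimately act must be $A_{i,a+2k-1}^{-1}$, and this introduces a negative power at the rightmost column, turning the monomial right negative. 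Subsequent $A_{j,b}^{-1}$ factors preserve right negativity because no such factor can cancel the rightmost negative contribution at column $r_i$.

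For part (ii), the explicit form \eqref{eq:right_negative_monomial} of the right negative monomials in $\qtChar_{\leq 2k}W^{(i)}_{k,a}$ follows from the same algorithm under the truncation constraint. The truncation forbids any monomial containing $Y_{j,b}^{\pm 1}$ with $b>2k$, which kills every branch of the algorithm except the straight chain $A^{-1}_{i,a+2k-1},A^{-1}_{i,a+2k-3},\ldots$ along vertex $i$; unwinding $A_{i,b}^{-1}$ via its definition produces precisely the listed monomials parametrized by $0\leq s\leq k-1$.

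For part (iii), I would deduce the identity \eqref{eq:exact_sequence} from the multiplication rule in the Grothendieck ring $\quotKGp$. The normalized product on the left hand side decomposes in the basis of simple characters $\{\qtChar S(w)\}$, and comparing the first two $l$-dominant monomials (identified using parts (i) and (ii)) with the characters of $W^{(i)}_{k+1,a}$ and $S(w^{(i)}_{k-1,a}-\sum_{j\neq i}C_{ij}w^{(j)}_{1,a+2k-1+\epsilon_{ij}})$ forces the claimed decomposition. The $t^{-1}$ coefficient on the secondary term is obtained by a direct evaluation of the Euler form $\cE$ in \eqref{eq:twist_prod} on the two dimension vectors involved. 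The main obstacle is careful bookkeeping of $t$-exponents and grading shifts under the normalization \eqref{eq:Y_torus_normalization}; once that translation is in place, the entire argument reduces to Nakajima's original combinatorics.
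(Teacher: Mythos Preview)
The paper does not supply its own proof; it records that Nakajima's arguments from \cite{Nakajima03} carry over to the present graded setting and singles out Lemma~\ref{lem:right_negative_product} as the key tool. Nakajima's actual strategy differs from yours: he does not prove (i) directly via a Frenkel--Mukhin--type algorithm, but establishes (i), (ii), (iii) \emph{simultaneously} by induction on $k$. Assuming (i) and (ii) for $W^{(i)}_{k,a}$, Lemma~\ref{lem:right_negative_product} classifies the non-right-negative monomials in $[\qtChar W^{(i)}_{1,a+2k}*\qtChar W^{(i)}_{k,a}]$, yielding the decomposition (iii), from which (i) and (ii) for $W^{(i)}_{k+1,a}$ follow by subtraction.

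Your direct argument for (i) has a gap. The claim that ``a direct inspection of the dimension vector \ldots\ shows that the pair $(0,w^{(i)}_{k,a})$ is the only $l$-dominant pair'' is false for $k\geq 2$: for instance $(e_{i,a+1},\,w^{(i)}_{k,a})$ is $l$-dominant, since $w^{(i)}_{k,a}-C_q e_{i,a+1}=\sum_{t=2}^{k-1}e_{i,a+2t}+\sum_{j\neq i}(-C_{ji})e_{j,a+1-\epsilon_{ji}}\geq 0$. What is true---and is the content of (i)---is that the corresponding monomial does not \emph{appear} in $\qtChar W^{(i)}_{k,a}$, but that cannot be read off from $w^{(i)}_{k,a}$ alone. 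Relatedly, the assertion that ``the very first $A^{-1}$ factor that can legitimately act must be $A_{i,a+2k-1}^{-1}$'' is exactly what needs proof: the naive procedure applied to $Y^{w^{(i)}_{k,a}}$ equally permits $A_{i,a+2t-1}^{-1}$ for any $1\leq t\leq k-1$, and the resulting monomial is again $l$-dominant, so excluding it amounts to minusculeness. The inductive route through (iii) and Lemma~\ref{lem:right_negative_product} is precisely what breaks this circularity.
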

Recall that $[\ ]$ denote the normalization (cf. \eqref{eq:Y_torus_normalization}).

Notice that $m$ becomes the leading term $Y^{w^{(i)}_{k,a}}$ of $\qtChar W^{(i)}_{k,a}$ if
we take $s=k$. The following Lemma was used in the proof of Theorem \ref{thm:KR_module}.
\begin{Lem}[\cite{Nakajima03}]\label{lem:right_negative_product}
  Take two monomials $m$ and $m'$ from $\qtChar W^{(i)}_{k,a}$ and
  $\qtChar W^{(i)}_{1,a+2k}$ respectively. If the product $mm'$ is not
  right negative, then the following claims are true.

(i) The monomials take the form
\begin{align}
  m'=&Y_{i,a+2k}\\
m=&Y_{i,a}\cdots
Y_{i,a+2k-2}\cdot\prod_{t=s+1}^{k}(A_{i,a+2t-1}^{-1}),\ 0\leq s\leq k.
\end{align}

(ii) The coefficient of the monomial $mm'$ in $[\qtChar W^{(i)}_{1,a+2k}*\qtChar
W^{(i)}_{k,a}]$ is $1$ if $s=k$ and $t^{-1}$ otherwise.
\end{Lem}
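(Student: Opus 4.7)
\textbf{Proof plan for Lemma \ref{lem:right_negative_product}.} The plan is to leverage Theorem \ref{thm:KR_module}(i)--(ii) which already classifies the monomials of $\qtChar W^{(i)}_{k,a}$: apart from the leading $l$-dominant term $Y^{w^{(i)}_{k,a}}$, every monomial is right negative and (in the truncated range) has the explicit form \eqref{eq:right_negative_monomial}. Since the product of right-negative monomials is right negative (standard, going back to \cite{frenkel2001combinatorics}), the hypothesis that $mm'$ is not right negative forces at least one of $m$, $m'$ to be the leading monomial of its respective $q,t$-character, i.e., either $m=Y^{w^{(i)}_{k,a}}=Y_{i,a}Y_{i,a+2}\cdots Y_{i,a+2k-2}$ or $m'=Y_{i,a+2k}$.

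Next I will show that in fact one must have $m'=Y_{i,a+2k}$. Assume to the contrary that $m$ is the leading monomial but $m'$ is some non-leading (hence right negative) monomial of $\qtChar W^{(i)}_{1,a+2k}$. Every such $m'$ has all its right-edge factors at vertex $i$ of degree strictly larger than $a+2k-2$ (concretely, it is supported strictly to the right of $Y_{i,a+2k}$; the first non-leading monomial is $A^{-1}_{i,a+2k+1}$, which introduces $Y_{i,a+2k+2}^{-1}$ and $Y_{j,a+2k+1+\epsilon_{ij}}^{-C_{ij}}$). Since $m$ only involves $Y_{i,\cdot}$ at degrees $\leq a+2k-2$, there is no cancellation at the right edge of $m'$, so $mm'$ inherits right negativity from $m'$, contradicting the hypothesis. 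Hence $m'=Y_{i,a+2k}$.

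With $m'=Y_{i,a+2k}$ fixed, I will then determine which right-negative $m$'s in $\qtChar W^{(i)}_{k,a}$ produce a non-right-negative product. By Theorem \ref{thm:KR_module}(i)--(ii), every right-negative monomial of $\qtChar W^{(i)}_{k,a}$ satisfies $r_i(m)=a+2k$ and the unique $Y_{i,\cdot}$-factor at that degree is $Y_{i,a+2k}^{-1}$ coming from $A^{-1}_{i,a+2k-1}$. After multiplying by $m'=Y_{i,a+2k}$, this right-edge negative factor is cancelled; right negativity of $mm'$ then depends on what remains at the new right edge. A direct inspection shows that if $m$ contains $A^{-1}_{i,a+2t-1}$ for any $t<k$ but the maximal index $t$ such that $A^{-1}_{i,a+2t-1}\mid m$ equals $k$, then after cancellation the factors at the new right edge are the positive powers $Y_{j,a+2k-1+\epsilon_{ij}}^{-C_{ij}}$ for $j\neq i$ (recall $-C_{ij}\geq 0$), which prevents right negativity. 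Conversely, combining $A^{-1}_{i,a+2t-1}$ with any $t'<t$ for which no compensating factor arrives from $m'$ introduces a new right-negative edge that survives the cancellation. Matching this against the explicit formula \eqref{eq:right_negative_monomial} parametrised by $0\leq s\leq k$ yields exactly the list of $m$'s asserted in (i), with $s=k$ corresponding to the dominant case $m=Y^{w^{(i)}_{k,a}}$.

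For part (ii), I will track the $t$-coefficient through the normalisation $[\ ]$ in \eqref{eq:Y_torus_normalization} and the twist $t^{-\cE(\cdot,\cdot)}$ of the star product in \eqref{eq:twist_prod}. When $s=k$, both $m$ and $m'$ are the leading monomials of their $q,t$-characters, which carry coefficient $1$, and the normalisation $[Y^{w^{(i)}_{1,a+2k}}*Y^{w^{(i)}_{k,a}}]$ absorbs the twist exactly so as to give $Y^{w^{(i)}_{k+1,a}}$ with coefficient $1$. When $s<k$, the coefficient of $m$ in $\qtChar W^{(i)}_{k,a}$ is known to be $1$ from the Kirillov--Reshetikhin character formula (all non-leading monomials listed in \eqref{eq:right_negative_monomial} occur with coefficient $1$ at $t=1$ and with a definite $t$-power that I will compute), and combining this with the twist defect between $[Y_{i,a+2k}*m]$ and the ``ideal'' leading normalisation produces a single excess factor of $t^{-1}$, consistent with the coefficient of the second summand $\qtChar S(\cdots)$ in \eqref{eq:exact_sequence}. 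The anticipated obstacle is precisely this last bookkeeping: verifying that the power of $t$ collapses to exactly $t^{-1}$ for \emph{every} $s<k$, uniformly, which will require invoking the explicit rules from \cite{Nakajima03} translated into our graded convention (cf.\ Remark \ref{rem:shift_degree}).
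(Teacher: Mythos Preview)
The paper does not supply its own proof of this lemma: it is quoted from \cite{Nakajima03} with the remark that ``the following Lemma was used in the proof of Theorem \ref{thm:KR_module}.'' Your plan essentially reconstructs Nakajima's argument, and the outline for part (i) is correct in spirit. However, there is a logical issue you should address explicitly.

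\textbf{Circularity.} You invoke Theorem \ref{thm:KR_module}(i)--(ii) as input, but the paper states that the present lemma is an ingredient in the proof of that theorem. In \cite{Nakajima03} this is not circular because the argument is a joint induction on $k$: one knows Theorem \ref{thm:KR_module}(i)--(ii) for $W^{(i)}_{k,a}$ (and trivially for $W^{(i)}_{1,a+2k}$), uses them to establish the present lemma for this $k$, feeds that into \eqref{eq:exact_sequence}, and thereby obtains Theorem \ref{thm:KR_module} for $k+1$. Your write-up should make this inductive framing explicit; as stated, ``leverage Theorem \ref{thm:KR_module}(i)--(ii)'' reads as an appeal to a result that depends on what you are proving.

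\textbf{A minor imprecision.} The claim ``every right-negative monomial of $\qtChar W^{(i)}_{k,a}$ satisfies $r_i(m)=a+2k$'' is not literally true: Theorem \ref{thm:KR_module}(ii) only describes the monomials surviving the truncation at level $a+2k$. What is true is that any right-negative $m$ has $r_i(m)\geq a+2k$; if $r_i(m)>a+2k$ then $m\cdot Y_{i,a+2k}$ is still right negative for the same reason you give in the $m'$-case, so such $m$ are excluded by hypothesis. The remaining $m$ are exactly those in the truncated character, where Theorem \ref{thm:KR_module}(ii) applies and your analysis goes through.
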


As a consequence of the properties of Kirillov-Reshetikhin modules in Theorem
\ref{thm:KR_module}, we deduce the following $T$-systems (such formula appear as \cite[Proposition 5.6]{HernandezLeclerc11} after the bar involution).
\begin{Thm}[\cite{Nakajima03}]\label{thm:T_system}
  The following equation holds in the quantum torus $\YTorus$:
  \begin{align}
    \label{eq:T_system}
    [\qtChar W^{(i)}_{k,a+2}*\qtChar W^{(i)}_{k,a}]=[\qtChar W^{(i)}_{k-1,a+2}*\qtChar
    W^{(i)}_{k+1,a} ]+t^{-1}[\prod_{j\neq i} (\qtChar
    W^{(j)}_{k,a+1+\epsilon_{ij}})^{-C_{ij}}].
  \end{align}
Moreover, $ W^{(i)}_{k-1,a+2}\otimes W^{(i)}_{k+1,a}$ agrees with the
simple module $S(w^{(i)}_{k+1,a}+ w^{(i)}_{k-1,a+2})$ up to
$t$-power in $\quotKGp$.
\end{Thm}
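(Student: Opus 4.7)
The strategy follows Nakajima's approach from \cite{Nakajima03}, adapted to our setting of graded quiver varieties: identify the $l$-dominant monomials in the twisted product $[\qtChar W^{(i)}_{k,a+2}*\qtChar W^{(i)}_{k,a}]$ in $\YTorus$ and match them with the $l$-dominant monomials on the right hand side of \eqref{eq:T_system}. Since the $(q,t)$-character map $\qtChar:\quotKGp\to\YTorus$ is injective and every element of $\quotKGp$ is determined (in the $\{S(w)\}$-basis) by the $l$-dominant part of its character, an identity in $\YTorus$ between sums of $(q,t)$-characters of simple modules reduces to a comparison of $l$-dominant monomials together with their coefficients.

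By Theorem \ref{thm:KR_module}(i), every non-leading monomial of $\qtChar W^{(i)}_{k,a}$ and $\qtChar W^{(i)}_{k,a+2}$ is right-negative. Since a product of right-negative monomials is right-negative and right-negative monomials are never $l$-dominant, any $l$-dominant contribution to the product must arise from a pair $(m_1,m_2)$ in which at least one factor is a leading term. Using the explicit form \eqref{eq:right_negative_monomial} of right-negative monomials together with the cancellation pattern worked out in Lemma \ref{lem:right_negative_product}, I would enumerate these pairs: the leading-leading contribution yields $Y^{w^{(i)}_{k+1,a}+w^{(i)}_{k-1,a+2}}$ with coefficient $1$, while the remaining contributions combine to produce exactly the $l$-dominant part of $[\qtChar W^{(i)}_{k-1,a+2}*\qtChar W^{(i)}_{k+1,a}]$ together with a single additional $l$-dominant term $t^{-1}Y^{\sum_{j\neq i}(-C_{ij})w^{(j)}_{k,a+1+\epsilon_{ij}}}$. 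Matching $l$-dominant monomials on both sides and inducting downward in the dominance order on $l$-weights then yields \eqref{eq:T_system}.

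For the moreover statement, I would run the same $l$-dominant analysis on $[\qtChar W^{(i)}_{k-1,a+2}*\qtChar W^{(i)}_{k+1,a}]$: a direct computation with \eqref{eq:right_negative_monomial} shows that any pair involving a right-negative monomial from $\qtChar W^{(i)}_{k+1,a}$ leaves an uncanceled negative factor $Y^{-1}_{i,a+2k+2}$, hence produces no $l$-dominant monomial, and the remaining $l$-dominant monomials coming from right-negatives of $\qtChar W^{(i)}_{k-1,a+2}$ paired with the leading $Y^{w^{(i)}_{k+1,a}}$ all match, with matching coefficients, the corresponding monomials in $\qtChar S(w^{(i)}_{k+1,a}+w^{(i)}_{k-1,a+2})$. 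Expanding $W^{(i)}_{k-1,a+2}\otimes W^{(i)}_{k+1,a}=\sum_w a_w(t)S(w)$ in $\quotKGp$ and working downward in the dominance order of $l$-weights forces $a_w=0$ for every $w\neq w^{(i)}_{k+1,a}+w^{(i)}_{k-1,a+2}$. The main technical obstacle is the bookkeeping of the Euler twist factors $t^{-\cE(\cdot,\cdot)}$ from \eqref{eq:twist_prod} across all contributing pairs: these must combine to give exactly the coefficient $t^{-1}$ of the secondary $l$-dominant term in \eqref{eq:T_system}, which is the analog here of the computation at the end of Lemma \ref{lem:right_negative_product}(ii) and the most delicate step of the argument.
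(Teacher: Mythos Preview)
Your argument for the identity \eqref{eq:T_system} is correct and is exactly what the paper intends: the paper does not give a self-contained proof but says the $T$-system is ``a consequence of the properties of Kirillov-Reshetikhin modules in Theorem~\ref{thm:KR_module}'' and cites \cite{Nakajima03}. Your use of Theorem~\ref{thm:KR_module}(i)(ii), right-negativity of products, and the coefficient computation in Lemma~\ref{lem:right_negative_product} is precisely Nakajima's method transported to the present grading conventions.

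The ``moreover'' part, however, has a gap. You write that the extra $l$-dominant monomials in $[\qtChar W^{(i)}_{k-1,a+2}*\qtChar W^{(i)}_{k+1,a}]$ ``all match, with matching coefficients, the corresponding monomials in $\qtChar S(w^{(i)}_{k+1,a}+w^{(i)}_{k-1,a+2})$'' and then conclude $a_w=0$ for $w\neq w^{(i)}_{k+1,a}+w^{(i)}_{k-1,a+2}$ by descending induction. But the $l$-dominant monomials of $\qtChar S(w^{(i)}_{k+1,a}+w^{(i)}_{k-1,a+2})$ are not known a priori; they are exactly what you are trying to determine. At each step of the descent you need the coefficient of $Y^{w'}$ in $\qtChar S(w^{(i)}_{k+1,a}+w^{(i)}_{k-1,a+2})$ in order to read off $a_{w'}$, so the argument as written is circular. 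Your observation that pairing a right-negative monomial of $W^{(i)}_{k+1,a}$ with anything leaves an uncancelled $Y^{-1}_{i,a+2k+2}$ is correct and useful, but it only shows that all $l$-dominant monomials of the product lie in an explicit finite list---it does not by itself rule out other simple summands.

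The paper signals how to close this gap when it treats the parallel statement Proposition~\ref{prop:KR_module_variant}(iv): either invoke the argument of \cite[Lemma~4.1]{Nakajima03} directly (which uses more than bare $l$-dominant bookkeeping---in effect an induction that simultaneously pins down the truncated character of the simple and the vanishing of the other $a_w$), or bypass the issue entirely via the factorization property Proposition~\ref{prop:factorization_canonical}. Either route would complete your proof.
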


\subsection{A different deformation}
\label{sec:twist}

In order to compare Grothendieck rings with quantum groups, we will need a different
$t$-deformation (introduced in
\cite{Hernandez02} as an algebraic approach to $q,t$-characters) and
work with the corresponding extended Grothendieck ring
$\extQuotKGp=\quotKGp\otimes \Z[t^{\pm\Hf}]$ as in
\cite{HernandezLeclerc11}. To be more precise, as in
\cite[Section 6.1]{KimuraQin11}, we define the quantum torus
$\YTorus^H_{t^\Hf}$ as the Laurent polynomial ring $\Z[t^{\pm\Hf}][Y_{i,a}^\pm]_{(i,a)\in
  \sW }$ equipped with the twisted product $*$ defined by
\begin{align}
  \label{eq:new_twist_prod}
  \begin{split}
    Y^{w^1}*Y^{w^2}=&t^{\Hf\cN(m^1,m^2)}Y^{w^1+w^2},\\
    \cN(m^1,m^2)=&w^1[1]\cdot C_q^{-1}w^2-w^2[1]\cdot
    C_q^{-1}w^1\\
    &\quad -w^1[-1]\cdot C_q^{-1}w^2+w^2[-1]\cdot C_q^{-1}w^1,
  \end{split}
\end{align}
for any $w^1,w^2\in\N^\sW$,\cf \cite[Section
6]{KimuraQin11}. Replacing the Euler form $-\cE(\ ,\ )$ in the construction of
the twisted multiplication in $\cR_{t^\Hf}=\cR_t\otimes \Z[t^{\pm\Hf}] $
by $\Hf\cN(\ ,\ )$, we obtain the extended 
Grothendieck ring $\cR_{t^\Hf}^H$, which will be simply denoted by $\extQuotKGp$. Denote the corresponding
$q,t$-character from $\extQuotKGp$ to $\YTorus^H_{t^\Hf}$ by
$\qtChar^H$. 

\begin{Rem}\label{rem:compare_form}
  The construction of $\cN(\ ,\ )$ in \eqref{eq:new_twist_prod} as a
  variant of $\cE(\ ,\ )$ is inspired by
  \cite[(6)]{HernandezLeclerc11}.


In \cite{HernandezLeclerc11}, the $q$-analog of Cartan matrix is defined to be $$C^{HL}_q(z)=(z+z^{-1})I-A=\sum_{m\in \Z} C^{HL}_q(m)z^m,$$
where $A=C-2I$ is the adjacent matrix and $z$ an indeterminate. Its inverse is denoted by
$$\tilde{C}(z)=\sum_{k\geq 0}(z+z^{-1})^{-k-1}A^k=\sum_{m\geq 1}\tilde{C}(m)Z^m.$$

Fix any chosen $j\in I$. If we identify our vertices $(i,b-\epsilon_{ij})$, $\forall i\in I,b\in\Z$ with the vertices $(i,b)$ in \cite{HernandezLeclerc11},  
then our $C_q$ and $C_q^{-1}$ agree with theirs:
  \begin{align*}
(C_q^{HL})_{ij}(b-a)&=  C_q e_{j,a}\cdot  e_{i,b-\epsilon_{ij}}=&(C_q e_{j,a})_{i,b-\epsilon_{ij}}
=\left\{
      \begin{array}{ll}
        1&\mathrm{if}\ i=j,b=a\\
        C_{ij}&\mathrm{if}\ i\neq j,b=a\\
        0&\mathrm{else}
      \end{array}\right.  
  \\
 \tilde{C}_{ij}(b-a)&=C_q^{-1}e_{j,a}\cdot e_{i,b-\epsilon_{ij}}.
  \end{align*}
For example, the expression $$\tilde{C}_{21}(z)=z^2-z^8+z^{12}-z^{18}+\cdots$$ in \cite[Example 2.2]{HernandezLeclerc11} corresponds to our expression $$(C_q^{-1}e_{1,a})_{2,\Z}=e_{2,a+2-\epsilon_{21}}-e_{2,a+8-\epsilon_{21}}+e_{2,a+12-\epsilon_{21}}-e_{2,a+18-\epsilon_{21}}+\cdots.$$  
  
Consequently, we have the following translations of bilinear forms:
\begin{align*}
  \cN(e_{i,p-\epsilon_{ij}},e_{j,s})&=\tilde{C}_{ij}(p-1-s)-\tilde{C}_{ji}(s-1-p)-\tilde{C}_{ij}(p+1-s)+\tilde{C}_{ji}(s+1-p),\\
  2\cE(e_{i,p-\epsilon_{ij}},e_{j,s})&=-2\tilde{C}_{ij}(p-1-s)+2\tilde{C}_{ji}(s-1-p),
\end{align*}
which are the same as the bilinear forms in \cite[(6)(10)]{HernandezLeclerc11} because the matrix $\tilde{C}$ is symmetric.
\end{Rem}

 \begin{Lem}\label{lem:compatible_twist}
For any $c\in 2\N$, $d,d'\in \N$, and $i\in I$, we have
   \begin{align*}
     -\cE(Y_{i,c-2d}\cdots Y_{i,c-2}Y_{i,c},A_{j,c-1-2d'}^{-1})=&\left\{
       \begin{array}{ll}
         -1&\mathrm{if\ }i=j,d'=d\\
       0&\mathrm{else}
\end{array}
\right.\\
     \cN(Y_{i,c-2d}\cdots Y_{i,c-2}Y_{i,c},A_{j,c-1-2d'}^{-1})=&\left\{
       \begin{array}{ll}
         -2&\mathrm{if\ }i=j,d'=d\\
       0&\mathrm{else}
\end{array}
\right. .
   \end{align*}
 \end{Lem}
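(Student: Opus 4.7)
The plan is a direct bilinear-algebra computation. The key observation is that the defining formula for $A_{j,b}$ rewrites as $A_{j,b}=Y^{C_q e_{j,b}}$, so if I set
\[
w^1:=\sum_{l=0}^d e_{i,c-2l},\qquad w^2:=-C_q e_{j,c-1-2d'},
\]
then the two arguments of the forms are $Y^{w^1}$ and $Y^{w^2}$, and crucially $C_q^{-1}w^2=-e_{j,c-1-2d'}$ is an explicit unit vector (up to sign). This is where the hypothesis that the second argument is $A_{j,\cdot}^{-1}$ does all the work.

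Next, I would establish two algebraic facts about $C_q$: self-adjointness with respect to the standard pairing on $\Z^{I\times\Z}$, and commutation with every shift operator $[s]$. Self-adjointness follows by reindexing the double sum in $(C_q\eta)\cdot\eta'$ and using $C_{ij}=C_{ji}$ together with $\epsilon_{ij}=-\epsilon_{ji}$; shift-commutativity is immediate from the defining formula of $C_q$. Consequently $C_q^{-1}$ is self-adjoint and shift-commuting as well. Combined with the elementary identity $\eta[s]\cdot\xi=\eta\cdot\xi[-s]$, each of the four pairings $w^1[\pm 1]\cdot C_q^{-1}w^2$ and $w^2[\pm 1]\cdot C_q^{-1}w^1$ entering \eqref{eq:twist_prod} and \eqref{eq:new_twist_prod} collapses to an evaluation of $-w^1$ at one of the two coordinates $(j,c-2d')$ or $(j,c-2-2d')$.

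Finally, I would read off when these two evaluations are nonzero: both require $i=j$; the coordinate $(j,c-2d')$ lies in the support $\{(i,c-2l):0\le l\le d\}$ of $w^1$ precisely when $0\le d'\le d$, while $(j,c-2-2d')$ lies there precisely when $0\le d'\le d-1$. Plugging into $\cE=-w^1[1]\cdot C_q^{-1}w^2+w^2[1]\cdot C_q^{-1}w^1$ produces a pairwise cancellation throughout the interior $0\le d'\le d-1$, leaving $-\cE=-\delta_{ij}$ only at the boundary $d'=d$. The identical bookkeeping for $\cN$ yields a double cancellation off the boundary and $\cN=-2\delta_{ij}$ at $d'=d$, matching the stated formulas. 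I do not anticipate any real obstacle; the only point requiring care is the verification of the self-adjointness of $C_q$, after which all remaining steps are a purely combinatorial enumeration of three cases.
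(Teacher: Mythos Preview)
Your proposal is correct and follows essentially the same route as the paper. The paper likewise uses $A_{j,b}^{-1}=Y^{-C_qe_{j,b}}$, invokes self-adjointness and shift-commutation of $C_q$ (citing \cite[Lemma 4.3]{Qin12} rather than reproving them), computes $-\cE(e_{i,a},-C_qe_{j,b})$ and $\cN(e_{i,a},-C_qe_{j,b})$ for a single unit vector $e_{i,a}$ in the first slot, and then leaves the telescoping sum over $a\in\{c,c-2,\dots,c-2d\}$ implicit in the phrase ``The desired claim follows.'' Your version simply keeps $w^1$ as a block throughout instead of summing unit-vector contributions at the end; the algebraic content is identical.
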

 \begin{proof}
  With the help of \cite[Lemma 4.3]{Qin12}, for any $(i,a)\in \sW$ and
  $(j,b)\in \sV$, we make the following
  computation.
  \begin{align}\label{eq:calculate_Euler_form}
    \begin{split}
      -\cE(e_{i,a},-C_q e_{j,b})&=-e_{i,a}[1]\cdot C_q^{-1}C_q
      e_{j,b}+C_q e_{j,b}[1]\cdot C_q^{-1}e_{i,a}\\
      &=-e_{i,a}[1]\cdot e_{j,b}+e_{j,b}[1]\cdot C_qC_q^{-1}e_{i,a}\\
      &=-e_{i,a-1}\cdot e_{j,b}+e_{j,b-1}\cdot e_{i,a}\\
      &=\left\{
        \begin{array}{cc}
          1&i=j,\ b=a+1\\
          -1&i=j,\ b=a-1\\
          0&\mathrm{else}
        \end{array}
      \right.
    \end{split}
  \end{align}

  \begin{align}\label{eq:calculate_Euler_form_new}
    \begin{split}
      \cN(e_{i,a},-C_q e_{j,b})&=-\cE(e_{i,a},-C_q e_{j,b})\\
      &\qquad -e_{i,a}[-1]\cdot C_q^{-1}(-C_q
      e_{j,b})+(-C_qe_{j,b})[-1]\cdot C_q^{-1}e_{i,a}\\
      &=-\cE(e_{i,a},-C_q e_{j,b})+e_{i,a+1}\cdot e_{j,b}-e_{j,b+1}\cdot e_{i,a}\\
      &=\left\{
        \begin{array}{cc}
          2&i=j,\ b=a+1\\
          -2&i=j,\ b=a-1\\
          0&\mathrm{else}
        \end{array}
      \right.
    \end{split}
\end{align}
The desired claim follows.
 \end{proof}

\subsection{Computation of characters}
\label{sec:consequence_KR_module}

We compute $q,t$-characters of some simple modules, which will be useful
in the proof of Theorem \ref{thm:consequence}.

Recall that the character $\qtChar$ is a ring homomorphism from the Grothendieck ring $\quotKGp$ to the quantum torus $\YTorus =\Z[Y_{i,a}^\pm]_{(i,a)\in I\times \Z}$, such that the character of a simple $S(w)$ takes the form
$$
\qtChar S(w)=Y^w(1+\sum_{v\neq 0} b_v(t)A^{-v}),\ b_v(t)\in\N[t^\pm].
$$
In Section \ref{sec:KR_module}, we have seen that the character of the Kirillov-Reshetikhin modules $W^{(i)}_{k,a}$, $k\in\N$, $i\in I$, $a\in\Z$, are given by
\begin{align*}
\qtChar W^{(i)}_{k,a}=Y_{i,a}Y_{i,a+2}\cdots Y_{i,a+2k-2}(&1+A_{i,a+2k-1}^{-1}+A_{i,a+2k-3}^{-1}A_{i,a+2k-1}^{-1}\\
&+\cdots +A_{i,a+1}^{-1}A_{i,a+2k-3}^{-1}A_{i,a+2k-1}^{-1}),
\end{align*}
where the leading term is $l$-dominant (namely, the multiplicities of all factors $Y_{j,b}$ are non-negative), and the others are right negative in the sense of Definition \ref{def:right_negative} (namely, there exists a factor $Y_{j,b}$ with highest $b$ and negative multiplicity) .

Recall that we have calculated the Euler form $\cE$ in Lemma
\ref{lem:compatible_twist}. Inspired by Lemma
\ref{lem:right_negative_product}, we consider the following
product. Results of this type was found in \cite{fourier2014schur}.

\begin{Lem}\label{lem:right_negative_product_variant}
Fix integers $h,k\in\N$ such that $k\geq 2$, $k>h\geq 1$. Take monomials $m$ and $m'$ from $\qtChar W^{(i)}_{k,a}$ and
  $\qtChar W^{(i)}_{h,a+2(k-h+1)}$ respectively. If the product $mm'$ is
  not right negative, then the following claims are true.

(i) The monomials take the form
\begin{align}
  m'=&Y_{i,a+2(k-h+1)}\cdots Y_{i,a+2k}\\
m=& Y_{i,a}\cdots Y_{i,a+2(k-1)}\cdot\prod_{t=s+1}^{k}(A_{i,a+2t-1}^{-1}),\ 0\leq s\leq k.
\end{align}

In particular, $mm'$ is $l$-dominant if and only if $(k-h)\leq s\leq k$

(ii) The coefficient of $mm'$ in $[\qtChar
W^{(i)}_{h,a+2(k-h+1)}*\qtChar W^{(i)}_{k,a}]$ is $1$ if $(k-h)<s\leq k$ and $t^{-1}$ if $0\leq
s\leq (k-h)$.
\end{Lem}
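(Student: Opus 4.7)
The plan is to generalize Nakajima's proof of Lemma \ref{lem:right_negative_product} (the special case $h=1$ in \cite{Nakajima03}) to the range $k > h \geq 1$, using the explicit monomial description of Theorem \ref{thm:KR_module}(ii) together with the Euler-form calculation of Lemma \ref{lem:compatible_twist}. First I would show that $m'$ must equal the leading term $Y_{i,a+2(k-h+1)}\cdots Y_{i,a+2k}$: otherwise, by Theorem \ref{thm:KR_module}(ii) applied to $\qtChar W^{(i)}_{h,a+2(k-h+1)}$, $m'$ contains the factor $A_{i,a+2k+1}^{-1}$, hence a $Y_{i,a+2k+2}^{-1}$ at a position strictly above every $Y_{i',b}$-position that can occur in any $m \in \qtChar W^{(i)}_{k,a}$ (whose $Y_i$-positions lie in $[a,a+2k]$ and $Y_j$-positions in $[a+\epsilon_{ij}, a+2k-1+\epsilon_{ij}]$ for $j \neq i$). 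Then $r_i(mm') = a+2k+2$ is realized only by $Y_{i,a+2k+2}^{-1}$, so $mm'$ is right negative at $j=i$, contradicting hypothesis.

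Fixing $m'$ as above and writing $m$ in the form of Theorem \ref{thm:KR_module}(ii) with some parameter $s \in [0,k]$, I track the $Y_i$-exponent of $mm'$ at each position $(i, a+2t)$: the accounting yields $+1$ for $t \in [0, s-1]$, $0$ at $t=s$, $-1$ for $t \in [s+1, k-h]$, and $0$ for $t \in [\max(s+1, k-h+1), k]$, while the $Y_j$-exponents for $j \neq i$ remain $-C_{ij} \geq 0$. Hence $mm'$ is $l$-dominant precisely when the negative interval $[s+1, k-h]$ is empty, i.e.\ $k-h \leq s \leq k$, establishing the ``moreover'' claim of (i). For $s<k-h$, the surviving factor $Y_{i,a+2(k-h)}^{-1}$ together with the generic distinctness of the gradings $\xi_{i'}$ forces $r_i(mm')$ to be attained at a lone negative factor, making $mm'$ right negative---a case ruled out by hypothesis.

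For (ii), I expand the $\ast$-product via \eqref{eq:twist_prod}. Writing $m = Y^{w^{(i)}_{k,a}} \prod_{t=s+1}^k A_{i,a+2t-1}^{-1}$ (of weight $w^{(i)}_{k,a} - C_q v_s$ with $v_s = \sum_{t=s+1}^k e_{i,a+2t-1}$) and $m' = Y^{w^{(i)}_{h,a+2(k-h+1)}}$, after normalizing out the leading-term twist $t^{\cE(w^{(i)}_{h,a+2(k-h+1)}, w^{(i)}_{k,a})}$, the coefficient of $m'm$ equals $t^{\cE(w^{(i)}_{h,a+2(k-h+1)}, C_q v_s)}$. Applying Lemma \ref{lem:compatible_twist} with $c = a+2k$, $d = h-1$, and $d' = k-t$ for each factor $A_{i,a+2t-1}^{-1}$ shows that only the index $t = k-h+1$ contributes, and only by $-1$; since $k-h+1 \in [s+1, k]$ iff $s \leq k-h$, the resulting exponent is $-1$ when $s \leq k-h$ and $0$ otherwise, yielding coefficient $1$ for $s > k-h$ and $t^{-1}$ for $s \leq k-h$, as asserted.

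The main obstacle is the right-negativity check for $s<k-h$ in the proof of (i): the uncanceled positive $Y_j$-factors from $m$ at positions $a+2t-1+\epsilon_{ij}$ for $t \in [k-h+1, k]$ must be shown not to overtake the top negative $Y_{i,a+2(k-h)}^{-1}$ in the $r_i$-ordering, which requires invoking the genericity of the grading $\xi$ so that the $r_i$-maximizing factor is isolated at an integer position where the exponent is negative, paralleling Nakajima's position-tracking arguments in \cite{Nakajima03}.
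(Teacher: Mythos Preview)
The paper does not give an explicit proof of this lemma; it only says the argument follows \cite{Nakajima03} (and points to \cite{fourier2014schur} for related results). Your outline---pin down $m'$, classify $m$, then compute coefficients via Lemma~\ref{lem:compatible_twist}---is the right shape, and your argument that $m'$ must be the leading term and your coefficient calculation in (ii) are both correct.

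There is, however, a real gap in (i). You write ``writing $m$ in the form of Theorem~\ref{thm:KR_module}(ii) with some parameter $s\in[0,k]$'' with no justification. Theorem~\ref{thm:KR_module}(ii) only classifies the right-negative monomials of the \emph{truncated} character; the full $\qtChar W^{(i)}_{k,a}$ contains many more monomials, obtained from the $i$-string ones by further applying $A_{j,b}^{-1}$ with $j\neq i$. The substance of Nakajima's argument is precisely to show that any such further-lowered $m$, when multiplied by $m'$, stays right negative (its top negative factor sits at a vertex $j\neq i$, or at a position $m'$ cannot reach). You have skipped this step, and it is the main content of the proof.

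Separately, you have misread what (i) asserts. The lemma allows all $s\in[0,k]$; it does \emph{not} claim every such $s$ yields a non-right-negative product, only that a non-right-negative product forces $m$ into this family. Your attempt to exclude $s<k-h$ is therefore unnecessary---and it is also wrong whenever $i$ has a neighbour $j$. For $s<k-h$ the product $mm'$ carries positive factors $Y_{j,a+2t-1+\epsilon_{ij}}^{-C_{ij}}$ with $t$ up to $k$; since $|\xi_{ij}|<1$, their $r_i$-positions exceed $a+2k-2\ge a+2(k-h)$, so they sit strictly above the negative factor $Y_{i,a+2(k-h)}^{-1}$. Hence $mm'$ is \emph{not} right negative in that case. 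What you flag as ``the main obstacle'' is not a technical wrinkle to be smoothed over by genericity of $\xi$: it is a disproof of your own extra claim.
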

Notice that, when $s=k$, $m$ is the leading term in $\qtChar W^{(i)}_{k,a}$.
\begin{Eg}
We consider the vectors $w\in\N^\sW$ with support on
Figure \ref{fig:type_A_quantum_affine}.

Take $k=3,h=2, i=2$. Consider the following dimension vectors
\begin{align*}
w^{(2)}_{k,a}=&e_{2,a}+e_{2,a+2}+e_{2,a+4},\\
w^{(2)}_{h,a+2(k-h+1)}=&e_{2,a+4}+e_{2,a+6},\\
w^{(2)}_{k+1,a}=&e_{2,a}+e_{2,a+2}+e_{2,a+4}+e_{2,a+6},\\
w^{(2)}_{k-h,a+2(k-h+1)}=&e_{2,a+4},\\
w^{(1)}_{h,a+2(k-h)+1+\epsilon_{21}}=&e_{1,a+4}+e_{1,a+6},\\
w^{(3)}_{h,a+2(k-h)+1+\epsilon_{23}}=&e_{3,a+4}+e_{3,a+6}.
\end{align*}

We compute the truncated $q,t$-characters of the Kirillov-Reshetikhin
modules
\begin{align*}
\qtChar_{\leq a+6} W^{(2)}_{k,a}=&Y_{2,a}Y_{2,a+2}Y_{2,a+4}(1+A^{-1}_{2,a+5}+A^{-1}_{2,a+3}A^{-1}_{2,a+5}+A^{-1}_{2,a+1}A^{-1}_{2,a+3}A^{-1}_{2,a+5}),\\
\qtChar_{\leq a+6} W^{(2)}_{h,a+2(k-h+1)}=&Y_{2,a+4}Y_{2,a+6},\\
\qtChar_{\leq a+6} W^{(2)}_{k+1,a}=&Y_{2,a}Y_{2,a+2}Y_{2,a+4}Y_{2,a+6},\\
\qtChar_{\leq a+6} W^{(2)}_{k-h,a+2(k-h+1)}=&Y_{2,a+4}(1+A^{-1}_{2,a+5}).
\end{align*}
For simplicity, specialize $t$ to $1$. It is easy to compute the following difference
\begin{align*}
\qtChar_{\leq a+6}W^{(2)}_{k,a}*\qtChar_{\leq a+6}W^{(2)}_{h,a+2(k-h+1)}-&\qtChar_{\leq a+6}W^{(2)}_{k+1,a}*\qtChar_{\leq a+6}W^{(2)}_{k-h,a+2(k-h+1)}\\
&=Y_{2,a}Y_{1,a+4}Y_{1,a+6}Y_{3,a+4}Y_{3,a+6}(1+A^{-1}_{2,a+1}).
\end{align*}
We observe that the difference has only one $l$-dominant
monomial. In fact, we will show it is the truncated character of the
minuscule module $W^{(2)}_{k-h,h,a}$ in Proposition \ref{prop:KR_module_variant}.
\end{Eg}


\begin{figure}[htb!]
 \centering
\beginpgfgraphicnamed{fig:type_A_quantum_affine}
  \begin{tikzpicture}
  \node [ draw] (v24) at (2,3) {2,a+6};
    \node [ draw] (v34) at (1,2) {3,a+6};
\node [ draw] (v44) at (0,1) {4,a+6};
\node [ draw] (v54) at (-1,0) {5,a+6};

\node [draw] (v16) at (1,4) {1,a+6}; 
\node [draw] (v26) at (0,3) {2,a+4}; 
\node [draw] (v36) at (-1,2) {3,a+4}; 
\node [draw] (v46) at (-2,1) {4,a+4}; 

\node [draw] (v18) at (-1,4) {1,a+4}; 
\node [draw] (v28) at (-2,3) {2,a+2}; 
\node [draw] (v38) at (-3,2) {3,a+2}; 

\node [draw] (v110) at (-3,4) {1,a+2}; 
\node [draw] (v210) at (-4,3) {2,a}; 
  \end{tikzpicture}
\endpgfgraphicnamed
\caption{A subset of $\sW$.}
\label{fig:type_A_quantum_affine}
\end{figure}

For $0\leq h\leq k$, we define:
\begin{align}\label{eq:minuscule_dimension}
  \begin{split}
    w_{k-h,h,a}^{(i)}&=w^{(i)}_{k-h,a}-\sum_{j\neq
      i}C_{ij}w^{(j)}_{h,a+2(k-h)+1+\epsilon_{ij}}\\
    W_{k-h,h,a}^{(i)}&=S(w_{k-h,h,a}^{(i)}).
  \end{split}
\end{align}
Then $W_{k,0,a}^{(i)}=W_{k,a}^{(i)}$.

Similar to Theorem \ref{thm:KR_module}, by using Lemma \ref{lem:right_negative_product_variant}, we have the following consequences on the modules $W_{k-h,h,a}^{(i)}$. The claim (iv) follows from an analogous proof of \cite[Lemma 4.1]{Nakajima03} or from Proposition \ref{prop:factorization_canonical}.

\begin{Prop}
\label{prop:KR_module_variant}
(i) All monomials in $\qtChar
W^{(i)}_{k-h,h,a}$ are right negative except its leading term $Y^{w_{k-h,h,a}^{(i)}}$.

(ii) Let $m$ be a right negative monomial appearing in the truncated
$q,t$-character $\qtChar_{\leq 2k}
W^{(i)}_{k-h,h,a}$, then we have
\begin{align}
  \label{eq:new_right_negative_monomial}
  \begin{split}
    m=&Y^{w_{k-h,h,a}^{(i)}}\prod_{t=s+1}^{k-h}A^{-1}_{i,a+2t-1}
  \end{split}
\end{align}
where $0\leq s< k-h$.

(iii) We have the following equality in the quantum torus $\YTorus$;
\begin{align}
  \label{eq:exact_sequence_variant}
  \begin{split}
    [\qtChar &W^{(i)}_{h,a+2(k-h+1)}*\qtChar W^{(i)}_{k,a}]\\&=[\qtChar
    W^{(i)}_{k-h,a+2(k-h+1)}*\qtChar
    W^{(i)}_{k+1,a}]
    +t^{-1}\qtChar W^{(i)}_{k-h,h,a},
  \end{split}
\end{align}
where $[\ ]$ denote the normalization by $t$-factors.

(iv) In the Grothendieck ring $\quotKGp$ (Section \ref{sec:definition_quiver_variety}), we have $$
 W^{(i)}_{k-h,a+2(k-h+1)}\otimes W^{(i)}_{k+1,a}=t^\alpha S(w^{(i)}_{k+1,a}+ w^{(i)}_{k-h,a+2(k-h+1)}).$$ for some $\alpha\in\Z$,


\end{Prop}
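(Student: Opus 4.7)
The plan is to prove (iii) first by a direct character computation, then derive (i) and (ii) as consequences of (iii) combined with known Kirillov--Reshetikhin character formulas, and finally deduce (iv) by the method of \cite[Lemma 4.1]{Nakajima03}.

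For (iii), I would expand both $[\qtChar W^{(i)}_{h,a+2(k-h+1)}*\qtChar W^{(i)}_{k,a}]$ and $[\qtChar W^{(i)}_{k-h,a+2(k-h+1)}*\qtChar W^{(i)}_{k+1,a}]$ as sums $mm'$ drawn from the two factors. By Theorem \ref{thm:KR_module}(i), every non-leading monomial of a Kirillov--Reshetikhin character is right negative, so Lemma \ref{lem:right_negative_product_variant} (and its analogue for the second product, established by an identical argument) ensures that non-right-negative contributions can only come from $m'$ equal to the leading monomial of the smaller KR character; these contributions are parametrized by $s\in[0,k]$ (respectively some $s'$) with coefficient $1$ or $t^{-1}$ as specified in the lemma. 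A direct expansion of $\prod_{t=s+1}^{k}A^{-1}_{i,a+2t-1}$ shows that at $s=k-h$ the monomial $mm'$ equals exactly $Y^{w^{(i)}_{k-h,h,a}}$ with coefficient $t^{-1}$; the $l$-dominant monomials for $s\in(k-h,k]$ on the LHS match in one-to-one fashion with those on the RHS first term after re-indexing, and the coefficients match as well. After verifying that the non-$l$-dominant but non-right-negative monomials also cancel between the two products, the difference has a unique $l$-dominant monomial $t^{-1}Y^{w^{(i)}_{k-h,h,a}}$. Since the difference lies in $\qtChar(\quotKGp)$ and the basis $\{\qtChar S(w)\}$ is triangular with respect to the $l$-dominant order, it must equal $t^{-1}\qtChar W^{(i)}_{k-h,h,a}$, proving (iii).

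For (i) and (ii), the relation (iii) realises $\qtChar W^{(i)}_{k-h,h,a}$ as a $\Z[t^{\pm}]$-linear combination of monomial contributions from products of Kirillov--Reshetikhin characters. The leading term is $Y^{w^{(i)}_{k-h,h,a}}$, and every other monomial appearing on the right is right negative (being a product of a right negative KR monomial with an arbitrary monomial, or obtained by subtraction that leaves right negativity intact; this mirrors the argument in \cite{Nakajima03}). For (ii), restricting to $a$-degrees $\leq 2k$ isolates precisely those $s<k-h$ in the LHS expansion, and a direct computation of $m\cdot m'=\prod_{t=0}^{s-1}Y_{i,a+2t}\prod_{t=s+1}^{k}A^{-1}_{i,a+2t-1}$ yields the formula \eqref{eq:new_right_negative_monomial}. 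For (iv), I would adapt \cite[Lemma~4.1]{Nakajima03}: one checks that the only $l$-dominant monomial in the character product $\qtChar W^{(i)}_{k-h,a+2(k-h+1)}\cdot\qtChar W^{(i)}_{k+1,a}$ is the leading monomial $Y^{w^{(i)}_{k+1,a}+w^{(i)}_{k-h,a+2(k-h+1)}}$; since the structure constants in $\{S(w)\}$ are detected by $l$-dominant monomials, uniqueness forces the product to be a single $t$-twisted simple.

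The main obstacle will be the accounting step in the proof of (iii), namely matching the non-$l$-dominant but non-right-negative contributions between the two normalised products. Lemma \ref{lem:right_negative_product_variant} provides a complete list for the first product, but for the second product an analogous lemma must be derived from scratch (the shift $a+2(k-h+1)$ does not fit directly into the lemma's framework, as one checks that the required compatibility $k_{\text{new}}-h_{\text{new}}=k-h$ only holds when $k=2h+1$). Establishing the required bijection between the $s$-parametrisation on the LHS and the corresponding parameter on the RHS, together with the matching of $t$-powers under the normalisation $[\,\cdot\,]$ using Lemma \ref{lem:compatible_twist}, is where the delicate bookkeeping sits; everything else then follows from the triangularity of simple characters.
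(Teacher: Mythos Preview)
The paper gives essentially no proof: it says (i)--(iii) follow by running the argument for Theorem~\ref{thm:KR_module} (i.e.\ \cite{Nakajima03}) through Lemma~\ref{lem:right_negative_product_variant}, and that (iv) follows separately from \cite[Lemma~4.1]{Nakajima03} or Proposition~\ref{prop:factorization_canonical}. Your plan uses the same ingredients but inverts the order: you aim to establish (iii) first by expanding both normalized products and cancelling, then read off (i),(ii), and finally do (iv). The Nakajima template that the paper points to works instead from the single left-hand product alone: Lemma~\ref{lem:right_negative_product_variant} enumerates all its non-right-negative monomials with coefficients $1$ for $s\in(k-h,k]$ and $t^{-1}$ for $s\in[0,k-h]$, and one then argues via positivity of the simple structure constants that the product decomposes into exactly two simple summands --- the leading simple and $t^{-1}W^{(i)}_{k-h,h,a}$ --- which is (iii), with (i),(ii) coming out of the same analysis. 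The first summand is identified \emph{a posteriori}, so no separate analysis of a second product is needed. Your route through two independent product expansions is viable but strictly harder, and the bookkeeping obstacle you flag is exactly the price of that ordering.

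The mismatch you detect for the second product is, however, more than a nuisance: the printed factor $W^{(i)}_{k-h,\,a+2(k-h+1)}$ in (iii) and (iv) appears to be an index slip for $W^{(i)}_{h-1,\,a+2(k-h+1)}$. With the printed index the two sides of (iii) carry different leading $Y_i$-degrees ($k+h$ versus $2k-h+1$), agreeing only when $k=2h-1$ --- precisely the case $k=3,h=2$ of the paper's worked example, which therefore does not detect the discrepancy. With $h-1$ in place of $k-h$ the leading degrees match identically and $h=1$ specializes to Theorem~\ref{thm:KR_module}(iii). Since only (i) and (ii) are used downstream (in Proposition~\ref{prop:calculate_variable}), this does not affect the paper's later arguments. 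For (iv) your approach and the paper's coincide.
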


\section{Facts and conjectures about monoidal categorifications}
\label{sec:preliminaries_monoidal}

\subsection{Monoidal categorification}
\label{sec:monoidal_categorification}

\subsubsection*{Definitions}

Let $\clAlg_\Z^\dagger$ be any given commutative cluster algebra. Recall that a tensor category $(\monCat,\otimes)$ is a category $\monCat$ equipped with, up to a natural isomorphisms, an associative bifunctor $\otimes$ and its left and right identity $S(0)$. A simple object $S$ is called real if $S\otimes S$ remains simple, and prime if it has a nontrivial factorization $S\simeq V_1\otimes V_2$, for some $V_1,V_2\in\monCat$. We give the following definition following
  \cite{HernandezLeclerc09}.
\begin{Def}
   We say that $\clAlg_\Z^\dagger$ admits a monoidal
  categorification if there exists an abelian tensor category
  $(\monCat,\otimes)$, such that
  \begin{enumerate}[(i)]
  \item there exists a ring isomorphism $\kappa$ from the commutative cluster algebra 
    $\clAlg_\Z^\dagger$ to the Grothendieck ring $K_0(\monCat)$ of
    $\monCat$;
  \item $\kappa$ sends all cluster variables (resp. all cluster
    monomials) to classes of prime real simple objects (resp. classes
    of real simple objects).
  \end{enumerate}
\end{Def}
We shall also denote $K_0(\monCat)$ by $\cR_{t=1}$.

By abuse of notation, we denote an object and its class in
the Grothendieck ring $K_0(\monCat)$ by the same symbol. Denote the
multiplication in the Grothendieck ring $K_0(\monCat)$ by $\otimes$.


We consider two types of monoidal categories $\monCat$ possessing a graded Grothendieck ring $\cR_t$. The reader can refer to \cite{KKKO14} and \cite{VaragnoloVasserot03} for examples respectively.
\begin{enumerate}[Type(A)]

\item The abelian monoidal category $(\monCat,\otimes)$ is graded and equipped with a grading shift functor [1], \cf \cite[(5.4)]{KKKO14}, such that:
\begin{itemize}
\item $K_0(\monCat)=\oplus_w \Z[t^{\pm}]S(w)$, where $S(w)$ are some simple objects and $w$ some parameters, $t^{\pm}S(w)=S(w)[\pm 1]$,
\item $K_0(\monCat)$ has a bar involution  $\overline{(\ )}:tS(w)\mapsto t^{-1}S(w)$,
\item the tensor product in $\monCat$ becomes a $t$-twisted product $*$ in $K_0(\monCat)$ satisfying, $\forall w^1,w^2$,
\begin{align}
\label{eq:str_const}
S(w^1)* S(w^2)=\sum_{w^3} a_{w^1,w^2}^{w^3}S(w^3),\
a_{w^1,w^2}^{w^3}\in\N[t^\pm].
\end{align}
\end{itemize}
We define $\cR_t$ to be $K_0(\monCat)$.

\item The abelian monoidal category $(\monCat,\otimes)$ is ungraded. Consider the free $\Z[t^\pm]$-module $\cR_t=\Z[t^\pm]\otimes K_0(\monCat)$. Assume that we can equip it with a $t$-deformation $*$ of the tensor product of $\monCat$ satisfying \eqref{eq:str_const}, and a bar involution such that $\overline{tS(w)}=t^{-1}S(w)$, where $\{S(w)\}$ is the set of the simple objects.
\end{enumerate}

The bar-invariant $\Z[t^\pm]$-basis
$\set{S(w)}$ is called the \emph{simple basis}. It is positive by \eqref{eq:str_const}.

Fix an integer $d\geq 1$. Let $\extQuotKGp$ denote the
natural extension
$\quotKGp\otimes_{\Z[t^\pm]}\Z[t^{\pm\frac{1}{d}}]$. By the convention
of this article, we will take $d=2$.
\begin{Def}\label{def:monoidal_categorification}
  Let $\monCat$ be a monoidal category of type (A)(B). We say that it provides a monoidal categorification of a quantum cluster algebra
  $\clAlg^\dagger$ if
  \begin{enumerate}[(i)]
  \item there exists a ring isomorphism $\kappa$ from $\clAlg^\dagger$
    to the graded Grothendieck ring $\extQuotKGp$ of $\monCat$ such
    that $\kappa(q^\Hf)=t^\frac{1}{d}$ and it commutes with the bar-involutions;
  \item $\kappa$ sends all quantum cluster variables (resp. all
    quantum cluster monomials) to classes of prime real simple objects
    (resp. classes of real simple objects).
  \end{enumerate}
\end{Def}
Notice that if we assume $\monCat$ to be a graded monoidal category considered above (Type(A)), then our Definition \ref{def:monoidal_categorification} corresponds to the Grothendieck group level of the monoidal categorification in the sense of \cite[Definition 5.8]{KKKO14}, but we don't consider $R$-matrices and short exact sequences in the category.
\subsubsection*{Standard basis and properties}

In this article, we shall work with the case that the set of parameters $w$ is
$\oplus_{k=1}^{l}\N\beta_k$, whose basis vector $\beta_k$ is called
the \emph{$k$-th root vectors}. The object
$S(\beta_k)$ is called the $k$-th \emph{fundamental object}.

Endow $\N^l$ with the natural lexicographical order $\prec_w$ such
that for any $w=(w_k)$, $w'=(w_k')$, $w\prec_w w'$ if and only if
there exists some $1\leq p \leq l$,  such that $w_k=w_k'$ for any $k<p$ and
$w_p<w_p'$ (this is the convention in \cite[4.3.5]{Kimura10})

Let $M(w)$ denote the class in $\extQuotKGp$
defined as
\begin{align*}
  M(w)=t^{\Hf\alpha_w} S(\beta_1)^{w_1}* S(\beta_2)^{ w_2}*\cdots * S(\beta_l)^{ w_l},
\end{align*}
for some normalization factor $\alpha_w\in\Z$. Denote $\bm=t^{-\Hf}\Z[t^{-\Hf}]$. We shall further assume
that the set of classes $\set{M(w)}$ decomposes $(\prec_w,\bm)$-unitriangularity into
the basis $\set{S(w)}$ in the sense of Definition \ref{def:triangular}. Then it is
also a $\Z[t^{\pm\frac{1}{2}}]$-basis of $\extQuotKGp$,
which is called the \emph{standard basis}.

Assume $\kappa$ is given and we fix the initial seed $t_0$. If the elements $\kappa^{-1}S(w)$ have distinct leading
degrees $\deg S(w)\in \degL(t_0)=\Z^m$ in the quantum torus $\cT(t_0)$ (Definition \ref{def:leading_term}), we
define the map $\theta^{-1}$ sending $w$ to $\deg \kappa^{-1}S(w)$. Denote the elements $\kappa^{-1}S(w)$ and
$\kappa^{-1}M(w)$ in $\clAlg^\dagger$ by $\simp(w)$ and $\stdMod(w)$
respectively.

The following observation states that a monoidal categorification of $\clAlg^\dagger_\Z$ implies that of
$\clAlg^\dagger$.
\begin{Prop}
Let $\monCat$ be a monoidal category which satisfies
\eqref{eq:str_const}, such that its graded Grothendieck
ring $\extQuotKGp$ is isomorphic to a quantum cluster algebra
$\clAlg^\dagger$ by identifying some simples with the initial quantum
cluster variables. Assume that $\monCat$ has the commutative
Grothendieck ring $\cR_{t=1}$ as the specialization of $\extQuotKGp$ at $t^\Hf=1$ and, moreover, $\monCat$ provides a
monoidal categorification of the commutative cluster algebra
$\clAlg^\dagger_\Z$. Then $\monCat$ provides a
monoidal categorification of $\clAlg^\dagger$ as well.
\end{Prop}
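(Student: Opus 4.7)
The strategy is to leverage the hypotheses in two complementary ways: the commutative monoidal categorification of $\clAlg^\dagger_\Z$ identifies the specialization of each quantum cluster monomial with a unique (prime) real simple class, while the positivity \eqref{eq:str_const} of the structure constants of the simple basis in $\extQuotKGp$, combined with the identification of the initial quantum cluster variables with simples, forces that identification to lift uniquely (up to a power of $t^{\pm 1/d}$) to the quantum level. First I would fix a quantum cluster monomial $M\in\clAlg^\dagger$ and expand
\begin{equation*}
\kappa(M)=\sum_{w}c_w(t^{\Hf})\,S(w),\qquad c_w\in\Z[t^{\pm\Hf}],
\end{equation*}
in the simple basis of $\extQuotKGp$. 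Specializing $t^{\Hf}\mapsto 1$ and using that $\kappa|_{t^{\Hf}=1}$ is a monoidal categorification of $\clAlg^\dagger_\Z$, the class $\kappa(M)|_{t^{\Hf}=1}$ is a single real simple $S(w_\square)|_{t=1}\in\cR_{t=1}$; by $\Z$-linear independence of $\{S(w)|_{t=1}\}$, this yields $c_{w_\square}(1)=1$ and $c_w(1)=0$ for $w\neq w_\square$.

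Next I would exploit positivity. Since $\extQuotKGp\cong\clAlg^\dagger$ via $\kappa$ identifies the initial cluster variables $X_i(t_0)$ with simples (up to normalization by $t^{\pm\Hf}$), the monomial $\kappa(M)$ equals $t^{\alpha}$ times an iterated twisted product $S(\eta_{i_1})*S(\eta_{i_2})*\cdots$ of simples, for a single shift $\alpha\in\tfrac{1}{d}\Z$ determined by $M$. Iterating \eqref{eq:str_const} shows that such a product expands in the basis $\{S(w)\}$ with coefficients in $\N[t^{\pm}]$, so $c_w(t^{\Hf})\in t^{\alpha}\N[t^{\pm}]$ for every $w$. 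Non-negativity and vanishing at $t^{\Hf}=1$ then forces $c_w\equiv 0$ for $w\neq w_\square$; and a non-negative Laurent polynomial in $\N[t^{\pm}]$ whose value at $1$ equals $1$ must be a single monomial, giving $c_{w_\square}(t^{\Hf})=t^{\gamma}$ for some $\gamma\in\tfrac{1}{d}\Z$. Hence
\begin{equation*}
\kappa(M)=t^{\gamma}\,S(w_\square),
\end{equation*}
which is the class of a real simple object (a grading shift of $S(w_\square)$ in the case of Type (A), or the same class up to normalization in Type (B)).

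Finally, if $M$ is a quantum cluster variable, then $M|_{q^{\Hf}=1}$ is a commutative cluster variable, and by hypothesis $\kappa(M)|_{t^{\Hf}=1}=S(w_\square)|_{t=1}$ is \emph{prime} real simple. Any non-trivial factorization $S(w_\square)\cong A\otimes B$ in $\monCat$ would specialize to a non-trivial factorization of $S(w_\square)|_{t=1}$, contradicting primality; hence $S(w_\square)$ itself is prime, establishing condition (ii) of Definition~\ref{def:monoidal_categorification}.

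\textbf{Expected obstacle.} The delicate point is the uniform positivity statement: one needs $c_w\in t^{\alpha}\N[t^{\pm}]$ with the \emph{same} shift $\alpha$ for all $w$, so that the vanishing $c_w(1)=0$ really forces $c_w\equiv 0$. This requires that the normalized quantum cluster monomial $M$ can be written as $t^{\alpha}$ times a single unnormalized twisted product of the simples representing initial cluster variables, which follows from how \eqref{eq:str_const} combines $\N$-coefficients with $t$-powers, but must be tracked carefully through the twisted multiplication to ensure that the $t^{\Hf}$-half-integer shifts are compatible with $\kappa(q^{\Hf})=t^{1/d}$.
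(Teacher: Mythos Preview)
There is a genuine gap in your positivity step. You claim that for an arbitrary quantum cluster monomial $M$, the element $\kappa(M)$ equals $t^{\alpha}$ times a twisted product $S(\eta_{i_1})*S(\eta_{i_2})*\cdots$ of simples, so that iterating \eqref{eq:str_const} gives $c_w\in t^{\alpha}\N[t^{\pm}]$. But the hypothesis only identifies the \emph{initial} cluster variables $X_i(t_0)$ with simples. If $M$ is a cluster monomial in a seed $t\neq t_0$, then $M$ is a product of the $X_i(t)$, not of the $X_i(t_0)$; the quantum Laurent phenomenon only expresses $M$ as a Laurent \emph{polynomial} in the $X_i(t_0)$, involving both sums and negative powers, so \eqref{eq:str_const} does not apply and you have no control on the sign of $c_w$. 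Your argument, as written, therefore only proves the (trivial) case where $M$ is a cluster monomial in the initial seed.

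The paper repairs exactly this by induction on one-step mutations rather than treating a general $M$ directly. Assuming the cluster variables of a seed $t$ are already known to be simples $S_i(t)$, it looks at the exchange relation
\[
S_k(t)*S_k(t)^* \;=\; f_+\,S_+ + f_-\,S_-
\]
in $\extQuotKGp$, where $S_\pm$ are monomials in the $S_i(t)$ (hence simples, by induction) and $S_k(t)^*$ is the simple corresponding to the new commutative cluster variable. Here \eqref{eq:str_const} legitimately gives $f_\pm\in\N[t^{\pm\Hf}]$, the specialization forces $f_\pm(1)=1$, and bar-invariance of $S_k(t)^*$ pins down the normalization so that $S_k(t)^*$ equals the new quantum cluster variable $X_k(\mu_k t)$. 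Iterating from $t_0$ reaches every seed. Your specialize-and-lift idea is the right reflex, but it must be run through the exchange relation at each mutation step, not applied to $M$ globally.
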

\begin{proof}
Let $t$ be any seed and $\set{S_i(t)}$ the collection of simples in
$\cR_{t=1}$ identified with the set of commutative cluster variables
$\set{x_i(t)}$. Then, for any index $k$, there exists some simple
module $S_k(t)^*$ such that we have the following exchange relation in
$\cR_{t=1}$:
\begin{align*}
  S_k(t)\otimes S_k(t)^*=S_++S_-,
\end{align*}
where $S_+$ and $S_-$ are monomials in $\set{S_i(t)}$. In particular,
$S_+$ and $S_-$ are simples.

Now, assume that, via the isomorphism between the graded Grothendieck ring $\extQuotKGp$ and the quantum cluster algebra, $\set{S_i(t)}$ is identified
with $\set{X_i(t)}$ the set of the quantum cluster variables in $t$. The twisted product
takes the form
\begin{align*}
  S_k(t)\otimes S_k(t)^*=f_+S_++f_-S_-,
\end{align*}
where the coefficient $f_+,f_-\in\N[t^{\pm\Hf}]$ by
\eqref{eq:str_const}. It follows that $f_+$ and $f_-$ must be powers of
$t^{\pm\Hf}$ so that they specialize to $1$.

Next, left multiplying $S_k(t)^{-1}$, by the bar-invariance of $S_k(t)^*$, we
must get
\begin{align*}
  S_k(t)^*=[S_k(t)^{-1}*S_+]^t+[S_k(t)^{-1}*S_+]^t,
\end{align*}
where $[\ ]^t$ denote normalization in the quantum torus
$\Z[t^{\pm\Hf}][S_i(t)^\pm]$, \cf Definition \ref{def:pointed_element}. Therefore, $S_k(t)^*$ is identified with the quantum cluster variable
$X_k(\mu_k t)$.

Starting from the initial seed $t_0$ and repeating the above argument, we
obtain the desired claim.  
\end{proof}

\subsection{Quantum cluster algebras associated with words}
\label{sec:quiver_word}


We shall recall quantum cluster algebras arising from quantum unipotent subgroups $A_q(\mfr{n}(w))$ (type (i)), or arising from level $N$ categories $\cC_N$ of finite dimensional representations of quantum affine algebras (type (ii)). Their canonical initial seeds will be described in a uniform and combinatorial way in terms of a word $\bi$. Our conventions will be compatible with those of \cite{GeissLeclercSchroeer10} and \cite{HernandezLeclerc09} respectively, cf. Remark \ref{rem:compare_PBW_generator} \ref{rem:compare_fundamental_module}.

\subsubsection*{Ice quiver of type (i)(ii)}

Fix an non-empty vertex set $I=\{1,\ldots,r\}$ and a generalized Cartan matrix
$C=(C_{i,j})_{i,j\in I}$. For any $i\in I$, let $s_i$ denote the
simple reflection at the $i$-th simple root. By choosing an acyclic orientation $\overline{\Omega}$ on the diagram $\Gamma$ associated with $C$. We obtain an acyclic quiver $(\Gamma,\overline{\Omega})$.


Let $\bi=(i_l,i_{l-1},\ldots,i_2,i_1)$ be a non-empty word of length
$l$ with elements in $I$.

We say the word $\bi$ is \emph{reduced}, if the Weyl group element
$w_\bi=s_{i_l}\cdots s_{i_2}s_{i_1}$ has length $l$.

We say the word $\bi$ is \emph{$\overline{\Omega}$-adaptable} (or
adaptable for short), if there exists an acyclic quiver
$(\Gamma,\overline{\Omega})$ associated with the Cartan matrix $C$, such that $(i_l,\ldots,i_2,i_1)$ is a sink sequence of the quiver when
  read from right to left, \cf \cite[Section 3.7]{GeissLeclercSchroeer07b} for
  more details.

An adaptable word $(i_r,\ldots,i_2,i_1)$ which contains every
$i\in I$ exactly once is called an \emph{acyclic Coxeter word}.

We consider the following types of the pair $(C,\bi)$:
\begin{enumerate}[(i)]
\item The generalized Cartan matrix $C$ is symmetric, and $\bi$ is reduced.
\item The generalized Cartan matrix $C$ is of type $ADE$, and the word $\bi$ is
  $\overline{\Omega}$-adaptable for some acyclic quiver $(\Gamma,\overline{\Omega})$ associated $C$.
\end{enumerate}

Without loss of generality, we can assume that the diagram $\Gamma$
associated with $C$ is connected.

By default, we consider words satisfying the following mild assumption, which will not affect the correctness of our results.
\begin{Assumption*}
	We assume that every element of $I$ appears in the word
	$\bi$ at least once.
\end{Assumption*}

Given a word $\bi$, for any $1\leq k\leq
l$, $i\in I$, we define the following notations (\cf
\cite[Section 9.8]{GeissLeclercSchroeer10} \cite{BerensteinFominZelevinsky05}) 
  \begin{align*}
k^{\max}=&\max\set{s\in [1,l]|i_s=i_k},\\
k^{\min} =&\min\set{s\in [1,l]|i_s=i_k},\\
k^+=&\min\set{l+1,s\in [k+1,l]|i_s=i_k},\\
k^-=&\max\set{0,s\in [1,k-1]|i_s=i_k},\\
k^+(i)=&\min\set{l+1,s\in [k+1,l]|i_s=i}\\
k^-(i)=&\max\set{0,s\in [1,k-1]|i_s=i},\\
^{\max} i=&\max\set{s\in [1,l]|i_s=i},\\
^{\min} i=&\min\set{s\in [1,l]|i_s=i}.
  \end{align*}
For any interval $[a,b]=\set{k\in\Z|a\leq k\leq b}$ in $[1,l]$, define the
multiplicities
\begin{align*}
  m(i,[a,b])=&|\set{k|k\in [a,b],i_k=i}|,\\
m(i)=&m(i,[1,l]),\\
m_k^+=&m(i_k,[k,l])-1,\\
m_k^-=&m(i_k,[1,k])-1.
\end{align*}
We define $k[0]=k$, and for any integer $1\leq d\leq m_k^+$, we recursively
define the $d$-th offset $k[d]$ of $k$ to be $k[d-1]^+$. The integer
$m(i_k,[k^{\min},k])$ is called the \emph{level }of the vertex $k$.

If we define the following subset of $[a,b]$:
  \begin{align*}
    [a,b](i)=&\set{k|k\in [a,b],i_k=i}.
  \end{align*}
Then the cardinality of $[1,l](i)$ is just
$m(i_k)$. The subset $[1,l](i)$ is
naturally a sequence (read from right to left) whose elements are
ordered as the following:
\begin{align*}
  (^{\min}i[m(i)-1],^{\min} i[m(i)-2],\ldots,^{\min} i[1],^{\min}
  i[0]).
\end{align*}

Following\footnote{In our convention, the quiver
  $\Gamma_\bi$ is opposite to that of \cite{GeissLeclercSchroeer10}.}
\cite[Section 2.4]{GeissLeclercSchroeer10} \cite{BerensteinFominZelevinsky05}, we associate the
quiver $\Gamma_\bi$ to $\bi$ as the following:
\begin{itemize}
\item The vertices of $\Gamma_\bi$ are $1,2,\ldots,l$.
\item For $1\leq s,t\leq l$, there are $-C_{i_s i_t}$ arrows from $t$
  to $s$ if $t^+\geq s^+>t>s$. These are called the \emph{ordinary
    arrows} of $\Gamma_\bi$.
\item For any $1\leq s\leq l$, there is an arrow from $s$ to $s^+$ if
  $s^+\leq l$. These are called the \emph{horizontal arrows} of $\Gamma_\bi$. 
\end{itemize}

\begin{Eg}[type $A$]\label{eg:type_A}
  The quiver in Figure \ref{fig:type_A} is the quiver $\Gamma_\bi$
  associated with the adaptable word $\bi=(1, 2, 1, 3,  2, 1, 4, 3, 2, 1)$ and
  the type $A_4$ Cartan matrix given by 
$$C=\left(\begin{array}{cccc}
    2&-1&0&0\\
    -1&2&-1&0\\
    0&-1&2&-1\\\
0&0&-1&2
  \end{array}\right).$$
 The pair $(C,\bi)$
  is of type both (i) and (ii).
\end{Eg}
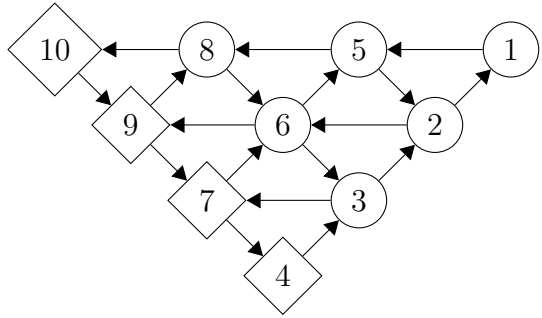
\begin{figure}[htb!]
 \centering
\beginpgfgraphicnamed{fig:type_A}
  \begin{tikzpicture}
 \node [shape=circle, draw] (v1) at (7,4) {1}; 
    \node  [shape=circle, draw] (v2) at (6,3) {2}; 
    \node [shape=circle,  draw] (v3) at (5,2) {3};
\node [shape=diamond, draw] (v4) at (4,1) {4}; 

\node  [shape=circle, draw] (v5) at (5,4) {5}; 
    \node [shape=circle,  draw] (v6) at (4,3) {6};
    
\node [shape=diamond, draw] (v7) at (3,2) {7}; 
    \node  [shape=circle, draw] (v8) at (3,4) {8}; 
    \node [shape=diamond,  draw] (v9) at (2,3) {9};

\node [shape=diamond, draw] (v10) at (1,4) {10};

    \draw[-triangle 60] (v4) edge (v3); 
    \draw[-triangle 60] (v3) edge (v2); 
    \draw[-triangle 60] (v2) edge (v1);
    
    \draw[-triangle 60] (v7) edge (v6); 
    \draw[-triangle 60] (v6) edge (v5);
    
    \draw[-triangle 60] (v9) edge (v8);

    \draw[-triangle 60] (v10) edge (v9); 
    \draw[-triangle 60] (v9) edge (v7); 
    \draw[-triangle 60] (v7) edge (v4);
    
    \draw[-triangle 60] (v8) edge (v6); 
    \draw[-triangle 60] (v6) edge (v3);
    
    \draw[-triangle 60] (v5) edge (v2);

    \draw[-triangle 60] (v1) edge (v5); 
    \draw[-triangle 60] (v5) edge (v8); 
    \draw[-triangle 60] (v8) edge (v10);

    \draw[-triangle 60] (v2) edge (v6); 
    \draw[-triangle 60] (v6) edge (v9); 

\draw[-triangle 60] (v3) edge (v7);
  \end{tikzpicture}
\endpgfgraphicnamed
\caption{A quiver $\Gamma_\bi$ of type $A$}
\label{fig:type_A}
\end{figure}

\begin{Eg}[{\cite[Example 13.2]{GeissLeclercSchroeer10}} ]\label{eg:type_wild}
  The quiver in Figure \ref{fig:type_wild} is the quiver associated
  with the non-adaptable word $\bi=(2,3,2,1,2,1,3,1,2,1)$ and the Cartan matrix $C=
  \left(\begin{array}{ccc}
    2&-3&-2\\
    -3&2&-2\\
    -2&-2&2
  \end{array}\right)
$. The pair $(C,\bi)$ is of type (i). Here, we put a number $s$ on an
arrow to indicate that there are $s$-many arrows drawn here.
\end{Eg}

\begin{figure}[htb!]
 \centering
\beginpgfgraphicnamed{fig:type_wild}
  \begin{tikzpicture}
 \node [shape=circle, draw] (v1) at (10,5) {1}; 
    \node  [shape=circle, draw] (v2) at (9,2) {2}; 
    \node [shape=circle,  draw] (v3) at (8,5) {3};
\node [shape=circle, draw] (v4) at (7,-1) {4}; 

\node  [shape=circle, draw] (v5) at (6,5) {5}; 
    \node [shape=circle,  draw] (v6) at (4,2) {6};
    
\node [shape=diamond, draw] (v7) at (3,5) {7}; 
    \node  [shape=circle, draw] (v8) at (2,2) {8}; 
    \node [shape=diamond,  draw] (v9) at (0,-1) {9};

\node [shape=diamond, draw] (v10) at (-1,2) {10}; 
 
\draw[-triangle 60] (v2) edge  node[near start] {3} (v1);
\draw[-triangle 60] (v4) edge  node[near start] {2} (v2);   
    \draw[-triangle 60] (v4) edge  node[near start] {2} (v3);   
\draw[-triangle 60] (v5) edge  node[near start] {3} (v2);   
\draw[-triangle 60] (v6) edge  node[near end] {3} (v5);   
\draw[-triangle 60] (v7) edge  node[near start] {3} (v6); 
\draw[-triangle 60] (v7) edge  node[near end] {2} (v4);   
\draw[-triangle 60] (v8) edge  node[near start] {2} (v4);   

\draw[-triangle 60] (v9) edge  node[near end] {2} (v8);   
\draw[-triangle 60] (v9) edge  node[near end] {2} (v7);   

\draw[-triangle 60] (v10) edge  node[near start] {3} (v7);   
\draw[-triangle 60] (v10) edge  node[near start] {2} (v9);   

\draw[-triangle 60] (v1) edge (v3);
\draw[-triangle 60] (v3) edge (v5);   
\draw[-triangle 60] (v5) edge (v7);   
\draw[-triangle 60] (v2) edge (v6);
\draw[-triangle 60] (v6) edge (v8);   
\draw[-triangle 60] (v8) edge (v10);   
\draw[-triangle 60] (v4) edge (v9);   
  \end{tikzpicture}
\endpgfgraphicnamed
\caption{The quiver $\Gamma_\bi$ associated with a non-adaptable word $\bi$.}
\label{fig:type_wild}
\end{figure}
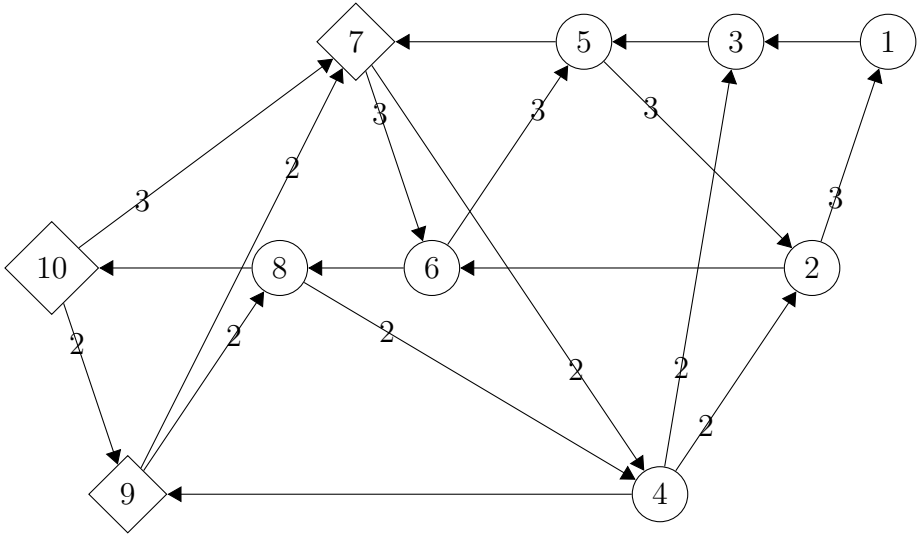

\subsubsection*{Mutation sequences}

Following \cite[Section 13.1]{GeissLeclercSchroeer10}, for any $1\leq
k\leq l$, we define the following mutation sequence (read from right
to left):
\begin{align}
  \label{eq:k_sequence}
  \overleftarrow{\mu}_k=\mu_{k^{\min}[m(i_k,[k,l])-2]}\cdots\mu_{k^{\min}[1]}\cdots\mu_{k^{\min}}.
\end{align}
Notice that, when $m(i_k,[k,l])-2<0$, this sequence is trivial.

Define the mutation sequence (read from right to left)
\begin{align}
  \Sigma_\bi=\overleftarrow{\mu}_l\cdots\overleftarrow{\mu}_2\overleftarrow{\mu}_1\label{eq:GLS_seq}.
\end{align}
We
also define a permutation $\sigma_\bi$ of $[1,l]$ such that, for any $k\in[1,l]$,
$0\leq d\leq m(i_k,[1,l])-2$, we have 
\begin{align*}
  \sigma_\bi (k^{\max})=&k^{\max},\\
\sigma_\bi(k^{\min}[d])=&k^{\min}[
  m(i_k,[1,l])-2-d].
\end{align*}
Notice that $\sigma_\bi=(\sigma_\bi)^{-1}$.

We always view the quiver $\Gamma_\bi$ as an ice quiver by freezing
the vertices $^{\max}i$, $i\in I$. The number
of exchangeable vertices is $n=l-r$. We associate the initial $l\times n$ matrix $\tB(t_0)$ to the quiver $\Gamma_\bi$
such that, at position $(i,j)$, its entry $b_{ij}$ equals the difference
between the number of arrows from $i$ to $j$ and the number of arrows
from $j$ to $i$. Let $\clAlg_\Z(\bi)=\clAlg_\Z(t_0)$ denote the commutative cluster algebra
arising from this initial matrix $\tB(t_0)$.

By \cite[Proposition 13.4]{GeissLeclercSchroeer10}, $t_0$ is
injective-reachable via $(\Sigma_\bi,\sigma_\bi)$ in the sense of
Section \ref{sec:injective_reachable}.

We use $\can(k^{\min},k)$ to denote the initial cluster variables
$X_k(t_0)$, $1\leq k\leq l$. When $\overleftarrow{\mu}_k$ is nontrivial, applying it to the seed
$\overleftarrow{\mu}_{k-1}\cdots\overleftarrow{\mu}_2\overleftarrow{\mu}_1t_0$, we obtain the new cluster variables
$$X_{k^{\min}[d]}(\overleftarrow{\mu}_{k}\cdots\overleftarrow{\mu}_2\overleftarrow{\mu}_1t_0),\ 
0\leq d\leq m(i_k,[k,l])-2,$$
which we will denote by $\can(k^+,k[d]^+)$.
\begin{Rem}\label{rem:compare_PBW_generator}
In \cite[Section 5.4] {GeissLeclercSchroeer11}, the initial quantum cluster variables are denoted by $D(0,b)$, where $0<b\leq l$, which  correspond to our initial quantum cluster variables $\can(b^{\min},b)$. In addition, their quantum minor $D(b,d)$, where $0<b<d\leq l$ and $i_b=i_d$, correspond to our quantum cluster variables $\can(b[1],d)$. Notice that these include all dual PBW basis generators, $D(b^-,b)$, $0<b\leq l$, cf. \cite[Corollary 12.4]{GeissLeclercSchroeer11}, which we denote by $\can(b,b)$.
\end{Rem}
\begin{Eg}
In Example \ref{eg:type_A}, The cluster variables in the canonical initial seed $t_0$ are denoted by
\begin{align*}
  \can(1,10),\can(1,8),\can(1,5),\can(1,1),\\
\can(2,9),\can(2,6),\can(2,2),\\
\can(3,7),\can(3,3),\\
\can(4,4).
\end{align*}
The mutation sequence $\Sigma_\bi$ is the
composition of the following mutation sequences (read from right to left):
\begin{align*}
  \begin{array}{lllll}
  \overleftarrow{\mu}_1=\mu_8\mu_5\mu_1&
 \overleftarrow{\mu}_2=\mu_6\mu_2&
 \overleftarrow{\mu}_3=\mu_3&
 \overleftarrow{\mu}_4=\id&
 \overleftarrow{\mu}_5=\mu_5\mu_1\\
 \overleftarrow{\mu}_6=\mu_2&
 \overleftarrow{\mu}_7=\id&
 \overleftarrow{\mu}_8=\mu_1&
 \overleftarrow{\mu}_9= \id&
 \overleftarrow{\mu}_{10}= \id
\end{array}
\end{align*}
By applying the mutation sequence $\overleftarrow{\mu}_1$ to $t_0$, we obtain the new
cluster variables (from right to left):
\begin{align*}
  \can(5,10),\can(5,8),\can(5,5).
\end{align*}
We continue with the sequence $\overleftarrow{\mu}_2$ to obtain
$\can(6,9),\can(6,6)$. After applying all factors of $\Sigma_\bi$, we
obtain the cluster variables of $\Sigma_\bi t_0$ denoted by
\begin{align*}
  \can(1,10),\can(5,10),\can(8,10),\can(10,10),\\
\can(2,9),\can(6,9),\can(9,9),\\
\can(3,7),\can(7,7),\\
\can(4,4).
\end{align*}
In the convention of Section \ref{sec:injective_reachable}, the cluster variables will be denoted
as follows:
\begin{align*}
  X_{10}(t_0),I_1(t_0),I_5(t_0),I_8(t_0),\\
X_9(t_0),I_2(t_0),I_6(t_0),\\
X_7(t_0),I_3(t_0),\\
X_4(t_0).
\end{align*}
The permutation $\sigma_\bi=
\left(\begin{array}{cccccccccc}
  1&2&3&4&5&6&7&8&9&10\\
8&6&3&4&5&2&7&1&9&10
\end{array}\right).
$
\end{Eg}

\subsection{Monoidal categorification conjecture: type (i)}
\label{sec:category_type_i}

\subsubsection*{Quantum cluster algebra structure}

When the pair $(C,\bi)$ is of type (i), $\clAlg_\Z^\dagger(\bi)$ is isomorphic
to the
coordinate ring of the unipotent subgroup $N(w_\bi)$,
\cf\cite{BerensteinFominZelevinsky05}\cite{GeissLeclercSchroeer10}. Moreover, the canonical initial seed
$t_0$ has a natural
quantization matrix $\Lambda(t_0)$ by
\cite{GeissLeclercSchroeer11}. We define the rational
quantum cluster algebra to be $\clAlg^\dagger_{\Q(q^\Hf)}(\bi)=\clAlg^\dagger(t_0)\otimes \Q(q^\Hf)$.

The quantum
unipotent subgroup $A_{\Q(v)}(\mathfrak{n}(w_\bi))$  is an subalgebra
of $U_{\Q(v)}(\mfr{n})$. It contains its integral form $A_{\Z[v^\pm]}(\mathfrak{n}(w_\bi))$. It has
the dual PBW basis and the dual canonical basis as the restrictions
from that of $U_{\Z[v^\pm]}(\mfr{n})$. We refer the reader to
\cite{Kimura10} for more details. Denote
$A_{\Q(v^\Hf)}(\mathfrak{n}(w_\bi))=A_{\Q(v)}(\mathfrak{n}(w_\bi))\otimes
\Q(v^\Hf)$.

\begin{Thm}[\cite{GeissLeclercSchroeer10}\cite{GeissLeclercSchroeer11}]\label{thm:unipotent_subgroup}
  Assume $(C,\bi)$ is of type (i). Then there is an algebra
  isomorphism\footnote{In
  \cite{GeissLeclercSchroeer11}, the square roots $q^\Hf$ and $v^\Hf$ did not
  appear because the quantum cluster algebra is rescaled and its
  bar-involution is defined differently.} $\kappa$ from the quantum cluster algebra
  $\qClAlg^\dagger_{\Q(q^\Hf)}(\bi)$ to the quantum unipotent subgroup
  $A_{\Q(v^\Hf)}(\mathfrak{n}(w_\bi))$ such that
  $\kappa(q^\Hf)=v^\Hf$ and for any $1\leq a\leq b\leq l$ with $i_a=i_b$, $\kappa\can(a,b)$ are
  $v^\Hf$-shifts of the unipotent quantum minors
  $$D(a^{-},b)=D_{s_{i_1}s_{i_2}\cdots
    s_{i_{a^{-}}}(\varpi_{i_a}),s_{i_1}s_{i_2}\cdots
    s_{i_b}(\varpi_{i_a})}.$$ 

Moreover, $\kappa^{-1}A_{\Z[v^{\pm\Hf}]}(\mathfrak{n}(w_\bi))$ is contained
in $\qClAlg^\dagger(\bi)$.
\end{Thm}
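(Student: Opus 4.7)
The plan is to derive this theorem essentially as a reformulation of the Geiss-Leclerc-Schröer construction \cite{GeissLeclercSchroeer10}\cite{GeissLeclercSchroeer11} in the conventions of the present paper. The first step is to recall that GLS constructed an explicit initial quantum seed inside $A_{\Q(v)}(\mathfrak{n}(w_\bi))$ whose quantum cluster variables are (up to $v^\Hf$-shifts coming from normalization) the unipotent quantum minors $D(a^-,b)$ for $1\le a\le b\le l$ with $i_a=i_b$, and whose exchange matrix and quantization matrix are exactly those read off from the ice quiver $\Gamma_\bi$ described in Section \ref{sec:quiver_word}. Matching this with our canonical initial seed $t_0$ is essentially a dictionary check: the vertex labels $k\in[1,l]$ and the labeling convention for $\can(k^{\min},k)$ of Section \ref{sec:quiver_word} are set up precisely so that $\can(a^-,a)\leftrightarrow D(a^-,a)$ for $a$ ranging over $[1,l]$ (cf.\ Remark \ref{rem:compare_PBW_generator}).

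Next, I would upgrade this to a ring isomorphism between algebras rather than just seeds. Since the quantum cluster algebra $\qClAlg^\dagger_{\Q(q^\Hf)}(\bi)$ is by definition generated, via iterated mutations, from $t_0$, and since GLS prove that the same mutations performed inside $A_{\Q(v^\Hf)}(\mathfrak{n}(w_\bi))$ remain inside this subalgebra and generate it (over $\Q(v^\Hf)$), the map $\kappa$ sending each generator $X_k(t_0)$ to the corresponding (normalized) quantum minor extends uniquely to an algebra homomorphism compatible with mutation. Injectivity follows from the quantum Laurent phenomenon applied in $\cT(t_0)$ on both sides: any element killed by $\kappa$ must have vanishing Laurent expansion in the initial torus, hence is zero. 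Surjectivity follows from the GLS result that these quantum minors, together with the mutated variables, span $A_{\Q(v^\Hf)}(\mathfrak{n}(w_\bi))$. The identification $\kappa(q^\Hf)=v^\Hf$ is forced by matching the quantization matrices on both sides, for which one uses that both quantizations satisfy the same compatibility with the skew-symmetrizable $B$-matrix $\tB(t_0)$.

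The step that requires the most care is the compatibility with the bar-involution and the integral form, because in \cite{GeissLeclercSchroeer11} the quantum cluster algebra is rescaled and the bar-involution is normalized differently (as noted in the footnote after the statement). To handle this, I would write out the rescaling factor explicitly: passing from their convention to ours requires the introduction of the square root $v^\Hf$ (respectively $q^\Hf$), which is exactly why we must extend scalars from $\Z[v^\pm]$ to $\Z[v^{\pm\Hf}]$. Once this renormalization is fixed, the bar-involutions match on the generators and hence on the full algebra. For the integral form statement, observe that the mutation formulas in Definition \ref{def:mutation} have coefficients in $\Z[q^{\pm\Hf}]$, so all iterated cluster variables $\kappa^{-1}X_i(t)$ starting from the integral generators lie in $A_{\Z[v^{\pm\Hf}]}(\mathfrak{n}(w_\bi))$; conversely, by \cite{GeissLeclercSchroeer11} the integral form is generated by such quantum minors and their mutates, giving the containment $\kappa^{-1}A_{\Z[v^{\pm\Hf}]}(\mathfrak{n}(w_\bi))\subset \qClAlg^\dagger(\bi)$.

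The main obstacle will be the bookkeeping of the rescaling between the two conventions. None of the steps is intrinsically hard once GLS is cited, but the normalization factor $v^\Hf$ must be tracked through every quantum minor identity, and one must check that this rescaling is consistent with both the exchange relations and the bar-involution simultaneously. Everything else in the proof is either a direct citation of GLS or a formal consequence of the universal property of the quantum cluster algebra as generated by iterated mutation from its initial seed.
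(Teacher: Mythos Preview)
The paper does not give its own proof of this theorem: it is stated as a direct citation of \cite{GeissLeclercSchroeer10,GeissLeclercSchroeer11}, and the text immediately moves on to use it. Your proposal is therefore not competing with an argument in the paper but is a reconstruction of how the cited result translates into the present conventions. As such, your outline is essentially correct and matches how the paper uses the GLS papers (cf.\ Remark \ref{rem:compare_PBW_generator} and the footnote in the theorem statement about rescaling).

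One point deserves correction. In your last paragraph you argue both containments between $\qClAlg^\dagger(\bi)$ and $\kappa^{-1}A_{\Z[v^{\pm\Hf}]}(\mathfrak{n}(w_\bi))$, but the theorem only asserts $\kappa^{-1}A_{\Z[v^{\pm\Hf}]}(\mathfrak{n}(w_\bi))\subset \qClAlg^\dagger(\bi)$. The reverse containment is precisely Conjecture \ref{conj:integral_quantum_group}, which the paper proves separately as Theorem \ref{thm:isom_integral}. Your argument for the needed direction is the right one: the integral form is generated over $\Z[v^{\pm\Hf}]$ by the dual PBW generators $S(\beta_k)$, which are (normalized) quantum minors $D(k^-,k)$ and hence, via $\kappa^{-1}$, the cluster variables $\can(k,k)\in\qClAlg^\dagger(\bi)$. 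Your first sentence there, however, has $\kappa$ and $\kappa^{-1}$ reversed (you write ``$\kappa^{-1}X_i(t)$ \ldots lie in $A_{\Z[v^{\pm\Hf}]}(\mathfrak{n}(w_\bi))$'', which should be $\kappa X_i(t)$), and in any case that sentence argues the direction you do not need here.
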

Let $B^*(w_\bi)=\set{b(w)|w\in \N^l}$ denote the dual canonical basis of the quantum unipotent
subgroup $A_{\Q(v)}(\mathfrak{n}(w_\bi))$. The map $\kappa$ transports
the bar-involution on $\cT(t_0)$ to an induced bar-involution\footnote{The map chosen in \cite[Proposition 11.5]{GeissLeclercSchroeer11} sends each initial quantum cluster variable $X_i$ to a dual canonical basis element $v^{\frac{1}{4}(\beta,\beta)}b$ of homogeneous degree $\beta$. $\overline{X_i}=X_i$ induces the bar involution $\overline{b}=v^{\frac{1}{2}(\beta,\beta)}b$, which differs from the standard involution $\sigma$ by a $v$-power, cf. \cite[(6.7)]{GeissLeclercSchroeer11} where our $v$ is denoted by $q$.} on
$A_{\Q(v)}(\mathfrak{n}(w_\bi))$. By multiplying $v^\Hf$-factors,
we modify $B^*(w_\bi)$ into a basis $\set{S(w)|w\in \N^l}$ invariant under the induced bar involution. Then, the
rational quantum cluster algebra has the bar-invariant basis
$\set{\simp (w)}=\set{\kappa^{-1}S(w)}$.

\begin{Conj}[\cite{FominZelevinsky02}\cite{GeissLeclercSchroeer07b}]\label{conj:canonical_basis}
  Assume $(C,\bi)$ is of type (i). Then the quantum cluster monomials of $\qClAlg(\bi)$
  are contained in $\set{\simp (w)}$. 
\end{Conj}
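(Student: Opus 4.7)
The plan is to prove the conjecture by constructing the common triangular basis of $\qClAlg(\bi)$ and identifying it, up to normalization and localization at the frozen variables, with the set $\{\simp(w)\}$. Since by Lemma \ref{lem:basis_property}(iii) every common triangular basis contains all quantum cluster monomials, this will give the conjecture. The heavy lifting is to verify the hypotheses of Reduced Existence Theorem (Theorem \ref{thm:reduction}) at the canonical initial seed $t_0=t_0(\bi)$ described in Section \ref{sec:quiver_word}.

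First I would identify the injective pointed set $\inj^{t_0}$ with the (localized, normalized) dual PBW basis $\{\stdMod(w)\}$ of $A_{\Q(v^{\Hf})}(\mfr{n}(w_\bi))$. Via Theorem \ref{thm:unipotent_subgroup}, the initial cluster variables $X_k(t_0) = \can(k^{\min},k)$ and the injectives $I_k(t_0) = \can(k,k^{\max})$ are $v^{\Hf}$-shifts of unipotent quantum minors, whose normalized ordered products are precisely the dual PBW monomials (cf.\ Remark \ref{rem:compare_PBW_generator}); this identification matches the $\degL(t_0)$-parametrization of $\inj^{t_0}$ with the PBW parametrization by $\N^l$. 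Next, I would check that $\{\simp(w)\}$ is the weakly triangular basis $\can^{t_0}$: it is bar-invariant, $\degL(t_0)$-pointed, factors through the frozen variables (the standard factorization of dual canonical bases at frozen minors), and the $(\prec_{t_0},\bm)$-unitriangularity of $\inj^{t_0}$ against $\{\simp(w)\}$ is exactly the classical Kashiwara--Lusztig transition between dual PBW and dual canonical bases. Positivity of the structure constants of $\{\simp(w)\}$, required by Theorem \ref{thm:reduction}, comes in type $ADE$ from Lusztig's geometric construction, and in the symmetric case from the KLR/quiver-Hecke categorification of $U_q(\mfr{n})$.

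The main obstacle is the remaining hypothesis of Theorem \ref{thm:reduction}: showing that every quantum cluster variable that occurs along the mutation sequence $\Sigma_\bi$ from $t_0$ to $t_0[1]$, and along $\sigma_\bi^{-1}\Sigma_\bi^{-1}$ to $t_0[-1]$, already lies in $\{\simp(w)\}$. By the combinatorics described in Section \ref{sec:quiver_word}, each intermediate cluster variable is produced by the sequences $\overleftarrow{\mu}_k$, and by (a quantum version of) the Gei{\ss}--Leclerc--Schr\"oer analysis these intermediates are quantum minors $D(a,b)$ with $1\leq a\leq b\leq l$ and $i_a=i_b$. Dual canonicality of quantum minors (Caldero, Lusztig, Kimura) would then place each of them in $\{\simp(w)\}$. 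This is exactly the step where the hypothesis on $\bi$ (ADE type, or inferior to an adaptable word under the weak order) enters: in the adaptable case one may identify the quantum minors with Kirillov--Reshetikhin--like characters computed from graded quiver varieties (Section \ref{sec:quiver_variety}, in particular Theorem \ref{thm:KR_module} and Proposition \ref{prop:KR_module_variant}), which makes the recognition of each intermediate cluster variable as a member of $\{\simp(w)\}$ explicit; for words below an adaptable word under the weak order, one reduces to the adaptable case by a freezing/deletion argument and transports the conclusion using the coefficient-change invariance (Theorem \ref{thm:change_coefficient}) and the variation/correction machinery of Section \ref{sec:correction}.

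Once this admissibility input is secured, Reduced Existence Theorem (Theorem \ref{thm:reduction}) promotes $\can^{t_0} = \{\simp(w)\}$ to the common triangular basis $\can$ of $\qClAlg(\bi)$ (after the prescribed normalization and localization). By construction $\can$ contains every quantum cluster monomial, and $\can = \{\simp(w)\}$ as sets, yielding Conjecture \ref{conj:canonical_basis} in the stated cases. I expect the delicate points to be entirely concentrated in the admissibility check, since the triangularity, pointedness, bar-invariance and positivity of $\{\simp(w)\}$ are either classical or available from the categorifications cited above.
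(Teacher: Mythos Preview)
Your overall architecture matches the paper's: verify that the localized simple basis is the positive triangular basis at $t_0$, then feed the admissibility of the sequences to $t_0[1]$ and $t_0[-1]$ into the Reduced Existence Theorem. But two of your concrete steps do not go through as written.

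First, the identification of $\inj^{t_0}$ with the dual PBW basis is false. The factors of $\inj^{t_0}$ are the minors $X_k(t_0)=\simp(k^{\min},k)$ and $I_k(t_0)=\simp(k[1],k^{\max})$, whereas the PBW generators are $\simp(k,k)$; already for $\bi=(1,1,1)$ the middle PBW generator $\simp(2,2)$ is neither an $X_k$ nor an $I_k$. So ``Kashiwara--Lusztig unitriangularity'' does not literally give you the $(\prec_{t_0},\bm)$-unitriangularity of $\inj^{t_0}$ against $\{\simp(w)\}$. Moreover, Theorem~\ref{thm:reduction} requires the \emph{triangular} basis, not just the weakly triangular one. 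The paper does not attempt such an identification; instead it proves the triangular property $[X_k(t_0)*\can^{t_0}]^{t_0}\ (\prec_{t_0},\bm)$-unitriangular to $\can^{t_0}$ directly (Proposition~\ref{prop:relate_PBW}), by an induction on the word length that uses the standard basis as an intermediary together with Lemma~\ref{lem:preserve_triangular}. The factorization through frozen variables (Proposition~\ref{prop:factorization_canonical}) is also proved inductively in the paper rather than cited as a classical fact.

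Second, and more seriously, the claim that the intermediate cluster variables along \emph{both} sequences are quantum minors $D(a,b)$ is only correct for the forward sequence $\Sigma_\bi$ to $t_0[1]$ (these are indeed Kirillov--Reshetikhin type minors). Along the backward sequence $\sigma_\bi^{-1}\Sigma_\bi^{-1}$ to $t_0[-1]$ the intermediates are in general \emph{not} minors, and showing they lie in $\can^{t_0}$ is exactly the open Conjecture~\ref{conj:desired_module}. The paper settles this only for $\bi=c^{N+1}$ (and then for adaptable $\bi$ by embedding), and the mechanism is not recognition as minors: one computes the truncated $q,t$-characters of the minuscule modules $W^{(i)}_{k-h,h,a}$ via Proposition~\ref{prop:KR_module_variant}, matches them with the explicit Laurent expansions of the new cluster variables (Proposition~\ref{prop:calculate_variable}), and then runs a similarity/induction argument along $(\overleftarrow{\mu}^c)^N$ (Lemmas~\ref{lem:equivalent_sequence}, \ref{lem:enough_variable}, Proposition~\ref{prop:compatible}). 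Your sketch of this step as ``dual canonicality of quantum minors'' would not account for these non-minor simples, and is the essential gap.
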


By the work of Khovanov, Lauda \cite{KhovanovLauda08} and Rouquier
\cite{Rouquier08}, the quantum group $U(\mathfrak{n})$ can be viewed as the
Grothendieck ring of modules of the corresponding quiver Hecke algebra,
such that $v$-shifts of the dual canonical basis corresponds to the classes of
finite dimensional simple modules \cite{VaragnoloVasserot09}\cite{rouquier2012quiver}. It follows that the integral form $A_{\Z[v^\pm]}(\mathfrak{n}(w_\bi))$ is isomorphic to the
graded Grothendieck ring $\extQuotKGp(w_\bi)$ of certain monoidal subcategory
$\monCat(w_\bi)$, and each $S(w)$
represent the class of a simple module. 

The category $\monCat(w_\bi)$ fits into the setting of Section
\ref{sec:monoidal_categorification}. In particular, $\extQuotKGp$ has the
basis $\set{M(w)}$ with expected properties, which corresponds to the
normalization of the dual PBW basis. Here, we use Kashiwara's bilinear form following the convention of the \cite{GeissLeclercSchroeer11}.

\begin{Conj}[{Integral form, \cite[Conjecture 12.7]{GeissLeclercSchroeer11}}]\label{conj:integral_quantum_group}
The isomorphism $\kappa$
restricts to an isomorphism from $\clAlg^\dagger$ to
$A_{\Z[v^{\pm\Hf}]}(\mathfrak{n}(w_\bi))$.
\end{Conj}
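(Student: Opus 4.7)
The plan is to reduce Conjecture \ref{conj:integral_quantum_group} to a single integrality statement about pointed expansions. Theorem \ref{thm:unipotent_subgroup} already provides one inclusion, namely $\kappa^{-1}A_{\Z[v^{\pm\Hf}]}(\mathfrak{n}(w_\bi))\subseteq\qClAlg^\dagger(\bi)$, so the only thing to establish is the reverse inclusion $\kappa(\qClAlg^\dagger(\bi))\subseteq A_{\Z[v^{\pm\Hf}]}(\mathfrak{n}(w_\bi))$. Since $\qClAlg^\dagger(\bi)$ is generated as a $\Z[q^{\pm\Hf}]$-algebra by quantum cluster variables, it suffices to show that every element $Z\in\qClAlg^\dagger(\bi)$ maps under $\kappa$ into the integral form.

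The first step is to match the injective pointed set $\inj^{t_0}$ with the dual PBW basis. By the construction of $t_0$ from the word $\bi$ and the description of initial quantum cluster variables as (shifts of) unipotent quantum minors $D(a^-,b)$ (Theorem \ref{thm:unipotent_subgroup} and Remark \ref{rem:compare_PBW_generator}), the normalized products forming $\inj^{t_0}$ correspond, up to $v^\Hf$-powers, to normalized products of dual root vectors of $A_{\Q(v^\Hf)}(\mfr{n}(w_\bi))$. In particular, the image $\kappa(\inj^{t_0})$ (after restriction to the sub-index set with non-negative exponents on the frozen coordinates) coincides up to $v^\Hf$-shifts with the dual PBW basis, which is a $\Z[v^{\pm\Hf}]$-basis of $A_{\Z[v^{\pm\Hf}]}(\mfr{n}(w_\bi))$. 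This identification is essentially contained in \cite{GeissLeclercSchroeer11}.

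Next, I would exploit unitriangularity. Any $Z\in\qClAlg^\dagger(\bi)$ is, by construction, a $\Z[q^{\pm\Hf}]$-linear combination of products of quantum cluster variables, hence its Laurent expansion in the quantum torus $\cT(t_0)$ has coefficients in $\Z[q^{\pm\Hf}]$ (using Theorem \ref{thm:quantum_cluster_expansion} and the fact that $\cT(t_0)$ is a $\Z[q^{\pm\Hf}]$-algebra). Since $\kappa Z$ admits a finite expansion in the dual PBW basis of $A_{\Q(v^\Hf)}(\mfr{n}(w_\bi))$, the pulled-back expansion of $Z$ in $\inj^{t_0}$ is also finite. I would extract its coefficients recursively from the top, as in Remark \ref{rem:expansion_algorithm}: at the unique maximal leading degree $\tg$ appearing in the expansion, the coefficient of the monomial $X(t_0)^\tg$ in the Laurent expansion of $Z$ equals the coefficient of $\inj^{t_0}(\tg)$ in the expansion, and in particular lies in $\Z[q^{\pm\Hf}]$. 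Subtracting the corresponding scalar multiple of $\inj^{t_0}(\tg)$ (itself an element of $\qClAlg^\dagger(\bi)$) leaves an element with strictly smaller $\prec_{t_0}$-leading degree, and the procedure terminates in finitely many steps. All coefficients are therefore in $\Z[q^{\pm\Hf}]$, so $\kappa Z$ is a $\Z[v^{\pm\Hf}]$-combination of dual PBW basis elements.

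Combining this with the already-known inclusion yields the desired equality $\kappa(\qClAlg^\dagger(\bi))=A_{\Z[v^{\pm\Hf}]}(\mfr{n}(w_\bi))$. The main potential obstacle is the precise matching in step two between $\inj^{t_0}$ (restricted to non-negative frozen exponents) and the dual PBW basis, including the normalization conventions relating the bar involution on $\cT(t_0)$ transported by $\kappa$ to Kashiwara's standard bar involution on $A_{\Q(v)}(\mfr{n}(w_\bi))$. Once the $v^\Hf$-shifts are book-kept correctly, the rest of the argument is purely formal and uses only pointedness plus the Laurent phenomenon with $\Z[q^{\pm\Hf}]$-coefficients, which is why the author can describe it as an ``easy proof''.
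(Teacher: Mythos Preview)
Your overall strategy --- show the missing inclusion $\kappa(\clAlg^\dagger)\subseteq A_{\Z[v^{\pm\Hf}]}(\mfr{n}(w_\bi))$ by expanding an arbitrary $Z\in\clAlg^\dagger$ in a pointed set with integral image, peeling off leading terms recursively --- is exactly the paper's idea (Theorem~\ref{thm:isom_integral}). However, your step two contains a genuine misidentification: the injective pointed set $\inj^{t_0}$ is \emph{not} the dual PBW basis. Elements of $\inj^{t_0}$ are normalized products of the initial cluster variables $X_k(t_0)=\simp(k^{\min},k)$ and injectives $I_k(t_0)=\simp(k[1],k^{\max})$, which are Kirillov--Reshetikhin modules, whereas the dual PBW (standard) basis consists of normalized products of the \emph{fundamental} modules $\simp(\beta_k)=\can(k,k)$. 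These two families of pointed elements share the same leading-degree parametrization but are different bases; already for a word $\bi=(i,i)$ the element $X_2=\simp(\beta_1+\beta_2)$ is not a monomial in $S(\beta_1),S(\beta_2)$.

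This misidentification also undermines your termination argument: you deduce finiteness of the $\inj^{t_0}$-expansion from finiteness of the dual PBW expansion via the claimed equality, which is unavailable. The paper circumvents both issues by working instead with the $\domDegL(t_0)$-pointed set $\{\simp(w)\}=\{\kappa^{-1}S(w)\}$ (the simple basis), whose images are tautologically in the integral form, and by invoking two preparatory lemmas: Lemma~\ref{lem:isom_degree_lattice} (so that every maximal degree of $Z$ is hit by some $\simp(w)$) and Lemma~\ref{lem:initial_bounded} (so that $(\prec_{t_0},\domDegL(t_0))$ is bounded from below, forcing the subtraction procedure to terminate). Your argument would be repaired by replacing $\inj^{t_0}$ with either $\{\simp(w)\}$ or the genuine standard basis $\{\stdMod(w)\}$ and citing Lemma~\ref{lem:initial_bounded} for termination; the rest of your outline then matches the paper's proof.
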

We shall give a proof of Conjecture \ref{conj:integral_quantum_group} in Theorem \ref{thm:isom_integral}.
\begin{Lem}
  Conjecture
\ref{conj:canonical_basis} implies Conjecture \ref{conj:integral_quantum_group}.
\end{Lem}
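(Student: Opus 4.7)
The plan is to combine the one-sided integrality result already recorded in Theorem \ref{thm:unipotent_subgroup} with the hypothesis of Conjecture \ref{conj:canonical_basis} to upgrade the rational isomorphism $\kappa\colon\qClAlg^\dagger_{\Q(q^\Hf)}(\bi)\iso A_{\Q(v^\Hf)}(\mfr{n}(w_\bi))$ to an integral isomorphism between $\qClAlg^\dagger(\bi)$ and $A_{\Z[v^{\pm\Hf}]}(\mfr{n}(w_\bi))$. The two inclusions will come from complementary sources.

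First I will recall that Theorem \ref{thm:unipotent_subgroup} already provides the inclusion $\kappa^{-1}A_{\Z[v^{\pm\Hf}]}(\mfr{n}(w_\bi))\subseteq\qClAlg^\dagger(\bi)$, equivalently $A_{\Z[v^{\pm\Hf}]}(\mfr{n}(w_\bi))\subseteq\kappa(\qClAlg^\dagger(\bi))$. The remaining task is to show the opposite inclusion $\kappa(\qClAlg^\dagger(\bi))\subseteq A_{\Z[v^{\pm\Hf}]}(\mfr{n}(w_\bi))$. By the very definition of $\qClAlg^\dagger(\bi)$ as the $\Z[q^{\pm\Hf}]$-subalgebra generated by the quantum cluster variables $X_i(t)$, it is enough to verify that each $\kappa(X_i(t))$ lies in $A_{\Z[v^{\pm\Hf}]}(\mfr{n}(w_\bi))$ (using that $\kappa(q^\Hf)=v^\Hf$).

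Next I will invoke Conjecture \ref{conj:canonical_basis}: every quantum cluster monomial, and in particular every quantum cluster variable $X_i(t)$, is an element of $\{\simp(w)\}_{w\in\N^l}$. By construction $\{\simp(w)\}=\kappa^{-1}\{S(w)\}$, where $\{S(w)\}$ is the bar-invariant $v^\Hf$-renormalization of the dual canonical basis $B^*(w_\bi)$ of $A_{\Q(v)}(\mfr{n}(w_\bi))$. Since $B^*(w_\bi)\subset A_{\Z[v^{\pm}]}(\mfr{n}(w_\bi))$ and the $v^\Hf$-normalization only multiplies by units in $\Z[v^{\pm\Hf}]$, the set $\{S(w)\}$ is a $\Z[v^{\pm\Hf}]$-basis of $A_{\Z[v^{\pm\Hf}]}(\mfr{n}(w_\bi))$. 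Therefore $\kappa(X_i(t))\in A_{\Z[v^{\pm\Hf}]}(\mfr{n}(w_\bi))$ for every $t$ and $i$, giving the desired inclusion.

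Combining the two inclusions yields $\kappa(\qClAlg^\dagger(\bi))=A_{\Z[v^{\pm\Hf}]}(\mfr{n}(w_\bi))$, so $\kappa$ restricts to the integral isomorphism predicted by Conjecture \ref{conj:integral_quantum_group}. No obstacle is anticipated: the argument is purely a matter of unwinding definitions once Conjecture \ref{conj:canonical_basis} is granted, the only mild point being the bookkeeping with the $v^\Hf$-normalization that passes between $B^*(w_\bi)$ and $\{S(w)\}$.
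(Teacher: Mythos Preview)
Your proposal is correct and follows essentially the same approach as the paper's proof. The only cosmetic difference is in the reverse inclusion $A_{\Z[v^{\pm\Hf}]}(\mfr{n}(w_\bi))\subseteq\kappa(\qClAlg^\dagger(\bi))$: you cite the last line of Theorem \ref{thm:unipotent_subgroup} directly, whereas the paper re-derives it by observing that the integral form is generated over $\Z[v^\pm]$ by the fundamental modules $M(\beta_k)=S(\beta_k)$, each of which is $\kappa$ applied to a quantum cluster variable $\can(k,k)$; these two justifications amount to the same thing.
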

\begin{proof}
  Notice the algebra $\clAlg^\dagger$ is generated by the quantum cluster
  variables over $\Z[q^{\pm\Hf}]$. Therefore,
  $\kappa\clAlg^\dagger$ is generated by elements in $\set{S(w)}$ and, consequently, contained in the integral form
  $A_{\Z[v^{\pm\Hf}]}(\mathfrak{n}(w_\bi))$. But the integral form is
  generated by the fundamental
  modules $M(\beta_k)$ over $\Z[v^\pm]$, which are images of quantum
  cluster variables. Therefore, $\kappa\clAlg^\dagger$ is the integral form.
\end{proof}

\subsubsection*{Embeddings of Grothendieck rings}

Let $\bi-i_l$ denote the word $(i_{l-1},\ldots,i_2,i_1)$ whose
associated root vectors are denoted as $\beta_1$, $\beta_2$,\ldots,
$\beta_{l-1}$. Similarly, 
let $\bi-i_1$ denote the word $(i_{l},\ldots,i_3,i_2)$ whose
associated root vectors are denoted as $\beta_2$, $\beta_3$,\ldots,
$\beta_{l}$, and $\bi-i_1-i_l$ the word $(i_{l-1},\ldots,i_3,i_2)$ with
root vectors denoted by $\beta_2,\beta_3,\ldots,\beta_{l-1}$. Then we
have the natural embedding of
$\extQuotKGp(w_{\bi-i_l})$ into $\extQuotKGp(w_\bi)$ sending $S(\beta_k)$ to
$S(\beta_k)$, $\forall 1\leq k\leq l-1$, and the simple basis into the simple basis. 

Moreover, the Lusztig's braid group symmetry $T_{i_1}$ at vertex $i_1$ maps $D_{s_{i_2}\cdots s_{i_k}\varpi_{i_k},s_{i_2}\cdots s_{i_k}\varpi_{i_k}}$ to $(1-v^2)^{\langle h_{i_1},\xi_k\rangle }D_{s_{i_1}s_{i_2}\cdots s_{i_k}\varpi_{i_k},s_{i_1}s_{i_2}\cdots s_{i_k}\varpi_{i_k}}$, where $\langle h_{i_1},\xi_k\rangle$ linearly depends on the homogeneous grading $\xi_k$ of $D_{s_{i_2}\cdots s_{i_k}\varpi_{i_k},s_{i_2}\cdots s_{i_k}\varpi_{i_k}}$. And it will send the dual canonical basis of $A_{\Q(v^\Hf)}(\mathfrak{n}(w_{\bi-i_1}))$ into that of $A_{\Q(v^\Hf)}(\mathfrak{n}(w_\bi))$ modulo such $v$-functions depending on homogeneous gradings, cf. \cite{Kimura10}. Consequently, after rescaling, we have the embedding from $\extQuotKGp(w_{\bi-i_1})$ into $\extQuotKGp(w_\bi)$
sending $S(\beta_k)$ to $S(\beta_k)$ and the simple basis into the simple basis. Notice that, if we work with Lusztig's bilinear form instead, then the above $v$-function does not appear, cf. \cite[38.2.1]{Lusztig93}.

Therefore, there is an natural
embedding from $\extQuotKGp(w_{\bi-i_1-i_l})$ to
$\extQuotKGp(w_\bi)$. Denote the images of the above embeddings by
$\extQuotKGp(w_\bi)_{\hat{l}}$, $\extQuotKGp(w_\bi)_{\hat{1}}$,
$\extQuotKGp(w_\bi)_{\hat{1},\hat{l}}$ respectively.

\subsection{Monoidal categorification conjecture: adaptable word}
\label{sec:adaptable_word}

When the pair $(C,\bi)$ is of type (ii), we refer the reader to \cite{HernandezLeclerc09} for the original monoidal
categorification conjecture of $\clAlg_\Z(\bi)$ in terms
of finite dimensional representation of quantum affine algebras. In this section, we shall
present a quantized and slightly generalized version of this
conjecture in terms of graded quiver
varieties, where $\bi$ is adaptable and $C$ any generalized symmetric Cartan matrix.


\subsubsection*{Subring via adaptable embedding}

Fix the framed repetition quiver
$\tilde{\Gamma}_{\Omega}$ associated with an acyclic quiver
$(\Gamma,\Omega)$.

Let $\bi$ be any given $\overline{\Omega'}$-adaptable word for an acyclic quiver
$(\Gamma,\overline{\Omega'})$. Fix a multidegree
$\ua=(a_k)_{1\leq k\leq l}\in(2\Z)^l$ such that
$a_{k[d+1]}=a_{k[d]}-2$. Define the corresponding map
$\iota_{\ua}$ which sends the vertices $1\leq k\leq l$ of
$\Gamma_\bi$ to $(i_k,a_k)$. It follows that
\begin{align}
  \iota_\ua(^{\min}i[d])=(i,a_{^{\min}i}-2d),\ \forall i\in I,0\leq d\leq m(i)-1.
\end{align}

We further assume the multidegree $\ua$
to be \emph{$\Omega'$-adaptable} (or adaptable for simplicity) such
that the full subquiver of $\tilde{\Gamma}_{\Omega}$ on the
vertices $(^{\min}i,a_{^{\min}i}-1)$, where $i\in I$, is isomorphic to $(\Gamma,\Omega')$. Because the word $\bi$ is adaptable, an adaptable
embedding multidegree $\ua$ always exists. The associated
map $\iota_\ua$ is called an \emph{adaptable embedding}.

\begin{Eg}\label{eg:embed_adaptable_quiver}

We take the framed repetition quiver $\tilde{\Gamma}_\Omega$
associated with $(\Gamma,\Omega)$ in Figure \ref{fig:quiver_A_5}. Part of this quiver is drawn in Figure \ref{fig:quiver_variety_A_5}.

Take the word $\bi=(2 1 3 2 1  4 3  2 15 4 3 2)$ (read from right
to left). It is $\overline{\Omega}$-adaptable with respect to the orientation
$\overline{\Omega}$. The associated ice quiver $\Gamma_\bi$ is drawn
in Figure \ref{fig:ice_quiver_A_5}. Recall that we have
$i_1=i_6=i_{10}=i_{13}=2$, $1[0]=1$, $1[1]=6$,
$1[2]=10$, $1[3]=13$,
$13^{\min}=10^{\min}=6^{\min}=1^{\min}=1$, $m_6^-=1$, $m_6^+=2$.

For any chosen $a\in 2\Z$, we have an
$\Omega$-adaptable multidegree $\ua$ and the associated adaptable
embedding $\iota_\ua$ such that $a_k=a+6$
for $1\leq k\leq 5$, \cf Figure \ref{fig:quiver_variety_A_5}.

Let us take another adaptable word $\bi'=(2 1 3 5 4)$. It is
$\overline{\Omega}'$-adaptable where the acyclic quiver $(\Gamma,\Omega')$ is
given in Figure \ref{fig:quiver_A_5_new}. A possible adaptable
grading $\iota_\ua$ could have the image
$$\set{(1,a+4),(2,a+2),(3,a+4),(4,a+6),(5,a+6)}$$ in the set of vertices of the
framed repetition quiver $\tilde{\Gamma}_{\Omega}$.
\end{Eg}
\begin{figure}[htb!]
 \centering
\beginpgfgraphicnamed{fig:ice_quiver_A_5}
  \begin{tikzpicture}[scale=0.6]
\node [shape=circle,draw] (w16) at (2,8) {5}; 
\node [shape=circle, draw] (w26) at (4,6) {1};
    \node [shape=circle, draw] (w36) at (2,4) {2};
\node [shape=circle, draw] (w46) at (0,2) {3};
\node [ draw] (w56) at (-2,0) {4};

\node [shape=circle,draw] (w14) at (-2,8) {9}; 
\node [shape=circle,draw] (w24) at (0,6) {6}; 
\node [shape=circle,draw] (w34) at (-2,4) {7}; 
\node [draw] (w44) at (-4,2) {8};

\node [draw] (w12) at (-6,8) {12}; 
\node [shape=circle,draw] (w22) at (-4,6) {10}; 
\node [draw] (w32) at (-6,4) {11};

\node [draw] (w20) at (-8,6) {13};

\draw[-triangle 60](w16)edge (w26);
\draw[-triangle 60](w36)edge (w26);
\draw[-triangle 60](w46)edge (w36);
\draw[-triangle 60](w56)edge (w46);

\draw[-triangle 60](w14)edge (w24);
\draw[-triangle 60](w34)edge (w24);
\draw[-triangle 60](w44)edge (w34);

\draw[-triangle 60](w24)edge (w16);
\draw[-triangle 60](w24)edge (w36);
\draw[-triangle 60](w34)edge (w46);

\draw[-triangle 60](w12)edge (w22);
\draw[-triangle 60](w32)edge (w22);

\draw[-triangle 60](w22)edge (w14);
\draw[-triangle 60](w22)edge (w34);

\draw[-triangle 60](w20)edge (w12);
\draw[-triangle 60](w20)edge (w32);
\draw[-triangle 60](w32)edge (w44);
\draw[-triangle 60](w44)edge (w56);

\draw[-triangle 60](w16)edge (w14);
\draw[-triangle 60](w14)edge (w12);
\draw[-triangle 60](w26)edge (w24);
\draw[-triangle 60](w24)edge (w22);
\draw[-triangle 60](w22)edge (w20);
\draw[-triangle 60](w36)edge (w34);
\draw[-triangle 60](w34)edge (w32);
\draw[-triangle 60](w46)edge (w44);
  \end{tikzpicture}
\endpgfgraphicnamed
\caption{The ice quiver $\Gamma_\bi$ associated with an adaptable word
  $\bi$ and $A_5$
  Cartan matrix.}
\label{fig:ice_quiver_A_5}
\end{figure}

\begin{figure}[htb!]
 \centering
\beginpgfgraphicnamed{fig:quiver_A_5_new}
\begin{tikzpicture}
\node [shape=circle, draw] (v1) at (-1,4) {1};
    \node [shape=circle, draw] (v2) at (-2,3) {2};
    \node [shape=circle, draw] (v3) at (-1,2) {3};
\node [shape=circle, draw] (v4) at (0,1) {4};
\node [shape=circle, draw] (v5) at (-1,0) {5};

    \draw[-triangle 60] (v1) edge (v2); 
\draw[-triangle 60] (v3) edge (v2);
\draw[-triangle 60] (v4) edge (v3);
\draw[-triangle 60] (v4) edge (v5);

\end{tikzpicture}
\endpgfgraphicnamed
\caption{An acyclic quiver $(\Gamma,\Omega')$ of Cartan type $A_5$}
\label{fig:quiver_A_5_new}
\end{figure}

Fix an adaptable embedding $\iota_{\ua}$. Define $\extQuotKGp(\bi,\ua)$ to
be the subring of the extended graded Grothendieck ring $\extQuotKGp$ generated by the simples
$W^{(i_k)}_{1,a_k}$, $i\in I$, $1\leq k\leq l$. It has the simple
basis and standard basis as expected in Section
  \ref{sec:monoidal_categorification}.
   
  \begin{Rem}\label{rem:compare_fundamental_module}
  Recall that $W_{1,a_k}^{(i_k)}$ lies in the dual of the Grothendieck ring of perverse sheaves over graded quiver varieties, cf. Section \ref{sec:definition_quiver_variety} \ref{sec:KR_module}. By \cite{Nakajima01}, this geometric object corresponds to the fundamental module $V_{i_k,q^{a_k}}$ in \cite[Section 3.3]{HernandezLeclerc09}. Notice that their ordered tensor products give the standard modules of the quantum affine algebra, cf. \cite[Section 5.3]{HernandezLeclerc11}, such that the indices $k$ increase from left to right.
  \end{Rem}
  
If we take the adaptable word $\bi=c^{N+1}$, where $c$ is an acyclic Coxeter word and $N\in \N$, or $\bi=w_0$ the longest reduced word in the Weyl group, then $\extQuotKGp(\bi,\ua)$ is the $\Z[t^{\pm\Hf}]$-extended $t$-deformed Grothendieck ring of a level $N$ category $\cC_N$ in \cite{HernandezLeclerc09}, \cf Example \ref{eg:level_N_subcategory}, or the category $\cC_Q$ in \cite[Section 5.11]{HernandezLeclerc11} respectively.

\subsubsection*{Quantum cluster algebra structure}

We denote by $\theta_\ua:\cT(t_0)\ra \YTorus^H_{t^\Hf}$ the embedding of the
Laurent polynomial rings such that
\begin{align*}
\theta_\ua(q^\Hf)&=t^\Hf,\\
\theta_\ua(X_k)&=Y_{i_k,a_k} Y_{i_k,a_{k^-}}\cdots Y_{i_k,a_{k^{\min}}},\ 1\leq k\leq l.
\end{align*}
It follows that we have
\begin{align*}
  \theta_\ua(Y_k)=\theta_a(X^{\tB(t_0)e_k})=A^{-1}_{i_k,a_k-1}=A^{-1}_{i_k,a_{i_k}-2 m_k^--1}.
\end{align*}

The twisted multiplication on $\YTorus^H_{t^\Hf}$ then induces the twisted
multiplication on $\cT(t_0)$ via the morphism $\theta_\ua$ such that 
\begin{align*}
  \Lambda(t_0)(e_k,e_s)=\cN(w^{(i_k)}_{m^-_k+1,a_k},w^{(i_s)}_{m^-_s+1,a_s}).
\end{align*}
By Lemma \ref{lem:compatible_twist},
it is compatible with the matrix $\tB$ such that
\begin{align*}
  \Lambda(t_0)\cdot (-\tB)= \left(\begin{array}{c}
      -2\id_n\\
      0
    \end{array}\right).
\end{align*}
We denote the quantum cluster algebra
obtained via this quantization by $\clAlg(\bi)$.


By comparing $T$-systems with specific exchange relations, we obtain the following result, whose proof
goes the same as that of \cite[Proposition 12.1]{GeissLeclercSchroeer11}.
\begin{Thm}\label{thm:categorification_quiver_variety}
Assume that the word $\bi$ and the multidegree $\ua$ are adaptable. Then there exists
an injective algebra homomorphism $\kappa^{-1}$ from the
Grothendieck ring $\extQuotKGp(\bi,\ua)$ to the quantum
cluster algebra $\qClAlg^\dagger(\bi)$, such that
$\kappa^{-1}(t^\Hf)=q^\Hf$ and, for any $1\leq b\leq s\leq l$ such
that $i_b=i_s$, $\kappa^{-1}W^{(i_b)}_{m(i_b,[b,s]),a_s}$ agrees with the quantum cluster variables $\can(b,s)$.
\end{Thm}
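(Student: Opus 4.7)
The plan is to follow the strategy of \cite[Proposition 12.1]{GeissLeclercSchroeer11}, replacing unipotent quantum minors and their quantum determinantal identities by Kirillov-Reshetikhin modules and the $T$-systems of Theorem \ref{thm:T_system}. The construction of $\kappa^{-1}$ does not use $\qtChar^H$ as a bijection onto its image; rather, it matches abstract ring presentations. The subring $\extQuotKGp(\bi,\ua)\subset \extQuotKGp$ is generated by the fundamental modules $W^{(i_k)}_{1,a_k}$, $k\in[1,l]$, subject to the multiplication rules of the Grothendieck ring, while $\qClAlg^\dagger(\bi)$ is generated by its quantum cluster variables $\can(b,s)$ subject to exchange relations; the content of the theorem is that these two presentations agree under the assignment $W^{(i_b)}_{m(i_b,[b,s]),a_s}\leftrightarrow \can(b,s)$.

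First I would identify the initial seed. Set $\kappa^{-1}(W^{(i_k)}_{m_k^-+1,a_k})=\can(k^{\min},k)=X_k$, up to a $t^\Hf$-normalization, for each $k\in[1,l]$. The KR modules $W^{(i_k)}_{m_k^-+1,a_k}$ quasi-commute in $\extQuotKGp$, matching the quasi-commutation of the initial cluster $\{X_k\}$ in $\cT(t_0)$; the precise $t^\Hf$-powers agree because $\Lambda(t_0)$ was defined via $\theta_\ua$ exactly so that Lemma \ref{lem:compatible_twist} applies. Concretely, $\theta_\ua(X_k)=Y_{i_k,a_k}\cdots Y_{i_k,a_{k^{\min}}}$ coincides with the leading monomial $Y^{w^{(i_k)}_{m_k^-+1,a_k}}$ of $\qtChar^H(W^{(i_k)}_{m_k^-+1,a_k})$, which is the shadow of the desired identification at the ambient quantum-torus level.

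The crucial step is the propagation along the mutation sequence $\Sigma_\bi$ of \eqref{eq:GLS_seq}. Starting from $t_0$ and applying the subsequences $\overleftarrow{\mu}_k$ in turn, each mutation at vertex $k^{\min}[d]$ produces a new cluster variable $\can(k^+,k[d]^+)$ through its exchange relation. One compares this with the $T$-system \eqref{eq:T_system} for $W^{(i_k)}_{m(i_k,[k^{\min}[d+1],k[d]^+]),a_{k[d]^+}}$: both sides have the shape ``product of two modules equals product of two modules plus a cross-term $\prod_{j\neq i_k}(W^{(j)}_{\bullet,\bullet})^{-C_{i_k j}}$'', and term-by-term matching extends $\kappa^{-1}$ by sending the new KR module to $\can(k^+,k[d]^+)$. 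Since the fundamental modules $W^{(i_k)}_{1,a_k}$ themselves appear as $\can(k,k)$ at the end of this recursion, every generator of $\extQuotKGp(\bi,\ua)$ lies in the image, so $\kappa^{-1}$ extends to a well-defined algebra homomorphism landing in $\qClAlg^\dagger(\bi)$.

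Injectivity is then the easy part: via $\theta_\ua$, the basis of simples of $\extQuotKGp(\bi,\ua)$ maps to elements of $\cT(t_0)$ whose Laurent expansions have pairwise distinct leading degrees, so they are linearly independent. The main obstacle is the combinatorial bookkeeping in the propagation step: the $T$-system \eqref{eq:T_system} and the exchange relation at $k^{\min}[d]$ must agree not only as polynomial identities in KR-modules versus cluster variables, but also with matching $t^\Hf$-exponents. This forces one to evaluate $\cN$ on specific dimension vectors using \eqref{eq:calculate_Euler_form_new}, compare with rows of $\Lambda(t_0)$ read off from $\Gamma_\bi$, and verify sign compatibilities controlled by the $\epsilon_{ij}$. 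The adaptability of $(\bi,\ua)$ is precisely the hypothesis that makes all these signs line up, so the proof is a careful combinatorial verification rather than a conceptually new step.
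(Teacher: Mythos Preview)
Your proposal is correct and follows essentially the same approach as the paper: the paper's own proof consists of the single sentence that one compares $T$-systems with specific exchange relations and argues exactly as in \cite[Proposition 12.1]{GeissLeclercSchroeer11}. You have faithfully expanded this outline---matching initial KR modules with initial cluster variables via $\theta_\ua$, then propagating along $\Sigma_\bi$ by identifying each $T$-system \eqref{eq:T_system} with the corresponding exchange relation---and your remarks on the role of adaptability and the $t^\Hf$-bookkeeping via Lemma \ref{lem:compatible_twist} are on point.
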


\begin{Rem}
When
  the word $\bi$ is reduced adaptable, $\cR(\bi,\ua)=\cR_{t^\Hf}^H(\bi,\ua)$ is also isomorphic to the quantum unipotent subgroup
$A_{\Q(v^\Hf)}(\mathfrak{n}(w_\bi))$, \cf the proofs of \cite[Theorem 12.3]{GeissLeclercSchroeer11}\cite[Theorem
  6.1(a)]{HernandezLeclerc11} based on $T$-systems. In this case, $S(w)$ represents a
simple module of the quiver Hecke algebra.

If the Cartan matrix $C$ is of Dynkin type $ADE$, $\quotKGp(\bi,\ua)$
and $\extQuotKGp(\bi,\ua)=\cR^H_{t^\Hf}(\bi,\ua)$ are $t$-deformations of the Grothendieck ring of the
monoidal subcategory $\monCat(\bi,\ua)$ of finite dimensional representations of the 
quantum affine algebras $U_\varepsilon(\hat{\mathfrak{g}})$, where
each $S(w)$ represents a simple module as constructed by \cite{Nakajima01}.
\end{Rem}

The corresponding monoidal categorification conjecture takes
the following form, where the case $\bi=c^{N+1}$ is a conjecture about level $N$ categories in \cite{HernandezLeclerc11}.
\begin{Conj}\label{conj:simple_module}
  Assume that $\bi$ is adaptable. Then the quantum cluster monomials of $\qClAlg^\dagger(\bi)$
  are contained in $\kappa^{-1}(\set{S(w)}_{w\in\N^l})$.
\end{Conj}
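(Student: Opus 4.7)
The plan is to prove that $\kappa^{-1}(\{S(w)\}_{w \in \N^l})$ is the common triangular basis of $\qClAlg(\bi)$. Once this is established, the conjecture is immediate since, by Lemma \ref{lem:basis_property}(iii), the common triangular basis automatically contains all quantum cluster monomials. To establish existence of the common triangular basis, I would invoke the Reduced Existence Theorem (Theorem \ref{thm:reduction}), which reduces the whole problem to a finite verification carried out entirely at the initial seed $t_0$.

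First, I would show that $\kappa^{-1}(\{S(w)\})$ is the triangular basis with respect to $t_0$. Under the adaptable embedding $\iota_\ua$, the initial cluster variables $X_k(t_0)$ correspond to Kirillov-Reshetikhin modules $W^{(i_k)}_{m_k^-+1, a_k}$ via Theorem \ref{thm:categorification_quiver_variety}, while the injectives $I_k(t_0) = X_{\sigma k}(t_0[1])$ correspond to the classes indexed by the highest offsets in their level; consequently the standard basis $\{M(w)\}$ of $\extQuotKGp(\bi,\ua)$ pulls back (after reindexing and normalization) to the injective pointed set $\inj^{t_0}$ of Definition \ref{def:injective_pointed_element}. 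The $(\prec_{t_0},\bm)$-unitriangular transition from standard to simple basis, of Kazhdan-Lusztig type and arising geometrically from the $IC$-sheaf structure on graded quiver varieties (or, in type (i), algebraically from the dual PBW--dual canonical transition), then shows that $\kappa^{-1}(\{S(w)\})$ is a weakly triangular basis in the sense of Definition \ref{def:weakly_triangular_basis}. Positivity is inherited from \eqref{eq:str_const}, bar-invariance and factorization through the frozens are built in. Promoting weakly triangular to triangular amounts to controlling $[X_k(t_0) * S(w)]^{t_0}$ by a $(\prec_{t_0},\bm)$-unitriangular decomposition into $\{S(w)\}$, a Leclerc-type property.

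Second, I would apply Theorem \ref{thm:reduction}. The finite criterion demands that every quantum cluster variable appearing along $\Sigma_\bi$ (from $t_0$ to $t_0[1]$) and along $\sigma_\bi^{-1} \Sigma_\bi^{-1}$ (from $t_0$ to $t_0[-1]$) lie in $\kappa^{-1}(\{S(w)\})$. By the construction of $\Sigma_\bi$ in \eqref{eq:GLS_seq}, these variables are exactly the $\can(b,s)$ for $1 \leq b \leq s \leq l$ with $i_b = i_s$, together with the intermediate variables produced inside each factor $\overleftarrow{\mu}_k$. Theorem \ref{thm:categorification_quiver_variety} identifies $\can(b,s)$ with the Kirillov-Reshetikhin module $W^{(i_b)}_{m(i_b,[b,s]), a_s}$, and Proposition \ref{prop:KR_module_variant} together with \eqref{eq:minuscule_dimension} identifies the intermediate variables with the minuscule modules $W^{(i)}_{k-h, h, a}$ whose characters have been explicitly computed. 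The $T$-systems \eqref{eq:T_system} and \eqref{eq:exact_sequence_variant} match the cluster exchange relations exactly, permitting an inductive identification of all required cluster variables with simple modules. For the special case $\bi = c^{N+1}$, this is the argument sketched in Section \ref{sec:application}; for a general adaptable $\bi$, one would either reduce via the inclusions $\extQuotKGp(w_\bi)_{\hat 1}, \extQuotKGp(w_\bi)_{\hat l} \hookrightarrow \extQuotKGp(w_\bi)$, or transfer structure between similar seeds using Theorem \ref{thm:change_coefficient} and Lemma \ref{lem:variation_triangular_basis}.

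The main obstacle will be the upgrade from weakly triangular to triangular in step one, namely the analog of Leclerc's conjecture describing $[X_k(t_0) * S(w)]^{t_0}$: the weakly triangular property is relatively routine given the Kazhdan-Lusztig framework, but the full triangularity with respect to multiplication by initial cluster variables requires substantially more input about the interaction between fundamental classes and simples. A secondary but serious difficulty lies in the finite verification of step two for words that are adaptable but not of the Coxeter-power form $c^{N+1}$: one needs explicit identifications for every intermediate cluster variable along $\Sigma_\bi$ with a simple module, and some of these may fall outside the minuscule class covered by Proposition \ref{prop:KR_module_variant}, requiring further extensions of the character computations of Section \ref{sec:quiver_variety}.
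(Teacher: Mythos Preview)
Your overall strategy matches the paper's: establish that the localized simple basis is the triangular basis at $t_0$, then invoke the Reduced Existence Theorem after a finite verification. Your step 2 is essentially right; for $\bi=c^{N+1}$ the backward variables are identified with the minuscule modules $W^{(i)}_{k-h,h,a}$ via Proposition~\ref{prop:KR_module_variant}, and a general adaptable $\bi$ is reduced to this case by realising its quiver as a full subquiver of the one attached to some $c^{N+1}$ and using the embedding $\extQuotKGp(\bi',\ua')\hookrightarrow\extQuotKGp(c^{N+1},\ua)$.

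There is, however, a genuine gap in your step 1. The standard basis $\{M(w)\}$ does \emph{not} pull back to the injective pointed set $\inj^{t_0}$. The standard basis element $M(w)$ is a normalized product of the fundamental modules $\simp(\beta_k)=\can(k,k)$, with leading degree $\theta^{-1}\beta_k=e_k-e_{k^-}$, whereas $\inj^{t_0}$ consists of normalized products of the initial variables $X_k(t_0)=\can(k^{\min},k)$ and the injectives $I_k(t_0)=\can(k[1],k^{\max})$, which are longer Kirillov--Reshetikhin modules. These are two different $\degL(t_0)$-pointed sets. Hence the known $(\prec_{t_0},\bm)$-unitriangularity of $\{M(w)\}$ to $\{\simp(w)\}$ does \emph{not} by itself give the weakly triangular property, which requires $\inj^{t_0}$ to be unitriangular to $\{\simp(w)\}$.

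The paper sidesteps this entirely: rather than proving weakly triangular first, it proves the full triangularity condition $[X_k(t_0)*\simp(w)]^{t_0}\in\simp(w')+\sum_{w''\prec_{t_0}w'}\bm\cdot\simp(w'')$ directly (Proposition~\ref{prop:relate_PBW}), by induction on the length $l$ of $\bi$. The inductive step decomposes $\simp(w)$ into the standard basis, strips off the last factor $M(w^1_l\beta_l)$, applies the induction hypothesis to $[X_k*M(w^1_{\hat l})]$ inside the subalgebra $\qClAlg^\dagger(\bi-i_l)$ (using $k\neq l$ and the embedding $\extQuotKGp(\bi)_{\hat l}\hookrightarrow\extQuotKGp(\bi)$), and then reassembles using Lemma~\ref{lem:preserve_triangular}. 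So the standard basis does enter, but as an inductive tool for the multiplication-by-$X_k$ property, not as a substitute for $\inj^{t_0}$. Once triangularity is established this way, weakly triangular follows for free from Lemma~\ref{lem:triangular_basis_property}(iii).
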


\begin{Thm}[{\cite[Theorem
5.1]{hernandez2013cluster}, \cite[Theorem
3.1]{GeissLeclercSchroeer10}}]
The map $\kappa^{-1}$ induces an isomorphism from the extended Grothendieck
  ring $\cR_{t=1}(\bi,\ua)\otimes \Q$ to $\clAlg(\bi)_{\Z}\otimes\Q$.
\end{Thm}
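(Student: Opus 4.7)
The theorem asserts that, after specializing $t^\Hf = 1$ and tensoring with $\Q$, the injection $\kappa^{-1}$ of Theorem \ref{thm:categorification_quiver_variety} becomes an isomorphism of commutative $\Q$-algebras. Since $\kappa^{-1}$ is already known to be an injective algebra homomorphism sending $t^\Hf \mapsto q^\Hf$, the specialization remains injective (we only impose the extra relation $t^\Hf = 1 = q^\Hf$ on both sides). So the substance of the proof is surjectivity, and the plan is to exhibit an explicit set of generators on the Grothendieck-ring side whose images already generate the commutative cluster algebra.

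First I would record that, by construction, $\cR_{t=1}(\bi,\ua)\otimes\Q$ is generated as a $\Q$-algebra by the classes of the fundamental simples $[W^{(i_k)}_{1,a_k}]$, $1 \leq k \leq l$, and that under $\kappa^{-1}$ these are sent to specific cluster variables $\can(b,s)$ of $\clAlg(\bi)$ by Theorem \ref{thm:categorification_quiver_variety}. More generally, every Kirillov-Reshetikhin class $W^{(i_b)}_{m(i_b,[b,s]),a_s}$ is sent to the cluster variable $\can(b,s)$; in particular, all initial cluster variables $X_k(t_0)=\can(k^{\min},k)$ and all injectives $I_k(t_0)=X_{\sigma_\bi(k)}(\Sigma_\bi t_0)$ lie in the image. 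Passing from $\qClAlg^\dagger(\bi)$ to $\clAlg(\bi)$ requires inverting the frozen variables $X_{n+1},\ldots,X_m$; their preimages in $\cR_{t=1}(\bi,\ua)$ are the frozen fundamental classes (non-zero divisors in a commutative domain), so inverting them makes sense, and the induced map still goes into $\clAlg(\bi)_\Z\otimes\Q$.

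The central step is to argue that the subalgebra generated by this explicit family already equals the whole cluster algebra $\clAlg(\bi)_\Z\otimes\Q$. I would proceed via a dimension / transcendence-degree comparison in the spirit of Hernandez-Leclerc and Geiss-Leclerc-Schr\"oer: both $\cR_{t=1}(\bi,\ua)\otimes\Q$ and $\clAlg(\bi)_\Z\otimes\Q$ are finitely generated commutative $\Q$-algebras of pure Krull dimension $l$ (on the Grothendieck side this uses the fact that the fundamental classes are algebraically independent after specialization, a consequence of the triangularity of the $q,t$-characters in Theorem \ref{thm:KR_module} and Proposition \ref{prop:KR_module_variant}; on the cluster-algebra side this follows from the Laurent phenomenon in the initial seed). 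An injective $\Q$-algebra homomorphism between two finitely generated commutative $\Q$-domains of the same transcendence degree, whose image contains a transcendence basis consisting of elements of the codomain, must be surjective.

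The main obstacle will be making the generator-matching rigorous. Concretely, one must verify that a generic cluster variable of $\clAlg(\bi)$, obtained by an arbitrary mutation sequence starting from $t_0$, lies in the subalgebra generated by $\{\kappa^{-1}[W^{(i_k)}_{1,a_k}]\}_{1\le k\le l}$ after localizing at the frozen variables. In type (i) this is delivered by Theorem \ref{thm:unipotent_subgroup}: one identifies $\cR_{t=1}(\bi,\ua)\otimes\Q$ with the specialization at $v=1$ of $A_{\Q(v^\Hf)}(\mfr{n}(w_\bi))$, which in turn is the coordinate ring $\C[N(w_\bi)]$ whose cluster algebra structure was established in \cite{GeissLeclercSchroeer10}. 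In type (ii), and more generally for adaptable $\bi$, the necessary relations come from the $T$-systems of Theorem \ref{thm:T_system} and Proposition \ref{prop:KR_module_variant}(iii), which encode exactly the exchange relations of the cluster algebra at the level of simple classes, forcing the subalgebra generated by the fundamental classes to be closed under mutation and hence to equal the full cluster algebra.
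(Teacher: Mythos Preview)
The paper does not give its own proof of this statement; it is stated as a citation of \cite[Theorem 5.1]{hernandez2013cluster} and \cite[Theorem 3.1]{GeissLeclercSchroeer10}. The paper does, however, prove a strictly stronger result in Theorem~\ref{thm:isom_integral}: $\kappa^{-1}$ is already an isomorphism at the integral quantum level, without specializing $t$ or tensoring with $\Q$. That argument is quite different from yours: it works entirely in the initial torus $\cT(t_0)$, shows that the simple basis $\{\simp(w)\}$ is $\domDegL(t_0)$-pointed with $\theta^{-1}$ a bijection of cones (Lemma~\ref{lem:isom_degree_lattice}) and that the dominance order on $\domDegL(t_0)$ is bounded from below (Lemma~\ref{lem:initial_bounded}), and then runs the finite expansion algorithm of Remark~\ref{rem:expansion_algorithm} to write any element of $\clAlg^\dagger(\bi)$ as a finite $\Z[q^{\pm\Hf}]$-combination of the $\simp(w)$.

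Your third paragraph contains a genuine gap: the assertion that an injective $\Q$-algebra homomorphism between finitely generated domains of equal transcendence degree, whose image contains a transcendence basis of the target, must be surjective is false (take $\Q[x^2]\hookrightarrow\Q[x]$). Equal transcendence degree only forces the extension of fraction fields to be algebraic; it says nothing about surjectivity at the ring level. Your fourth paragraph is where the actual argument lives---identifying the $T$-systems of Theorem~\ref{thm:T_system} with cluster exchange relations so that the image of $\kappa^{-1}$ is closed under mutation and hence contains every cluster variable---and this is indeed the spirit of \cite{hernandez2013cluster}. You should drop the transcendence-degree paragraph and promote the mutation-closure argument to the body of the proof. (One further small point: that specializing at $t^\Hf=1$ preserves injectivity is not automatic; it holds here because both sides are free over $\Z[t^{\pm\Hf}]$, and you should say so.)
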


\subsubsection*{Embeddings of Grothendieck rings}

Let $\ua_{\hat{1}}$ to be the multidegree obtained from $\ua$ by deleting
the component $a_1$. Similarly, construct $\ua_{\hat{l}}$ and
$\ua_{\hat{1},\hat{l}}$ by deleting $a_l$ and $a_1,a_l$ from $\ua$
respectively. By construction, we have natural embeddings of
$\extQuotKGp(\bi-i_1,\ua_{\hat{1}})$, $\extQuotKGp(\bi-i_l,\ua_{\hat{l}})$,
$\extQuotKGp(\bi-i_1-i_l,\ua_{\hat{1},\hat{l}})$ into $\extQuotKGp(\bi,\ua)$, which send $S(\beta_k)$ to $S(\beta_k)$ and the simple
bases into the simple basis. As before, we denote the images by $\extQuotKGp(\bi,\ua)_{\hat{1}}$, $\extQuotKGp(\bi,\ua)_{\hat{l}}$,
$\extQuotKGp(\bi,\ua)_{\hat{1},\hat{l}}$ respectively.

\begin{Eg}[Level $N$ category]\label{eg:level_N_subcategory}
Assume that the quiver $Q$ is a Dynkin quiver, we have the corresponding quantum affine algebra $U_q(\hat{\mathfrak{g}})$ with generic $q\in \C^*$. 

For given adaptable word $\bi$ and adaptable multidegree $\ua$, let $\cC(\bi,\ua)$ be the monoidal category generated by the $U_q(\hat{\mathfrak{g}})$'s fundamental modules $S(\beta_k)=V_{i_k,q^{a_k}}$, $1\leq k\leq l$. Notice that these fundamental modules admit geometric realization given by our $W_{1,a_k}^{(i_k)}$. Then $\extQuotKGp(\bi,\ua)$ is the $\Z[t^{\pm\Hf}]$-extended graded Grothendieck ring of $\cC(\bi,\ua)$.

Further assume $Q$ is bipartite, $\bi=c^{N+1}$ where $c$ is an acyclic word, $N\in\N$. Then for any $1\leq k\leq r$, the left shifted vertex $k[d]=k+2d$, $\forall 0\leq d\leq N$ (cf. Example \ref{eg:embed_adaptable_quiver} for an example of $k[d]$). Choose the multidegree $\ua$ such that for any $1\leq k\leq r$, $0\leq d\leq N$, we have $a_{k[N-d]}=2d$ if $i_k$ is a sink point and in $Q$ and $a_{k[N-d]}=2d+1$ if $i_k$ is a source point. The level $N$ subcategory $\cC_N(\ua)$ in \cite[Proposition 3.2]{HernandezLeclerc09} is\footnote{
The module $V_{i,q^{2d+\xi_i}}$ in \cite[Proposition 3.2]{HernandezLeclerc09} associated with $Q\op$, $j\in I$, is noted as our module $S(\beta_k)=V_{k,q^{a_k}}$, where $k$ satisfies $i_k=i$, $k=k^{min}[N-d]$. We have $\xi_{i_k}=a_{k^{max}}$ and the category $\cC_N$ depends on the choice of $\ua$.} our category $\cC_(c^{N+1},\ua)$.

Now we have the natural inclusion $\cC_{N-1}(\ua)\subset \cC_N(\ua)$ which appeared in \cite{HernandezLeclerc09}. The former Grothendieck ring is the subring of the latter by successively deleting the fundamental modules $S(\beta_1),\ldots,S(\beta_r)$ corresponding to the first $r$ letters:
$$
\extQuotKGp(c^N,\ua_{\hat{1},\hat{2},\cdots,\hat{r}})=\extQuotKGp(c^{N+1},\ua)_{\hat{1},\hat{2},\cdots,\hat{r}}.
$$
We also have the natural inclusion of a different level $N-1$ subcategory $\cC(c^N,\ua+2)\subset\cC(c^{N+1},\ua)$. The former Grothendieck ring is the subring of the latter by successively deleting the fundamental modules $S(\beta_{r[N]}),\ldots,S(\beta_{1[N]})$ corresponding to the last $r$ letters: 
$$
\extQuotKGp(c^N,\ua_{\hat{r[N]},\cdots,\hat{2[N]},\hat{1[N]}})=\extQuotKGp(c^{N+1},\ua)_{\hat{r[N]},\cdots,\hat{2[N]},\hat{1[N]}}.
$$
\end{Eg}

\section{Monoidal categorification conjectures}
\label{sec:application}
We refer the reader to Section \ref{sec:preliminaries}, \ref{sec:preliminaries_monoidal} for the
necessary notations and definitions used in this section.

Fix a pair $(C,\bi)$ of type (i) or (ii). We want to know if the quantum cluster
algebra $\clAlg=\clAlg(\bi)$ has the common triangular basis or not. As before, we impose the mild assumption that every $i\in I$ appears
at least once in $\bi$.

Recall that we have the corresponding extended Grothendieck ring
$\extQuotKGp(\bi)$ which can be taken as an extension of a quantum
unipotent subgroup $A_{\Z[v^\pm]}(\mfr{n}(w_\bi))$ if $\bi$ is reduced or a subring $\extQuotKGp(\bi,\ua)$ for
some adaptable multidegree $\ua$ if $\bi$ is adaptable.

\subsection{Initial triangular basis}
\label{sec:verify_initial_condition}

We work in the canonical initial seed $t_0$ associated with $(C,\bi)$ in this section.

Define the injective linear map $\theta^{-1}$ from
$\N^l=\oplus_{1\leq k\leq l} \N \beta_k$ to $\domDegL(t_0)$ such that, for any $i\in I$, we have
\begin{align*}
\theta^{-1}\beta_{i[d]}&=e_{i[d]}-e_{i[d-1]},\ 1\leq d\leq m(i)-1,\\
\theta^{-1}\beta_{^{\min} i}&=e_{^{\min} i}.\\
\end{align*}

Notice that, $\forall 1\leq k\leq l$, the vector $\theta^{-1}\beta_k$ equals $\deg^{t_0}\can(k,k)$. Recall that there exists an injective homomorphism $\kappa^{-1}$ from $\cR(\bi)$ to the quantum cluster algebra $\qClAlg^\dagger(\bi)$, such that $\kappa^{-1}S(\beta_k)=\can(k,k)$. Consequently, $\kappa^{-1}$ sends the standard basis element $M(w)$ to a pointed element $\kappa^{-1}M(w)\in\cT(t_0)$ with leading degree $\theta^{-1}w$, $\forall w=\sum_{1\leq k\leq l,w_k\in\N} w_k \beta_k$. Moreover, because $M(w)$ has a $(\prec_w,\bm)$-unitriangular decomposition into $\{S(w')\}$, which all become positive Laurent polynomials in seed $t_0$ under the map $\kappa^{-1}$, the leading term of $\kappa^{-1}M(w)$ must be the contribution from $\kappa^{-1}S(w)$ since it has coefficient $1$. So $\kappa^{-1}S(w)\in\cT(t_0)$ is a pointed element with degree\footnote{As an alternative approach to leading degrees, in type (ii), we can explicitly compare the $q$-characters of elements in $\cR(\bi)$ and Laurent polynomials in $\cT$.} $\deg^{t_0}S(w)=\deg^{t_0}M(w)=\theta^{-1}w$.
\begin{Lem}\label{lem:isom_degree_lattice}
  $\theta^{-1}$ is an isomorphism of cones.
\end{Lem}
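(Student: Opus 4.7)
The plan is to verify three things in turn: $\theta^{-1}$ extends to a $\Z$-linear injection, its image is contained in $\domDegL(t_0)$, and conversely that every generator of $\domDegL(t_0)$ lies in $\theta^{-1}(\N^l)$.

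First, I would extend $\theta^{-1}$ by linearity to a map $\Z^l\to \Z^m$. Fix for each $i\in I$ the increasing sequence $^{\min}i,\,^{\min}i[1],\ldots,\,^{\min}i[m(i)-1]$ of vertices $k$ with $i_k=i$. The defining formulae $\theta^{-1}\beta_{^{\min}i}=e_{^{\min}i}$ and $\theta^{-1}\beta_{i[d]}=e_{i[d]}-e_{i[d-1]}$ express each $\theta^{-1}\beta_k$ as a unitriangular combination of $\{e_s\}_{s\leq k,\,i_s=i_k}$. Hence the matrix of $\theta^{-1}$ is block unitriangular with $\pm 1$ on the diagonal, and $\theta^{-1}$ is injective. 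Moreover, the image $\theta^{-1}(\N^l)$ is explicitly described as the cone of those $v\in\Z^l\subset\Z^m$ whose ``suffix sums'' $\sum_{d'\geq d}v_{^{\min}i[d']}$ along each residue class are non-negative.

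Second, I would show $\theta^{-1}(\N^l)\subseteq \domDegL(t_0)$. By the remark immediately preceding the Lemma, $\theta^{-1}\beta_k=\deg^{t_0}\can(k,k)$, and each $\can(k,k)$ is itself a quantum cluster variable (either an initial one in $t_0$ when $k=k^{\min}$, or a quantum cluster variable appearing along the mutation sequence $\Sigma_\bi$ leading to $t_0[1]$, cf. Section \ref{sec:quiver_word} and Remark \ref{rem:compare_PBW_generator}). Therefore every generator $\theta^{-1}\beta_k$ of the cone $\theta^{-1}(\N^l)$ is a leading degree of a cluster variable, and hence belongs to $\domDegL(t_0)$, which is closed under non-negative integer combinations.

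Third, and this is the nontrivial direction, I would prove $\domDegL(t_0)\subseteq \theta^{-1}(\N^l)$. By definition it suffices to verify that the leading degree $\deg^{t_0}X_i(t')$ of any quantum cluster variable lies in $\theta^{-1}(\N^l)$. Use the injective ring homomorphism $\kappa^{-1}$ (Theorems \ref{thm:unipotent_subgroup}, \ref{thm:categorification_quiver_variety}): each simple basis element $S(w)$, $w\in\N^l$, pulls back to a pointed element $\kappa^{-1}S(w)\in \cT(t_0)$ of leading degree $\theta^{-1}(w)$, and the simples form a $\Z[q^{\pm\Hf}]$-basis of $\kappa^{-1}\extQuotKGp(\bi)\supseteq \kappa^{-1}\Ind(\extQuotKGp(\bi))$, which contains $\clAlg^\dagger$ after passing to the fraction field (for type (i)) or after localization at the frozen variables (for type (ii)). Write $X_i(t')$ as a finite $\Z[q^{\pm\Hf}]$-combination $\sum_j c_j \kappa^{-1}S(w^{(j)})$. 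The Laurent expansion of $X_i(t')$ in $\cT(t_0)$ is pointed (Theorem \ref{thm:quantum_cluster_expansion}), so its unique leading degree must be $\theta^{-1}(w^{(j_0)})$ for some index $j_0$ corresponding to a $\prec_{t_0}$-maximal $w^{(j_0)}$ among the $w^{(j)}$'s appearing with non-trivial coefficient, which lies in $\theta^{-1}(\N^l)$.

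The main obstacle is Step 3: one must ensure that the leading degree of the Laurent expansion of a cluster variable is indeed realized by a single $\theta^{-1}(w^{(j_0)})$ from the simple basis expansion rather than arising from cancellation. This is where the positivity/unitriangularity of the simple basis expansion of a pointed element, together with the fact that $\kappa^{-1}S(w)$ is $\theta^{-1}(w)$-pointed, is essential; the key input is that distinct $\theta^{-1}(w^{(j)})$ are distinct degrees by Step 1, so the $\prec_{t_0}$-maximum among them must survive as the (unique) leading monomial. In the type (ii) case one additionally normalizes to allow negative powers of the frozen variables, using the factorization property from Lemma \ref{lem:basis_property}(i) and the compatibility of $\kappa^{-1}$ with the grading on frozen directions.
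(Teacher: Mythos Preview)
Your overall strategy is right and close to the paper's, but Step~3 has a circularity problem in type~(ii). You want to write an arbitrary quantum cluster variable $X_i(t')$ as a finite combination of the $\kappa^{-1}S(w)$, and for this you need $\clAlg^\dagger(\bi)$ to lie in the image of $\kappa^{-1}$. In type~(i) this is available over $\Q(q^{\Hf})$ from Theorem~\ref{thm:unipotent_subgroup} (though then your coefficients $c_j$ are in $\Q(q^{\Hf})$, not $\Z[q^{\pm\Hf}]$ as you wrote; this does not affect the leading-degree argument). In type~(ii), however, Theorem~\ref{thm:categorification_quiver_variety} only gives that $\kappa^{-1}$ is \emph{injective}; surjectivity at the quantum level is precisely Theorem~\ref{thm:isom_integral}, whose proof uses the present lemma. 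Your appeal to ``localization at the frozen variables'' does not supply the missing surjectivity.

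The paper avoids this by specializing to $q^\Hf=1$. At the commutative level, $\kappa^{-1}$ is known to be an isomorphism $\cR_{t=1}(\bi)\otimes\Q\iso\clAlg_\Z(\bi)\otimes\Q$ in both types (by \cite{GeissLeclercSchroeer10} for type~(i) and by \cite{hernandez2013cluster} for type~(ii)), so every commutative cluster variable expands in the simple basis and hence has its leading degree in $\theta^{-1}(\N^l)$. Since the extended $g$-vector $\deg^{t_0}X_i(t')$ is insensitive to $q$ (Theorem~\ref{thm:quantum_cluster_expansion}), this already gives $\domDegL(t_0)\subseteq\theta^{-1}(\N^l)$ at the quantum level. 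Your Steps~1 and~2 are fine and essentially coincide with what the paper builds into the definition of $\theta^{-1}$ as a map into $\domDegL(t_0)$; the fix for Step~3 is simply to pass to $q=1$ first.
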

\begin{proof}
$\kappa^{-1}$ is an isomorphism when $q=1$. Any element $Z$ in $\clAlg^\dagger_{\Z}(\bi)\otimes \Q$ has the expansion into the simple basis elements $\kappa^{-1}S(w)$ pointed at $\theta^{-1}w$. Consequently, the $\prec_{t_0}$-maximal degrees of
  $Z$ takes the form $\theta^{-1}w$. It follows by
  definition that the dominant degree lattice $\domDegL(t_0)$ equals
  $\theta^{-1}\N^l$.
\end{proof}

The lexicographical
order $\prec_{w}$ on $\oplus \N\beta_k$ extends to a lexicographical order $\prec_w$ on $\oplus\Z\beta_k$ and consequently on $\oplus\Z\beta_k\overset{\theta^{-1}}{\simeq}\degL(t)$. Because $\deg^{t_0} Y_k(t_0)=-e_{k[1]}+\sum_{i<k[1]} b_{ik}e_i \prec_w 0$, $\forall 1\leq k\leq n$, we deduce that $\tg^1\prec_{t_0} \tg^2$ in $\degL(t_0)$
implies $\tg^1\prec_{w}\tg^2$. The following claim follows.
\begin{Lem}\label{lem:initial_bounded}
  $(\prec_{t_0},\domDegL(t_0))$ is bounded from below.
\end{Lem}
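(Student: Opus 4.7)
The plan is to produce a $\Z$-linear functional $\phi \colon \degL(t_0) \to \Z$ satisfying the two conditions: (a) $\phi(\deg^{t_0} Y_k(t_0)) < 0$ for every exchangeable vertex $k \in [1, n]$, and (b) $\phi(\theta^{-1}\beta_k) > 0$ for every $k \in [1, l]$. Granting such a $\phi$, the lemma follows at once. Fix $\tg \in \domDegL(t_0)$ and put $N = \phi(\tg)$. Any $\tg' \in \domDegL(t_0)$ with $\tg' \prec_{t_0} \tg$ is of the form $\tg' = \tg + \tB(t_0) v$ for some $0 \neq v \in \N^n$, so by (a) we have $\phi(\tg') = N + \sum_k v_k \phi(\deg^{t_0} Y_k(t_0)) < N$. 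On the other hand, Lemma \ref{lem:isom_degree_lattice} lets us write $\tg' = \theta^{-1} w'$ for a unique $w' = \sum_k w'_k \beta_k \in \N^l$, and (b) then gives $0 \leq w'_k \phi(\theta^{-1}\beta_k) \leq \phi(\tg') < N$ for every $k$, which forces $w'$ into a finite subset of $\N^l$. Hence only finitely many such $\tg'$ exist.

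To construct $\phi$, I would simply take $\phi(\tg) = \sum_{k=1}^{l} M^k \tg_k$ for a sufficiently large integer $M$ (anything exceeding $\max_{i,j} |b_{ij}(t_0)| + 1$ will do). Condition (b) is immediate from the explicit recipe for $\theta^{-1}$ given just before Lemma \ref{lem:isom_degree_lattice}: for $k = {}^{\min}i$ one gets $\phi(\theta^{-1}\beta_k) = M^k$, and for $k = {}^{\min}i[d]$ with $d \geq 1$ one gets $M^{{}^{\min}i[d]} - M^{{}^{\min}i[d-1]}$, which is positive because ${}^{\min}i[d] > {}^{\min}i[d-1]$. Condition (a) uses the explicit form $\deg^{t_0} Y_k(t_0) = -e_{k[1]} + \sum_{i < k[1]} b_{ik} e_i$ recalled just before the lemma: this gives $\phi(\deg^{t_0} Y_k(t_0)) = -M^{k[1]} + \sum_{i < k[1]} b_{ik} M^i$, and the leading term $-M^{k[1]}$ strictly dominates the geometric-series tail for $M$ as above, forcing negativity.

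The main subtlety worth flagging is that the weaker observation $\prec_{t_0} \Rightarrow \prec_{w}$ recorded right before the lemma is not by itself enough, because the lexicographic order on $\Z^l$ is not bounded from below. The real content of the proof lies in the simultaneous positivity of $\phi$ on the extreme rays $\theta^{-1}\beta_k$ of the dominant cone $\domDegL(t_0) = \theta^{-1}\N^l$ (guaranteed by Lemma \ref{lem:isom_degree_lattice}) together with the strict monotonicity of $\phi$ along $\prec_{t_0}$. Both demands push $\phi$ to be an increasing function of the vertex index, so they are compatible, and an exponential weight $M^k$ with $M$ large does the job; no further obstacle is expected.
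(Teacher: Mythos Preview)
Your proof is correct. The paper's own argument is extremely terse: it records the triangular shape $\deg^{t_0}Y_k(t_0)=-e_{k[1]}+\sum_{i<k[1]}b_{ik}e_i$, deduces $\prec_{t_0}\Rightarrow\prec_w$, and then simply asserts ``the following claim follows.'' You are right that the bare implication $\prec_{t_0}\Rightarrow\prec_w$ is not enough on its own, since $\{w'\in\N^l:w'\prec_w w\}$ is infinite; what actually makes the paper's argument work is the stronger triangularity (the $\beta_{k[1]}$-coefficient of $\theta(\tB e_k)$ is $-1$ and all higher ones vanish), which lets one bound the $v_k$ inductively from the top index downward, using $\theta(\tg')\in\N^l$ at each step. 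Your linear functional $\phi(\tg)=\sum_k M^k\tg_k$ packages exactly this triangularity into a single grading: the exponential weights are the standard device for turning a ``leading term at the top index'' structure into a strict linear inequality. So the two arguments exploit the same structural fact; yours is more explicit and self-contained, while the paper leaves the finiteness step to the reader.
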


With the help of Lemma \ref{lem:isom_degree_lattice}
\ref{lem:initial_bounded}, we obtain the following result, which verifies Conjecture \ref{conj:integral_quantum_group} for
type (i) quantum cluster algebras. It was already known for
type (ii) quantum cluster algebras by \cite[Theorem
5.1]{hernandez2013cluster}.
\begin{Thm}\label{thm:isom_integral}
The injective homomorphism $\kappa^{-1}$ from $\extQuotKGp(\bi)$ to
  $\clAlg^\dagger(\bi)$ is an isomorphism. In particular, $\{\simp(w)\}$
  is a basis of $\clAlg^\dagger(\bi)$.
\end{Thm}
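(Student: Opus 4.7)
The plan is to upgrade the already established injectivity of $\kappa^{-1}$ to surjectivity by iteratively peeling off leading terms, using that $\simp(w)=\kappa^{-1}S(w)$ is a pointed element of $\cT(t_0)$ with leading degree $\theta^{-1}w\in\domDegL(t_0)$. The $\Z[q^{\pm\Hf}]$-linear independence of $\{\simp(w)\}$ is immediate from distinctness of these leading degrees (in any hypothetical finite relation, the $\prec_w$-maximal surviving degree carries coefficient exactly the corresponding $c_w$), so the theorem will reduce to showing that every $Z\in\clAlg^\dagger(\bi)$ admits a \emph{finite} $\Z[q^{\pm\Hf}]$-expansion in the $\simp(w)$'s.

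To produce this expansion I will run a greedy top-reduction. Starting from $Z_0:=Z$, the quantum Laurent phenomenon gives that $Z_0\in\cT(t_0)$ has finite support, and any $\prec_{t_0}$-maximal element of this support lies in $\domDegL(t_0)$. I will pick $\tg_0$ to be the $\prec_w$-maximal element of $\supp(Z_0)$; since $\prec_w$ refines $\prec_{t_0}$, $\tg_0$ is automatically $\prec_{t_0}$-maximal, so by Lemma \ref{lem:isom_degree_lattice} it equals $\theta^{-1}w_0$ for a unique $w_0\in\N^l$. Letting $c_0\in\Z[q^{\pm\Hf}]$ be the coefficient of $X^{\tg_0}$ in $Z_0$, set $Z_1:=Z_0-c_0\simp(w_0)\in\clAlg^\dagger(\bi)$; then $\supp(Z_1)$ is still finite and lies strictly $\prec_w$ below $\tg_0$. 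Iterating produces sequences $Z_i$, $\tg_i=\theta^{-1}w_i$, and $c_i$ with $\tg_0\succ_w\tg_1\succ_w\cdots$.

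The main obstacle is termination of this iteration; everything else is bookkeeping. Two observations will close the argument. First, a short induction based on $\supp(\simp(w_j))\subseteq\{\tg:\tg\preceq_{t_0}\tg_j\}$ shows that $\supp(Z_i)$ remains inside the downward $\prec_{t_0}$-closure of the initial $\prec_{t_0}$-maxima $\tg_0^{(1)},\ldots,\tg_0^{(r)}$ of $\supp(Z_0)$, so every $\tg_i$ lies in the finite set
\[
U\;:=\;\bigcup_{j=1}^{r}\bigl\{\tg\in\domDegL(t_0)\,:\,\tg\preceq_{t_0}\tg_0^{(j)}\bigr\},
\]
whose finiteness is guaranteed by Lemma \ref{lem:initial_bounded}. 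Second, no degree is processed twice: if $\tg_i=\tg_j$ with $i<j$, taking the smallest $j^*\in(i,j]$ with $\tg_i\in\supp(Z_{j^*})$ forces $\tg_i$ to appear in $\supp(\simp(w_{j^*-1}))\setminus\{\tg_{j^*-1}\}$, hence $\tg_i\prec_{t_0}\tg_{j^*-1}$ and so $\tg_i\prec_w\tg_{j^*-1}$; but $j^*-1>i$ yields $\tg_{j^*-1}\prec_w\tg_i$, a contradiction. Together these bound the iteration length by $|U|$ and deliver the finite decomposition $Z=\sum_i c_i\simp(w_i)=\kappa^{-1}\!\bigl(\sum_i c_i\,S(w_i)\bigr)$, proving surjectivity and the basis statement simultaneously.
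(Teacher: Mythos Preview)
Your argument is correct and follows essentially the same greedy top-reduction as the paper's proof: both start from an arbitrary $Z\in\clAlg^\dagger(\bi)$, repeatedly strip off the leading term using the pointed elements $\simp(w)$, and appeal to Lemmas \ref{lem:isom_degree_lattice} and \ref{lem:initial_bounded} for termination. The only cosmetic difference is that the paper subtracts all $\prec_{t_0}$-maximal terms simultaneously while you peel off the single $\prec_w$-maximum; since $\prec_w$ refines $\prec_{t_0}$, this is equivalent. Note that your ``second observation'' (no degree is processed twice) is redundant: you have already established the strict chain $\tg_0\succ_w\tg_1\succ_w\cdots$, which forces all $\tg_i$ to be distinct, so finiteness of $U$ alone gives termination.
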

\begin{proof}
Work in the initial quantum torus $\cT(t_0)$. Observe that any element of $\clAlg^\dagger(\bi)$ has maximal degrees
in $\domDegL(t_0)$, because it is a polynomial of cluster variables which have maximal degrees. For any given element $Z$ of
$\clAlg^\dagger(\bi)$, we repeat the expansion
algorithm in Remark \ref{rem:expansion_algorithm} to deduce its decomposition in $\domDegL(t_0)$-pointed set $\set{\simp(w)}$:
\begin{enumerate}[(a)]
\item Let $\{\tg_i\}$ denote the set of maximal degrees of $Z$ with coefficients $z_i\in \Z[q^{\pm\Hf}]$. By Lemma \ref{lem:isom_degree_lattice}, we can find $\simp(w_i)=\kappa^{-1}S(w_i)$ pointed at degree $\tg_i$, where the parameter $w_i=\theta \tg_i$.
\item Notice that $Z-\sum_i z_i \simp(w_i)$ has its maximal degrees inferior than those of $Z$. If it is nonzero, use it to replace $Z$ and go to the previous step.
\end{enumerate}
The above process must end after finite steps by Lemma \ref{lem:initial_bounded}. Therefore, $Z$ is a $\Z[t^{\pm\Hf}]$-linear combination of $\simp(w_i)$. The claim follows.
\end{proof}

For any $1\leq a\leq b\leq l$ with $i_a=i_b$, use $S(a,b)$ to denote the simple
module $S(\beta_a+\beta_{a[1]}+\ldots+\beta_{b})$. Then we have the quantum cluster
variables $X_k(t_0)=\simp(\theta
e_k)=\simp(k^{\min},k)$ and, for any $k\neq k^{\max}$, $I_k(t_0)=\simp(k[1],k^{\max})$.

\begin{Rem}
  When the word $\bi$ is adaptable, we can construct the
  graded Grothendieck ring $\extQuotKGp(\bi)=\extQuotKGp(\bi,\ua)$ as in Section
  \ref{sec:quiver_variety} for some choice of adaptable multidegree
  $\ua=(a_k)\in\Z^l$, such that the root vectors $\beta_k$ corresponds
  to the unit vectors $e_{i_k,a_k}$. In this situation, the map
  $\theta$ identify the $X$-variables $X_k$ and $Y$-variables $Y_k=\prod_{1\leq s\leq
    l}X_{s}^{b_{sk}(t_0)}$ with the monomials
  $Y_{i_k,a_k}Y_{i_k,a_k+2}\cdots Y_{i_k,a_{k^{\min}}}$ and
  $A_{i_k,a_k-1}^{-1}$ respectively in the convention of Section
  \ref{sec:quiver_variety}, cf. Section \ref{sec:adaptable_word}.
\end{Rem}

For any $w$, the basis element
$\simp (w)=\kappa^{-1}S(w)$ has the leading degree $\tg=\theta^{-1}w$ in $\cT(t_0)$. Denote the basis $\set{\simp (w)}$ by $\can^{\dagger,t_0}$ and $\simp (w)$ by $\can^{\dagger,t_0}(\tg)$ correspondingly.

Recall that we have another basis formed by the elements
\begin{align}\label{eq:pbw}
  \stdMod(w)=[\simp(\beta_1)^{w_1}*\simp(\beta_2)^{w_2}*\cdots*\simp(\beta_l)^{w_l}]^{t_0}.
\end{align}
It is $(\prec_w,\bm)$-unitriangular to the basis $\can^{\dagger, t_0}$. By Lemma \ref{lem:triangular}(iii), it is $(\prec_{t_0},\bm)$-unitriangular
to $\can^{\dagger, t_0}$ as well. From now on, by default, we shall
only consider the dominance orders $\prec_t$ associated with a seed $t$ (or denoted by
$\prec$ when there is no confusion). Notice that $\N^l=\oplus_k \N
\beta_k$ inherits the order $\prec_{t_0}$ via the isomorphism $\theta$.

\begin{Prop}[Factorization property]\label{prop:factorization_canonical}
  The bar-invariant simple basis $\set{S (w)}$ of $\extQuotKGp$ factors through the
  simples $S(\theta e_s)$, for
  any $1\leq s\leq l$ such that $s=s^{\max}$ (longest
  Kirillov-Reshetikhin modules):
  \begin{align*}
    [S(w)*S(\theta e_s)]=[S(\theta e_s)*S(w)]=S(w+\theta e_s),\
    \forall w\in\N^l,
  \end{align*}
  where $[Z]$ denote the bar-invariant element of the form $t^\alpha Z$, $\alpha\in\frac{\Z}{2}$.
\end{Prop}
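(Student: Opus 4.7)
The plan is to combine three ingredients: (a) identify $S(\theta e_s)$ with a frozen cluster variable, (b) establish quasi-commutativity of $S(\theta e_s)$ with every simple $S(w)$ up to a power of $t^{1/2}$, and (c) transfer the obvious factorization of standard basis elements to the simple basis via a triangularity argument.

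First I would note that since $s = s^{\max}$, the variable $X_s(t_0)$ is frozen, and by the identification stated just before the proposition we have $\kappa^{-1} S(\theta e_s) = X_s(t_0)$. The claim is therefore equivalent to the identity $[\simp(w) * X_s]^{t_0} = [X_s * \simp(w)]^{t_0} = \simp(w + \theta e_s)$ in $\cT(t_0)$. Second, I would establish the quasi-commutation $X_s * \simp(w) = t^{\gamma_w} \simp(w) * X_s$ for some $\gamma_w \in \tfrac{1}{2}\Z$. In type (i) this is the classical fact that a unipotent quantum minor attached to the $w_\bi$-invariant weight $w_\bi \varpi_{i_s}$ $q$-commutes with every dual canonical basis element of $A_v(\mathfrak{n}(w_\bi))$; see Geiss-Leclerc-Schr\"oer. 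In type (ii) the analogue follows from the explicit bilinear form $\cN$ of Section \ref{sec:quiver_variety}: since $s = s^{\max}$, the fundamental monomials appearing in $\qtChar S(\theta e_s)$ involve only the $Y_{i_s, b}$ with $b \leq a_s$ sitting at the right edge of the framed repetition quiver, and Lemma \ref{lem:compatible_twist} together with the right-negativity results of Theorem \ref{thm:KR_module}(i) and Proposition \ref{prop:KR_module_variant}(i) force $\cN(\theta e_s, \deg S(w))$ to depend bilinearly (hence scalarly) on $w$. Once quasi-commutation holds, the two normalized products in the statement coincide automatically.

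Third, I would compute the product explicitly. Writing $\theta e_s = \beta_{s^{\min}} + \beta_{s^{\min}[1]} + \cdots + \beta_s$ and using quasi-commutation to reorder the PBW factors, the normalized twisted product $[M(w) * S(\theta e_s)]$ of two standard basis elements equals $M(w + \theta e_s)$ up to absorbing the factors $\simp(\beta_{s^{\min}}), \ldots, \simp(\beta_s)$ into the corresponding factors of $M(w + \theta e_s)$. Substituting the $(\prec_w, \bm)$-unitriangular decomposition of $M(w)$ into the simple basis $\{S(w')\}$ and applying Triangularity Preservation (Lemma \ref{lem:preserve_triangular}) to the pointed element $S(\theta e_s)$, I obtain a $(\prec_w, \bm)$-unitriangular decomposition of $[S(\theta e_s) * S(w)]$ whose leading term is $S(w + \theta e_s)$. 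Because both sides of the decomposition are bar-invariant, Lemma \ref{lem:triangular}(ii) forces all lower terms to vanish.

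The main obstacle is the quasi-commutation in Step two, where one has to control the exact $t^{1/2}$-power rather than just establish commutativity modulo $t$. In type (i) this requires careful bookkeeping of Kashiwara's bilinear form on the $w_\bi$-invariant side of $A_v(\mathfrak{n}(w_\bi))$; in type (ii) it reduces to a direct Euler form computation using the fact that the support of $\qtChar S(\theta e_s)$ lies in a "column" $\{(i_s, b) : b \leq a_s\}$ of $\sW$ where only one factor of each dimension vector $w$ can contribute nontrivially to $\cN$, so the pairing is diagonal and gives a well-defined $t$-shift.
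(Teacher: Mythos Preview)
Your Steps 1 and 2 are correct, and in fact Step 2 is simpler than you suggest: since $s$ is frozen, the compatibility $\Lambda(e_s,\tB e_k)=0$ for all exchangeable $k$ shows immediately that the monomial $X_s=\kappa^{-1}S(\theta e_s)$ $q$-commutes with any pointed element of $\cT(t_0)$, so no appeal to right-negativity or the form $\cN$ is needed. Once $[X_s*S(w)]$ is bar-invariant, the problem does reduce to showing it is $(\prec_{t_0},\bm)$-unitriangular to $\{S(w')\}$ with leading term $S(w+\theta e_s)$.

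The gap is in Step 3. You treat $S(\theta e_s)$ as if it were the standard basis element $M(\theta e_s)=[\simp(\beta_{s^{\min}})*\cdots*\simp(\beta_s)]$, so that its PBW factors could be ``absorbed'' into those of $M(w)$. But these two objects are different: $S(\theta e_s)=X_s$ is a single Laurent monomial, while $M(\theta e_s)$ is a genuine Laurent polynomial (already in the $A_1$ example $\bi=(1,1)$, the $T$-system gives $M(\theta e_2)=S(\theta e_2)+t^{-1}\cdot(\text{something nonzero})$). Hence the identity $[M(w)*S(\theta e_s)]=M(w+\theta e_s)$ you claim is false, and there is no direct way to see that $[X_s*M(w')]$ is $(\prec,\bm)$-unitriangular to the standard basis. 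Attempting to supply this via Proposition~\ref{prop:relate_PBW} is circular, since that proposition is proved \emph{after} and \emph{using} the factorization property.

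The paper circumvents this by an induction on the length $l$ of $\bi$, using the embeddings $\extQuotKGp_{\hat 1},\extQuotKGp_{\hat l},\extQuotKGp_{\hat 1,\hat l}\hookrightarrow\extQuotKGp$. One strips off the extremal PBW factor $M(w^1_l\beta_l)$ (or $M(w^1_1\beta_1)$) via Lemma~\ref{lem:pbw_induction}, applies the induction hypothesis in the shorter subring to handle $[S(\theta e_s)*(\text{remainder})]$, and reassembles. The case $s=l$ with $i_l=i_1$ is the delicate one: there the frozen variable $X_l$ cannot be pushed into either subring directly, and a quantum $T$-system relation \eqref{eq:T_system_substitute} is needed to trade $S(\theta e_l)$ for products of simples already controlled by the induction hypothesis, after which Lemma~\ref{lem:simple_factor} finishes the argument. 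This inductive structure is precisely what your Step 3 is missing.
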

Notice that the longest Kirillov-Reshetikhin modules correspond to frozen variables as in \cite{HernandezLeclerc09}.

In type (i), this property was proved in
  \cite{Kimura10}. With the help of the standard basis and $T$-systems, we will give a proof by induction in the end
  of this subsection .

  \begin{Cor}
    Localized at the frozen variables $S(\theta e_s)$, where $s^{\max}=s$, the
    simple basis $\set{S(w)}$ generates a basis of $\clAlg(\bi)$:
\begin{align}\label{eq:initial_weakly_triangular_basis}
\can^{t_0}=\{S(w)\cdot \prod_{1\leq s\leq l:s=s^{\max}} S(\theta e_s)^{f_s}|w=\sum w_i \beta_i,\ f_s=0\mathrm{\ if\ }w_s>0\}.
\end{align}     
     Its
    structure constants are contained in $\N[q^{\pm\Hf}]$.
  \end{Cor}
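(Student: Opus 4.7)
The plan is to derive both the basis property and the positivity of structure constants from Theorem \ref{thm:isom_integral} (the simple basis of $\clAlg^\dagger(\bi)$) together with Proposition \ref{prop:factorization_canonical} (the factorization of the simple basis through the longest Kirillov-Reshetikhin modules).

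First I would observe that the frozen variables $X_s(t_0)$ for $s = s^{\max}$ coincide with the simples $S(\theta e_s)$ up to powers of $q^{\pm\Hf}$, so passing from $\clAlg^\dagger(\bi)$ to $\clAlg(\bi)$ amounts to inverting these simples. The factorization identity $[S(w) * S(\theta e_s)^{k}] = S(w + k \theta e_s)$, valid for $w_s + k \geq 0$ by iteration of Proposition \ref{prop:factorization_canonical}, extends to arbitrary $k \in \Z$ in the localization. It shows that every product $S(w) \cdot \prod_s S(\theta e_s)^{f_s}$ admits a unique canonical form satisfying the constraint $f_s = 0$ whenever $w_s > 0$, by successively absorbing frozen factors with positive exponent into $S(w)$ and canceling frozen factors with negative exponent against matching components of $w$. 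Combined with the fact that the resulting elements have pairwise distinct leading degrees in $\cT(t_0)$, this will establish $\can^{t_0}$ as a $\Z[q^{\pm\Hf}]$-basis of $\clAlg(\bi)$.

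For positivity, I would write the product of two basis elements $b_i = S(w^{(i)}) F_i$ in the form $b_1 \cdot b_2 = q^{\gamma} \bigl(S(w^{(1)}) * S(w^{(2)})\bigr) * (F_1 * F_2)$, where the commutation past $F_1$ produces only a $q^\Hf$-power because each frozen simple $q$-commutes with every $S(w)$ (a direct consequence of the symmetric factorization identity $[S(w) * S(\theta e_s)] = [S(\theta e_s) * S(w)]$). The positivity $S(w^{(1)}) * S(w^{(2)}) = \sum_{w^{(3)}} a^{w^{(3)}}_{w^{(1)}, w^{(2)}} S(w^{(3)})$ with $a^{w^{(3)}}_{w^{(1)}, w^{(2)}} \in \N[q^{\pm\Hf}]$ is built into the simple basis by \eqref{eq:str_const}. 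Finally, each $S(w^{(3)}) * F_1 * F_2$ gets rewritten into $\can^{t_0}$-form by repeatedly applying Proposition \ref{prop:factorization_canonical} to absorb or separate frozen factors according to the signs of the exponents; each such step costs only a $q^{\pm\Hf}$-shift and yields exactly one basis element of $\can^{t_0}$. Hence the structure constants of $\can^{t_0}$ are $q^{\pm\Hf}$-shifts of the simple basis constants, and they remain in $\N[q^{\pm\Hf}]$.

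The argument is essentially bookkeeping once the two inputs are in place; the only point requiring care is the tracking of $q^\Hf$-shifts through the normalization and the rewriting procedure, but the factorization property guarantees that every step produces a single basis element with a $q^{\pm\Hf}$-coefficient and never introduces signs.
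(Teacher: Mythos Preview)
Your proof is correct and matches the paper's intended approach; the paper states this corollary without proof, treating it as an immediate consequence of Theorem \ref{thm:isom_integral} and Proposition \ref{prop:factorization_canonical}, and your argument spells out precisely that derivation.

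One small caution: your reduction to canonical form says you ``cancel frozen factors with negative exponent against matching components of $w$,'' checking only $w_s > 0$. But cancelling one factor $S(\theta e_s)^{-1}$ against $S(w)$ requires $w_k \geq 1$ for \emph{every} $k$ with $i_k = i_s$ (since $\theta e_s = \sum_{k:\, i_k = i_s} \beta_k$), not merely $w_s \geq 1$. The paper's formula \eqref{eq:initial_weakly_triangular_basis} has the same informal phrasing. The fix is routine: the correct constraint for uniqueness is that either $f_s = 0$ or $\min_{k:\, i_k = i_s} w_k = 0$, and with this the leading-degree map to $\degL(t_0)$ is indeed a bijection. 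Your positivity argument via $q$-commutation of frozen simples with arbitrary $S(w)$ (deduced from the two-sided factorization in Proposition \ref{prop:factorization_canonical}) is exactly right.
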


We use $t_0\langle l^-\rangle$ to denote the
seed from $t_0$ by freezing vertex $l^-$. Because the only exchangeable vertex
connected to $l$ is $l^-$, the frozen vertex $l$ does not connect to
any exchangeable vertex in the corresponding ice quiver $\tQ(t_0\langle
l^-\rangle)$. Consequently, the quantum cluster algebra $\qClAlg^\dagger(t_0\langle
l^-\rangle)$ is generated by $\qClAlg^\dagger(\bi-i_l)$ and the frozen
variable $X_l$. It follows that $\qClAlg^\dagger(t_0\langle l^-\rangle)$ has the following basis
\begin{align}
\can^\dagger(t_0\langle
l^-\rangle)=\set{X_l^d\can(\eta)|\can(\eta)\in \can^\dagger(t_0(\bi-i_l)), d\in\N}.
\end{align}
As a consequence, we obtain the following Lemma.
\begin{Lem}\label{lem:embedding}
The restriction of
$\can^{\dagger,t_0}$ on the subalgebra $\qClAlg^\dagger(t_0\langle
l^-\rangle)$ of $\clAlg(\bi)$ gives its basis $\can^{\dagger, t_0\langle
  l^-\rangle}$.
\end{Lem}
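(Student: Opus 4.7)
The plan is to chain together three ingredients that are already in place earlier in the paper: (a) the identification $\kappa^{-1}\colon\extQuotKGp(\bi)\iso\qClAlg^\dagger(\bi)$ from Theorem \ref{thm:isom_integral} (applied both to $\bi$ and to the shorter word $\bi-i_l$), (b) the embedding $\extQuotKGp(w_{\bi-i_l})\hookrightarrow \extQuotKGp(w_\bi)$ from Section \ref{sec:category_type_i} that sends simples to simples, and (c) the factorization property of the simple basis with respect to longest Kirillov--Reshetikhin modules (Proposition \ref{prop:factorization_canonical}). The explicit description of $\can^\dagger(t_0\langle l^-\rangle)$ as $\{X_l^d\can(\eta)\}$ given just before the lemma then follows by simply checking that the two sets coincide as subsets of $\qClAlg^\dagger(\bi)$.

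First, I would apply Theorem \ref{thm:isom_integral} to the word $\bi-i_l$ to get that $\can^\dagger(t_0(\bi-i_l))=\{\simp(w')\mid w'\in\N^{l-1}\}$ is a basis of $\qClAlg^\dagger(\bi-i_l)$. The embedding $\extQuotKGp(w_{\bi-i_l})\hookrightarrow \extQuotKGp(w_\bi)_{\hat l}\subset \extQuotKGp(w_\bi)$ sends the simple basis to a subset of the simple basis; transporting through $\kappa^{-1}$ on both sides gives a ring embedding $\qClAlg^\dagger(\bi-i_l)\hookrightarrow \qClAlg^\dagger(\bi)$ under which $\simp(w')$ (viewed in the smaller algebra) goes to $\simp(w')$ (viewed in the larger, with $w'\in\oplus_{k<l}\N\beta_k\subset\N^l$).

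Next, since $l=l^{\max}$, the initial cluster variable $X_l=\simp(\theta e_l)$ is a longest Kirillov--Reshetikhin module in the sense of Proposition \ref{prop:factorization_canonical}. Iterating the factorization identity gives, for any $w'\in\N^{l-1}$ and $d\in\N$,
\begin{align*}
[X_l^d * \simp(w')]^{t_0} = \simp(w'+d\,\theta e_l),
\end{align*}
which is again an element of $\can^{\dagger,t_0}$. In particular the normalized products $X_l^d\cdot\can(\eta)$ appearing in the defining set of $\can^\dagger(t_0\langle l^-\rangle)$ are exactly of this form and therefore lie in $\can^{\dagger,t_0}$.

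Finally, I would verify that these are all the elements of $\can^{\dagger,t_0}$ that lie in $\qClAlg^\dagger(t_0\langle l^-\rangle)$. Since $\qClAlg^\dagger(t_0\langle l^-\rangle)$ is generated by $\qClAlg^\dagger(\bi-i_l)$ together with $X_l$, every element of it is a $\Z[q^{\pm\Hf}]$-linear combination of $[X_l^d*\simp(w')]^{t_0}=\simp(w'+d\,\theta e_l)$ with $w'\in\N^{l-1}$, $d\in\N$; since $\can^{\dagger,t_0}$ is a basis of $\qClAlg^\dagger(\bi)$ (Theorem \ref{thm:isom_integral}), linear independence forces any element of $\can^{\dagger,t_0}\cap\qClAlg^\dagger(t_0\langle l^-\rangle)$ to be one of these, and the identification $X_l^d\cdot\can(\eta)\leftrightarrow\simp(w'+d\,\theta e_l)$ gives the claimed equality of bases. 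The only subtle point is checking that the embedding of the two Grothendieck rings is indeed compatible with $\kappa^{-1}$ after the rescaling that appears in Section \ref{sec:category_type_i}, but this is a bookkeeping matter since both $\kappa^{-1}$ and the embedding send standard basis elements (normalized products of fundamentals) to normalized products of initial quantum cluster variables of the same indices.
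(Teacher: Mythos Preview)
Your proof is correct and follows essentially the same reasoning the paper intends; the paper gives no separate proof, only the sentence ``As a consequence, we obtain the following Lemma,'' referring to the preceding paragraph where $\can^\dagger(t_0\langle l^-\rangle)$ is defined and to the factorization property (Proposition~\ref{prop:factorization_canonical}) and the embedding of Grothendieck rings. One small remark: the embedding of $\extQuotKGp(\bi-i_l)$ into $\extQuotKGp(\bi)$ sending simples to simples is discussed in Section~\ref{sec:category_type_i} for type~(i) and separately at the end of Section~\ref{sec:adaptable_word} for type~(ii), so your citation should cover both cases since Section~8 treats them uniformly.
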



\begin{Prop}(triangular product)\label{prop:relate_PBW}
The $\degL(t_0)$-pointed set $[X_k(t_0)*\can^{t_0}]^{t_0}$ is $(\prec_{t_0},\bm)$-unitriangular to $\can^{t_0}$, for any $1\leq k\leq l$.
\end{Prop}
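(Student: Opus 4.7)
My plan is to first reduce the statement to a claim about the simple basis $\{\simp(w)\}_{w\in\N^l}$, then verify the leading term, and finally establish $\bm$-triangularity of the remaining coefficients by induction on the length of $\bi$, reducing the boundary cases to known properties of the simple basis via the standard basis.

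First I would apply Proposition \ref{prop:factorization_canonical} to the definition \eqref{eq:initial_weakly_triangular_basis}: every basis element of $\can^{t_0}$ factors as $\simp(w)\cdot \prod_{s=s^{\max}} \simp(\theta e_s)^{f_s}$, and the longest Kirillov-Reshetikhin frozen simples $\simp(\theta e_s)$ quasi-commute with $X_k(t_0)=\simp(\theta e_k)$ up to $q^{\Hf}$-powers. Since normalization $[\cdot]^{t_0}$ absorbs such powers, the claim reduces to showing that $[\simp(\theta e_k)*\simp(w)]^{t_0}$ has a $(\prec_{t_0},\bm)$-unitriangular decomposition into $\{\simp(w')\}_{w'\in\N^l}$.

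Next I would identify the leading term. Transferring the product to the extended Grothendieck ring $\extQuotKGp(\bi)$ via $\kappa$, the product $S(\theta e_k)*S(w)$ is an $\N[q^{\pm\Hf}]$-linear combination of simples $S(w')$ by \eqref{eq:str_const}. The leading degree in $\prec_{t_0}$ equals $\theta^{-1}(w+\theta e_k)=e_k+\theta^{-1}w$, attained uniquely by $S(w+\theta e_k)$, and after normalization this simple appears with coefficient $1$. The set of remaining $w'$ involves strictly $\prec_{t_0}$-smaller leading degrees by Lemma \ref{lem:isom_degree_lattice} and the refinement of $\prec_w$ by $\prec_{t_0}$ observed in the proof of Lemma \ref{lem:initial_bounded}.

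The central task is proving that the remaining coefficients lie in $\bm=q^{-\Hf}\Z[q^{-\Hf}]$. I would argue by induction on $l=\mathrm{length}(\bi)$, using the embeddings $\extQuotKGp(\bi-i_l),\extQuotKGp(\bi-i_1)\hookrightarrow \extQuotKGp(\bi)$ from Section \ref{sec:category_type_i} (respectively \ref{sec:adaptable_word}) and the sub-cluster-algebra structure given by Lemma \ref{lem:embedding}. When $k\neq l$ and $\simp(w)$ lies in the image of one of these embedded subrings, the claim follows from the inductive hypothesis because multiplication preserves the images and the restrictions of the simple basis and dominance order are compatible. The remaining cases correspond to the interaction of $X_k(t_0)$ with the extremal generators $\simp(\beta_l)$ or $\simp(\beta_1)$, which I would control by writing $\simp(w)=\stdMod(w)+(\prec_w\text{-lower})$, unfolding $\stdMod(w)$ into the normalized product \eqref{eq:pbw}, and repeatedly applying the inductive version of the proposition together with the $(\prec_{t_0},\bm)$-unitriangular transition between $\{\stdMod(w)\}$ and $\{\simp(w)\}$ (Lemma \ref{lem:inverse_transition}).

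The main obstacle is the boundary case of this induction, namely verifying $\bm$-triangularity of multiplication by $X_k(t_0)$ directly at the level of the simple basis for the initial step. In type (i) this is the triangular multiplication property of quantum unipotent minors on the dual canonical basis, established by Kimura and Geiss-Leclerc-Schr\"oer; in type (ii) it reduces, through the graded quiver variety framework of Section \ref{sec:consequence_KR_module}, to the triangular multiplication properties of Kirillov-Reshetikhin modules governed by the $T$-system (Theorem \ref{thm:T_system}) and its variant in Proposition \ref{prop:KR_module_variant}. In both situations the $\bm$-triangularity emerges from the combination of the positivity of the structure constants with the bar-invariance of the basis, which forces the corrections in the normalized product to belong to $q^{-\Hf}\Z[q^{-\Hf}]$.
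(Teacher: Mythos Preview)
Your overall plan---induction on the length $l$ of $\bi$, passage through the standard basis, and use of the embedding $\extQuotKGp(\bi-i_l)\hookrightarrow\extQuotKGp(\bi)$---matches the paper. But two points in your proposal are off, and one of them is a genuine gap.

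First, your case analysis is misdirected. The paper does not split on whether $\simp(w)$ lies in a subring; for $k\neq l$ it treats \emph{every} $\simp(w)$ the same way. One expands $\simp(w)$ into the standard basis, then factors each standard monomial as $\stdMod(w^1)=[\stdMod(w^1_{\hat{l}})*\simp(\beta_l)^{w^1_l}]$. Left-multiplying by $X_k$ and invoking Lemma~\ref{lem:preserve_triangular} (triangularity preservation under left multiplication by $X_k$) yields a $(\prec,\bm)$-unitriangular expression in the terms $[[X_k*\stdMod(w^1_{\hat{l}})]*\simp(\beta_l)^{w^1_l}]$. Since $k\neq l$, both $X_k$ and $\stdMod(w^1_{\hat{l}})$ live in $\clAlg^\dagger(\bi-i_l)$, so the induction hypothesis applies to $[X_k*\stdMod(w^1_{\hat{l}})]$, giving a $(\prec,\bm)$-decomposition into the standard basis there. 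One then substitutes and invokes Lemma~\ref{lem:preserve_triangular} a second time, now for \emph{right} multiplication by $\simp(\beta_l)^{w^1_l}$. This works because $\simp(\beta_l)=\can(l,l)$ is either the injective $I_{l^-}(t_0)=\simp(l,l^{\max})$ (when $i_l$ occurs more than once) or the frozen variable $X_l$ (when $l^{\min}=l$), and in either case right multiplication preserves $(\prec,\bm)$-unitriangularity. The case $k=l$ is immediate from Proposition~\ref{prop:factorization_canonical}, since $X_l=\simp(\theta e_l)$ with $l=l^{\max}$ is frozen. No ``boundary case'' beyond this is needed, and no external input from Kimura, Gei\ss--Leclerc--Schr\"oer, or the $T$-system is invoked; the argument is self-contained.

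Second, and more seriously, your final sentence is false as a general principle: positivity of structure constants together with bar-invariance does \emph{not} force the non-leading coefficients into $\bm$. A bar-invariant positive coefficient such as $q+q^{-1}$ lies in $\N[q^{\pm\Hf}]$ but not in $\bm$. The $\bm$-triangularity in this proof comes entirely from the explicit inequalities on $\Lambda$ in Lemma~\ref{lem:preserve_triangular}, which control the $q$-shifts arising when one commutes $X_k$ (from the left) or $\simp(\beta_l)$ (from the right) past terms of lower dominance degree. You should replace the appeal to positivity/bar-invariance with this lemma.
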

\begin{proof}
Omit the symbol $t_0$ for simplicity. We prove the claim by induction on the length $l$ of the word
$\bi$. The case $l=|I|$ is trivial.

Now assume the claim has been verified for the words of length
$l-1$.

The quantum cluster variable $X_k$ is the basis element
$\can(e_k)=\simp(\theta e_k)=\simp(\beta_k+\beta_{k[-1]}+\ldots+\beta_{k^{\min}})$. By
Proposition \ref{prop:factorization_canonical}, it suffices to verify
the claim for $k\neq l$.

Let
$\simp(w)$ be any basis elements in $\can^{\dagger,t_0}$, where
$w\in\N^l$. Because the simple basis is unitriangular to the
standard basis, we have the following $(\prec,\bm)$-unitriangular decomposition
\begin{align*}
  \simp(w)=&\sum_{w^1\preceq w}c(w,w^1)\stdMod(w^1)\\
=&\sum_{w^1\preceq w}c(w,w^1)[\stdMod(w^1_{\hat{l}})*\stdMod(w^1_l\beta_l)],
\end{align*}
where $w^1_l\beta_l$ is the $l$-th component of $w^1$ and
$w^1_{\hat{l}}=w^1-w^1_{l}\beta_l$, $[\ ]$ is the normalization.

Now left multiplying the initial cluster variable $X_k$, by Lemma
\ref{lem:preserve_triangular}, we still have a $(\prec,\bm)$-unitriangular decomposition
\begin{align*}
  [X_k*\simp(w)]=&\sum_{w^1\preceq
    w}c(w,w^1)[X_k*\stdMod(w^1_{\hat{l}})*\stdMod(w^1_l\beta_l)]\\
=&\sum_{w^1\preceq w}c(w,w^1)[[X_k*\stdMod(w^1_{\hat{l}})]*\stdMod(w^1_l\beta_l)].
\end{align*}

As discussed before, the quantum cluster algebra
$\qClAlg^{\dagger}(\bi-i_l)$ associated with the length $l-1$ word
$\bi-i_l$ is a subalgebra of $\qClAlg^{\dagger}(\bi)$ and it contains the standard basis element
$\stdMod(w^1_{\hat{l}})$. It also contains $X_k$ because $k\neq l$. By
induction hypothesis, the normalized product
$[X_k*\stdMod(w^1_{\hat{l}})]$ is unitriangular to the simple basis of $\qClAlg^{\dagger}(\bi-i_l)$, and, consequently, to
its standard basis. Namely, we have $(\prec,\bm)$-unitriangular decompositions
\begin{align*}
  [X_k*\stdMod(w^1_{\hat{l}})]=\sum_{w^2\preceq \theta
    e_k+w^1_{\hat{l}}}d(w^2,\theta e_k+w^1_{\hat{l}})\stdMod(w^2).
\end{align*}

Reusing Lemma \ref{lem:preserve_triangular}, we now obtain the $(\prec,\bm)$-unitriangular decomposition
\begin{align*} 
[X_k*\simp(w)]=&\sum_{w^1\preceq w,w^2\preceq \theta
  e_k+w^1_{\hat{l}}}e(w,w^1,w^2)[\stdMod(w^2)*\stdMod(w^1_l\beta_l)],
\end{align*}
where $(w^2)_l=0$, $e(w,w^1,w^2)$ is contained in $\bm$ if $w^1_l+w^2\neq w+\theta e_k$
and equals to $1$ if  $w^1_l\beta_l+w^2= w+\theta e_k$.

Recall that the standard basis of $\clAlg^{\dagger,t_0}$ is
unitriangular to its simple basis. The claim follows.
\end{proof}

\begin{Cor}(triangular basis)
The basis $\can^{t_0}$ given by \eqref{eq:initial_weakly_triangular_basis} is the triangular basis with respect to $t_0$.
\end{Cor}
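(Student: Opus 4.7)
The plan is to verify each of the four axioms of Definition~\ref{def:triangularBasis} for the candidate basis $\can^{t_0}$ of \eqref{eq:initial_weakly_triangular_basis}, assembling the structural results already proved in this subsection.

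\emph{Bar-invariance and parameterization.} Each simple $\simp(w)=\kappa^{-1}S(w)$ is bar-invariant by construction, each frozen simple $S(\theta e_s)$ with $s=s^{\max}$ is bar-invariant, and the normalization $[\,\cdot\,]^{t_0}$ preserves this. For parameterization, I combine Lemma~\ref{lem:isom_degree_lattice} (which gives $\theta^{-1}\colon\N^l\iso\domDegL(t_0)$) with the observation that $S(\theta e_s)$ is $e_s$-pointed and with the factorization property (Proposition~\ref{prop:factorization_canonical}): the element indexed by a pair $(w,f)$ has leading degree $\theta^{-1}w+\sum_s f_s e_s$, and the condition $f_s=0$ whenever $w_s>0$ is exactly what selects a unique representative, yielding a bijection between the indexing set and $\degL(t_0)=\Z^m$.

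\emph{Triangularity.} The condition that $[X_k(t_0)\ast\can^{t_0}]^{t_0}$ is $(\prec_{t_0},\bm)$-unitriangular to $\can^{t_0}$ for every $1\leq k\leq l$ is precisely Proposition~\ref{prop:relate_PBW}, and this covers all quantum cluster variables in the initial seed.

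\emph{Containment of cluster monomials.} The initial cluster variables $X_k(t_0)=\simp(\theta e_k)$ and the injectives $I_k(t_0)=\simp(k[1],k^{\max})$ are themselves elements of $\can^{t_0}$ as simples. For a cluster monomial $[X(t_0)^d]^{t_0}$ in $t_0$, iterated application of Proposition~\ref{prop:relate_PBW} starting from $1=\can^{t_0}(0)$ produces a $(\prec_{t_0},\bm)$-unitriangular decomposition into $\can^{t_0}$; since the monomial is bar-invariant, Lemma~\ref{lem:triangular}(ii) forces it to coincide with its leading basis element and hence to lie in $\can^{t_0}$. For a cluster monomial $[I(t_0)^g\cdot X(t_0)^f]^{t_0}$ of $t_0[1]$ (with $f$ supported on the frozen vertices), I would establish a right-multiplication analogue of Proposition~\ref{prop:relate_PBW}, namely that $[\can^{t_0}\ast I_k(t_0)]^{t_0}$ is $(\prec_{t_0},\bm)$-unitriangular to $\can^{t_0}$, by rerunning the same induction on word length but stripping off $i_1$ rather than $i_l$ and using the embedding $\extQuotKGp(w_\bi)_{\hat 1}\hookrightarrow\extQuotKGp(w_\bi)$. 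The same bar-invariance argument via Lemma~\ref{lem:triangular}(ii) then places the monomial in $\can^{t_0}$.

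The main obstacle is the right-multiplication companion of Proposition~\ref{prop:relate_PBW} used for the cluster monomials of $t_0[1]$; it is the only place where new work (symmetric to what was done for left multiplication by $X_k(t_0)$) is required. Apart from this symmetric piece, every step is a direct assembly of the already-established facts: Lemma~\ref{lem:isom_degree_lattice}, Proposition~\ref{prop:factorization_canonical}, Proposition~\ref{prop:relate_PBW}, and Lemma~\ref{lem:triangular}.
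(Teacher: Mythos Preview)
Your proposal is correct and is essentially the natural way to complete the argument the paper leaves implicit (the Corollary carries no proof in the paper). The verification of bar-invariance, parametrization, and triangularity is exactly as you describe, and your treatment of cluster monomials in $t_0$ via iterated left multiplication and Lemma~\ref{lem:triangular}(ii) is the intended route.

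You are also right that the containment of cluster monomials of $t_0[1]$ is the one place requiring genuine additional work: one needs the right-multiplication analogue $[\can^{t_0}\ast I_k(t_0)]^{t_0}$ to be $(\prec_{t_0},\bm)$-unitriangular to $\can^{t_0}$. Your proposed symmetric induction (strip off $i_1$ instead of $i_l$, use the embedding $\extQuotKGp(\bi)_{\hat{1}}$, and the right-multiplication half of Lemma~\ref{lem:preserve_triangular}) works. Two small points worth recording when you carry it out: first, the base case $l=|I|$ is vacuous since there are no unfrozen vertices and hence no $I_k$; second, in the inductive step the case $k=1$ needs separate treatment, because $I_1(t_0(\bi))=\simp(1[1],1^{\max})$ is not an injective for $\bi-i_1$ but rather the frozen variable $X_{1^{\max}}(t_0(\bi-i_1))$, so that case is handled directly by Proposition~\ref{prop:factorization_canonical}. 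With these in place the symmetric argument goes through exactly as in Proposition~\ref{prop:relate_PBW}.
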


We finish this subsection by a proof of Proposition \ref{prop:factorization_canonical}.

\begin{Lem}\label{lem:pbw_induction}
Take any $Z$ in $\extQuotKGp$. If $Z$ is $(\prec,\bm)$-unitriangular to
  the standard basis, then for any $w_1,w_l\in \N$, the normalized
  product $[Z*S(w_l\beta_l)]$ and $[S(w_1 \beta_1)*Z]$ are
  $(\prec,\bm)$-unitriangular to the standard basis.
\end{Lem}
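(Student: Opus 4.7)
The plan is to reduce the claim to the explicit identity $\stdMod(w) * S(w_l\beta_l) = q^{\gamma(w)}\stdMod(w+w_l\beta_l)$ for a suitable $q$-power $\gamma(w)$, and then control the resulting coefficients via Triangularity Preservation (Lemma \ref{lem:preserve_triangular}). We treat the right multiplication case $[Z * S(w_l\beta_l)]$; the case $[S(w_1\beta_1)*Z]$ is analogous.

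Write the unitriangular decomposition $Z = \stdMod(w_0) + \sum_{w \prec_{t_0} w_0} c_w \stdMod(w)$ with $c_w \in \bm$. The crucial observation is that $S(\beta_l) = \simp(l,l)$ coincides with either an injective or a frozen quantum cluster variable of $t_0$: when $l^- > 0$, $S(\beta_l) = \simp(l^-[1], (l^-)^{\max}) = I_{l^-}(t_0)$ since $l = (l^-)^{\max}$; when $l^- = 0$, $S(\beta_l) = X_l(t_0)$ is a frozen variable. Moreover, $S(\beta_l)$ is real, so $S(w_l\beta_l) = \stdMod(w_l\beta_l) = [S(\beta_l)^{w_l}]$: this is automatic in the frozen case, and in the injective case it follows because longest Kirillov--Reshetikhin modules are real in type (ii), while in type (i) the Cartan matrix is symmetric so the dual PBW generator at any single root vector agrees with the corresponding dual canonical basis element. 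Combined with $\stdMod(w) = [S(\beta_1)^{w_1}*\cdots*S(\beta_l)^{w_l}]$, right multiplication by $S(w_l\beta_l)$ absorbs into the rightmost factor and gives $\stdMod(w)*S(w_l\beta_l) = q^{\gamma(w)}\stdMod(w+w_l\beta_l)$ where $\gamma(w) = \tfrac{1}{2}\Lambda(t_0)(\deg\stdMod(w), \deg S(w_l\beta_l))$.

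Summing over the decomposition of $Z$ and normalizing yields
\[
[Z*S(w_l\beta_l)] = \stdMod(w_0+w_l\beta_l) + \sum_{w\prec_{t_0} w_0} c_w\, q^{\gamma(w)-\gamma(w_0)}\,\stdMod(w+w_l\beta_l).
\]
It remains to verify $\gamma(w)-\gamma(w_0) = \tfrac{w_l}{2}\Lambda(t_0)(\deg\stdMod(w)-\deg\stdMod(w_0),\, \deg S(\beta_l)) \leq 0$ so that each coefficient stays in $\bm$. When $l^- > 0$, $\deg S(\beta_l) = \deg I_{l^-}(t_0)$ and Lemma \ref{lem:preserve_triangular} (right multiplication by an injective) supplies the inequality, since $\deg\stdMod(w) \prec_{t_0} \deg\stdMod(w_0)$. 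When $l^- = 0$, write $\deg\stdMod(w) - \deg\stdMod(w_0) = -\tB(t_0)v$ with $v \in \N^n$; the compatibility $\Lambda(t_0)\cdot(-\tB(t_0)) = D \oplus 0$ forces the $l$-th row of $\Lambda(t_0)\tB(t_0)$ to vanish for frozen $l$, so $\Lambda(t_0)(-\tB(t_0)v, e_l) = 0$ and the difference vanishes identically.

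The left multiplication case follows by the mirror argument: $S(\beta_1) = X_1(t_0)$ is an initial cluster variable, $S(w_1\beta_1) = \stdMod(w_1\beta_1) = [X_1(t_0)^{w_1}]$ by the same reality argument, and left multiplication by $S(w_1\beta_1)$ absorbs into the leftmost factor of $\stdMod(w)$; the required $q$-power inequality is then the left-multiplication variant of Lemma \ref{lem:preserve_triangular}. The main obstacle is the identification $S(w_l\beta_l) = \stdMod(w_l\beta_l)$, which rests on the reality of $S(\beta_l)$; once this is in place the remaining argument is routine, controlled by Triangularity Preservation and the compatibility of $(\tB(t_0), \Lambda(t_0))$.
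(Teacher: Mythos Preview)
Your argument follows the same route as the paper's two-line proof: decompose $Z$ into the standard basis and apply Lemma~\ref{lem:preserve_triangular}, using that $S(\beta_l)$ is $I_{l^-}(t_0)$ (or the frozen $X_l(t_0)$ when $l^-=0$) and $S(\beta_1)=X_1(t_0)$. The overall structure is correct.

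There is one inaccuracy in your justification of $S(w_l\beta_l)=\stdMod(w_l\beta_l)$. You write that in type~(ii) this follows ``because longest Kirillov--Reshetikhin modules are real'', but when $l^->0$ the module $S(\beta_l)=\simp(l,l)=W^{(i_l)}_{1,a_l}$ is a \emph{fundamental} module, not a longest Kirillov--Reshetikhin module (the latter being $\simp(l^{\min},l)=W^{(i_l)}_{m(i_l),a_l}$; they coincide only when $l^-=0$). Likewise, your type~(i) remark that a single dual PBW generator lies in the dual canonical basis does not by itself give $S(w_l\beta_l)=[S(\beta_l)^{w_l}]$ for $w_l>1$. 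A clean justification that avoids any reality input and works uniformly is: $\stdMod(w_l\beta_l)=S(\beta_l)^{*w_l}$ is bar-invariant (the bar involution reverses the order of factors, but all factors are equal and $\Lambda$ is skew-symmetric so no $q$-power appears), and $\stdMod(w_l\beta_l)$ is $(\prec,\bm)$-unitriangular to the simple basis; hence $\stdMod(w_l\beta_l)=S(w_l\beta_l)$ by Lemma~\ref{lem:triangular}(ii). With this correction in place, the remaining steps---the absorption $\stdMod(w)*S(w_l\beta_l)=q^{\gamma(w)}\stdMod(w+w_l\beta_l)$ and the $q$-power control via Lemma~\ref{lem:preserve_triangular} (or, in the frozen case, the vanishing from the compatible-pair condition)---are correct and match the paper's intended argument.
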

\begin{proof}
Notice that we can replace $Z$ by a unitriangular decomposition
into the standard basis. The claim follows from
Lemma \ref{lem:preserve_triangular}.
\end{proof}

\begin{Lem}\label{lem:simple_factor}
  Take any $Z=\sum a_i S(w^i)$ in $\extQuotKGp$ such that
  $a_i\in\N[t^{\pm\Hf}]$. If there exists some simple module $S(w)$ such
  that $Z * S(w)$ or $S(w)*Z$ is a simple module up to $t^\Hf$-shift, then $Z$ is a simple module.
\end{Lem}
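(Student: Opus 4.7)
The plan is to exploit the positivity of the structure constants of the simple basis (equation \eqref{eq:str_const}) together with the fact that $\{S(w)\}$ is a $\Z[t^{\pm\Hf}]$-basis. Concretely, I will rule out any nontrivial cancellation in the expansion of $Z*S(w)$ into the simple basis, and then argue that the surviving coefficient must itself be a single monomial in $t^{\pm\Hf}$.

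First I expand, using \eqref{eq:str_const}:
\begin{align*}
Z * S(w) \;=\; \sum_i a_i\, S(w^i) * S(w) \;=\; \sum_{i}\sum_{u} a_i\, a^{u}_{w^i,w}\, S(u),
\end{align*}
where every $a_i\in \N[t^{\pm\Hf}]$ and every structure constant $a^{u}_{w^i,w}\in \N[t^{\pm}]$. So every coefficient of $S(u)$ appearing in this expansion is a finite sum of products of elements of $\N[t^{\pm\Hf}]$, hence lies in $\N[t^{\pm\Hf}]$; in particular no cancellation can occur between distinct summands. By hypothesis the left hand side is $t^{\alpha}S(w')$ for some $\alpha\in\tfrac12\Z$ and some parameter $w'$. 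Comparing coefficients in the basis $\{S(u)\}$ and using the no-cancellation observation, exactly one pair $(i_0,u_0)$ contributes, namely $u_0=w'$, and all other $a_i\,a^{u}_{w^i,w}$ vanish. Thus $a_i=0$ for $i\neq i_0$ and $a^{u}_{w^{i_0},w}=0$ for $u\neq w'$, so
\begin{align*}
Z = a_{i_0} S(w^{i_0}), \qquad S(w^{i_0})*S(w) = a^{w'}_{w^{i_0},w}\,S(w'),
\end{align*}
and $a_{i_0}\cdot a^{w'}_{w^{i_0},w} = t^{\alpha}$.

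The remaining step is to deduce that $a_{i_0}$ is itself a monomial in $t^{\pm\Hf}$. This is where I expect the only mild subtlety: $a_{i_0}\in\N[t^{\pm\Hf}]$ and $a^{w'}_{w^{i_0},w}\in\N[t^{\pm}]$ are sums of $t$-monomials with non-negative integer coefficients, and their product equals the monomial $t^{\alpha}$ with coefficient $1$. Writing $a_{i_0}=\sum_{\beta}c_{\beta}t^{\beta}$ and $a^{w'}_{w^{i_0},w}=\sum_{\gamma}d_{\gamma}t^{\gamma}$ with $c_\beta,d_\gamma\in\N$, positivity forbids any cancellation in the product, so exactly one pair $(\beta,\gamma)$ occurs with $c_\beta=d_\gamma=1$ and $\beta+\gamma=\alpha$. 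Hence $a_{i_0}=t^{\beta}$ and $Z = t^{\beta} S(w^{i_0})$, which is a simple module up to $t^\Hf$-shift, as required. The symmetric case where $S(w)*Z$ is simple is identical, expanding on the left instead. The main (and really only) obstacle is this last positivity/monomiality argument, which is an elementary fact about $\N[t^{\pm\Hf}]$ but deserves to be spelled out carefully to guarantee the $\Hf$-grading causes no trouble.
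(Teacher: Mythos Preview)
Your proof is correct and is exactly the approach the paper indicates (the paper's proof reads in its entirety: ``The claim follows by the positivity of the structure constants of the simple basis''). The only point left implicit is that passing from ``$a_i\,a^{u}_{w^i,w}=0$ for all $u$ when $i\neq i_0$'' to ``$a_i=0$ for $i\neq i_0$'' uses $S(w^i)*S(w)\neq 0$, which holds because $\extQuotKGp$ is a domain (it embeds in a quantum torus via $\kappa^{-1}$, or equivalently because the structure constant at $v=0$ is $1$ in the geometric description).
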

\begin{proof}
  The claim follows by the positivity of the structure constants of
  the simple basis.
\end{proof}

\begin{proof}[{Proof of Proposition \ref{prop:factorization_canonical}}]
We prove the claim by induction on the length $l$ of the word. 

If the adaptable word has length $l= |I|$, the statement is trivial.

Assume that the Proposition has been verified for any adaptable word
of length less than $l$. For any given word $\bi$ of
length $l$, $w=\sum_{k=1}^n w_k \beta_k$ with $w_k\in \N$, and any $1\leq s\leq l$ such that $s=s^{\max}$, we want to show the normalized product
$[S(\theta e_s)*S(w)]$ is a simple module. By its bar-invariance, it
suffices to show that this product is $(\prec,\bm)$-unitriangular to the simple basis,
or, equivalently, to the standard basis. 

Let $w_{\hat{1}}$, $w_{\hat{l}}$, $w_{\hat{1},\hat{l}}$ denote the
dimension vector obtained from $w$ by subtracting $w_1\beta_1$,
$w_l\beta_l$, and $w_1\beta_1+w_l\beta_l$ respectively.

(i) Assume $s\neq l$.

Notice that the
module $S(w)$ is $(\prec,\bm)$-unitriangular to the standard basis:
\begin{align*}
  S(w)=&\sum_{w^1\preceq w}c(w,w^1)M(w^1)\\
=&\sum_{w^1\preceq w}c(w,w^1)[M(w^1_{\hat{l}})*M(w^1_l\beta_l)].
\end{align*}

Therefore, we have the unitriangular decomposition
\begin{align*}
  [S(\theta e_s)*S(w)]=&\sum_{w^1\preceq w}c(w,w^1)[S(\theta e_s)*M(w^1_{\hat{l}})*M(w^1_l\beta_l)]\\
=&\sum_{w^1\preceq w}c(w,w^1)[[S(\theta
  e_s)*M(w^1_{\hat{l}})]*M(w^1_l\beta_l)]
\end{align*}

Notice that $M(w^1_{\hat{l}})$ is unitriangular to the basis
$\set{S(w^2)}$ of $\extQuotKGp(\bi)_{\hat{l}}$. The above is written as
\begin{align*}
   [S(\theta e_s)*S(w)]=&\sum_{w^1\preceq w,w^2\preceq w^1_{\hat{l}}}d(w,w^1,w^2)[[S(\theta
  e_s)*S(w^2)]*M(w^1_l\beta_l)]\\
&=\sum_{w^1\preceq w,w^2\preceq w^1_{\hat{l}}}d(w,w^1,w^2)[S(\theta
  e_s+w^2)*M(w^1_l\beta_l)],
\end{align*}
where we used induction hypothesis on $\extQuotKGp_{\hat{l}}\iso
\extQuotKGp(\bi-i_l)$ in the last equality. Here, the coefficients are contained in $\bm$ except the leading term with
$w^1=w,w^2=w^1_{\hat{l}}$ equals to $1$.

By Lemma \ref{lem:pbw_induction}, each factor is
$(\prec,\bm)$-unitriangular to the standard basis. 

(ii) Assume $s=l$. We divide the proof into two
cases.

(ii-1) If that $i_l\neq i_1$, the proof is similar to that of (i).

Consider $(\prec,\bm)$-unitriangular decomposition of $S(w)$:
\begin{align*}
  S(w)=&\sum_{w^1\preceq w}c(w,w^1)M(w^1)\\
=&\sum_{w^1\preceq w}c(w,w^1)[M(w^1_1\beta_1)*M(w^1_{\hat{1}})].
\end{align*}
Since $M(w^1_1\beta_1)$ commutes with $S(\theta
e_l)$ up to a $t^\Hf$-power, we have the following unitriangular decomposition
\begin{align*}
  [S(\theta  e_l)*S(w)]=&\sum_{w^1\preceq w}c(w,w^1)[M(w^1_1\beta_1)*S(\theta  e_l)*M(w^1_{\hat{1}})]\\
=&\sum_{w^1\preceq w}c(w,w^1)[M(w^1_1\beta_1)*[S(\theta 
  e_l)*M(w^1_{\hat{1}})]]
\end{align*}

Decomposing the terms $M(w^1_{\hat{1}})$ into simple basis elements, the above is written as
\begin{align*}
   [S(\theta  e_l)*S(w)]=&\sum_{w^1\preceq w,w^2\preceq w^1_{\hat{1}}}d(w,w^1,w^2)[M(w^1_1\beta_1)*[S(\theta 
  e_l)*S(w^2)]]\\
&=\sum_{w^1\preceq w,w^2\preceq w^1_{\hat{1}}}d(w,w^1,w^2)[M(w^1_1\beta_1)*S(\theta 
  e_l+w^2)],
\end{align*}
where we have used induction hypothesis $\extQuotKGp_{\hat{1}}\iso
\extQuotKGp(\bi-i_1)$ in the last equality. Here, the coefficients are contained in $\bm$
except the leading term with $w^1=w$, $w^2=w^1_{\hat{1}}$ equals to $1$.

By Lemma \ref{lem:pbw_induction}, the above expression is
$(\prec,\bm)$-unitriangular to the standard basis.

(ii-2) Assume that $i_l=i_1$.  

Consider the exchange relation at vertex $l[-1]$ in the seed
$\mu_{l[-2]}\cdots\mu_{1[1]}\mu_1t_0$ of the quantum cluster algebra
$\clAlg(\bi)$ (\cf \cite{GeissLeclercSchroeer10} for its ice quiver). Via the isomorphism $\kappa^{-1}$, it corresponds to the following quantum
$T$-system in $\extQuotKGp$:
\begin{align}\label{eq:T_system_substitute}
[S(\theta e_{l[-1]})*S(1[1],l)]=[S(\theta
e_l)*S(1[1],l[-1])+t^{-1}[\prod_{1\leq t<l,t=t^{\max}}S(\theta
e_t)^{-C_{i_l i_t}}].
\end{align}
Denote the last simple module $\prod_{1\leq t<l,t=t^{\max}}S(\theta
e_t)^{-C_{i_l i_t}}$ by $S(q)$

Let us take any $w'$ such that $w'_1=w'_l=0$. Notice that, in the quantum
torus $\cT(t_0)$, the leading monomials of $[S(\theta e_{l[-1]})*S(1[1],l)]$
and $S(q)$ differ by $Y_{l[-1]}\ldots Y_{l}Y_{1}$, which commutes
with the monomial $X^{\theta^{-1}w'}$ by the definition of compatible pair. If we deal with the type (ii),
this commutativity also follows from \eqref{eq:calculate_Euler_form}\eqref{eq:calculate_Euler_form_new}. Therefore, by using
the above $T$-system, we obtain
\begin{align*}
  [S(\theta e_l)*S(1[1],l[-1])*S(w')]=&[[S(\theta e_{l[-1]})*S(1[1],l)*S(w)']\\
  &-t^{-1}[S(q)*S(w')].
\end{align*}

By the induction hypothesis on $\extQuotKGp_{\hat{1}}$, $[S(1[1],l)*S(w)']$ is a
simple, which we denote by $S(p)$. By the induction hypothesis on either $\extQuotKGp_{\hat{1}}$ or
  $\extQuotKGp_{\hat{l}}$, $[S(q)*S(w')]$ is a simple, which we denote by
    $S(p')$. So we have
\begin{align*}
[S(\theta e_l)*S(1,[1],l[-1])*S(w')]=&[S(\theta e_{l[-1]})*S(p)]-t^{-1}S(p')
\end{align*}
By Lemma \ref{lem:pbw_induction}, $\RHS$ is unitriangular to the
standard basis. By induction hypothesis, on $\extQuotKGp_{\hat{1},\hat{l}}\iso \extQuotKGp(\bi-i_1-i_l)$, $S(1[1],l[-1])$ quasi-commutes with
$S(w')$. Also, $S(\theta e_l)$ quasi-commutes with $S(w')$ as it is a
frozen variable in the corresponding quantum cluster algebra. Therefore,
the normalized product $[S(\theta e_l)*S(1[1],l[-1])*S(w')]$ is
bar-invariant. Consequently, it is a simple module. By Lemma \ref{lem:simple_factor}, $[S(\theta e_l)*S(w')]$ is a simple module.

Consider the $(\prec,\bm)$-unitriangular decompositions of the module $S(w)$ as
\begin{align*}
  S(w)=&\sum_{w^1\preceq w} b(w,w^1)[M(w^1_1\beta_1)*M(w^1_{\hat{1},\hat{l}})*M(w^1_l\beta_l)]\\
=&\sum_{w^1\preceq w,w'\preceq w^1_{\hat{1},\hat{l}}} c(w,w^1,w')[M(w^1_1\beta_1)*S(w')*M(w^1_l\beta_l)]
\end{align*}
It follows that we have the $(\prec,\bm)$-unitriangular decomposition
\begin{align*}
  [S(\theta e_l)*S(w)]&=\sum d(w,w^1,w')[S(\theta  e_l)*M(w^1_1\beta_1)*S(w')*M(w^1_l\beta_l)]\\
&=\sum d(w,w^1,w')[M(w^1_1\beta_1)*[S(\theta  e_l)*S(w')]*M(w^1_l\beta_l)].
\end{align*}
Because $[S(\theta  e_l)*S(w')]$ is a simple
module, the above expansion of $S(\theta e_l)*S(w)$
is $(\prec,\bm)$-unitriangular to the standard basis by Lemma
\ref{lem:pbw_induction}. The desired claim follows by its bar-invariance.
\end{proof}

\subsection{Compatibility}

By Theorem \ref{thm:reduction}, in order to apply the existence theorem to the quantum cluster algebra
$\qClAlg(\bi)$, it remains to check the following property. 
\begin{Conj} \label{conj:desired_module}
The quantum cluster variables obtained along the sequence
$\sigma^{-1}\Sigma_\bi^{-1}$ from $t_0$ to $t_0[-1]$ are contained in $\can^{t_0}$.
\end{Conj}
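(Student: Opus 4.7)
The plan is to interpret each quantum cluster variable produced along $\sigma^{-1}\Sigma_\bi^{-1}$ as the image under $\kappa^{-1}$ of a simple module of $\extQuotKGp(\bi)$; since $\can^{t_0}$ consists of the simple basis localized at the longest Kirillov--Reshetikhin modules (cf. \eqref{eq:initial_weakly_triangular_basis}), this identifies such a cluster variable with a basis element. The argument proceeds by induction on the position in the mutation sequence.

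First, for the adaptable case (type (ii), and the adaptable part of type (i)), I would work inside the quantum torus $\YTorus^H_{t^\Hf}$ through the embedding $\theta_\ua$ of Section \ref{sec:adaptable_word}. The key observation is that the combinatorics of the mutation sequence $\overleftarrow{\mu}_k$ in \eqref{eq:k_sequence} is arranged so that each exchange relation performed in $\qClAlg(\bi)$ matches, term by term (up to $q$-powers), the quantum $T$-system \eqref{eq:exact_sequence_variant}. By the inductive hypothesis, both terms on the right of the exchange are normalized products of simples already contained in $\can^{t_0}$, and Proposition \ref{prop:KR_module_variant}(iv) ensures that the first of these products is itself a single simple. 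Solving the exchange relation for the newly produced cluster variable then identifies it with the image under $\kappa^{-1}$ of the minuscule simple $W^{(i)}_{k-h,h,a}$ of \eqref{eq:minuscule_dimension}, so it lies in $\can^{t_0}$.

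For the remaining subcase of type (i), where $\bi$ is inferior under the weak order to an adaptable word $\bi'$, I would use the natural inclusion $\extQuotKGp(\bi) \hookrightarrow \extQuotKGp(\bi')$ described in Section \ref{sec:category_type_i}, which sends the simple basis into the simple basis. The cluster variables of $\qClAlg(\bi)$ along $\sigma^{-1}\Sigma_\bi^{-1}$ would be matched with a suitable subfamily of the cluster variables of $\qClAlg(\bi')$ along (a truncation of) the analogous sequence; Theorem \ref{thm:correction} would be invoked to absorb the differing coefficient patterns between the two ambient algebras. This reduces the claim for $\bi$ to the adaptable case already settled.

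The hard part will be the combinatorial bookkeeping inside the induction: at each step of the sequence one must simultaneously match the indices $(a,b)$ naming the produced cluster variable to the parameters $(i,k,h,a)$ of the minuscule module $W^{(i)}_{k-h,h,a}$, verify that the tropical transformation of the $g$-vector agrees with the leading degree of the proposed simple module inside $\cT(t_0)$, and check that the quantization $q$-powers in the exchange relation match the $t$-powers produced by the twist $\cN(\ ,\ )$ in \eqref{eq:new_twist_prod}. A secondary difficulty is to handle the last step of the sequence, where the simple on the right of \eqref{eq:exact_sequence_variant} degenerates; here I would likely need a direct comparison with the projective modules discussed in Example \ref{eg:compare_inj_proj}, strengthening the inductive hypothesis to record the leading degrees explicitly rather than only the isomorphism class of the simple module.
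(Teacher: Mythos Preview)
Your proposal has a genuine gap in the adaptable case. You claim that each exchange relation along $\sigma^{-1}\Sigma_\bi^{-1}$ matches the quantum $T$-system \eqref{eq:exact_sequence_variant} term by term, so that the new cluster variable at each step is identified with a minuscule simple $W^{(i)}_{k-h,h,a}$. But \eqref{eq:exact_sequence_variant} has Kirillov--Reshetikhin modules $W^{(i)}_{h,a+2(k-h+1)}$ and $W^{(i)}_{k,a}$ on the left-hand side. After the first batch of mutations the neighbouring cluster variables in the current seed are no longer KR modules (they are the minuscule simples produced at earlier steps), so the next exchange relation has no reason to be of the form \eqref{eq:exact_sequence_variant}. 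Your inductive hypothesis ``both terms on the right are normalized products of simples in $\can^{t_0}$'' does not help here: membership in $\can^{t_0}$ does not force the specific KR shape that \eqref{eq:exact_sequence_variant} requires. You also invoke $\overleftarrow{\mu}_k$ from \eqref{eq:k_sequence}, but that is a constituent of the \emph{forward} sequence $\Sigma_\bi$; the backward sequence $\sigma^{-1}\Sigma_\bi^{-1}$ has different exchange relations.

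The paper avoids this difficulty entirely. It first restricts to $\bi=c^{N+1}$ and replaces $\sigma^{-1}\Sigma_\bi^{-1}$ by a longer auxiliary sequence $(\overleftarrow{\mu}^c)^N$ whose cluster variables contain all those of the original sequence (Lemma \ref{lem:enough_variable}). Along $(\overleftarrow{\mu}^c)^N$ only the first piece $\overleftarrow{\mu}^l$ is handled directly (Proposition \ref{prop:calculate_variable}), and even there the identification with $W^{(i_l)}_{m_k^+,m_k^-+1,a_k}$ is proved by comparing truncated Laurent expansions via Proposition \ref{prop:KR_module_variant}(i)(ii), \emph{not} by matching an exchange relation with \eqref{eq:exact_sequence_variant}. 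The remaining pieces are reduced to this basic case by observing that after each $\overleftarrow{\mu}^{l-s}$ the principal quiver coincides with that of the initial seed for a different Coxeter power, so similarity of seeds (Lemma \ref{lem:variation_triangular_basis}) transports the result; Proposition \ref{prop:condition_change_seed} then propagates the common triangular basis step by step. General adaptable $\bi$ is handled only at the very end, by embedding $\extQuotKGp(\bi,\ua)$ into $\extQuotKGp(c^{N+1},\ua)$, not by running your induction inside $\qClAlg(\bi)$ directly. Your second paragraph (reducing non-adaptable type (i) to the adaptable case via embeddings and Theorem \ref{thm:correction}) is in the right spirit, but note that the paper applies this reduction at the level of the final statement (Theorem \ref{thm:canonical_basis}) rather than at the level of the conjecture itself.
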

We shall prove this conjecture for the quantum cluster algebras of type (ii) or of type (i) with an adaptable reduced word. Given that we have seen the triangular basis $\can^{t_0}$ is generated by the simples, Conjecture \ref{conj:desired_module} is a weaker form of the monoidal categorification Conjecture in the introduction.

In
Section \ref{sec:type_A}, we will give an example as a different quick proof
of this property for type $A_4$
based on the level-rank duality.

In this section, we verify that the triangular bases \wrt $t_0$ and $t_0[-1]$
are compatible when $\bi=c^{N+1}$ for some acyclic Coxeter word $c$ and $N\in\N$.

Instead of working with the previously introduced mutation sequence
$\Sigma_\bi$, we define the following sequences (read from right to left)
\begin{align*}
  \overleftarrow{\mu}^k=&\mu_{k^{\max}[-m_k^-]}\cdots
  \mu_{k^{\max}[-2]}\mu_{k^{\max}[-1]},\ 1\leq k\leq l,\\
  \sigma\overleftarrow{\mu}^k=&\mu_{k^{\min}[m_k^- -1]}\cdots
  \mu_{k^{\min}[1]}\mu_{k^{\min}},\ 1\leq k\leq l,\\
\Sigma^\bi=&(\sigma\overleftarrow{\mu}^l)^{-1}\cdots (\sigma
\overleftarrow{\mu}^2)^{-1} (\sigma\overleftarrow{\mu}^1)^{-1}.\\
\end{align*}
Notice that $\sigma$ is an involution. We have
$$
\sigma(\Sigma^\bi)^{-1}=(\sigma \Sigma^\bi)^{-1}=\overleftarrow{\mu}^1\cdots \overleftarrow{\mu}^2\overleftarrow{\mu}^l.
$$

\begin{Eg}\label{eg:inverse_sequence}
Let us continue Example \ref{eg:type_A}. The sequences
$\overleftarrow{\mu}^{10}=\mu_1\mu_5\mu_8$,  $\overleftarrow{\mu}^5=\mu_8$, and $\overleftarrow{\mu}^1$ is
trivial. The sequence $\Sigma^\bi$ and $(\sigma\Sigma^\bi)^{-1}$ (read
from right to left) are
given by
\begin{align*}
  \Sigma^\bi&=(1, 5, 8, 2, 6, 1, 5, 3, 2, 1)\\
(\sigma\Sigma^\bi)^{-1}&=(8, 6, 3, 5, 8, 2, 6, 1, 5, 8).
\end{align*}
\end{Eg}

Let  $\bi-i_l$ denote the word
obtained from $\bi$ by removing the last element $i_l$ and
$\Sigma^{\bi-i_l}$ the corresponding sequence. Then we have
$\Sigma^\bi=(\sigma\overleftarrow{\mu}^l)^{-1}\Sigma^{\bi-i_l}=\overleftarrow{\mu}^l\Sigma^{\bi-i_l}$. This observation
inductively implies the following result.

\begin{Lem}
For any $d\in\Z$, the seed $t[d]$ is injective-reachable via either
$(\sigma^d\Sigma_\bi,\sigma)$ or $(\sigma^d\Sigma^\bi,\sigma)$. Moreover, the
  sets of the quantum cluster variables obtained along both sequences
  starting from $t[d]$ are the same.
\end{Lem}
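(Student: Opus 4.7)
I would first reduce the statement to the base case $d = 0$. Indeed, by the recursive construction of the injective-reachable chain in Section \ref{sec:chain_seed} together with Lemma \ref{lem:long_injective_mutation}, once $t_0$ is known to be injective-reachable via $(\Sigma^\bi, \sigma_\bi)$ (as is already proved for $(\Sigma_\bi, \sigma_\bi)$ using \cite[Proposition 13.4]{GeissLeclercSchroeer10}), the analogous assertions for all $t[d]$ follow automatically by conjugating the sequences by $\sigma^d$. The comparison of the sets of intermediate cluster variables transfers in the same way.

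The bulk of the proof would then be an induction on the length $l$ of $\bi$, following the hint in the excerpt. When $l = |I|$ there are no exchangeable vertices and both $\Sigma_\bi$ and $\Sigma^\bi$ are empty, so the claim is vacuous. For the inductive step, I would exploit the factorization $\Sigma^\bi = \overleftarrow{\mu}^l \, \Sigma^{\bi - i_l}$ together with a parallel identification of the corresponding last block of $\Sigma_\bi$ coming from the blocks $\overleftarrow{\mu}_k$ with $i_k = i_l$. Applying the induction hypothesis to the shorter word $\bi - i_l$, one sees that $\Sigma^{\bi-i_l}$ and the matching initial part of $\Sigma_\bi$ transport $t_0$ to a common intermediate seed and produce the same set of quantum minors $\can(a,b)$ for $1 \leq a < b < l$ with $i_a = i_b$. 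The final block of mutations, carried out at the positions of color $i_l$ strictly less than $l$ in a compatible order, then uniformly replaces the variables $\can(k^{\min}, k)$ with $i_k = i_l$ by their injective counterparts $\can(k^+, l) = I_k(t_0)$, realizing in both sequences the same passage to $t_0[1]$ with permutation $\sigma_\bi$.

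The main obstacle, to my mind, is the inductive bookkeeping: one must check that the quiver $\Gamma_\bi$ is obtained from $\Gamma_{\bi - i_l}$ in a way compatible with the mutations (in particular, the vertex $l^-$, formerly the last frozen vertex of color $i_l$ in $\bi - i_l$, now becomes exchangeable), and that the final block $\overleftarrow{\mu}^l$ effects precisely the transformation of cluster variables described above with the permutation coming out to $\sigma_\bi$ rather than a more exotic permutation. Uniqueness of the seed $t_0[1]$ up to relabeling, established in the proposition attributed to \cite{Plamondon10b}\cite{gross2014canonical}, will then force the permutations arising from $\Sigma_\bi$ and $\Sigma^\bi$ to agree. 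A small separate argument is needed in the degenerate case where $i_l$ appears only once in $\bi$ so that removing it formally violates the standing assumption; in that case vertex $l$ is already frozen, the block $\overleftarrow{\mu}^l$ is empty, and one treats $l$ as a disconnected frozen vertex adjoined to the quiver of $\bi - i_l$ so that the inductive hypothesis applies unchanged.
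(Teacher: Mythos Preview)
Your proposal is correct and follows essentially the same approach as the paper: reduce to $d=0$ via the similarity of the seeds $t[d]$ (the paper phrases this as ``same principal part up to $\sigma$''), then argue by induction on the length $l$ using the factorization $\Sigma^\bi = \overleftarrow{\mu}^l\,\Sigma^{\bi-i_l}$ stated immediately before the lemma. The paper's own proof is one sentence and treats the $d=0$ case as ``obvious by induction''; your write-up is simply a more careful unpacking of that sentence, and your identification of the bookkeeping around the vertex $l^-$ (frozen in $\bi-i_l$, exchangeable in $\bi$) as the main point to check is apt.

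One small inaccuracy worth flagging: you speak of comparing $\Sigma^{\bi-i_l}$ with ``the matching initial part of $\Sigma_\bi$'', but $\Sigma_\bi$ does not factor as something times $\Sigma_{\bi-i_l}$ --- the extra mutations at vertices of colour $i_l$ are interspersed among the blocks $\overleftarrow{\mu}_k$, not appended at the end. The cleaner way to run the induction is to track directly that the cluster variables produced along $\Sigma^{\bi-i_l}$ on $\Gamma_\bi$ are the quantum minors $\can(a,b)$ with $b<l$ (using that this sequence never mutates at $l^-$ or $l$, and that $l$ is connected only to $l^-$), then check that the final block $\overleftarrow{\mu}^l$ produces the remaining $\can(a,l)$'s, and finally compare with the known list of minors along $\Sigma_\bi$ from \cite{GeissLeclercSchroeer10}. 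This is what your ``bookkeeping'' paragraph is reaching for; just avoid the misleading phrase ``initial part of $\Sigma_\bi$''.
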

\begin{proof}
  The statement for $d=0$ is obvious by induction, from which we deduce the general
  statement by noticing that, up to the permutation $\sigma$ of vertices, all the seeds involving have the same
  principal part.
\end{proof}

Recall that when $\bi$ is adaptable, we can choose an adaptable
multidegree $\ua=(a_k)\in (2\Z)^l$, and identify the root vector
$\beta_k$ with the unit vector $e_{i_k,a_k}$ on graded quiver
varieties by the embedding $\iota_\ua$. The graded Grothendieck ring
$\extQuotKGp$ can be realized as $\extQuotKGp(\bi,\ua)$ \cf Section
\ref{sec:preliminaries} \ref{sec:quiver_variety}.

\begin{Prop}\label{prop:calculate_variable}
  Assume $\bi=c^{N+1}$. Then, for any $1\leq k< l$ such that
  $i_k=i_l$, the quantum cluster variables
  $X_{k}(\mu_k\mu_{k[1]}\cdots \mu_l t_0)$ is the simple module
  $\kappa^{-1} W^{(i_l)}_{m_k^+,m_k^-+1,a_k}$.
\end{Prop}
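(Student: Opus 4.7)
The plan is to prove the statement by induction on $m_k^+$, the number of occurrences of the letter $i_l=i_k$ in $\bi$ strictly after position $k$. The unifying idea is that at each step of the mutation sequence, the cluster-algebra exchange relation at the vertex being mutated can, after transport via the isomorphism $\kappa^{-1}$ of Theorem \ref{thm:categorification_quiver_variety}, be recognized as an instance of the generalized $T$-system identity \eqref{eq:exact_sequence_variant} of Proposition \ref{prop:KR_module_variant}(iii).

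For the base case $m_k^+=1$ we have $k=l^-$, and since $l$ is frozen the mutation sequence collapses to the single mutation $\mu_{l^-}$. The initial cluster variables at $l^-$ and at its neighbors in $\Gamma_\bi$ pull back under $\kappa^{-1}$ to Kirillov--Reshetikhin modules of the form $W^{(i_j)}_{m_j^-+1,a_j}$. Writing out the exchange relation at $l^-$ in $t_0$ and comparing with \eqref{eq:exact_sequence_variant} applied to the parameters $(i,K-H,H,a)=(i_l,1,m_{l^-}^-+1,a_{l^-})$, the two monomials appearing in the exchange match the two products of KR modules appearing on each side of the $T$-system; the remaining term is then $t^{-1}\kappa^{-1}W^{(i_l)}_{1,m_{l^-}^-+1,a_{l^-}}$, and Proposition \ref{prop:KR_module_variant}(iv) absorbs the $t$-power, giving the base case.

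For the inductive step, assume the statement for $k[1]$, i.e.\ that $X_{k[1]}(\mu_{k[1]}\cdots\mu_{l^-}t_0)$ has been identified with $\kappa^{-1}W^{(i_l)}_{m_k^+-1,m_k^-+2,a_k-2}$. We then compute $X_k(\mu_k\mu_{k[1]}\cdots\mu_{l^-}t_0)$ via the exchange relation at $k$ in the intermediate seed $\mu_{k[1]}\cdots\mu_{l^-}t_0$. Because $\bi=c^{N+1}$ for an acyclic Coxeter word $c$, the quiver $\Gamma_\bi$ has a highly regular ``brick-wall'' structure, and the local quiver around $k$ in the intermediate seed can be traced explicitly: the neighbors of $k$ consist of the mutated variable at $k[1]$ (given by the inductive hypothesis as a minuscule simple of one-smaller type) together with certain unchanged initial cluster variables at the not-yet-mutated neighbors of $k$ in $\Gamma_\bi$. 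Matching the resulting exchange relation with \eqref{eq:exact_sequence_variant} for the parameters $(i,K-H,H,a)=(i_l,m_k^+,m_k^-+1,a_k)$ identifies both monomials, and solving for the remaining simple yields exactly $\kappa^{-1}W^{(i_l)}_{m_k^+,m_k^-+1,a_k}$.

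The principal technical obstacle is the explicit bookkeeping of the quiver of the intermediate seed $\mu_{k[1]}\cdots\mu_{l^-}t_0$ and the identification of each neighbor of $k$ with the correct Kirillov--Reshetikhin or minuscule module appearing in \eqref{eq:exact_sequence_variant}. A convenient simplification is to apply the correction technique of Theorem \ref{thm:correction} first, reducing to any convenient coefficient pattern and quantization (for example principal coefficients with the natural quantization of Section \ref{sec:adaptable_word}); after this reduction the remaining verification is a direct combinatorial check within the repetition-quiver framework of Section \ref{sec:quiver_variety}, made tractable by the rigid ``grid'' combinatorics of $c^{N+1}$.
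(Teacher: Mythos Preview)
Your approach is genuinely different from the paper's, and it has a real gap.

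The paper's proof is a two-line direct comparison: one computes the Laurent expansion of each cluster variable $X_k(\mu_k\mu_{k[1]}\cdots\mu_{l^-}t_0)$ in $\cT(t_0)$ (which is straightforward for this very regular mutation sequence on the brick-wall quiver of $c^{N+1}$), and then observes that this expansion coincides with the truncated $q,t$-character of $W^{(i_l)}_{m_k^+,m_k^-+1,a_k}$ as determined by Proposition~\ref{prop:KR_module_variant}(i)(ii). No induction and no exchange-relation matching is used; parts (i) and (ii) of that proposition do all the work.

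Your plan instead tries to use part (iii), the identity \eqref{eq:exact_sequence_variant}, inductively via exchange relations. The problem is structural: an exchange relation has the shape
\[
X_k(t)\cdot X_k(t') \;=\; M_+ \;+\; q^{\alpha} M_-,
\]
with the new variable appearing as a \emph{factor} on the left, whereas \eqref{eq:exact_sequence_variant} has the shape
\[
[W^{(i)}_{h,\,\cdot}\ast W^{(i)}_{K,\,\cdot}] \;=\; [W^{(i)}_{K-h,\,\cdot}\ast W^{(i)}_{K+1,\,\cdot}] \;+\; t^{-1}W^{(i)}_{K-h,h,\,\cdot},
\]
with the target minuscule simple appearing as a standalone \emph{summand} on the right. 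These do not match: there is no way to place the unknown $X_k(t')$ so that ``the two monomials in the exchange match the two products of KR modules on each side of the $T$-system'' and simultaneously have the leftover term be the new variable. Concretely, with your base-case parameters $(K-h,h,a)=(1,m_{l^-}^-+1,a_{l^-})$ one finds $K=N+1$ and the KR modules $W^{(i_l)}_{N+1,a_{l^-}}$, $W^{(i_l)}_{N+2,a_{l^-}}$, $W^{(i_l)}_{N,a_{l^-}+4}$ entering \eqref{eq:exact_sequence_variant} lie outside the degree range supported by $\extQuotKGp(\bi,\ua)$, so they are not cluster variables at all. The identity you would actually need is one expressing the product $W^{(i_l)}_{m_k^-+1,a_k}\ast W^{(i_l)}_{m_k^+,m_k^-+1,a_k}$ as a sum of two cluster monomials in the intermediate seed, and that identity is \emph{not} \eqref{eq:exact_sequence_variant}.

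The inductive/T-system philosophy is not unreasonable (indeed the ordinary $T$-system \eqref{eq:T_system} \emph{is} an exchange relation, which is how Theorem~\ref{thm:categorification_quiver_variety} is proved), but making it work here would require either a different family of identities or, ultimately, the same direct character comparison that the paper carries out using Proposition~\ref{prop:KR_module_variant}(i)(ii). I would recommend switching to that direct route.
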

\begin{proof}
  It is easy to compute the Laurent expansions of these
  quantum cluster variables. By Proposition
  \ref{prop:KR_module_variant}(i)(ii), the Laurent expansions of the
  simple modules $\kappa^{-1}W^{(i_l)}_{m_k^+,m_k^-+1,a_k}$ take the same values.
\end{proof}

For any given mutation sequence $\overleftarrow{\mu}$ and integer $0\leq d\leq N-1$, let $\overleftarrow{\mu}_{\geq d}$ denote the mutation subsequence obtained from $\overleftarrow{\mu}$
by taking only the mutations on the vertices $i[d']$ with $i\in I$, $d'\geq d$, and
$\overleftarrow{\mu}_{<d}$ the subsequence by taking only the mutations on the vertices $i[d']$ with $i\in I$, $d'< d$. 

\begin{Eg}\label{eg:quiver_level}
 Let the Cartan matrix be of type $A_3$, $c=(3 2 1)$ and $N=4$. The ice quiver $\tQ$ associated with the initial seed $t_0$ is drawn in Figure \ref{fig:quiver_level}. Recall that $\overleftarrow{\mu}^{15}=\mu_3\mu_6\mu_9\mu_{12}$, $\overleftarrow{\mu}^{14}=\mu_2\mu_5\mu_8\mu_{11}$,$\overleftarrow{\mu}^{15}=\mu_1\mu_4\mu_7\mu_{10}$, $\overleftarrow{\mu}^{12}=\mu_6\mu_9\mu_{12}$, \ldots.
 
 Define the mutation sequence $\overleftarrow{\mu}_c=\mu_{12}\mu_9\mu_6\mu_3\mu_{11}\mu_8\mu_5\mu_2\mu_{10}\mu_7\mu_4\mu_1(=\overleftarrow{\mu}_3\overleftarrow{\mu}_2\overleftarrow{\mu}_1)$. Take
  $d=2$, then we obtain the subsequences $(\overleftarrow{\mu}_c)_{<2}=\mu_6\mu_3\mu_5\mu_2\mu_4\mu_1$ and $(\overleftarrow{\mu}_c)_{\geq 2}=\mu_{12}\mu_9\mu_{11}\mu_8\mu_{10}\mu_7$.
\end{Eg}

\begin{figure}[htb!]
 \centering
\beginpgfgraphicnamed{fig:quiver_level}
\begin{tikzpicture}
\node [shape=circle, draw] (v1) at (6.5,8) {1};
    \node [shape=circle, draw] (v2) at (6,6) {2};
    \node [shape=circle, draw] (v3) at (5.5,4) {3};

\node [shape=circle, draw] (v4) at (4.5,8) {4};
    \node [shape=circle, draw] (v5) at (4,6) {5};
    \node [shape=circle, draw] (v6) at (3.5,4) {6};

\node [shape=circle, draw] (v7) at (2.5,8) {7};
    \node [shape=circle, draw] (v8) at (2,6) {8};
    \node [shape=circle, draw] (v9) at (1.5,4) {9};

\node [shape=circle, draw] (v10) at (0.5,8) {10};
    \node [shape=circle, draw] (v11) at (0,6) {11};
    \node [shape=circle, draw] (v12) at (-0.5,4) {12};

\node [shape=diamond, draw] (v13) at (-1.5,8) {13};
    \node [shape=diamond, draw] (v14) at (-2,6) {14};
    \node [shape=diamond, draw] (v15) at (-2.5,4) {15};
    
\draw[-triangle 60] (v2) edge (v1); 
\draw[-triangle 60] (v3) edge (v2);
\draw[-triangle 60] (v5) edge (v4); 
\draw[-triangle 60] (v6) edge (v5);
\draw[-triangle 60] (v8) edge (v7); 
\draw[-triangle 60] (v9) edge (v8);
\draw[-triangle 60] (v11) edge (v10); 
\draw[-triangle 60] (v12) edge (v11);

\draw[-triangle 60] (v10) edge (v13);
\draw[-triangle 60] (v11) edge (v14);
\draw[-triangle 60] (v12) edge (v15);

\draw[-triangle 60] (v1) edge (v4); 

; 
\draw[-triangle 60] (v2) edge (v5); 

\draw[-triangle 60] (v3) edge (v6); 

\draw[-triangle 60] (v4) edge (v7); 
\draw[-triangle 60] (v7) edge (v10);

\draw[-triangle 60] (v5) edge (v8); 
\draw[-triangle 60] (v8) edge (v11);

\draw[-triangle 60] (v6) edge (v9); 
\draw[-triangle 60] (v9) edge (v12);

\draw[-triangle 60] (v4) edge (v2);
\draw[-triangle 60] (v5) edge (v3);
\draw[-triangle 60] (v7) edge (v5);
\draw[-triangle 60] (v8) edge (v6);
\draw[-triangle 60] (v10) edge (v8);
\draw[-triangle 60] (v11) edge (v9);
\draw[-triangle 60] (v13) edge (v11);
\draw[-triangle 60] (v14) edge (v12);

\draw[-triangle 60] (v15) edge (v14);
\draw[-triangle 60] (v14) edge (v13);

\end{tikzpicture}
\endpgfgraphicnamed
\caption{The quiver $Q$ with the word $\bi=(321)^5$.}
\label{fig:quiver_level}
\end{figure}

\begin{Lem}[Equivalent mutation sequences]\label{lem:equivalent_sequence}
(i) Let $\bi=c^3$ and denote $\overleftarrow{\mu}_c=(\mu_{2r} \mu_{r})\cdots (\mu_{r+2}\mu_2)(\mu_{r+1} \mu_1)$. Then the seed  $\overleftarrow{\mu}_ct_0$ is the same as the seed $(\mu_{2r}\cdots \mu_{r+1})(\mu_r\cdots \mu_2 \mu_1)t_0$. In other words, the mutation sequences $\overleftarrow{\mu}_c$ and $(\overleftarrow{\mu}_c)_{\geq 1}(\overleftarrow{\mu}_c)_{<1}$ starting from $t_0$ are equivalent.

(ii) Let $\bi=c^{N+1}$, then the mutation sequences $\overleftarrow{\mu}_c=\overleftarrow{\mu}_r\cdots \overleftarrow{\mu}_2\overleftarrow{\mu}_1$ and $(\overleftarrow{\mu}_c)_{\geq d}(\overleftarrow{\mu}_c)_{<d}$ starting from $t_0$ are equivalent for any $0\leq d\leq N-1$.

(iii) Let $\bi=c^{N+1}$ and $\overleftarrow{\mu}^c$ denote the sequence
$\overleftarrow{\mu}^{l-r+1}\ldots \overleftarrow{\mu}^{l-1}\overleftarrow{\mu}^{l}$, then the mutation sequences $\overleftarrow{\mu}^c$ and $\overleftarrow{\mu}^c_{< d}\overleftarrow{\mu}^c_{\geq d}$ starting from $t_0$ are equivalent for any $0\leq d\leq N-1$.
\end{Lem}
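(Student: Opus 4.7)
I plan to prove all three claims by direct analysis of how the quiver $\Gamma_{c^{N+1}}$ evolves under the prescribed mutations, exploiting that certain pairs of mutations commute once a suitable prefix has been performed. Recall that in the initial quiver, between a column-$(d+1)$ vertex $i[d]$ and a column-$(d+2)$ vertex $j[d+1]$, the only arrows are the horizontal arrow $i[d]\to i[d+1]$ of weight $+1$ (when $j=i$) and the diagonal arrow $j[d+1]\to i[d]$ of weight $+1$ whenever $i,j$ are Dynkin-adjacent with the orientation induced by $c$; columns whose indices differ by at least two are initially unconnected.

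For claim (i) I establish the following key lemma by induction on $k$: after $\mu_1,\mu_2,\ldots,\mu_{k-1}$ have been applied in order to $t_0$, the vertex $k$ has no edge to any vertex in $\{r+1,\ldots,r+k-1\}$. The inductive step hinges on the cancellation at $\mu_{k-1}$: the column arrow $k\to k-1$ composes with the horizontal arrow $k-1\to r+(k-1)$ (one checks both remain live and of weight $+1$ at that moment) to contribute $+1$ to $b_{k,r+(k-1)}$ via the mutation rule at vertex $k-1$, exactly cancelling the initial $-1$ coming from the diagonal arrow $r+(k-1)\to k$; the remaining entries $b_{k,r+j}$ with $j\le k-2$ stay zero throughout, because no earlier mutation produces a positive two-step path from $k$ to $r+j$. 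Given the key lemma, in the sequence $A=\mu_1,\mu_{r+1},\mu_2,\mu_{r+2},\ldots,\mu_r,\mu_{2r}$ each $\mu_{r+j}$ commutes with every subsequent $\mu_{k+1},\mu_{k+2},\ldots,\mu_r$ once the prefix $\mu_1,\ldots,\mu_k$ has been performed, which allows me to reshape $A$ into $B=\mu_1,\mu_2,\ldots,\mu_r,\mu_{r+1},\ldots,\mu_{2r}$ by a systematic sequence of transpositions of commuting adjacent mutations.

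For claim (ii) I induct on $d$, with the base case $d=0$ being vacuous. At the inductive step, a generalization of the key lemma holds: after all mutations at levels strictly less than $d$ have been applied in the order prescribed by $(\overleftarrow{\mu}_c)_{<d}$, for each letter $i$ and each level $d'\ge d$, the vertex $i[d-1]$ has no edge to the vertices $j[d']$ that would block commutation with a level-$d$ mutation. The proof reuses the same horizontal-column cancellation pattern as in (i), applied to the subquiver spanned by the two consecutive levels $d-1$ and $d$ (with earlier and later levels treated as frozen coefficients); columns whose indices differ by at least two remain disconnected throughout, which keeps the argument local to a pair of adjacent levels. Iterating gives the desired splitting $(\overleftarrow{\mu}_c)_{\ge d}(\overleftarrow{\mu}_c)_{<d}$. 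Claim (iii) is the time-reversed analogue of (ii): the sequences $\overleftarrow{\mu}^k$ process each letter's orbit from the highest non-frozen level downward, and the proof proceeds by the mirror cancellation, with the roles of horizontal and column arrows interchanged at each inductive step.

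The main obstacle will be the careful bookkeeping of the $B$-matrix across the long mutation sequence, in particular confirming within the inductive step of the key lemma that none of the prior mutations $\mu_1,\ldots,\mu_{k-2}$ creates a spurious edge between $k$ and any vertex in $\{r+1,\ldots,r+k-2\}$. I expect to handle this with a suitably loaded induction hypothesis that tracks simultaneously the disconnection of the target vertex from the preceding column-$2$ vertices and the persistence of the relevant horizontal arrow $k-1\to r+(k-1)$ at the time of $\mu_{k-1}$, so that every candidate two-step path through a previously mutated vertex is ruled out by a sign check.
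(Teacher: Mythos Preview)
Your approach is correct in outline but takes a genuinely different route from the paper's.

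For (i), the paper does not track the $B$-matrix through a chain of commuting transpositions. Instead it invokes the categorification: in the level-$2$ algebra both mutation sequences consist entirely of $T$-system exchange relations, so every new cluster variable produced along either sequence is the same Kirillov--Reshetikhin module $\simp(k^{\min}[1],k[1])$; together with a direct check that the two resulting ice quivers coincide, this identifies the two seeds. Your commutation argument is more elementary---it avoids the representation-theoretic input entirely---but is correspondingly more laborious in its bookkeeping. Note that your key lemma \emph{is} sufficient for the transposition argument once you observe that if vertex $k$ is disconnected from every $r+j$ ($1\le j\le k-1$) after $\mu_1,\ldots,\mu_{k-1}$, then mutating at any such $r+j$ cannot create a new edge at $k$; so the disconnection persists through all the intermediate stages you need.

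For (ii) and (iii) you are doing much more work than necessary. The paper deduces (ii) from (i) by a short induction: equivalence of mutation sequences depends only on the principal quiver, so one may freeze all levels except two consecutive ones and apply (i) locally, then iterate. For (iii) the paper simply observes that $\overleftarrow{\mu}^c=(\overleftarrow{\mu}_c)^{-1}$ and that $\overleftarrow{\mu}_c t_0$ has the same principal quiver as $t_0$; inverting the equivalence in (ii) then gives (iii) immediately. Your plan to rerun the combinatorics at each level, and then a mirror version for (iii), will work but is redundant once (i) is in hand. If you keep your combinatorial proof of (i), I would recommend adopting the paper's reductions for (ii) and (iii).
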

\begin{proof}
(i) This cluster algebra of level $N=2$ is not difficult and one can easily find that the ice quiver associated with both seeds $\Sigma_\bi t_0$ and $(\Sigma_\bi)_{\geq 1}(\Sigma_\bi)_{<1}t_0$ are the same. We still have to compare the cluster variables in the two seeds.

The reader could verify the claim directly by comparing cluster variables appearing along both sequences. Alternatively, identify this cluster algebra with the Grothendieck ring of representations of quantum affine algebras, such that the initial quantum cluster variable $X_k(t_0)$ is identified with the isoclass of the simple module $\simp(\theta e_k)=\simp(k^{\min},k)$.

we observe that the mutations along both sequences arise from $T$-systems and, consequently, the new cluster variables obtained by both sequences are identified with Kirillov-Reshetikhin modules $\simp(k^{min}[1],k[1])$, $1\leq k\leq 2r$. The claim follows.

(ii) Notice that the equivalence between mutation sequences is not affected by changing the frozen part of the ice quiver. Use ``$=$" to denote the equivalence. Repeatedly applying (i), we obtain
$\overleftarrow{\mu}_c=(\overleftarrow{\mu}_c)_{\geq 1}(\overleftarrow{\mu}_c)_{<1}=(\overleftarrow{\mu}_c)_{\geq 2}\cdot((\overleftarrow{\mu}_c)_{<2})_{\geq 1}\cdot(\overleftarrow{\mu}_c)_{<1}=(\overleftarrow{\mu}_c)_{\geq 2}(\overleftarrow{\mu}_c)_{<2}=\cdots =(\overleftarrow{\mu}_c)_{\geq {N-1}}(\overleftarrow{\mu}_c)_{<{N-1}}$.

(iii)
First observe that $\overleftarrow{\mu}^c=\overleftarrow{\mu}_c^{-1}$. It is a sequence from $\overleftarrow{\mu}_c t_0$ to $t_0$. By (ii), the sequences $\overleftarrow{\mu}^c$ and $(\overleftarrow{\mu}^c)_{<d}(\overleftarrow{\mu}^c)_{\geq d}$ starting from $\overleftarrow{\mu}_c t_0$ are equivalent. Because $\overleftarrow{\mu}_c t_0$ and $t_0$ share the same principal quiver $Q$ and the equivalence between mutation sequences is not affected by the frozen part of the ice quiver, we deduce that $\overleftarrow{\mu}^c$ and  $(\overleftarrow{\mu}^c)_{<d}(\overleftarrow{\mu}^c)_{\geq d}$ starting from $t_0$ are equivalent.
\end{proof}

\begin{Lem}\label{lem:enough_variable}
Assume $\bi=c^{N+1}$. Then all the quantum cluster variables appearing
along the
sequence $(\sigma\Sigma^\bi)^{-1}$ from $t_0$ to $t_0[-1]$ also appear along the
sequence $(\overleftarrow{\mu}^c)^N$.
\end{Lem}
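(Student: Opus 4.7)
The plan is to match the intermediate seeds visited by $(\sigma\Sigma^\bi)^{-1}$ with a rearrangement of those visited by $(\overleftarrow{\mu}^c)^N$, built by iteratively applying Lemma \ref{lem:equivalent_sequence}(iii). Throughout, assign to each position $k\in[1,l]$ its \emph{level} $d_k\in\{0,\ldots,N\}$, where $k$ lies in the $(d_k+1)$-th copy of $c$ inside $\bi=c^{N+1}$; the frozen vertices are precisely those at level $N$, and $m_k^-=d_k$ for every $k$.

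First, I will analyse the block structure of $(\sigma\Sigma^\bi)^{-1}=\overleftarrow{\mu}^1\cdots\overleftarrow{\mu}^l$. Reading right to left, it decomposes into $N+1$ blocks $B_{N+1},B_N,\ldots,B_1$, where $B_p$ collects the factors $\overleftarrow{\mu}^k$ with $k$ in the $p$-th copy. Since each such factor has $m_k^-=p-1$, the block $B_p$ performs exactly one mutation for every pair (letter, level) with level $\leq p-2$; in other words, $B_p$ has the same underlying multiset of mutations as $\overleftarrow{\mu}^c_{<p-1}$. In particular $B_{N+1}=\overleftarrow{\mu}^c$ itself, while $B_1$ is empty.

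Next, I will rearrange $(\overleftarrow{\mu}^c)^N$ starting from $t_0$ using Lemma \ref{lem:equivalent_sequence}(iii) applied iteratively to each of its $N$ copies of $\overleftarrow{\mu}^c$. For the first copy the lemma directly gives $\overleftarrow{\mu}^c\equiv\overleftarrow{\mu}^c_{<d}\overleftarrow{\mu}^c_{\geq d}$ for any $0\leq d\leq N-1$. For the later copies, I will argue that the same identity persists: by Lemma \ref{lem:long_injective_mutation} each intermediate seed $(\overleftarrow{\mu}^c)^s t_0$ is injective-reachable via a mutation sequence similar to the one for $t_0$, so its principal quiver coincides, up to a permutation of vertices, with that of $t_0$. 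Since the proof of Lemma \ref{lem:equivalent_sequence}(iii) depends only on the principal quiver (cf.\ the argument for its item (ii)), the splitting $\overleftarrow{\mu}^c\equiv\overleftarrow{\mu}^c_{<d}\overleftarrow{\mu}^c_{\geq d}$ still applies at each $(\overleftarrow{\mu}^c)^s t_0$. Iterating the splits across the $N$ copies, I will express $(\overleftarrow{\mu}^c)^N$ as an equivalent concatenation whose successive sub-blocks realise precisely the multisets of mutations encoded by $B_{N+1},B_N,\ldots,B_2$, in the correct order.

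Finally, having aligned the two sequences up to equivalence of sub-blocks, I will identify the seed just before each $B_p$ of $(\sigma\Sigma^\bi)^{-1}$ with the corresponding intermediate seed inside the rearrangement of $(\overleftarrow{\mu}^c)^N$, and conclude that every cluster variable produced within $B_p$ is also produced within $(\overleftarrow{\mu}^c)^N$. The main obstacle will be to handle the mismatch of orderings inside each block $B_p$ versus its counterpart $\overleftarrow{\mu}^c_{<p-1}$: the multisets agree, but the orderings differ. I plan to resolve this by induction on $p$, using that a mutation at a level-$d$ vertex only interacts, via its exchange relation, with vertices at levels $d-1,d,d+1$ in the principal ice quiver of $\Gamma_\bi$; consequently, mutations at non-adjacent levels commute in the sense that they pass through the same intermediate cluster variables. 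A careful tracking of these commutations then shows that the two orderings visit the same set of cluster variables, completing the proof.
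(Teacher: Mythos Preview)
Your overall strategy---decompose $(\sigma\Sigma^\bi)^{-1}$ into blocks, rewrite $(\overleftarrow{\mu}^c)^N$ via Lemma~\ref{lem:equivalent_sequence}(iii), then commute pieces using a level-separation argument---is exactly the paper's approach. But you have the block description wrong, and this error derails the matching.

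Concretely: for $k$ in the $p$-th copy of $c$ one has $m_k^-=p-1$ and $k^{\max}$ at level $N$, so
\[
\overleftarrow{\mu}^k=\mu_{k^{\max}[-(p-1)]}\cdots\mu_{k^{\max}[-1]}
\]
mutates at levels $N-1,N-2,\ldots,N-p+1$, i.e.\ at levels $\geq N-p+1$, \emph{not} at levels $\leq p-2$. (You have the right cardinality $p-1$ but the set is reflected.) Moreover, the block associated with the $p$-th copy is not merely equal to $\overleftarrow{\mu}^c_{\geq N-p+1}$ as a multiset---it is equal to it \emph{as a sequence}. Thus
\[
(\sigma\Sigma^\bi)^{-1}=\overleftarrow{\mu}^c_{\geq N-1}\cdots\overleftarrow{\mu}^c_{\geq 1}\overleftarrow{\mu}^c_{\geq 0}
\]
exactly, and your entire final paragraph about reconciling orderings inside each block is unnecessary. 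With your wrong cutoffs ($<p-1$ instead of $\geq N-p+1$) the commutation step would actually fail: $\overleftarrow{\mu}^c_{\geq N-2}$ and $\overleftarrow{\mu}^c_{<N-1}$ share level $N-2$, so no separating layer exists.

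The paper's argument, once the correct block form is in hand, proceeds as follows. Since each $(\overleftarrow{\mu}^c)^s t_0$ has the same principal quiver as $t_0$, Lemma~\ref{lem:equivalent_sequence}(iii) applies at every intermediate seed, yielding that $(\overleftarrow{\mu}^c)^N$ and $\prod_{d=0}^{N-1}\bigl(\overleftarrow{\mu}^c_{<d}\overleftarrow{\mu}^c_{\geq d}\bigr)$ produce the same cluster variables. One then sets $A_d=\overleftarrow{\mu}^c_{\geq d+1}$ and $B_d=\overleftarrow{\mu}^c_{<d}\cdots\overleftarrow{\mu}^c_{<0}$; in the quiver $\overleftarrow{\mu}^c_{\geq d}\cdots\overleftarrow{\mu}^c_{\geq 0}Q(t_0)$ the vertices touched by $A_d$ (levels $\geq d+1$) and by $B_d$ (levels $\leq d-1$) are separated by the full subquiver at level $d$, and this separation persists along either $A_dB_d$ or $B_dA_d$. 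Swapping $A_d$ and $B_d$ for all $d$ rearranges the product into $(B_{N-1}\cdots B_0)\bigl(\overleftarrow{\mu}^c_{\geq N-1}\cdots\overleftarrow{\mu}^c_{\geq 0}\bigr)$, whose rightmost factor is precisely $(\sigma\Sigma^\bi)^{-1}$. Your level-adjacency heuristic is the right intuition for this swap, but it is applied \emph{between} the $A_d$'s and $B_d$'s, not within a single block.
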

\begin{proof}
  The claim follows by tracking the exchange relations
  involved. Recall that the quantum cluster variables $X_i(t)$, as
  well as their specialization $x_i(t)=X_i(t)|_{q^\Hf\mapsto 1}$, are in
  bijection with the object $T_i(t)$ in Section \ref{sec:ice_quiver}. By using Calabi-Yau reduction,
  it suffices to remove the frozen vertices and verify that the commutative cluster variables
  $x_i(t)$ appearing on the sequence $(\sigma\Sigma^\bi)^{-1}$
  starting from quiver $Q(t_0)$ are contained in those appearing on
  the sequence $(\overleftarrow{\mu}^c)^N$.

For any given $0\leq d\leq N-1$, by Lemma \ref{lem:equivalent_sequence}(iii), we see that
$\overleftarrow{\mu}^c$ and $\overleftarrow{\mu}^c_{<d}\overleftarrow{\mu}^c_{\geq
d}$ give identical collection of cluster variables starting from the initial principal quiver $Q(t_0)$. Because $(\sigma\Sigma^\bi)^{-1}=\overleftarrow{\mu}^c_{\geq
{N-1}}\cdots\overleftarrow{\mu}^c_{\geq
1}\overleftarrow{\mu}^c_{\geq
0}$, it suffices to verify that, starting from the quiver $Q(t_0)$,
the mutation sequences
$$
(\overleftarrow{\mu}^c_{<N-1}\overleftarrow{\mu}^c_{\geq
N-1})\cdots (\overleftarrow{\mu}^c_{<1}\overleftarrow{\mu}^c_{\geq
1})(\overleftarrow{\mu}^c_{<0}\overleftarrow{\mu}^c_{\geq
0})
$$
and
$$ 
(\overleftarrow{\mu}^c_{<N-1}\cdots \overleftarrow{\mu}^c_{<1}  \overleftarrow{\mu}^c_{<0})(\overleftarrow{\mu}^c_{\geq
N-1}\cdots\overleftarrow{\mu}^c_{\geq
1}\overleftarrow{\mu}^c_{\geq
0}).
$$
give identical collection of cluster
variables.

Let us denote $A_d=\overleftarrow{\mu}^c_{\geq
d+1}$ and $B_d=\overleftarrow{\mu}^c_{< d}\cdots
\overleftarrow{\mu}^c_{< 1} \overleftarrow{\mu}^c_{<0}$. In the quiver $Q'=\overleftarrow{\mu}^c_{\geq
d}\cdots \overleftarrow{\mu}^c_{\geq
0}Q(t_0)$, the set of
vertices acted by $A_d$ and that acted by $B_d$ are separated by the full
subquiver on $\set{^{\min}i[d]|i\in I}$ of $Q'$ (cf. Example \ref{eg:quiver_seperation}). By definition of
mutations, this property is
preserved by the
quivers appearing along $A_dB_d$ or $B_dA_d$. Therefore, the mutation sequences $A_dB_d$ and $B_dA_d$ give identical
set of vertices. The claim follows by recursively swapping $A_d$ and $B_d$ for all $d$.
\end{proof}

\begin{Eg}\label{eg:quiver_seperation}
  Let we continue Example \ref{eg:quiver_level}. Take
  $d=2$, then the principal quiver $Q'=\overleftarrow{\mu}^c_{\geq
d}\cdots \overleftarrow{\mu}^c_{\geq
0}Q(t_0)$ is drawn in Figure \ref{fig:quiver_seperation}. Take
$A=\overleftarrow{\mu}^c_{\geq d+1}=\mu_{10}\mu_{11}\mu_{12}$ and
$B=\overleftarrow{\mu}^c_{<2}\overleftarrow{\mu}^c_{<1}=\mu_1\mu_4\mu_2\mu_5\mu_3\mu_6\mu_1\mu_2\mu_3$. The
sets $\set{1,2,3,4,5,6}$ and $\set{10,11,12}$ are always separated by
the full subquiver on $\set{7,8,9}$ of the quivers appearing along mutation
sequence $AB$ or $BA$.

Moreover, any such quiver appearing is a union of small blocks in Figure \ref{fig:quiver_block} for
$i\neq j\in I$, $0\leq d'\leq N-2$, with at
most one diagonal edge of multiplicity $-C_{i j}$.
\end{Eg}

\begin{figure}[htb!]
 \centering
\beginpgfgraphicnamed{fig:quiver_seperation}
\begin{tikzpicture}
\node [shape=circle, draw] (v1) at (6,8) {1};
    \node [shape=circle, draw] (v2) at (6,6) {2};
    \node [shape=circle, draw] (v3) at (6,4) {3};

\node [shape=circle, draw] (v4) at (4,8) {4};
    \node [shape=circle, draw] (v5) at (4,6) {5};
    \node [shape=circle, draw] (v6) at (4,4) {6};

\node [shape=circle, draw] (v7) at (2,8) {7};
    \node [shape=circle, draw] (v8) at (2,6) {8};
    \node [shape=circle, draw] (v9) at (2,4) {9};

\node [shape=circle, draw] (v10) at (0,8) {10};
    \node [shape=circle, draw] (v11) at (0,6) {11};
    \node [shape=circle, draw] (v12) at (0,4) {12};

\draw[-triangle 60] (v2) edge (v1); 
\draw[-triangle 60] (v3) edge (v2);
\draw[-triangle 60] (v5) edge (v4); 
\draw[-triangle 60] (v6) edge (v5);
\draw[-triangle 60] (v8) edge (v7); 
\draw[-triangle 60] (v9) edge (v8);
\draw[-triangle 60] (v11) edge (v10); 
\draw[-triangle 60] (v12) edge (v11);

\draw[-triangle 60] (v4) edge (v1); 

; 
\draw[-triangle 60] (v5) edge (v2); 

\draw[-triangle 60] (v6) edge (v3); 

\draw[-triangle 60] (v7) edge (v4); 
\draw[-triangle 60] (v7) edge (v10);

\draw[-triangle 60] (v8) edge (v5); 
\draw[-triangle 60] (v8) edge (v11);

\draw[-triangle 60] (v9) edge (v6); 
\draw[-triangle 60] (v9) edge (v12);

\draw[-triangle 60] (v1) edge (v5);
\draw[-triangle 60] (v2) edge (v6);

\draw[-triangle 60] (v4) edge (v8);
\draw[-triangle 60] (v5) edge (v9);
\draw[-triangle 60] (v10) edge (v8);
\draw[-triangle 60] (v11) edge (v9);

\end{tikzpicture}
\endpgfgraphicnamed
\caption{A quiver $Q'$}
\label{fig:quiver_seperation}
\end{figure}
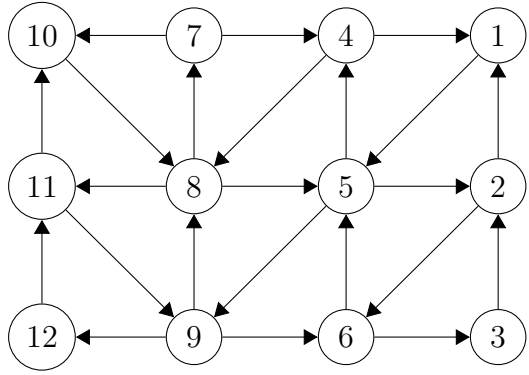

\begin{figure}[htb!]
 \centering
\beginpgfgraphicnamed{fig:quiver_block}
\begin{tikzpicture}

\node (v1) at (6,8) {$i[d']$};
    \node  (v2) at (6,6) {$j[d']$};
    \node  (v3) at (4,8) {$i[d'+1]$};
    \node (v4) at (4,6) {$j[d'+1]$};

\draw (v1) edge node[right]{$-C_{ij}$} (v2);
\draw (v1) edge (v3);
\draw (v3) edge node[left]{$-C_{ij}$} (v4);
\draw (v4) edge (v2);
\end{tikzpicture}
\endpgfgraphicnamed
\caption{small block }
\label{fig:quiver_block}
\end{figure}

Recall that the set $I$ denote $\set{1,2,\ldots,r}$.
\begin{Prop}\label{prop:compatible} 
  When $\bi=c^{N+1}$, the triangular bases in the seeds $t_0$ and
  $t_0[-1]$ are compatible. Moreover, they contain all the
  quantum cluster variables appearing along the sequence
  $(\sigma\Sigma^\bi)^{-1}$ from $t_0$ to $t_0[-1]$.
\end{Prop}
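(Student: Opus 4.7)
The plan is to propagate the triangular basis $\can^{t_0}$ seed-by-seed along the mutation sequence $(\sigma\Sigma^\bi)^{-1}$ from $t_0$ to $t_0[-1]$, applying Proposition \ref{prop:condition_change_seed} at each step. Compatibility of $\can^{t_0}$ and $\can^{t_0[-1]}$ will then be an automatic byproduct, as will containment of all cluster variables along this sequence in both bases. The whole proof is thus reduced to verifying admissibility (in the appropriate direction) of the propagated triangular basis at each intermediate seed, which in turn reduces to identifying cluster variables and injectives along the sequence with specific simple modules of $\extQuotKGp(\bi)$ that already belong to $\can^{t_0}$ by the construction \eqref{eq:initial_weakly_triangular_basis}.

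First I would invoke Lemma \ref{lem:enough_variable} to replace $(\sigma\Sigma^\bi)^{-1}$ by the more structured sequence $(\overleftarrow{\mu}^c)^N$; any cluster variable along the former also appears along the latter. For the initial block $\overleftarrow{\mu}^c$, Proposition \ref{prop:calculate_variable} directly identifies each new cluster variable with a simple module $\kappa^{-1}W^{(i_l)}_{m_k^+,m_k^-+1,a_k}$, which lies in $\can^{t_0}$ by \eqref{eq:initial_weakly_triangular_basis}. For the remaining $N-1$ blocks, I would proceed by induction on $N$: after applying one $\overleftarrow{\mu}^c$, the resulting seed is similar, in the sense of Section \ref{sec:correction}, to the canonical initial seed of the shorter word $c^N$, via an embedding $\extQuotKGp(c^N)\hookrightarrow\extQuotKGp(c^{N+1})$ that sends the simple basis into the simple basis (cf. the end of Section \ref{sec:adaptable_word}). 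Combined with Lemma \ref{lem:variation_triangular_basis}, the induction hypothesis then transports the identification of the remaining cluster variables as simple modules, so they too sit in $\can^{t_0}$.

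For the admissibility required by Proposition \ref{prop:condition_change_seed}, one also needs the corresponding injectives $I_k(\mu_k t)$ to belong to the propagated basis; these are treated by the same mechanism, since after identification they are again simple modules whose Laurent expansions match the $q,t$-characters of Kirillov-Reshetikhin modules or of their minuscule variants $W^{(i)}_{k-h,h,a}$ computed in Proposition \ref{prop:KR_module_variant}(i)(ii). The main obstacle will be the inductive bookkeeping in the third step: one must carefully match the ice quiver, the quantization matrix, and the permutation $\sigma$ on both sides of the similarity, so that the embedding of Grothendieck rings is compatible with the passage from $\qClAlg(c^{N+1})$ to $\qClAlg(c^N)$ and preserves the identification of cluster variables with specific simples. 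The character calculations themselves are handled by Proposition \ref{prop:KR_module_variant}; what requires care is verifying that the simple modules produced on the $c^N$ side lift to the correct simples on the $c^{N+1}$ side under the embedding, so that the lifted elements remain in $\can^{t_0}$.
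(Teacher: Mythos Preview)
Your proposal has two genuine gaps.

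First, Proposition \ref{prop:calculate_variable} does not cover the whole block $\overleftarrow{\mu}^c$. It only identifies the cluster variables obtained along $\overleftarrow{\mu}^l=\mu_{l^{\max}[-m_l^-]}\cdots\mu_{l^{\max}[-1]}$, i.e.\ the mutations at vertices $k$ with $i_k=i_l$. The block $\overleftarrow{\mu}^c=\overleftarrow{\mu}^{l-r+1}\cdots\overleftarrow{\mu}^{l-1}\overleftarrow{\mu}^l$ involves all $r$ vertex types, and after performing $\overleftarrow{\mu}^l$ the seed is no longer $t_0$, so you cannot reapply Proposition \ref{prop:calculate_variable} directly for $\overleftarrow{\mu}^{l-1}$.

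Second, your induction on $N$ fails at the similarity step. After applying $\overleftarrow{\mu}^c$ to $t_0(c^{N+1})$, the resulting seed still has $(N+1)r$ vertices and $Nr$ unfrozen vertices; it is not similar in the sense of Section \ref{sec:correction} to $t_0(c^N)$, which has only $Nr$ vertices. The Grothendieck ring embedding $\extQuotKGp(c^N)\hookrightarrow\extQuotKGp(c^{N+1})$ at the end of Section \ref{sec:adaptable_word} does not produce a similarity of seeds in the required technical sense, and Lemma \ref{lem:variation_triangular_basis} does not apply.

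The paper's argument resolves both issues at once by a different similarity. After $\overleftarrow{\mu}^l$, the principal quiver of $\overleftarrow{\mu}^l t_0$ coincides with that of $t_0(\bi')$ where $\bi'=(i_{r-1},\ldots,i_1,i_r)^{N+1}$ is a \emph{rotated} Coxeter word of the \emph{same} length. Now Proposition \ref{prop:calculate_variable} applies to $t_0(\bi')$ for the vertex type $i_{r-1}$, and Lemma \ref{lem:variation_triangular_basis}(iii) transports this to $\overleftarrow{\mu}^l t_0$, handling $\overleftarrow{\mu}^{l-1}$. Iterating through all $r$ rotations and then $N$ times gives $(\overleftarrow{\mu}^c)^N$. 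The admissibility at each step (i.e.\ that the injectives $I_k(\mu_k t')$ also lie in the basis) is not obtained by separately computing their characters as you suggest, but by noting that the injectives along the backward sequence are cluster variables along the forward sequence $\Sigma^\bi$ from $t_0$ to $t_0[1]$, and those are already known to be Kirillov--Reshetikhin modules.
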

\begin{proof}
Repeatedly applying Proposition
\ref{prop:condition_change_seed}, we have the following argument:

\emph{
Let $\can^t$ be the triangular basis \wrt a seed $t$ and assume it to be positive. Let $u_1,u_2$ be mutation sequences such that their composition $\overleftarrow{u}=u_2u_1$ is a mutation sequence from $t$ to $t[1]$. If the cluster variables appearing along the mutation sequences from $u_2u_1$ and from $u_2^{-1}$ starting from $t$ are contained in the initial triangular basis $\can^t$ of $t$, then the common triangular basis \wrt the seeds along the mutation sequence $u_2^{-1}$ starting from $t$ to $u_2^{-1}t$ exists.}

Recall that, for $\bi=(i_r,\cdots,i_2,i_1)^{N+1}$, the vertices of the ice quiver for the initial seed $t_0=t_0(\bi)$ are given by \begin{align*}
&(1,2,\cdots,r,r+1,\cdots,2r,\cdots,l)\\
=&(^{\min}i_1,^{\min}i_2,\cdots, ^{\min}i_r,^{\min}i_1[1],\cdots, ^{\min}i_r[1],\cdots,^{\min}i_r[N])\\
=&(^{\max}i_1[-N],^{\max}i_2[-N],\cdots, ^{\max}i_r[-N],^{\max}i_1[-N+1],\\
&\cdots, ^{\max}i_r[-N+1],\cdots,^{\max}i_r)
\end{align*}
 where $l=(N+1)r$.

We have shown that the triangular basis $\can^{t_0}$ \wrt $t_0$ exists and is positive. Now observe that all quantum cluster variables along the mutation
sequence $\Sigma^\bi$
from $t_0$ to $t_0[1]$ are known to be Kirillov-Reshetikhin modules,
and, in particular, contained in the initial triangular basis
$\can^t_0$. Also, the cluster variables obtained along the sequence $\overleftarrow{\mu}^l=\mu_{^{\max}i_r[-N]}\cdots \mu_{^{\max}i_r[-2]}\mu_{^{\max}i_r[-1]}$ from
$t_0$ to $\overleftarrow{\mu}^l t_0$ are in $\can^{t_0}$ by Proposition
\ref{prop:calculate_variable}. By the previous argument, the common triangular basis \wrt the seeds along the sequence $\overleftarrow{\mu}^l$ from
$t_0$ to $\overleftarrow{\mu}^l t_0$ exists.

Notice that the principal quiver $Q(\overleftarrow{\mu}^l t_0)$ is the same as that of
the ice quiver associated with the word
$\bi'=(i_{r-1}, \ldots, i_1,i_r)^{N+1}$ for the acyclic Coxeter word
$(i_{r-1}, \ldots, i_1,i_r)$ (cf. Example \ref{eg:similar_after_mutation}). Applying Proposition
\ref{prop:calculate_variable} to the seed $t_0(\bi')$ associated with $\bi'$, we obtain that the quantum cluster variables along the sequence $\mu_{^{\max}i_{r-1}[-N]}\cdots\mu_{^{\max}i_{r-1}[-2]}\cdots\mu_{^{\max}i_{r-1}[-1]}$ starting from
$t_0(\bi')$ are contained in the triangular basis $\can^{t_0(\bi')}$ for this new seed $t_0(\bi')$. Consequently, we deduce that the
quantum cluster variables obtained along the sequence $\overleftarrow{\mu}^{l-1}=\mu_{^{\max}i_{r-1}[-N]}\cdots\mu_{^{\max}i_{r-1}[-2]}\mu_{^{\max}i_{r-1}[-1]}$
starting from $\overleftarrow{\mu}^l t_0$ is contained in the canonical basis
$\can^{\overleftarrow{\mu}^l t_0}$ for the seed $\overleftarrow{\mu}^l t_0$, because this property is shared by similar seeds $t_0(\bi')$ and $\overleftarrow{\mu}^l t_0$, cf. Lemma \ref{lem:variation_triangular_basis}(iii)). By the previous argument, the common triangular basis \wrt the seeds along the sequence $\overleftarrow{\mu}^{l-1}$ from
$\overleftarrow{\mu}^l t_0$ to $\overleftarrow{\mu}^{l-1}\overleftarrow{\mu}^l t_0$ exists.

Repeating this process, we conclude that the common triangular basis \wrt the seeds along the sequence $(\overleftarrow{\mu}^c)^N$ from
$t_0$ to $(\overleftarrow{\mu}^c)^N t_0$ exists. In particular, the initial triangular basis $\can^{t_0}$ contains all the
quantum cluster variables along this sequence. By Lemma
\ref{lem:enough_variable}, $\can^{t_0}$ already contains the quantum cluster
variables along the sequence $(\sigma\Sigma^\bi)^{-1}$ starting from
$t_0$. Repeating the previous argument, we obtain that the common triangular basis \wrt the seeds along the mutation sequence $(\sigma\Sigma^\bi)^{-1}$ from $t_0$ and $t_0[-1]$ exists.
\end{proof}

\begin{Eg}\label{eg:similar_after_mutation}
Let us continue Example \ref{eg:quiver_level}. Applying Proposition \ref{prop:calculate_variable} to the initial seed $t_0$, we know that the cluster variables appearing along the mutation sequence $\overleftarrow{\mu}^l=\mu_{^{\max}3[-N]}\cdots \mu_{^{\max}3[-1]}=\mu_3\mu_6\mu_9\mu_{12}$ correspond to Kirillov-Reshetikhin modules and belong in the triangular basis $\can^{t_0}$. Consequently, by Proposition \ref{prop:condition_change_seed}, we have the common triangular basis \wrt all seeds along this mutation sequence from $t_0$ to $\overleftarrow{\mu}^l t_0$.

Next, look at the ice quiver of the $\overleftarrow{\mu}^l t_0$ drawn in figure \ref{fig:mutated_quiver}. It shares the same principal part with the ice quiver associated with the word $\bi'=(213)^5$, cf. Figure \ref{fig:similar_mutated_quiver}. In other word, the seeds $\overleftarrow{\mu}^l t_0$ and $t_0(\bi')$ are similar. Applying Proposition \ref{prop:calculate_variable} to the seed $t_0(\bi')$, we know that the cluster variables appearing along the mutation sequence $\mu_{^{\max}2[-1]}\cdots \mu_{^{\max}2[-N]}=\mu_3\mu_6\mu_9\mu_{12}$ belong to the triangular basis $\can^{t_0(\bi')}$. Therefore, this property holds for the similar seed $\overleftarrow{\mu}^l t_0$ as well, namely, the cluster variables appearing along the mutation sequence $\overleftarrow{\mu}^{l-1}==\mu_2\mu_5\mu_8\mu_{11}$ are contained in the triangular basis of $\can^{\overleftarrow{\mu}^l t_0}$. Consequently, we have the common triangular basis \wrt all seeds along the mutation sequence $\overleftarrow{\mu}^{l-1}$ from $\overleftarrow{\mu}^lt_0$ to $\overleftarrow{\mu}^{l-1}\overleftarrow{\mu}^l t_0$.
\end{Eg}

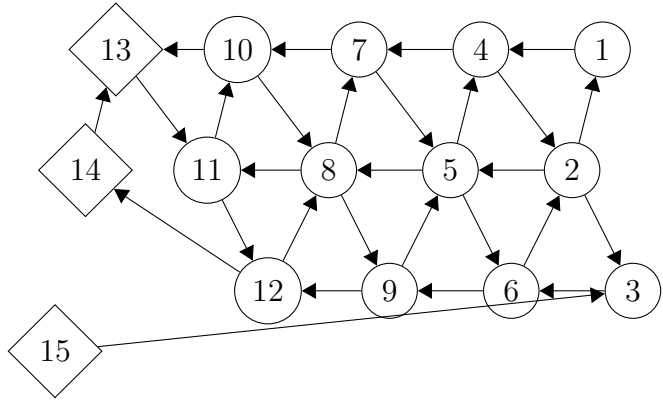
\begin{figure}[htb!]
 \centering
\beginpgfgraphicnamed{fig:mutated_quiver}
\begin{tikzpicture}[scale=0.8]
\node [shape=circle, draw] (v1) at (6.5,8) {1};
    \node [shape=circle, draw] (v2) at (6,6) {2};
    \node [shape=circle, draw] (v3) at (7,4) {3};

\node [shape=circle, draw] (v4) at (4.5,8) {4};
    \node [shape=circle, draw] (v5) at (4,6) {5};
    \node [shape=circle, draw] (v6) at (5,4) {6};

\node [shape=circle, draw] (v7) at (2.5,8) {7};
    \node [shape=circle, draw] (v8) at (2,6) {8};
    \node [shape=circle, draw] (v9) at (3,4) {9};

\node [shape=circle, draw] (v10) at (0.5,8) {10};
    \node [shape=circle, draw] (v11) at (0,6) {11};
    \node [shape=circle, draw] (v12) at (1,4) {12};

\node [shape=diamond, draw] (v13) at (-1.5,8) {13};
    \node [shape=diamond, draw] (v14) at (-2,6) {14};
    \node [shape=diamond, draw] (v15) at (-2.5,3) {15};
    
\draw[-triangle 60] (v2) edge (v1); 
\draw[-triangle 60] (v2) edge (v3);
\draw[-triangle 60] (v5) edge (v4); 
\draw[-triangle 60] (v5) edge (v6);
\draw[-triangle 60] (v8) edge (v7); 
\draw[-triangle 60] (v8) edge (v9);
\draw[-triangle 60] (v11) edge (v10); 
\draw[-triangle 60] (v11) edge (v12);

\draw[-triangle 60] (v10) edge (v13);
\draw[-triangle 60] (v14) edge (v13);

\draw[-triangle 60] (v1) edge (v4); 

; 
\draw[-triangle 60] (v2) edge (v5); 

\draw[-triangle 60] (v3) edge (v6); 

\draw[-triangle 60] (v4) edge (v7); 
\draw[-triangle 60] (v7) edge (v10);

\draw[-triangle 60] (v5) edge (v8); 
\draw[-triangle 60] (v8) edge (v11);

\draw[-triangle 60] (v6) edge (v9); 
\draw[-triangle 60] (v9) edge (v12);

\draw[-triangle 60] (v4) edge (v2);
\draw[-triangle 60] (v6) edge (v2);
\draw[-triangle 60] (v7) edge (v5);
\draw[-triangle 60] (v9) edge (v5);
\draw[-triangle 60] (v10) edge (v8);
\draw[-triangle 60] (v12) edge (v8);
\draw[-triangle 60] (v13) edge (v11);
\draw[-triangle 60] (v12) edge (v14);

\draw[-triangle 60] (v15) edge (v3);

\end{tikzpicture}
\endpgfgraphicnamed
\caption{The ice quiver of the mutated seed $\mu_3\mu_6\mu_9\mu_{12}t_0((321)^5)$}
\label{fig:mutated_quiver}
\end{figure}

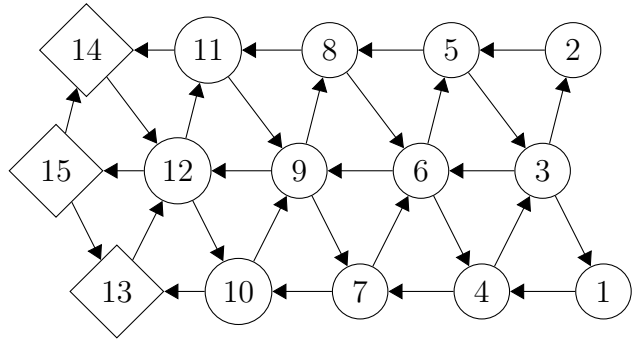
\begin{figure}[htb!]
 \centering
\beginpgfgraphicnamed{fig:similar_mutated_quiver}
\begin{tikzpicture}[scale=0.8]
\node [shape=circle, draw] (v1) at (6.5,8) {2};
    \node [shape=circle, draw] (v2) at (6,6) {3};
    \node [shape=circle, draw] (v3) at (5,4) {4};

\node [shape=circle, draw] (v4) at (4.5,8) {5};
    \node [shape=circle, draw] (v5) at (4,6) {6};
    \node [shape=circle, draw] (v6) at (3,4) {7};

\node [shape=circle, draw] (v7) at (2.5,8) {8};
    \node [shape=circle, draw] (v8) at (2,6) {9};
    \node [shape=circle, draw] (v9) at (1,4) {10};

\node [shape=circle, draw] (v10) at (0.5,8) {11};
    \node [shape=circle, draw] (v11) at (0,6) {12};
    \node [shape=diamond, draw] (v12) at (-1,4) {13};

\node [shape=diamond, draw] (v13) at (-1.5,8) {14};
    \node [shape=diamond, draw] (v14) at (-2,6) {15};

    \node [shape=circle, draw] (v15) at (7,4) {1};
    
\draw[-triangle 60] (v2) edge (v1); 
\draw[-triangle 60] (v3) edge (v2);
\draw[-triangle 60] (v5) edge (v4); 
\draw[-triangle 60] (v6) edge (v5);
\draw[-triangle 60] (v8) edge (v7); 
\draw[-triangle 60] (v9) edge (v8);
\draw[-triangle 60] (v11) edge (v10); 
\draw[-triangle 60] (v12) edge (v11);

\draw[-triangle 60] (v10) edge (v13);
\draw[-triangle 60] (v11) edge (v14);
\draw[-triangle 60] (v2) edge (v15);

\draw[-triangle 60] (v1) edge (v4); 

; 
\draw[-triangle 60] (v2) edge (v5); 

\draw[-triangle 60] (v3) edge (v6); 

\draw[-triangle 60] (v4) edge (v7); 
\draw[-triangle 60] (v7) edge (v10);

\draw[-triangle 60] (v5) edge (v8); 
\draw[-triangle 60] (v8) edge (v11);

\draw[-triangle 60] (v6) edge (v9); 
\draw[-triangle 60] (v9) edge (v12);

\draw[-triangle 60] (v4) edge (v2);
\draw[-triangle 60] (v5) edge (v3);
\draw[-triangle 60] (v7) edge (v5);
\draw[-triangle 60] (v8) edge (v6);
\draw[-triangle 60] (v10) edge (v8);
\draw[-triangle 60] (v11) edge (v9);
\draw[-triangle 60] (v13) edge (v11);
\draw[-triangle 60] (v14) edge (v12);

\draw[-triangle 60] (v15) edge (v3);

\draw[-triangle 60] (v14) edge (v13);

\end{tikzpicture}
\endpgfgraphicnamed
\caption{The ice quiver for the word $\bi'=(213)^5$}
\label{fig:similar_mutated_quiver}
\end{figure}

\subsection{Compatibility in type $A$ via level-rank duality}\label{sec:type_A}
In this section, we consider the special case when $C$ is of type
$A_{r}$ and $\bi=(1,2,1,3,2,1
\ldots,r-1\ldots,2,1,r,\ldots ,2,1)$. Then $w_{\bi}$ is the longest element in the Weyl group. The pair $(C,\bi)$
is of both type (i) and (ii).

In the following example, we check Conjecture
\ref{conj:desired_module} for the special case $A_4$, which,
together with Proposition \ref{prop:initial_seed_cond}, will
imply the type $A_4$ case in Theorem \ref{thm:consequence}
\ref{thm:simple_module} \ref{thm:canonical_basis}. This approach seems to be effective for general $r$, but we don't pursue the generalization in this paper.

\begin{Eg}
  Consider Figure \ref{fig:type_A}. The exchange relations along the
  mutation sequence $\Sigma^\bi=(1, 5, 8, 2, 6, 1, 5, 3, 2, 1)$ are $T$-system
  relations. Consider the trivial permutation $\var^*$ of
  $[1,l]$. It induces an isomorphism from principal quiver $Q(t_0')$ (Figure \ref{fig:type_A_rotated}) to the principal quiver $\tQ(t_0)$. We have a variation map $\Var$ from
$\ptSet(t)$ to $\ptSet(t')$. The mutation sequence (read from right to left)
$(\sigma\Sigma^\bi)^{-1}=(8, 6, 3, 5, 8, 2, 6, 1, 5, 8)$ becomes a new
sequence on $\tQ(t_0')$. It is straightforward to check that the
quantum cluster variables appearing along this sequence correspond to
  Kirillov-Reshetikhin modules for the seed $t_0'$, which is again
  associated with the longest word. In particular, they are contained in the triangular basis $\can^{t_0'}$. By the correction technique in Lemma \ref{lem:variation}, similar quantum cluster variables obtained along $(\sigma\Sigma^\bi)^{-1}$ are contained in $\can^{t_0}$.
\end{Eg}
\begin{figure}[htb!]
 \centering
\beginpgfgraphicnamed{fig:type_A}
  \begin{tikzpicture}[scale=0.8]
 \node [shape=diamond, draw] (v1) at (7,4) {4}; 
    \node  [shape=diamond, draw] (v2) at (6,3) {7}; 
    \node [shape=diamond,  draw] (v3) at (5,2) {9};
\node [shape=diamond, draw] (v4) at (4,1) {10}; 

\node  [shape=circle, draw] (v5) at (5,4) {1}; 
    \node [shape=circle,  draw] (v6) at (4,3) {2};
    
\node [shape=circle, draw] (v7) at (3,2) {3}; 
    \node  [shape=circle, draw] (v8) at (3,4) {5}; 
    \node [shape=circle,  draw] (v9) at (2,3) {6};

\node [shape=circle, draw] (v10) at (1,4) {8};

    \draw[-triangle 60] (v4) edge (v3); 
    \draw[-triangle 60] (v3) edge (v2); 
    \draw[-triangle 60] (v2) edge (v1);
    
    \draw[-triangle 60] (v7) edge (v6); 
    \draw[-triangle 60] (v6) edge (v5);
    
    \draw[-triangle 60] (v9) edge (v8);

    \draw[-triangle 60] (v10) edge (v9); 
    \draw[-triangle 60] (v9) edge (v7); 
    \draw[-triangle 60] (v7) edge (v4);
    
    \draw[-triangle 60] (v8) edge (v6); 
    \draw[-triangle 60] (v6) edge (v3);
    
    \draw[-triangle 60] (v5) edge (v2);

    \draw[-triangle 60] (v1) edge (v5); 
    \draw[-triangle 60] (v5) edge (v8); 
    \draw[-triangle 60] (v8) edge (v10);

    \draw[-triangle 60] (v2) edge (v6); 
    \draw[-triangle 60] (v6) edge (v9); 

\draw[-triangle 60] (v3) edge (v7);
  \end{tikzpicture}
\endpgfgraphicnamed
\caption{Ice quiver $\tQ(t_0')$ obtained by rotating the quiver in Figure
  \ref{fig:type_A} and changing coefficient pattern.}
\label{fig:type_A_rotated}
\end{figure}
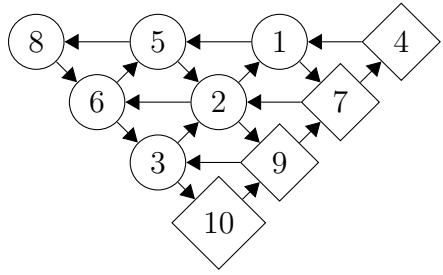

\subsection{Consequences}
\label{sec:consequence}

We summarize the consequences of the previous discussion.

We have seen that the simple basis $\set{\simp (w)}$ provides the
initial triangular basis \wrt the seed $t_0$ after localization. By Proposition
\ref{prop:compatible}, we have verified all the conditions demanded in Reduced Existence Theorem (Theorem
\ref{thm:reduction}) for the word $c^{N+1}$. Notice that, to any adaptable
word $\bi$, we can associate\footnote{The quiver associated with an $\overline{\Omega}$-adaptable word $\bi$ is a connected full subquiver $\tQ$ in the Auslander-Reiten quiver of the bounded derived category of $(\Gamma,\Omega)$-representations, such that its intersection with each $\tau$-orbit is connected, where $\tau$ denote the Auslander-Reiten translation. We can thus expand $\tQ$ following the direction of $\tau$ into a quiver associated with $c^{N+1}$, which has an initial adaptable word $\bi'$ producing $\tQ$. Notice that, for type (i), $\tQ$ is categorified by a terminal $(\Gamma,\Omega)$-representation, cf. \cite[Section 2.2]{GeissLeclercSchroeer07b} for detailed treatment.} an initial subword $\bi'$ of some
$c^{N+1}$, such that $w_\bi=w_{\bi'}$ and, consequently, $\clAlg(\bi)$
agrees with $\clAlg(\bi')$. Notice that $\bi'$ is adaptable as well. By working with
type (ii) quantum cluster algebras, we have the
embedding from $\extQuotKGp(\bi',\ua')$ to $\extQuotKGp(c^{N+1},\ua)$ and the
following result. 
\begin{Thm}\label{thm:consequence}
Assume that $C$ is symmetric and $\bi$ an adaptable word. Then the
simple basis $\set{\simp (w)}$, after localization at the frozen
variables, gives the common triangular basis of the
quantum cluster algebra $\qClAlg(\bi)$. In particular, it
verifies Fock-Goncharov Conjecture, and it contains all quantum cluster monomials. 
\end{Thm}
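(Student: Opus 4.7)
The plan is to reduce everything to the case $\bi=c^{N+1}$ by choosing an appropriate embedding, and then apply the Reduced Existence Theorem (Theorem \ref{thm:reduction}).

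First I would handle the case $\bi=c^{N+1}$. By Section \ref{sec:verify_initial_condition}, the localized simple basis $\{\simp(w)\}$ provides the triangular basis $\can^{t_0}$ with respect to the initial seed $t_0$; its positivity comes from the positivity of the structure constants of the simple basis in $\extQuotKGp$ (cf.\ \eqref{eq:str_const}) transported via $\kappa^{-1}$. Proposition \ref{prop:compatible} then shows that all quantum cluster variables along the mutation sequence $(\sigma\Sigma^\bi)^{-1}$ from $t_0$ to $t_0[-1]$ belong to $\can^{t_0}$. The quantum cluster variables along $\Sigma^\bi$ from $t_0$ to $t_0[1]$ are Kirillov--Reshetikhin modules (each step is a $T$-system exchange by Theorem \ref{thm:T_system}), hence lie in $\can^{t_0}$ as well. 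All hypotheses of Theorem \ref{thm:reduction} are therefore satisfied, yielding the common triangular basis of $\qClAlg(c^{N+1})$.

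Next I would treat a general adaptable $\bi$ by realizing it inside some $c^{N+1}$. Concretely, to an $\overline{\Omega}$-adaptable word $\bi$ I associate an initial subword $\bi'$ of $c^{N+1}$ (for $c$ an acyclic Coxeter word of the relevant quiver and $N$ large enough) with $w_{\bi'}=w_\bi$; the resulting word $\bi'$ is again adaptable and the quantum cluster algebras $\qClAlg(\bi)$ and $\qClAlg(\bi')$ coincide. On the categorical side, Section \ref{sec:adaptable_word} provides a natural embedding $\extQuotKGp(\bi',\ua')\hookrightarrow\extQuotKGp(c^{N+1},\ua)$ sending simples to simples and root vectors to root vectors. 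Under this embedding the already constructed common triangular basis of $\qClAlg(c^{N+1})$ restricts to a bar-invariant, positive, $\degL(t)$-pointed, cluster-monomial-containing, $(\prec_t,\bm)$-unitriangular family in every seed of $\qClAlg(\bi')$, which is exactly the common triangular basis of $\qClAlg(\bi)$. The Fock--Goncharov parametrization and the containment of all quantum cluster monomials are then immediate consequences of Definition \ref{def:common_triangular_basis} together with Lemma \ref{lem:FG_conj} and Lemma \ref{lem:basis_property}(iii).

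The only subtle point will be the case where one wishes to change the coefficient/quantization pattern (for instance when the initial subword $\bi'$ inside $c^{N+1}$ gives a seed only similar, not equal, to $t_0(\bi)$); here I would invoke Theorem \ref{thm:change_coefficient} together with Lemma \ref{lem:variation_triangular_basis} so that the common triangular basis is transported along the variation map $\Var$. The main obstacle is really bundled into Proposition \ref{prop:compatible}, i.e.\ showing that the quantum cluster variables along $(\sigma\Sigma^\bi)^{-1}$ lie in $\can^{t_0}$: this requires the identification in Proposition \ref{prop:calculate_variable} of certain cluster variables with the simples $W^{(i_l)}_{m_k^+,m_k^-+1,a_k}$ computed in Proposition \ref{prop:KR_module_variant}, and the equivalence-of-mutation-sequences arguments of Lemma \ref{lem:equivalent_sequence} and Lemma \ref{lem:enough_variable} that let one bootstrap from one-step Kirillov--Reshetikhin exchanges to the full sequence. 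Once this is in hand, the rest is a routine assembly of the machinery of Section \ref{sec:triangular_basis}.
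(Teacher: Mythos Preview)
Your proposal is essentially the same as the paper's: establish the initial triangular basis via Section~\ref{sec:verify_initial_condition}, invoke Proposition~\ref{prop:compatible} to feed the Reduced Existence Theorem for $\bi=c^{N+1}$, and then reduce a general adaptable $\bi$ to an initial subword $\bi'$ of some $c^{N+1}$ via the embedding of Grothendieck rings $\extQuotKGp(\bi',\ua')\hookrightarrow\extQuotKGp(c^{N+1},\ua)$.

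One small point of drift in your step~2: your phrasing ``the common triangular basis of $\qClAlg(c^{N+1})$ restricts to \ldots\ in every seed of $\qClAlg(\bi')$'' and your appeal to Theorem~\ref{thm:change_coefficient} do not quite fit. The seeds of $\qClAlg(\bi')$ have strictly fewer exchangeable vertices than those of $\qClAlg(c^{N+1})$, so they are neither seeds of the larger algebra nor \emph{similar} seeds in the sense of Section~\ref{sec:correction}; there is no direct restriction of a triangular basis from one seed-groupoid to the other, and the variation machinery does not apply across different ranks. What the paper actually uses (and what your outline implicitly needs) is more modest: since $\clAlg(\bi)\cong\clAlg(\bi')$ as cluster algebras, Section~\ref{sec:verify_initial_condition} already gives the initial triangular basis for $\qClAlg(\bi')$ directly, and the embedding of Grothendieck rings (sending simples to simples) is used only to transport the verification of Conjecture~\ref{conj:desired_module} for $\bi'$ from the already-established $c^{N+1}$ case, after which Theorem~\ref{thm:reduction} applies to $\qClAlg(\bi')$ on its own. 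The paper is equally terse on this passage, so this is a clarification rather than a gap.
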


The following result is a direct consequence.
\begin{Thm}\label{thm:simple_module}
  Conjecture \ref{conj:simple_module} holds true. In particular, the quantum cluster
monomials in $\clAlg$ of type (ii) are simple modules of quantum
affine algebras. 
\end{Thm}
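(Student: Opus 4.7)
My plan is to deduce Theorem \ref{thm:simple_module} as a direct corollary of Theorem \ref{thm:consequence} combined with the factorization property of the simple basis (Proposition \ref{prop:factorization_canonical}). The substantive content has already been established; what remains is a short bookkeeping argument about how quantum cluster monomials of $\qClAlg^\dagger(\bi)$ (which has all frozen variables inverted in the passage to $\qClAlg(\bi)$) sit inside the localized basis.

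First, I would fix an adaptable word $\bi$. By Theorem \ref{thm:consequence} the common triangular basis of $\qClAlg(\bi)$ is the localization of $\{\simp(w)\}$ at the frozen variables, and every quantum cluster monomial belongs to it. Explicitly, by \eqref{eq:initial_weakly_triangular_basis} and the compatibility of the basis \wrt all seeds, every common triangular basis element has the form $\simp(w)\cdot\prod_{s=s^{\max}}\simp(\theta e_s)^{f_s}$ for some $w\in\N^l$ with $w_s=0$ whenever $f_s\neq 0$, and with $f_s\in\Z$. The factorization property (Proposition \ref{prop:factorization_canonical}), applied to frozen longest Kirillov--Reshetikhin simples, implies that $[\simp(w)*\simp(\theta e_s)^{f_s}]=\simp(w+f_s\theta e_s)$ in $\extQuotKGp$ whenever $f_s\geq 0$; hence any common triangular basis element with all $f_s\geq 0$ is itself $\kappa^{-1}S(w')$ for some $w'\in\N^l$.

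Second, I would observe that any quantum cluster monomial $M$ of $\qClAlg^\dagger(\bi)$ is an ordered product of quantum cluster variables in a single seed, hence contains only non-negative powers of the frozen variables. Writing $M$ in the common triangular basis of the localized algebra $\qClAlg(\bi)$ and comparing Laurent expansions at the frozen variables in the initial seed $t_0$, the exponents $f_s$ appearing in the unique basis element representing $M$ must all be non-negative. By the previous paragraph, $M=\kappa^{-1}S(w')$ for some $w'\in\N^l$. This verifies Conjecture \ref{conj:simple_module}.

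For the ``in particular'' part concerning type~(ii), I would use the geometric identification recalled in Section \ref{sec:quiver_variety} (cf.\ Remark \ref{rem:compare_fundamental_module}): when $C$ is of type $ADE$, Nakajima's construction identifies each $S(w)\in\extQuotKGp(\bi,\ua)$ with the class of a simple finite-dimensional module of the quantum affine algebra $U_q(\hat{\mathfrak g})$, and the twisted product $*$ deforms the tensor product. Thus the previous conclusion translates directly into the assertion that cluster monomials correspond to simple modules, verifying in particular the Hernandez--Leclerc conjecture at all levels $N$ via the category $\cC_N$ of Example \ref{eg:level_N_subcategory}. I do not foresee a genuine obstacle here, since the only non-trivial ingredients (existence of the common triangular basis, its coincidence with the simple basis, and the factorization at frozen vertices) have all been established before the statement of Theorem \ref{thm:simple_module}; the remaining matching between frozen exponents is elementary.
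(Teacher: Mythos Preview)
Your approach is the same as the paper's: the paper simply declares Theorem~\ref{thm:simple_module} to be a ``direct consequence'' of Theorem~\ref{thm:consequence} and gives no further argument, so your bookkeeping is exactly the missing detail. The overall strategy---use that every quantum cluster monomial lies in the common triangular basis $\can^{t_0}$, then show that the relevant basis element is actually an unlocalized simple $\simp(w)$---is correct.

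There is one imprecise step, however. You assert that a quantum cluster monomial $M$ of $\qClAlg^\dagger(\bi)$ ``contains only non-negative powers of the frozen variables'' and then compare Laurent expansions in $\cT(t_0)$ to force $f_s\geq 0$. But $M$ is a monomial with non-negative exponents only in the seed $t$ where it is a cluster monomial; its Laurent expansion in $\cT(t_0)$ can a priori involve negative powers of the frozen $X_s$, so this comparison does not directly work. The clean fix is either of the following. First, by Theorem~\ref{thm:isom_integral} the set $\{\simp(w)\}_{w\in\N^l}$ is a $\Z[q^{\pm\Hf}]$-basis of $\qClAlg^\dagger(\bi)$, and it is a subset of the basis $\can^{t_0}$ of $\qClAlg(\bi)$; since $M\in\qClAlg^\dagger(\bi)$ is a single element of $\can^{t_0}$, linear independence forces $M\in\{\simp(w)\}$. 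Alternatively, observe that $\deg^{t_0}M\in\domDegL(t_0)$ by the very definition of the dominant degree cone, while an element $\simp(w)\cdot\prod_s X_s^{f_s}\in\can^{t_0}$ with some $f_{s_0}<0$ has $(\theta\deg^{t_0})_{s_0}=w_{s_0}+f_{s_0}=f_{s_0}<0$ (using the constraint $w_{s_0}=0$ when $f_{s_0}\neq 0$), hence its degree lies outside $\domDegL(t_0)=\theta^{-1}(\N^l)$; so again $M=\simp(w')$ for some $w'\in\N^l$. Either route completes your argument without further change.
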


For quantum cluster algebras arising from quantum unipotent subgroups, we have verified Conjecture \ref{conj:integral_quantum_group} in
Theorem \ref{thm:isom_integral}. Moreover, Theorem \ref{thm:consequence} implies that the quantum cluster monomials of
$\clAlg$ of type (i) are contained in the dual canonical basis for the
following cases.
\begin{Thm}\label{thm:canonical_basis}
Conjecture \ref{conj:canonical_basis} holds true for any reduced word
$w$ which is inferior than some adaptable word with respect to the left
or right weak order. In particular, it holds true for any reduced word $w$ when the Cartan
type is $ADE$.
\end{Thm}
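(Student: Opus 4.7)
The plan is to reduce Theorem \ref{thm:canonical_basis} to Theorem \ref{thm:consequence}. Suppose $w$ is inferior to an adaptable reduced word $w'$ in either the left or right weak order. I would choose reduced words $\bi$ for $w$ and $\bi'$ for $w'$ such that $\bi$ arises from $\bi'$ by iterated deletion of letters at one chosen end. The iterated embeddings $\extQuotKGp(w_{\bi - i_l}) \hookrightarrow \extQuotKGp(w_\bi)$ and $\extQuotKGp(w_{\bi - i_1}) \hookrightarrow \extQuotKGp(w_\bi)$ of Section \ref{sec:category_type_i}, corresponding to the two weak orders, then assemble into an embedding $\extQuotKGp(\bi) \hookrightarrow \extQuotKGp(\bi')$ that sends simple basis elements to simple basis elements up to explicit powers of $v$ coming from Lusztig's braid symmetries.

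Next, I would realize $\clAlg(\bi)$ as a sub-cluster-algebra of a variant of $\clAlg(\bi')$ in which the exchangeable vertices corresponding to the deleted letters have been frozen. This extends Lemma \ref{lem:embedding} and should follow from the explicit description of the ice quivers $\Gamma_\bi$ and $\Gamma_{\bi'}$ in Section \ref{sec:quiver_word}, together with the general fact that freezing does not alter the mutation pattern in the remaining exchangeable directions. Under this identification, every quantum cluster monomial of $\clAlg(\bi)$ becomes a quantum cluster monomial of the modified $\clAlg(\bi')$, modulo frozen factors.

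Finally, Theorem \ref{thm:consequence} already implies that every quantum cluster monomial of $\clAlg(\bi')$ lies in the localized simple basis $\{\simp(w)\}$ of $\extQuotKGp(\bi') \simeq A_{\Z[v^{\pm\Hf}]}(\mfr{n}(w'))$. Pulling this statement back along $\extQuotKGp(\bi) \hookrightarrow \extQuotKGp(\bi')$ places every cluster monomial of $\clAlg(\bi)$ in the normalized and localized dual canonical basis of $A_q(\mfr{n}(w))$, which is exactly Conjecture \ref{conj:canonical_basis}. The $ADE$ statement then follows because in simply-laced finite type the longest element $w_0$ admits an adaptable reduced expression (any acyclic orientation of the Dynkin diagram produces one), and every element of the Weyl group is inferior to $w_0$ in both weak orders.

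The main obstacle will be the second step, namely the cluster-theoretic compatibility: I must verify that the abstract inclusion $\extQuotKGp(\bi) \hookrightarrow \extQuotKGp(\bi')$ of Grothendieck rings is genuinely realized by a concrete inclusion of quantum cluster algebras in which initial quantum cluster variables of $\clAlg(\bi)$ correspond to initial quantum cluster variables of $\clAlg(\bi')$ (up to frozen factors and $v$-normalizations), so that the mutation sequences computing any given cluster monomial in $\clAlg(\bi)$ can be tracked faithfully inside $\clAlg(\bi')$. Once this compatibility is checked, the transfer of the conclusion of Theorem \ref{thm:consequence} is purely formal.
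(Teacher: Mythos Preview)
Your approach matches the paper's, which is extremely terse: it simply cites Theorem \ref{thm:consequence} together with the Grothendieck ring embeddings at the end of Section \ref{sec:category_type_i}, and for $ADE$ observes that the longest word is adaptable and bounds every reduced word in weak order. The obstacle you flag---that the abstract embedding of Grothendieck rings must be compatible with the cluster structures---is precisely the content the paper leaves implicit; for removing $i_l$ this is handled by the discussion preceding Lemma \ref{lem:embedding} (freezing $l^-$ realizes $\clAlg^\dagger(\bi-i_l)$ inside $\clAlg^\dagger(\bi)$ up to the extra frozen variable $X_l$), while the $\bi-i_1$ case is not written out in the paper but is analogous.
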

\begin{proof}
  The first claim follows from Theorem \ref{thm:consequence} and
  compositions of the embeddings of Grothendieck ring in the end of Section \ref{sec:category_type_i}.

For type $ADE$, the claim holds for the longest word because
  it is adaptable. Any reduced word $w$ is inferior than the
  longest word with respect to the left and right weak order, \cf
  \cite{bjorner2005combinatorics}. The second claim follows.
\end{proof}

\begin{Rem}\label{rem:Leclerc_conjecture}
  When the quantum cluster algebra is of type (i). The triangularity property of the common triangular basis might be compared with the following weaker form of Leclerc's conjecture
  \cite[Section 3.1]{Leclerc03}:

Let $b_1, b_2$ be elements of the dual canonical basis $B^*$. Suppose
that $b_1$ corresponds to a cluster variable. Then the expansion of
$b_1,b_2$ on $B^*$ takes the form
\begin{align}\label{eq:Leclerc_conjecture}
  b_1b_2=q^sb''+\sum_{c\neq b''}\gamma^c_{b_1b_2}(q)c,
\end{align}
where $s\in\Z$, $\gamma^c_{b_1 b_2}\in q^{s-1}\Z[q^{-1}]$.

Leclerc's conjecture was recently proved in \cite{KKKO14}.
\end{Rem}



\def\cprime{$'$}
\providecommand{\bysame}{\leavevmode\hbox to3em{\hrulefill}\thinspace}
\providecommand{\MR}{\relax\ifhmode\unskip\space\fi MR }
\providecommand{\MRhref}[2]{%
  \href{http://www.ams.org/mathscinet-getitem?mr=#1}{#2}
}
\providecommand{\href}[2]{#2}

\end{document}